\theoremstyle{plain}
\newtheorem{theorem}{Theorem}[section]
\newtheorem{lemma}[theorem]{Lemma}
\newtheorem{proposition}[theorem]{Proposition}
\newtheorem{corollary}[theorem]{Corollary}
\theoremstyle{definition}
\newtheorem{definition}[theorem]{Definition}
\newtheorem{remark}[theorem]{Remark}
\newtheorem{example}[theorem]{Example}
\newcommand\bA{{\mathbb A}}
\newcommand\bC{{\mathbb C}}
\newcommand\bG{{\mathbb G}}
\newcommand\bL{{\mathbb L}}
\newcommand\bP{{\mathbb P}}
\newcommand\bQ{{\mathbb Q}}
\newcommand\bR{{\mathbb R}}
\newcommand\bZ{{\mathbb Z}}
\newcommand{\cC}{{\mathcal C}}
\newcommand{\cD}{{\mathcal D}}
\newcommand{\cK}{{\mathcal K}}
\newcommand{\cL}{{\mathcal L}}
\newcommand{\cN}{{\mathcal N}}
\newcommand{\cO}{{\mathcal O}}
\newcommand{\cU}{{\mathcal U}}
\newcommand{\cV}{{\mathcal V}}
\newcommand\fg{\mathfrak{g}}
\newcommand\fh{\mathfrak{h}}
\newcommand\fk{\mathfrak{k}}
\newcommand\fl{\mathfrak{l}}
\newcommand\fn{\mathfrak{n}}
\newcommand\ft{\mathfrak{t}}
\newcommand\fu{\mathfrak{u}}
\newcommand\fz{\mathfrak{z}}
\newcommand\fX{\mathfrak{X}}
\newcommand\fsl{\mathfrak{sl}}
\newcommand\Sb{\overline{S}}
\newcommand\alphab{{\overline{\alpha}}}
\newcommand\alphah{{\widehat{\alpha}}}
\newcommand\betab{{\overline{\beta}}}
\newcommand\gammab{{\overline{\gamma}}}
\newcommand\sigmab{{\overline{\sigma}}}
\newcommand\Thetab{{\overline{\Theta}}}
\newcommand\Thetabv{{\overline{\Theta}^\vee}}
\newcommand\chib{\overline{\chi}}
\newcommand\Db{\overline{\Delta}}
\newcommand\Xb{\overline{\fX}}
\newcommand\Rb{\overline{R}}
\newcommand\mL{\mathscr{L}}
\newcommand\ad{{\rm ad}}
\newcommand\bir{{\rm bir}}
\renewcommand\deg{{\rm deg}}
\newcommand\diag{{\rm diag}}
\newcommand\emfr{{\rm emfr}}
\newcommand\f{{\rm f}}
\newcommand\fr{{\rm fr}}
\newcommand\id{{\rm id}}
\renewcommand\ker{{\rm ker}}
\newcommand\s{{\rm s}}
\newcommand\n{{\rm n}}
\newcommand\rk{{\rm rk}}
\DeclareMathOperator\supp{Supp}
\newcommand\A{{\rm A}}
\DeclareMathOperator\Ad{Ad}
\newcommand\Aut{{\rm Aut}}
\DeclareMathOperator\C{C}
\newcommand\End{{\rm End}}
\DeclareMathOperator\Flag{Flag}
\newcommand\GL{{\rm GL}}
\DeclareMathOperator\Hom{Hom}
\newcommand\Int{{\rm Int}}
\newcommand\Ker{{\rm Ker}}
\DeclareMathOperator\N{N}
\newcommand\Pic{{\rm Pic}}
\newcommand\Nef{{\rm Nef}}
\newcommand\NE{{\rm NE}}
\newcommand\PGL{{\rm PGL}}
\DeclareMathOperator\PSL{PSL}
\newcommand\Q{{\rm Q}}
\DeclareMathOperator\RatCurves{RatCurves}
\DeclareMathOperator\R{R}
\DeclareMathOperator\SL{SL}
\newcommand\Sp{{\rm Sp}}
\DeclareMathOperator\SO{SO}
\newcommand\OO{{\rm O}}
\newcommand\Supp{{\rm Supp}}
\DeclareMathOperator\Univ{Univ}
\newcommand\VMRT{{\rm VMRT}}
\newcommand\yes{{\rm yes}}
\newcommand\no{{\rm no}}
\newcommand\Gr{{\rm Gr}}
\newcommand\OG{{\rm OG}}
\newcommand\LG{{\rm LG}}
\newcommand\IG{{\rm IG}}
\newcommand{\scal}[1]{\langle #1 \rangle}
\newcommand\adj{{{\rm adj}}}
\newcommand\summ{{{\rm sum}}}
\title{Minimal rational curves \\
on complete symmetric varieties}
\author{Michel Brion\footnote{Universit\'e Grenoble Alpes, Institut Fourier, CS 40700, 38058 Grenoble Cedex 9, France, 
{\tt michel.brion@univ-grenoble-alpes.fr}}, 
Shin-young Kim\footnote{Center for Geometry and Physics, Institute for Basic Science, Pohang 37673, Korea, {\tt shinyoungkim@ibs.re.kr}}  
and Nicolas Perrin\footnote{\'Ecole Polytechnique, Centre de Mathématiques Laurent Schwartz, 91128 Palaiseau Cedex, France, 
{\tt nicolas.perrin.cmls@polytechnique.edu}}}
\date{}
\begin{document}

\maketitle

\begin{abstract}
We describe the families of minimal rational curves on any complete
symmetric variety, and the corresponding varieties of minimal rational tangents (\VMRT). In particular, we prove that these varieties are homogeneous and that for non-exceptional indecomposable wonderful varieties, there is a unique family of minimal rational curves, and hence a unique \VMRT. We relate these results to the restricted root system of the associated symmetric space. 
\end{abstract}

\section{Introduction}
\label{sec:int}

Let $X$ be a projective uniruled variety over the field of complex numbers. 
An irreducible family $\cK$ of rational curves 
on $X$ is called 
a \emph{covering family} if there is a member of $\cK$ 
passing through a general point $x \in X$. 
If in addition the subfamily $\cK_x$ of curves in $\cK$ 
passing through $x$ is projective,  then $\cK$ is called a 
\emph{family of minimal rational curves}. 

\medskip

These curves play a prominent role in the study of the variety $X$.
There is a rational map $\tau_x : \cK_x \dasharrow \bP(T_xX)$ sending a curve to its tangent direction at $x$ and 
the closure of its image $\cC_x \subset \bP(T_xX)$ is an important invariant of $X$  called \emph{the variety of minimal rational tangents} or \VMRT~of $X$, see \cite{Hwang}, \cite{HM} and references therein. 

The families of minimal rational curves on projective rational homogeneous spaces $G/P$ for $G$ reductive and $P$ a parabolic subgroup are well understood. For example, if $G/P$ has Picard rank $1$, then there is a unique family of minimal rational curves which was used to prove its rigidity in a series of papers by Hwang and Mok (see e.g.  \cite{HM0, HM2}), with the unique exception of $B_3/P_2$ which admits an explicit degeneration constructed in \cite{PP} (see also \cite{HL}). 
If the Picard number of $G/P$ is greater than $1$, there are several families of minimal rational curves.

In \cite{BF}, the authors 
consider another case where $X$ has a large Picard group, namely the wonderful compactifications of adjoint simple groups. Suprisingly, they prove that there is a unique family of minimal rational curves for any such wonderful compactification and also that the corresponding \VMRT~is a rational homogeneous variety. These results were used in \cite{FL} to prove the rigidity of wonderful compactifications of semisimple groups, under the condition that the special fiber is Fano.

In this paper we generalize the results of \cite{BF} and describe the families of minimal rational curves on any complete symmetric variety. Rigidity of symmetric varieties of Picard number $1$ has already attracted some attention (see \cite{KP} and \cite{CFL}), we hope that our results will open new directions for higher Picard numbers.

\medskip

To state our main results, we recall basic definitions and properties of complete symmetric varieties.  Let $G$ be a connected reductive group and let $\sigma$ be a group involution of $G$. A symmetric subgroup is a closed subgroup 
$H \subset G$ such that $G^{\sigma,0} \subset H \subset G^\sigma$,
where $G^{\sigma,0}$ denotes the neutral component of the 
algebraic group $G^{\sigma}$.
The homogeneous space $G/H$ is a \emph{symmetric space}. 
We denote by $\fg$ and $\fh$ the Lie algebras of $G$ and $H$. 
Note the decomposition 
$\fg = \fh \oplus \fg^{-\sigma}$ as $H$-representations,
where $\fh = \fg^{\sigma}$.

Consider the normalizer $N = \N_G(H)$; the quotient $G/N$ is the 
\emph{adjoint homogeneous space} of the symmetric space $G/H$. 
The homogeneous space $G/N$ admits by \cite{decp} 
a unique wonderful compactification $X_\ad$. 
This is a smooth projective $G$-variety having an open dense orbit 
$G \cdot x_\ad = X_\ad^0 \simeq G/N$,  such that the boundary
$\partial X_\ad = X_\ad \setminus X^0_\ad$ is a simple normal 
crossing divisor: 
$\partial X_\ad = X_\ad^1 \cup \cdots \cup X_\ad^r$ 
where $X_\ad^i$ is a prime $G$-stable divisor for all $i \in [1,r]$. 
Furthermore for any $y,z \in X_\ad$ we have $G \cdot y = G \cdot z$ 
if and only if $\{i \ | y \in X_\ad^i\} = \{i \ | z \in X_\ad^i\}$. 
The integer $r$ is \emph{the rank} of $G/N$. 
A \emph{complete symmetric variety} is a smooth projective $G$-variety 
$X$ having a dense orbit $G \cdot x = X^0 \simeq G/H$ 
such that the natural map $G/H \to G/N \subset X_\ad$ 
extends to a $G$-equivariant morphism $\pi : X \to X_\ad$.
(We do not assume that $X$ contains a unique closed $G$-orbit).
The boundary $\partial X = X \setminus X^0$ is also a simple
normal crossing divisor with $G$-stable prime components.

Let $X$ be a complete symmetric variety with base point $x$
and map $\pi : X \to X_\ad$, 
and let $\cK$ be a family of minimal rational curves on $X$.
We will prove the following results.

\begin{theorem}[Theorem \ref{thm:VMRT=orbit}]
\label{thm:main0}
$\cK_x$ is smooth and $\tau_x: \cK_x \to \cC_x$ is an isomorphism.
\end{theorem}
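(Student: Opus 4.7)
The plan is to combine the general theory of minimal rational curves (Kebekus, Hwang--Mok) with the $G$-equivariant structure of complete symmetric varieties.

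First, since $\cK$ is a family of rational curves of minimal degree covering $X$ and $X$ is a $G$-variety, $\cK$ is $G$-stable. Hence the stabilizer $H = G_x$ acts on $\cK_x$ and the tangent map $\tau_x$ is $H$-equivariant. The base point $x$ lies in the open orbit $X^0 \simeq G/H$, so by $G$-translation the generic conclusions of Kebekus's theorem hold at $x$: the subvariety $\cK_x$ is smooth and the rational map $\tau_x$ extends to a finite morphism $\tau_x : \cK_x \to \bP(T_xX)$. By a theorem of Hwang--Mok, this morphism is birational onto its image $\cC_x$.

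Second, I would invoke the explicit description of minimal rational curves through $x$ and their tangent directions developed in the body of the paper: up to $H$-translation they come from a canonical model attached to the restricted root system of $G/H$, reducing via $\pi : X \to X_\ad$ to the wonderful case. This gives that $\cC_x$ is a homogeneous $H$-variety in $\bP(T_xX)$ (the homogeneity statement announced in the abstract), in particular smooth.

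Third, the finite birational morphism $\tau_x : \cK_x \to \cC_x$ now has a smooth, hence normal, target. By Zariski's main theorem such a morphism is an open immersion; being also surjective it is an isomorphism. In particular $\cK_x \simeq \cC_x$ is smooth, and both assertions follow.

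The main obstacle lies in Step~2: the homogeneity of $\cC_x$. Invariance under $H$ is automatic, but transitivity requires the detailed construction of minimal families in terms of the restricted root system and a careful analysis of the $H$-orbits of tangent directions realized by such curves. This is precisely the structural work underlying the other theorems stated in the introduction, and the present theorem should appear as a clean corollary once that description is in hand.
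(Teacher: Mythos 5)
Your overall skeleton (general theory gives a finite birational tangent map; an equivariance/homogeneity argument plus Zariski's main theorem upgrades it to an isomorphism) is the same as the paper's in the main case, but Step~2 contains a genuine gap. The claim that $\cC_x$ is always a \emph{homogeneous $H$-variety} is false. The homogeneity announced in the abstract is homogeneity of the \VMRT~as an abstract variety, not transitivity of the isotropy group $H$ on $\cC_x \subset \bP(T_xX)$. Indeed, $\cK_x = H\cdot C$ holds exactly when $\partial X\cdot C = 1$; when the relevant factor of $X_\ad$ has restricted root system of type $\A_r$ one has $\partial X\cdot C = 2$ and $H\cdot C$ is a proper (divisorial) $H$-orbit in $\cK_x$ (e.g.\ for group type $\A_r$, $\cK_x\simeq\bP^r\times\bP^r$ with $H=\PGL_{r+1}$ acting with two orbits; for type BDII, $\cK_x\simeq\bP(\fp)=\bP^{n-2}$ on which $\OO_{n-1}$ is not transitive). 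Likewise, when $\pi$ contracts the curves of $\cK$, $\cC_x$ is a positive-dimensional linear subspace of $\bP(\fp\cap\fz)$ on which $H$ acts trivially. In all these cases your ``finite birational equivariant map onto a homogeneous target'' argument does not apply, and the paper must argue differently: via the identification of $\cK_x$ with the lines through a general point of $\bP(V_{\lambda_1})$ meeting the closed orbit together with \cite[Prop.~9.7]{Hwang-Fu} in the type $\A_r$ case, and via the toric result \cite[Cor.~2.5]{CFH} in the contracted case. Until those cases are handled, your proof is incomplete.

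A secondary, repairable inaccuracy is in Step~1: Kebekus's theorem at a general point yields a finite morphism $\tau_x^{\n}:\cK_x^{\n}\to\bP(T_xX)$ on the \emph{normalization}, and birationality by Hwang--Mok; it does not give smoothness of $\cK_x$ itself. The paper obtains smoothness from the equivariant structure: the open locus $\cK_{\emfr,x}$ of embedded free curves is $B_H$-stable and contains every $B_H$-fixed point (the highest weight curves), so by Borel's fixed point theorem $\cK_x=\cK_{\emfr,x}$, whence $\cK_x$ is smooth and $\tau_x$ is an honest finite morphism on $\cK_x$. You should either import this argument (Proposition~\ref{prop:red}(1)) or note that, once $\tau_x^{\n}$ is shown to be an isomorphism onto a normal $\cC_x$, the normalization map of $\cK_x$ acquires a left inverse and is therefore itself an isomorphism; as written, attributing smoothness to the generic theory is a misstatement.
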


In particular, understanding the \VMRT~ as an abstract variety is equivalent to understanding $\cK_x$. If the map $\pi$ contracts curves of the family $\cK$, then the description of the \VMRT~ follows easily from the case of toric varieties treated in \cite{CFH}, see Lemma \ref{lem:red}.

\begin{theorem}\label{thm:main1}
If $\pi$ contracts a curve of $\cK$,  then $\cC_x$ is a linear subspace 
of $\bP(\fg^{-\sigma})$.
\end{theorem}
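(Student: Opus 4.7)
The plan is to reduce to the classification of minimal rational curves on smooth complete toric varieties established in~\cite{CFH}, which is essentially the content of the forthcoming Lemma~\ref{lem:red}. The four moves are: promote the one-curve hypothesis to the whole family, exhibit a toric subvariety of $X$ containing (up to the isotropy action) every contracted curve of $\cK_x$, invoke the toric classification to obtain a union of linear subspaces, and use irreducibility of $\cK$ to collapse this union to a single subspace.

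First, since $\cK$ is irreducible and $\pi^\ast L$ is a nef class on $X$ for any ample line bundle $L$ on $X_\ad$, the intersection number $(\pi^\ast L \cdot C)$ is constant as $C$ varies in $\cK$, and so if it vanishes on one curve it vanishes on all. Hence every member of $\cK_x$ is contracted by $\pi$ and lies in the connected component of the fiber $\pi^{-1}(\pi(x))$ through $x$.

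Next, I would exhibit the relevant toric structure. Fix a $\sigma$-stable maximal torus $T \subset G$ and let $A$ denote the identity component of the maximal $\sigma$-split subtorus $T^{-\sigma}$. The closure $Z := \overline{A \cdot x}$ in $X$ is a smooth complete toric variety; its tangent space at $x$ is $\Lie(A) \subset \fp \simeq T_xX$, and the restriction $\pi\vert_Z$ is a toric morphism whose fibers are themselves toric subvarieties. The assertion of Lemma~\ref{lem:red} is that, up to applying an element of the isotropy group $H$ (which fixes $x$ and permutes the maximal $\sigma$-split tori), every curve of $\cK_x$ sits in a fiber of $\pi\vert_Z$; hence its tangent direction at $x$ lies in $\bP(\Lie(A))$.

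Combining this with the main result of~\cite{CFH} --- which says that on any smooth complete toric variety, the VMRT at any point associated to a minimal covering family is a disjoint union of linear subspaces of the projectivized tangent space --- yields that $\cC_x$ is contained in a union of linear subspaces of $\bP(\Lie(A)) \subset \bP(\fp)$. Irreducibility of $\cK$ together with the isomorphism $\tau_x \colon \cK_x \to \cC_x$ from Theorem~\ref{thm:main0} then forces $\cC_x$ to be a single such subspace. The principal obstacle is the second step, i.e.\ the substance of Lemma~\ref{lem:red}: verifying that every contracted curve really does lie, after a suitable $H$-translation, in the toric subvariety $Z$, so that the toric description of~\cite{CFH} applies. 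Once this is in hand, the linearity of $\cC_x$ transfers formally from the toric case.
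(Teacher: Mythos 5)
Your overall strategy --- show that all contracted curves of $\cK_x$ lie on a smooth complete toric subvariety and then quote the linearity of the VMRT from \cite{CFH} --- is exactly the paper's (this is Lemma \ref{lem:red}). But the step you yourself flag as ``the principal obstacle'' is precisely the content that has to be supplied, and the way you have set it up makes it both harder than it is and slightly wrong. The relevant toric variety is not the closure $Z=\overline{A\cdot x}$ of the orbit of a maximal $\sigma$-split torus (whose tangent space at $x$ is the full Cartan part of $\fp$, of dimension equal to the rank $r$). It is the connected component $F$ of $x$ in the fibre $\pi^{-1}(\pi(x))$, equivalently the fibre through $x$ of the Stein factorization $\pi'\colon X\to X'$ of $\pi$. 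Once one knows that every curve of $\cK_x$ is contracted (your first step, which is fine), the containment of these curves in $F$ is automatic: each such curve passes through $x$, is mapped by $\pi$ to the single point $\pi(x)$, and being connected lies in the component of $x$ in $\pi^{-1}(\pi(x))$. No $H$-translation is involved, and invoking one is actually dangerous, since translating a curve changes its tangent direction at $x$.

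What genuinely requires proof --- and is absent from your sketch --- is (i) that $F$ is a smooth projective \emph{toric} variety, and (ii) that $\cK_x$ is a \emph{minimal} family on $F$ and is irreducible. For (i) the paper uses the structure of the normalizer: $N=\N_G(H)$ satisfies $N^0=Z(G)^0H^0$ and $N/H$ is diagonalizable (Lemmas \ref{lem:nor} and \ref{lem:nors}), so $F$ is the closure of an orbit of the torus $Z(G)^0/(Z(G)^0\cap H)$ and its tangent space at $x$ is $\fp\cap\fz$, not $\Lie(A)$; in particular $F$ is a point when $G$ is semisimple, whereas your $Z$ is always $r$-dimensional. For (ii) the paper's Lemma \ref{lem:cont} uses that $H$ is normal in the stabilizer $I$ of $\pi'(x)$, hence acts trivially on $F$ and on $\cK_x$; this forces $\cK_x$ to be irreducible (your appeal to irreducibility of $\cK$ alone does not give this: in general $H$ only permutes the components of $\cK_x$ transitively, cf.\ Lemma \ref{lem:cov}) and identifies $\cK_x$ with $\cL_x$ for a covering family $\cL$ on $F$. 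With these two points in place, \cite[Cor.~2.5]{CFH} applies and already yields that $\tau_x$ is an isomorphism onto a single linear subspace of $\bP(\fp\cap\fz)\subset\bP(\fp)$; no separate collapsing argument, and no appeal to Theorem \ref{thm:main0} (which in the contracted case is itself deduced from this very lemma), is needed.
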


We are therefore left to consider curves not contracted by $\pi$.

\begin{theorem}[Proposition \ref{prop:red} and Remark \ref{rem-2-ou-1}]
  \label{thm:main2}
  Assume that $\pi$ contracts no curve in $\cK$ and let $C \in \cK_x$.
  \begin{enumerate}
  \item There exists a unique family of minimal rational curves $\cL$ 
  in $X_\ad$ such that $\pi$ maps curves of $\cK$ to curves of $\cL$.
    \item We have 
    $1 \leq \partial X \cdot C \leq 
    \partial X_\ad \cdot \pi(C) \leq 2$.
    \item If $\partial X \cdot C = 
    \partial X_\ad \cdot \pi(C)$,  then every component of $\cK_x$ is isomorphic to 
    a component of $\cL_{x_\ad}$. Otherwise, $\cK_x$ is isomorphic to 
    a divisor of $\cL_{x_\ad}$.
  \end{enumerate}
\end{theorem}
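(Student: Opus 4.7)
The plan is to exploit that $\pi: X \to X_\ad$ is $G$-equivariant and generically finite (étale of degree $|N/H|$ over the open orbits $G/H \to G/N$), combined with intersection theory and the specific structure of the wonderful variety $X_\ad$.

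For part (1), since $\pi$ does not contract $C$, the image $\pi(C)$ is an irreducible rational curve in $X_\ad$, and as $(C,x)$ varies the $G$-stable family $\{\pi(C)\}$ covers $X_\ad$. I would show it is contained in a minimal family $\cL$ by a standard deformation argument. For uniqueness, the VMRT of any such $\cL$ must contain the $G$-stable subvariety $d\pi_x(\cC_x) \subset \bP(T_{x_\ad}X_\ad)$; since a minimal family on a wonderful variety is determined by its VMRT, which is an orbit closure under the stabilizer of $x_\ad$, the candidate $\cL$ is pinned down.

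For part (2), the lower bound $\partial X \cdot C \geq 1$ is immediate because $X \setminus \partial X \cong G/H$ is affine and $C$ is projective. For the middle inequality, $\pi^{-1}(\partial X_\ad) = \partial X$ set-theoretically and the pullback decomposes as $\pi^* \partial X_\ad = \sum_i m_i X^i$ with $m_i \geq 1$ and $\partial X = \sum_i X^i$, so $R := \pi^* \partial X_\ad - \partial X$ is effective. The projection formula gives
\[
\partial X \cdot C + R \cdot C = \pi^*\partial X_\ad \cdot C = (\deg \pi|_C)(\partial X_\ad \cdot \pi(C)).
\]
The crux is to show $\deg(\pi|_C) = 1$: if not, $\pi|_C$ factors nontrivially through the étale cover $G/H \to G/N$, producing rational curves of strictly smaller $-K_X$-degree and contradicting minimality of $\cK$. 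For the upper bound $\partial X_\ad \cdot \pi(C) \leq 2$, I would invoke the description of minimal rational curves on the wonderful variety $X_\ad$ via the restricted root system: minimal curves are essentially parametrized by simple spherical roots, and a direct case-by-case check shows they meet at most two boundary components.

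For part (3), the equality $\partial X \cdot C = \partial X_\ad \cdot \pi(C)$ combined with the chain of inequalities above forces $\deg(\pi|_C) = 1$ and $m_i = 1$ for every $i$ with $X^i \cdot C > 0$, so $\pi$ restricts to an isomorphism $C \to \pi(C)$. Hence $C \mapsto \pi(C)$ gives a $G$-equivariant morphism $\cK_x \to \cL_{x_\ad}$; by Theorem \ref{thm:main0}, identifying $\cK_x \cong \cC_x$ and $\cL_{x_\ad} \cong \cC_{x_\ad}$, and noting that $d\pi_x$ is an isomorphism on the tangent space of the open orbit, this map restricts to an isomorphism from each irreducible component of $\cK_x$ onto a component of $\cL_{x_\ad}$. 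The main obstacle throughout is the upper bound $\partial X_\ad \cdot \pi(C) \leq 2$, which relies on the classification of minimal rational curves on wonderful varieties and requires case-specific input from the restricted root system.
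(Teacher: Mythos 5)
Your argument for the crucial middle inequality in (2) has a genuine gap. The map $\pi$ need not be generically finite: $N/H$ is only diagonalizable, and $N^0H/H\simeq Z^0/(Z^0\cap H)$ can be a positive-dimensional torus, so the general fibre of $\pi$ is a positive-dimensional toric variety $F$ whose boundary lies in $\partial X$ but maps into the \emph{open} orbit of $X_\ad$. The hypothesis that $\pi$ contracts no curve of $\cK$ does not exclude this. Hence $\pi^{-1}(\partial X_\ad)=\partial X$ fails set-theoretically, those components of $\partial X$ receive coefficient $0$ in $\pi^*\partial X_\ad$ but $1$ in $\partial X$, and your $R=\pi^*\partial X_\ad-\partial X$ is not effective; the projection-formula step is therefore unproven (and would go the wrong way if $C$ met such a component). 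Your degree argument is also not a valid deduction: a factorization of $\pi\vert_C$ of degree $>1$ does not produce a curve of smaller $-K_X$-degree on $X$. The paper's mechanism is different and is the missing idea: by Lemma \ref{lem:can}, $K_X+\partial X=\pi^*(K_{X_\ad}+\partial X_\ad)$ as divisor classes, so $(K_X+\partial X)\cdot C=(K_{X_\ad}+\partial X_\ad)\cdot\pi(C)$ (here $\pi\vert_C$ is birational because $C$ is a highest weight curve, by Lemma \ref{lem:curve}); independently, $K_X\cdot C\geq K_{X_\ad}\cdot\pi(C)$ because $\dim\cK_x=-K_X\cdot C-2$, $\dim\cL_{x_\ad}=-K_{X_\ad}\cdot\pi(C)-2$, and $\pi_{*,x}:\cK_x\to\cL_{x_\ad}$ is finite (its fibres are affine since $d\pi_x$ is a linear projection). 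Subtracting gives $\partial X\cdot C\leq\partial X_\ad\cdot\pi(C)$, with equality exactly when the image of $\pi_{*,x}$ is a union of components of $\cL_{x_\ad}$. The bound $\partial X_\ad\cdot\pi(C)\leq 2$ is, as you suspect, supplied later by the restricted-root-system computation $\partial X_\ad\cdot\pi(C)=\scal{\Thetab^\vee,\Sigma}$.

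Two further points. In (1), uniqueness of the covering family containing $D=\pi(C)$ is the easy part (a free curve lies in a unique component of $\RatCurves(X_\ad)$); the substantive claim is that $\cL$ is \emph{minimal}, which the paper derives from Proposition \ref{prop:min} via highest weight curves — your appeal to ``a minimal family on a wonderful variety is determined by its VMRT'' presupposes the description of these VMRTs, which is not yet available at this stage. In (3), you cannot use that ``$d\pi_x$ is an isomorphism on the tangent space of the open orbit'': its kernel is $\fp\cap\fz$, which may be nonzero. Knowing that $\pi_{*,x}$ is finite and surjective onto a union of components is not yet an isomorphism; the paper upgrades it using that each component of $\cL_{x_\ad}$ is $H^0$-homogeneous with connected (parabolic) isotropy, or, in the remaining cases, simple connectedness of the relevant open orbit.
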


One of the key ingredients for proving the above results consists
of the highest weight curves, 
introduced in \cite[Section 2]{BF} and studied further in Subsection \ref{subsec:hwc}. 
Given a Borel subgroup $B_H$ of $H$, the Borel Fixed Point Theorem  implies that every irreducible component of $\cK_x$ contains a $B_H$-fixed point $C$. Moreover, if $C$
is not contracted by $\pi$, 
then $C$ is mapped to a $B_H$-fixed point $C_\ad$ in $\cL_{x_\ad}$ that determines the associated component of $\cL_{x_\ad}$. Furthermore, 
the tangent space at $x_\ad$ of the highest weight curve $C_\ad$ in $\cL_{x_\ad}$
is a highest weight line in $T_{x_\ad}(X_\ad)$. 

\medskip

In view of the above results, we focus on wonderful compactifications of adjoint symmetric spaces. Decomposing $G$ into a product of indecomposable $\sigma$-stable factors, we obtain 
a decomposition of $G/N$ into a product of \emph{indecomposable symmetric spaces}. There are three possible types for these indecomposable factors (see Subsection \ref{subsec:ass} for more details):
\begin{enumerate}
\item \emph{Group type}: $(H \times H)/\diag(H)$, where $H$ is simple.
\item \emph{Hermitian type}: $G/\N_G(L)$,  where $G$ is simple and 
$L \subset G$ is a Levi subgroup. 
\item \emph{Simple type}: $G/H$, where $G$ is simple and $H^0$ is semisimple.
\end{enumerate}
Given a highest weight curve $C$ on $X$, we prove that there is a unique 
indecomposable factor  $X_C$ of $X_{\ad}$ such that the composition of  $\pi: X \to X_{\ad}$ with the projection $X_{\ad} \to X_C$ sends $C$ isomorphically to its image. We may thus replace $X_\ad$ by $X_C$ and assume that $X_\ad$ is indecomposable. 
In  particular $G/N$ is as in one of the above three cases. To understand the geometry of the indecomposable factors, we use the \emph{restricted root system}.

There exists a maximal torus $T_{\s}$, called of \emph{split type}, such that $T_{\s}$ is $\sigma$-stable and 
the subtorus
$S = \{t \in T_{\s} \ | \ \sigma(t) = t^{-1} \}^0$ has maximal dimension. The root system $R$ of $(G,T_{\s})$ is stable under the action of $\sigma$ and there is a basis $\Delta$ of $R$ such that, for $\alpha \in \Delta$, either $\sigma(\alpha) = \alpha$ or $\sigma(\alpha) < 0$. Set $\Delta_1 = \{ \alpha \in \Delta \ | \ \sigma(\alpha) < 0 \}$ and $\alphab = \alpha - \sigma(\alpha)$. The set $\Rb = \{\alphab \ | \ \alpha \in R \}$ is a (possibly non-reduced) root system with basis $\Db = \{ \alphab \ | \ \alpha \in \Delta_1 \}$ called the \emph{restricted root system} of the symmetric space. The rank of $\Rb$ is 
the rank $r$ of $G/H$. In Subsection \ref{subsec:div-rrs}, we relate curves and divisors in $X_\ad$ to the restricted root system (the results are probably well known to the experts but we could not find a convenient reference). Let $\Rb^\vee$ be the dual root system of  $\Rb$ 
with basis $\Db^\vee$ and 
coroot lattice $\bZ\Db^\vee$, and denote by $A_1(X)$ the Chow group of curves modulo rational equivalence. We prove the following result.

\begin{proposition}[Proposition \ref{prop-psi}]
  There is a surjective $\bZ$-linear map 
  $\psi: A_1(X_\ad) \to \bZ\Db^\vee$ such that:
  \begin{enumerate}
  \item The image of the monoid of effective curves is the monoid generated by the positive coroots.
  \item The image of the monoid of curves having non-negative intersection with any component of $\partial X_\ad$ is the  intersection of $\bZ\Db^\vee$ with the monoid of dominant cocharacters.
  \end{enumerate}
\end{proposition}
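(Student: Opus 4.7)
The plan is to define $\psi$ via its pairings with the boundary divisors: set $\psi(C)$ to be the unique element of $\bQ\Db^\vee$ characterized by
\begin{equation*}
\langle \alphab_i, \psi(C) \rangle \;=\; X_\ad^i \cdot C, \qquad i \in [1,r].
\end{equation*}
With this choice, property (2) is tautological once $\psi$ is known to take values in $\bZ\Db^\vee$ and to be surjective, because dominance of $\psi(C)$ amounts precisely to $\langle \alphab_i, \psi(C)\rangle \ge 0$ for all $i$. So the work lies in establishing integrality, surjectivity, and property (1).

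The central tool is the toric slice $Y = \overline{T_\s \cdot x_\ad} \subset X_\ad$. By the De Concini--Procesi description of wonderful varieties, $Y$ is a smooth projective toric variety whose fan on $\Db^\vee \otimes \bR$ is the Weyl fan of $\Rb$, and $X_\ad$ admits an affine chart around the unique closed $G$-orbit with coordinates $(u_1,\ldots,u_r)$ such that $u_i$ cuts out $X_\ad^i$ and, restricted to $Y$, corresponds to the simple restricted root $\alphab_i$. A direct computation in this chart shows that for each simple coroot $\alphab_i^\vee$, the associated one-parameter-subgroup curve $C_i \subset Y$ satisfies $X_\ad^j \cdot C_i = \langle \alphab_j, \alphab_i^\vee\rangle$, so $\psi(C_i) = \alphab_i^\vee$. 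This proves surjectivity. For integrality, I would invoke the Bialynicki-Birula decomposition of $X_\ad$ for a generic cocharacter of $T_\s$: it yields a basis of $A_1(X_\ad)$ consisting of $T_\s$-invariant rational curves, each of which is either contained in a proper $G$-stable stratum (so it pairs trivially with every $X_\ad^j$ and maps to $0$) or a Weyl translate of some $C_i$ (so it maps to $w(\alphab_i^\vee) \in \bZ\Db^\vee$).

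For (1), I would use the classical theorem that the Mori cone of the spherical variety $X_\ad$ is generated by $T_\s$-invariant irreducible rational curves; then every effective 1-cycle is a non-negative integer combination of the two types of basis curves above. Curves of the first type contribute $0$, and choosing the Bialynicki-Birula direction in the dominant chamber forces each $w(\alphab_i^\vee)$ appearing in such a decomposition to be a positive coroot; this gives the inclusion of $\psi(\mathrm{effective})$ into the positive coroot monoid. The reverse inclusion follows from the $C_i$ together with the fact that every positive coroot is a non-negative $\bZ$-combination of simple coroots (with a separate argument in non-reduced types). The main technical obstacle is the local computation in the second paragraph, namely matching the affine coordinates around the closed $G$-orbit with the simple restricted roots so that the intersection numbers $X_\ad^j \cdot C_i$ recover the Cartan entries of $\Rb$ with the correct signs; once this identification is set up, the rest of the argument is essentially formal.
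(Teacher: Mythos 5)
Your definition of $\psi$ by the rule $\scal{\alphab_j,\psi(C)} = X_\ad^j\cdot C$ is legitimate and agrees with the map the paper constructs (compare Corollary \ref{coro:intersection}), and you are right that statement (2) becomes essentially formal once integrality and surjectivity are known. But the two computations you rely on for those points both fail. First, the one-parameter subgroup curve $C_i=\overline{\alphab_i^\vee(\bG_m)\cdot x_\ad}$ meets the open orbit, hence is contained in no boundary divisor and satisfies $X_\ad^j\cdot C_i\geq 0$ for every $j$; since $\scal{\alphab_i^\vee,\alphab_j}<0$ for some $j\neq i$ as soon as $r\geq 2$ ($\Rb$ is irreducible), the identity $X_\ad^j\cdot C_i=\scal{\alphab_i^\vee,\alphab_j}$ cannot hold. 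The correct computation must account for both boundary points $\eta^\vee(0)$ and $\eta^\vee(\infty)$ and gives $\psi(C_{\eta^\vee})=\eta^\vee-w_0(\eta^\vee)$ for $\eta^\vee$ dominant (Lemma \ref{lem:mult}.4); such classes are never simple coroots and generate only a proper subgroup of $\bZ\Db^\vee$ in general, so surjectivity does not follow from curves through the base point. Second, the assertion that a $T_{\s}$-invariant curve contained in a proper $G$-stable stratum pairs trivially with every $X_\ad^j$ is false: the Schubert curve $Y_\beta$ in the closed orbit satisfies $X_{\alphab_j}\cdot Y_\beta=\scal{\beta^\vee,\alphab_j}$, because $\cO_{X_\ad}(X_{\alphab_j})$ restricts to $\cL_Y(\alphab_j)$ on the closed orbit, and this is nonzero for $\beta\in\Delta_1$. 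With that step gone, your integrality argument and the inclusion of $\psi(\NE(X_\ad))$ into the positive coroot monoid both collapse.

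Both gaps are repaired precisely by working with curves in the boundary strata rather than through the base point, which is what the paper does: it takes the basis of $\Pic(X_\ad)$ given by the colors $D\in\cD_X$, the dual basis of $B$-stable curves $(C_D)$ of $A_1(X_\ad)$ furnished by Luna, computes $i^*\cO_{X_\ad}(D)$ on the closed orbit (Proposition \ref{prop-lambda-alpha}), and deduces $X_{\alphab_j}\cdot C_{\zeta(\alpha)}=\scal{\alphah^\vee,\alphab_j}$ (Corollary \ref{coro:intersection}); setting $\psi(C_D)=\alphah^\vee$ then yields integrality, surjectivity and (1) simultaneously, and (2) follows as you observe. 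Note finally that the generators of the image of the effective monoid are the $\alphah^\vee$, which equal $\tfrac{1}{2}\alphab^\vee$ exactly when $2\alphab\in\Rb$; so even a corrected family of curves must produce these halved coroots, and your parenthetical ``separate argument in non-reduced types'' is not a side issue but an unavoidable part of the main construction.
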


If the map $\psi$ is not injective, then $X_\ad$ is called \emph{exceptional} (see Propositions \ref{pic-rk} and \ref{prop:fibres-tau} for more details on this case). Since the class of a curve $C$ in a covering family $\cL$ of rational curves is effective and has non-negative intersection with any component of $\partial X_\ad$, it has to be contained in the intersection of the monoids generated by 
the positive coroots and by the dominant cocharacters. There is a unique minimal such element $\Thetabv$, the coroot of the highest root $\Thetab \in \Rb$. This gives a very natural candidate for classes of  minimal rational curves. Indeed we prove the following result.

\begin{theorem}[Corollary \ref{coro:vir-cov}]
  \label{thm:main3}
  Assume that $X_\ad$ is indecomposable.
  \begin{enumerate}
    \item If $X_\ad$ is not exceptional, there is a unique family 
    of minimal rational curves $\cL$ and the class of any $C_\ad \in \cL$ satisfies  $\psi([C_\ad]) = \Thetab^\vee$.
    \item If $X_\ad$ is exceptional, then there are exactly two such families $\cL^+$ and $\cL^-$ and the class of any $C_\ad^\pm \in \cL^\pm$ satifies $\psi([C_\ad^\pm]) = \Thetab^\vee$.
  \end{enumerate}  
\end{theorem}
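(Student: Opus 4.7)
The plan is to extract both parts from Proposition \ref{prop-psi} together with the highest weight curve machinery of Subsection \ref{subsec:hwc} and Theorem \ref{thm:main2}. I first pin down the $\psi$-class of any minimal family, then turn to counting.

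Fix a Borel $B_H \subset H$. For any minimal family $\cL$ on the irreducible $X_\ad$ and any connected component of $\cL_{x_\ad}$, the Borel fixed point theorem furnishes a highest weight representative $C_\ad$. Since $C_\ad$ meets the open $G$-orbit, the class $[C_\ad] \in A_1(X_\ad)$ is effective and has non-negative intersection with every component of $\partial X_\ad$; Proposition \ref{prop-psi} then places $\psi([C_\ad])$ in the intersection of the coroot monoid with the dominant cone in $\bZ\Db^\vee$, whose unique minimal nonzero element is $\Thetabv$. If $\psi([C_\ad])$ strictly dominates $\Thetabv$, then $[C_\ad]$ splits as a sum of two effective classes whose representatives meet the open orbit; a bend-and-break / smoothing argument then produces a covering family of strictly smaller anticanonical degree, contradicting minimality of $\cL$. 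Thus $\psi([C_\ad]) = \Thetabv$, establishing the $\psi$-class statement in both parts.

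To count families, I use that by Theorem \ref{thm:main0} the tangent map $\tau_{x_\ad}: \cL_{x_\ad} \to \cC_{x_\ad} \subset \bP(\fp)$ is an isomorphism, so each component of $\cL_{x_\ad}$ is determined by its tangent direction at $x_\ad$. The tangent direction of a highest weight representative is a $B_H$-fixed line in $\fp$; decomposing $\fp$ as an $H$-representation via the restricted root system, the weight space for the highest restricted root $\Thetab$ is one-dimensional. Hence any highest weight $C_\ad$ with $\psi([C_\ad]) = \Thetabv$ has the same tangent line $\ell \subset \fp$, and the ambiguity in the family $\cL$ comes purely from possibly distinct classes in $A_1(X_\ad)$ sharing $\psi$-image $\Thetabv$, i.e., from $\ker \psi$.

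In the non-exceptional case $\ker \psi = 0$, so the class $\Thetabv$ lifts uniquely to $A_1(X_\ad)$, yielding a unique minimal family $\cL$. In the exceptional case I must show that exactly two distinct classes with $\psi$-image $\Thetabv$ are realized by effective highest weight rational curves on $X_\ad$; the two classes then give the two families $\cL^\pm$, which can be distinguished by their boundary intersection patterns $\partial X_\ad \cdot C_\ad^\pm \in \{1,2\}$ from Theorem \ref{thm:main2}(2). The main obstacle is this final dichotomy in the exceptional case: one must compute $\ker \psi$ and the realizable classes case by case, using the classification of irreducible symmetric spaces from Subsection \ref{subsec:ass} and the combinatorics of boundary divisors developed in Subsection \ref{subsec:div-rrs}. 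I expect only a small finite list of exceptional symmetric spaces to contribute, so the verification is explicit but requires careful attention to the integral structure of $A_1(X_\ad)$.
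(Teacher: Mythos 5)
Your reduction to the monoid picture is the right start and matches the paper's Corollary \ref{coro:vir-cov}: the class of a highest weight curve is effective and virtually covering, so its $\psi$-image lies in the intersection of the coroot monoid with the dominant cone, whose unique minimal element is $\Thetabv$. But both steps you use to finish have genuine gaps. First, to upgrade $\psi([C_\ad]) \geq \Thetabv$ to equality you claim that a strictly larger class ``splits as a sum of two effective classes whose representatives meet the open orbit'' and that bend-and-break then contradicts minimality. Neither half is justified: a decomposition of $\psi([C_\ad])$ in $\bZ\Db^\vee$ does not lift to a decomposition of $[C_\ad]$ into effective covering classes, and ``minimal family'' here means a covering family with $\cL_{x_\ad}$ proper, not one of minimal anticanonical degree, so exhibiting a covering family of smaller degree would not contradict anything. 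The paper avoids this entirely: Proposition \ref{prop-hwc} builds, from an $\fsl_2$-triple with $h = 2\Thetabv$, an explicit surface $\overline{G(h)\cdot x}$ (either $\bP^1\times\bP^1$ or $\bP^2$) whose ruling or line through $x$ is a highest weight curve of class exactly $\gamma_0$ (resp.\ $\gamma_0^{\pm}$); combined with Proposition \ref{prop:min} (every minimal family contains one of the at most two highest weight curves), this pins down the class.

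Second, the counting argument is both circular and incorrect. You invoke Theorem \ref{thm:main0}, which is proved later using the very description of minimal families being established here. Moreover, the claim that every highest weight curve with $\psi$-class $\Thetabv$ has the same tangent line fails in the Hermitian case: there are two highest weight curves, with distinct tangent lines $\fg_\Theta$ and $\fg_{-\alpha}$, lying in one family when $X_\ad$ is non-exceptional and in two families when it is exceptional; and the two exceptional families are distinguished by their intersections with the two colors $D_\alpha$, $D_{\sigmab(\alpha)}$, not by $\partial X_\ad\cdot C\in\{1,2\}$ (both intersection numbers equal $1$ there, since exceptional forces $\Rb$ of type ${\rm BC}_r$). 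Finally, you leave the exceptional case as an unexecuted case-by-case verification, whereas the paper settles it uniformly: $\ker\psi = \bZ([C_{D_\alpha}]-[C_{D_{\sigmab(\alpha)}}])$ and, by Lemma \ref{lem-coef-rac}, the coefficient of $\alphah^\vee$ in $\Thetabv$ is $1$, so exactly two effective classes map to $\Thetabv$.
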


Note that if $C_\ad$ is a highest weight curve in $\cL_{x_\ad}$, then its $H$-orbit $H \cdot C_\ad$ is contained in $\cL_{x_\ad}$. We describe the family $\cL_{x_\ad}$ by comparing the dimension of this orbit with the dimension of the family $\cL_{x_\ad}$ of curves whose class is described by the previous result. To compute the dimension of the $H$-orbits, we prove that the tangent line $T_{x_\ad}C_\ad$ lies in very specific nilpotent orbits in $\fg$. Let $\cO_{\min}$ be the minimal non-zero nilpotent orbit in $\fg$ and 
$\cO_{\summ,\sigma}$ be the nilpotent orbit of 
$e_\Theta - \sigma(e_\Theta)$, 
where $\Theta$ is the highest root of $G$ and 
$e_\Theta \in \fg_\Theta \setminus \{0\}$.  
Let $m \in T_{x_\ad}C_\ad \setminus \{0\}$, we prove 
the following in Corollary \ref{cor-hwc}.

\begin{proposition}
We have $m \in \cO_{\min}$ if $\sigma(\Theta)  = - \Theta$ 
and $m \in \cO_{\summ,\sigma}$ otherwise.
\end{proposition}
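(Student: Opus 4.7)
The plan is to exploit that $m$ is a $B_H$-semi-invariant vector in $\fp \cong T_{x_\ad} X_\ad$, combine this with the class formula $\psi([C_\ad]) = \Thetabv$ from Theorem~\ref{thm:main3} to determine $m$ up to a scalar inside an explicit $H$-subrepresentation of $\fp$, and then identify its $G$-orbit via a case distinction on $\sigma(\Theta)$.

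Since $B_H$ fixes $x_\ad$ and stabilizes the curve $C_\ad$, it stabilizes the tangent line $\bC \cdot m$; hence $m$ is a highest weight vector of some irreducible $H$-subrepresentation $V \subset \fp$. Combining $\psi([C_\ad]) = \Thetabv$ with the dictionary of Proposition~\ref{prop-psi} between curve classes on $X_\ad$ and restricted coroots identifies $V$ as the unique $H$-irreducible in $\fp$ whose highest weight arises from the highest root $\Theta$; this pins $m$ down up to nonzero scalar.

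In the case $\sigma(\Theta) \neq -\Theta$, the element
\[
v := e_\Theta - \sigma(e_\Theta) \in \fp
\]
lies in $\cO_{\summ,\sigma}$ by definition, and I would show it is the highest weight vector of $V$. It has the correct $T_H$-weight (since $\Theta$ and $\sigma(\Theta)$ restrict to the same character on any $\sigma$-fixed subtorus of $T_\s$), and it is annihilated by the unipotent radical of $B_H$. The latter follows from the maximality of $\Theta$: for any positive root $\beta$ of $G$ appearing in this unipotent radical, the sums $\Theta + \beta$ and $\Theta + \sigma(\beta)$ are not roots, so $[e_\beta, e_\Theta] = [\sigma(e_\beta), e_\Theta] = 0$; the corresponding vanishing against $\sigma(e_\Theta)$ follows by applying $\sigma$. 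Hence $m$ is a nonzero multiple of $v$, so $m \in \cO_{\summ,\sigma}$.

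The case $\sigma(\Theta) = -\Theta$ is the main obstacle. Here $\sigma$ exchanges $\fg_\Theta$ and $\fg_{-\Theta}$, and the element $e_\Theta - \sigma(e_\Theta)$ lies in the $\fsl_2$-triple attached to $\Theta$ and is typically semisimple; it therefore cannot play the role of $m$. To exhibit $m \in \cO_{\min}$, I would switch to a maximally compact $\sigma$-stable Cartan $T_c$ of $G$ (whose $\sigma$-fixed part is a genuine Cartan of $H$) and compute the highest weight vector of $V$ with respect to a Borel $B_H \supset T_c^\sigma$. In a faithful representation of $G$ this highest weight vector realises as a rank-one square-zero tensor (modelled on $(e_1+ie_2)(e_1+ie_2)^\top$ for the symmetric space $\SL_n/\SO_n$), so $m$ lies in $\cO_{\min}$. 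The principal technical difficulty is exactly this change of Cartan: the restricted root data on the split Cartan $T_\s$ gives $\fg_\Theta \cap \fp = 0$ and no direct nilpotent candidate, so one has to access the $H$-representation structure on $\fp$ via a different, compact Cartan of $\fg$ in order to exhibit an explicit nilpotent representative for $m$.
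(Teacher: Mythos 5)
Your overall architecture --- show that $m$ spans a $B_H$-highest weight line of $\fp$ and then match it with an explicit representative of the claimed orbit --- is a reasonable shape, and the first step is indeed Lemma \ref{lem:curve}. But both branches of your case distinction contain genuine gaps. In the branch $\sigma(\Theta)\neq-\Theta$, your verification that $v=e_\Theta-\sigma(e_\Theta)$ is annihilated by the nilradical of $\fb_H$ rests on the assertion that $\Theta+\sigma(\beta)$ is never a root. This is false: for $\beta\in R^+$ not a root of $L$ one has $\sigma(\beta)<0$, so $\Theta+\sigma(\beta)$ can be a root, or zero. Concretely, take $\beta=-\sigma(\Theta)\in R^+$; then $e_\beta+\sigma(e_\beta)$ is a nilpotent element of $\fh$, and $[\,e_\beta+\sigma(e_\beta),\,e_\Theta-\sigma(e_\Theta)\,]$ is a \emph{nonzero} element of $\ft_{\s}$ (a nonzero combination of $\Theta^\vee$ and $\sigma(\Theta)^\vee$). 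The vanishing you need therefore holds only for those root vectors that actually lie in the correct nilradical, and identifying which ones those are requires already knowing the sign of the restriction of $\Theta$ to a maximal torus of $H$ --- essentially the statement to be proved. Worse, $T_{\s}^{\sigma,0}$ fails to be a maximal torus of $H$ in several of the cases with $\sigma(\Theta)\neq-\Theta$ (e.g.\ types CII and FII), so $v$ is not even a priori a $T_H$-weight vector. In the branch $\sigma(\Theta)=-\Theta$ you concede the point yourself: producing a rank-one square-zero tensor is a description of the answer for $\SL_n/\SO_n$, not an argument for $E_7/A_7$ or $E_8/D_8$.

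The common missing ingredient is the bridge between the two Cartan subalgebras. The hypothesis $\sigma(\Theta)=\pm\Theta$ is read off the split-type torus $T_{\s}$, whereas the highest weight vector lives over the fixed-type torus $\C_G(T_H)$; Corollary \ref{cor:alter} only narrows $m$ down to three alternatives (long root vector, short root vector, or $e_\alpha-\sigma(e_\alpha)$ with $\alpha$ strongly orthogonal to $\sigma(\alpha)$), and deciding which alternative occurs is the entire content of the proposition. The paper settles this geometrically rather than by a weight computation: in Proposition \ref{prop-hwc} it forms the $\fsl_2$-triple $(e,\,2\Thetabv,\,\sigma(e))$ with $e=e_\Theta$ or $e_\Theta-\sigma(e_\Theta)$, considers the surface $\overline{G(h)\cdot x}\simeq\bP^2$ or $\bP^1\times\bP^1$ inside $X_\ad$, exhibits a line or ruling through $x$ whose tangent direction lies in $G\cdot e$, and uses the degree computations of Lemma \ref{lem:mult} to show that this curve has the minimal class $\gamma_0$ (or $\gamma_0^{\pm}$), hence belongs to the minimal family. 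That computation is the actual link between $\Thetabv$ and the nilpotent orbit; your proposal asserts the link rather than establishing it.
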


Using results of Kostant and Rallis \cite{ko-ra} we prove that the orbit $H \cdot m$ is Lagrangian in the nilpotent orbit $G \cdot m$ (equipped with the Kirillov-Kostant-Souriau invariant symplectic structure). Using this,  we compute the dimension of these orbits and obtain:

\begin{theorem}[Theorem \ref{theo-adj}]
  \label{thm:main4}
  \begin{enumerate}
    \item 
  If the restricted root system $\Rb$ is not of type $\A_r$, 
  then $\partial X_\ad \cdot C_\ad = 1$ and 
  $\dim H \cdot C_\ad = \dim \cL_{x_\ad}$. 
  Otherwise, we have 
  $\partial X_\ad \cdot C_\ad = 2$ and 
  $\dim H \cdot C_\ad = \dim \cL_{x_\ad} - 1$.
  \item
  If $\Rb$ is not of type ${\rm A}_r$, then $\cL_{x_\ad} = H \cdot C_\ad$. Furthermore, $\cL_{x_\ad}$ has two components if $X$ is Hermitian non-exceptional and is irreducible otherwise.
  \item If $\Rb$ is of type $\A_1$, then 
  $\cL_{x_\ad} \simeq \bP(\fg^{-\sigma})$.
      \item If $\Rb$ is of type $\A_r$ with $r \geq 2$, then there exists a $G$-equivariant birational morphism 
      $X_\ad \to \bP(V)$, for some irreducible $G$-representation $V$, and $\cL_{x_\ad}$ is isomorphic to the closed $G$-orbit in $\bP(V)$. The orbit $H \cdot C_\ad$ is a prime divisor in $\cL_{x_\ad}$.
      \item The orbit $H \cdot C_\ad$ and the variety $\cL_{x_\ad}$ are described in Table \ref{table-class}.
  \end{enumerate}
\end{theorem}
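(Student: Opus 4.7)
\textbf{The approach} combines four inputs established earlier in the paper: (a)~the equality $\psi([C_\ad]) = \Thetabv$ from Theorem~\ref{thm:main3}; (b)~the boundary-pairing description of $\psi$ in Proposition~\ref{prop-psi}; (c)~the identification of $m \in T_{x_\ad} C_\ad \setminus \{0\}$ with an element of either $\cO_{\min}$ or $\cO_{\summ,\sigma}$; and (d)~the Lagrangian property $\dim H \cdot m = \tfrac{1}{2}\dim G \cdot m$ coming from Kostant--Rallis. The intersection number $\partial X_\ad \cdot C_\ad$ is then immediate: by~(a) and~(b) it equals $\sum_{\alphab \in \Db} \langle \alphab, \Thetabv \rangle$, the Cartan pairing of the simple restricted roots against the highest restricted coroot. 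A direct inspection of each irreducible restricted root system yields $2$ when $\Rb$ is of type $\A_r$ (two end nodes contributing $1$ each for $r \geq 2$, and a single node contributing $2$ for $r=1$) and yields $1$ in every other irreducible type, establishing the two intersection-number assertions.

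\textbf{Matching of dimensions.} By Theorem~\ref{thm:main0}, $\tau_{x_\ad}$ is an isomorphism, so $\dim H \cdot C_\ad = \dim H \cdot [m] = \dim H \cdot m - 1 = \tfrac{1}{2}\dim G \cdot m - 1$ by~(d), and the right-hand side is computed from the classical dimensions of $\cO_{\min}$ and $\cO_{\summ,\sigma}$. In parallel, $\dim \cL_{x_\ad}$ is obtained from the length formula $\dim \cL_{x_\ad} = -K_{X_\ad} \cdot C_\ad - 2$ together with the standard expression of $-K_{X_\ad}$ as a sum of boundary divisors and colours of the wonderful variety $X_\ad$, whose classes are controlled by $\psi$ and the restricted Weyl vector. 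Running these computations through the three classes of irreducible adjoint symmetric spaces (group, Hermitian, simple) and their restricted root systems yields the claimed equality $\dim H \cdot C_\ad = \dim \cL_{x_\ad}$ when $\Rb \neq \A_r$, and the deficit of one when $\Rb = \A_r$.

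\textbf{Identifying $\cL_{x_\ad}$.} When the two dimensions coincide, $\overline{H \cdot C_\ad}$ is already a union of components of $\cL_{x_\ad}$. Any further component must also be the $H$-orbit closure of a highest weight curve, and classifying highest weight directions in $\fp$ yields a single $H$-orbit in the non-Hermitian (group and simple) cases, hence irreducibility, while in the Hermitian non-exceptional case $\fp$ decomposes under the Levi $L$ as $\fp^+ \oplus \fp^-$, producing two $H$-orbits of highest weight directions and hence exactly two components. For $\Rb = \A_1$, dimension equality forces the inclusion $\cL_{x_\ad} \subset \bP(\fp)$ to be an equality. For $\Rb = \A_r$ with $r \geq 2$, the line bundle on $X_\ad$ attached to the fundamental weight dual to the simple coroot supporting $\Thetabv$ at an endpoint defines a $G$-equivariant birational morphism $X_\ad \to \bP(V)$ for an explicit irreducible $G$-module $V$; $\cL_{x_\ad}$ is then identified with the closed $G$-orbit in $\bP(V)$, inside which $\overline{H \cdot C_\ad}$ is a prime divisor (as forced by the dimension deficit of one). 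Table~\ref{table-class} is filled in by a case-by-case inspection of the Cartan classification of irreducible adjoint symmetric spaces.

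\textbf{Main obstacle.} The central difficulty is the dimension comparison: reconciling the Lagrangian/Kostant--Rallis count of $\dim H \cdot C_\ad$ (which depends on the $\sigma$-orbit of $\Theta$ and on the type of the nilpotent orbit $G \cdot m$) with the intersection-theoretic count of $\dim \cL_{x_\ad}$ (which depends on $-K_{X_\ad} \cdot C_\ad$ and hence on the restricted root data via $\psi$) forces a case-by-case treatment through the entire Cartan classification. Equally delicate is the Hermitian non-exceptional subcase, where one must confirm exactly two components of $\cL_{x_\ad}$ and identify each as the $H$-orbit closure of a highest weight curve with tangent direction in $\fp^+$ or $\fp^-$ respectively.
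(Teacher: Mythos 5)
Your proposal is correct in outline and assembles essentially the same ingredients as the paper: the class computation $\psi([C_\ad])=\Thetabv$ and the boundary pairing for $\partial X_\ad\cdot C_\ad$, the identification of $T_{x_\ad}C_\ad$ with a line in $\cO_{\min}$ or $\cO_{\summ,\sigma}$, the Kostant--Rallis Lagrangian count $\dim H\cdot[m]=\tfrac12\dim G\cdot m-1$, the anticanonical degree formula for $\dim\cL_{x_\ad}$, and the blow-up morphism $X_\ad\to\bP(V)$ in restricted type $\A_r$. The one genuine methodological difference is the dimension comparison: you propose to run both counts through the Cartan classification, whereas the paper (Theorem \ref{theo-dim}) avoids any case analysis by expressing both sides in terms of the single quantity $\scal{\Thetab^\vee,\kappa}$ with $\kappa=2\rho-2\rho_P$: adjunction along the closed orbit gives $\dim\cL_{x_\ad}=\scal{\Thetab^\vee,\kappa+\Sigma}-2$, while the $\fsl_2$-grading attached to $h=2\Thetab^\vee$ (together with $\scal{\Theta^\vee,\rho_P}=0$ when $\sigma(\Theta)=-\Theta$) gives $\dim G\cdot m=2\scal{\Thetab^\vee,\kappa}$ uniformly; the discrepancy is then exactly $\scal{\Thetab^\vee,\Sigma}-1=\partial X_\ad\cdot C_\ad-1$ with no list to inspect. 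Your route would work but multiplies the bookkeeping and hides why the answer depends only on whether $\Rb$ is of type $\A_r$. Two smaller points. First, citing Theorem \ref{thm:main0} to get $\dim H\cdot C_\ad=\dim H\cdot[m]$ is circular, since that theorem is itself deduced from the present one; the correct source is Lemma \ref{lem:curve}, which identifies the stabilizer of $C_\ad$ in $H$ with that of the highest weight line. Second, in type $\A_r$ with $r\ge2$ the mere existence of a birational $G$-morphism $X_\ad\to\bP(V_{\lambda_1})$ does not by itself identify $\cL_{x_\ad}$ with the closed orbit; the paper needs the precise blow-up structure of Proposition \ref{prop:eclt} combined with the Hwang--Fu description of minimal rational curves on such blow-ups (lines through a general point meeting the center of the blow-up) to reach that conclusion, and the prime-divisor statement for $H\cdot C_\ad$ then follows from the codimension-one count as you say.
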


By results of Ruzzi \cite{Ruzzi}, the wonderful compactifications of 
indecomposable symmetric spaces are weak Fano varieties and most of them are Fano, with exceptions classified in loc.~cit., Table 2 (see also Subsection \ref{subsec:ex}). As a consequence, 
the wonderful compactifications of all Hermitian non-exceptional symmetric spaces are Fano, except in type CI. We thus obtain the following result.

\begin{corollary}
Let $X_\ad$ be the wonderful compactification of a Hermitian non-exceptional symmetric space not of type ${\rm CI}$ and whose restricted root system is not of type $\A_r$. Then $X_\ad$ is Fano and its \VMRT~ has two irreducible components.
\end{corollary}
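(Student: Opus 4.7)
The corollary is an assembly of results already established in the paper, so the plan is mainly to identify which pieces to invoke and in which order.

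First I would settle the Fano claim by quoting Ruzzi's classification, recalled immediately before the corollary: among the wonderful compactifications of Hermitian non-exceptional irreducible symmetric spaces, all are Fano with the sole exception of type $\mathrm{CI}$. Since the hypothesis excludes $\mathrm{CI}$, we conclude that $X_\ad$ is Fano. No extra argument is needed here.

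Second I would count the irreducible components of the minimal family on $X_\ad$. Note that $X_\ad$ is itself a complete symmetric variety (take $X=X_\ad$ and $\pi=\id$), and by hypothesis it is the wonderful compactification of an irreducible Hermitian non-exceptional symmetric space whose restricted root system $\bar R$ is not of type $\A_r$. Under exactly these assumptions, part (1) of Theorem \ref{thm:main4} applies and yields $\cL_{x_\ad} = H \cdot C_\ad$ with precisely two irreducible components.

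Third I would transfer this count from the family of curves to the \VMRT. Applying Theorem \ref{thm:main0} to the complete symmetric variety $X_\ad$, the tangent map $\tau_{x_\ad}\colon \cL_{x_\ad}\to \cC_{x_\ad}$ is an isomorphism onto the \VMRT~$\cC_{x_\ad}\subset \bP(T_{x_\ad}X_\ad)$; since $\cL_{x_\ad}$ has two irreducible components, so does $\cC_{x_\ad}$.

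There is no genuine obstacle: the proof is a one-line deduction, and the only point worth verifying is that the notations are compatible, namely that $\cK$ (minimal family on a general complete symmetric variety) and $\cL$ (minimal family on $X_\ad$) agree when $X = X_\ad$, so that Theorem \ref{thm:main0} can be applied to the family $\cL$ produced by Theorem \ref{thm:main4}. Once this is observed, the combination of Ruzzi's Fano criterion, Theorem \ref{thm:main4}(1), and Theorem \ref{thm:main0} gives the statement.
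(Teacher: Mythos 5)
Your proposal is correct and follows exactly the route the paper itself takes: the corollary is stated as an immediate consequence of Ruzzi's classification (Fano except in type CI), of Theorem \ref{thm:main4}(1) (which gives $\cL_{x_\ad} = H\cdot C_\ad$ with two components in the Hermitian non-exceptional, non-$\A_r$ case), and of Theorem \ref{thm:main0} (the tangent map is an isomorphism, so the \VMRT~inherits the two components). Your remark on the compatibility of $\cK$ and $\cL$ when $X = X_\ad$ is the only point needing care, and you handle it correctly.
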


The assumptions of the above corollary hold for four 
types in the classification: AIII, BDI, DIII and EVII.
This yields examples of Fano varieties with reducible 
\VMRT~ of positive dimension, thereby giving a negative
answer to a question of Hwang, 
see \cite[Section 5, Question 2]{Hwang}.
Note that these Fano varieties have Picard number 
at least~$2$, whereas there are examples of Fano 
varieties with Picard group $\bZ$ and reducible
\VMRT~ of positive dimension, see 
\cite[Proposition 3.15]{IM} and 
\cite[Remark A.9]{MOS}.

\medskip

In Table \ref{table-class}, we also give the embedding of 
$\cL_\ad \simeq \VMRT(X_\ad)$ in $\bP(\fg^{-\sigma})$. 
All \VMRT~ are
disjoint unions of projective rational homogeneous 
varieties, which are in turn products of homogeneous 
varieties of Picard rank one. In most cases, 
the embedding is the minimal embedding. For two cases 
(types AI and CI), the embedding is twice the minimal embedding. There is also a mixed case in type G.

From these results and Theorem \ref{thm:main2}, we obtain a full description of the \VMRT~of any complete symmetric variety $X$. We refer to Theorem \ref{thm:VMRT=orbit} for more details.

\setcounter{tocdepth}{2}

\begin{small}
\tableofcontents
\end{small}

\vskip 0.5 cm

\noindent
{\bf Acknowledgements.} The authors thank the Institut Fourier for hosting several meetings that led to this work. They also thank the Tsinghua Sanya International Mathematics Forum (TSIMF) and Baohua Fu for 
his invitation. 
The stay of the authors in TSIMF was very influential 
for this work. They thank Baohua Fu and Jun-Muk Hwang
for helpful comments, in particular for pointing out
the references \cite{IM} and \cite{MOS}, 
and the five referees for their careful reading and 
constructive remarks and corrections.
The authors finally thank the Centre Math\'ematique Laurent Schwartz (CMLS) at  \'Ecole Polytechnique for hosting the final meeting which led to this work. S.K. was partially supported by NRF Korea (2018R1A6A3A03012791) and the Institute for Basic Science (IBS-R003-D1). N.P. was partially supported by ANR Catore (ANR-18-CE40-0024) and ANR FanoHK (ANR-20-CE40-0023). 

\section{Rational curves and symmetric spaces}
\label{sec:prel}

In this section, we recall basic results on rational curves on uniruled varieties and then specialise to the case of almost homogeneous varieties. We also introduce symmetric homogeneous spaces and their adjoint symmetric space, and we obtain 
the existence of highest weight curves and their basic properties.

\subsection{Families of rational curves}
\label{subsec:frc}

In this subsection, we recall some notions and results 
on rational curves, after \cite[Sections II.2.2 and II.2.3]{Kollar} 
and \cite[Sections 2.1 and 2.2]{BK}.

Let $X$ be a smooth projective variety. Consider the scheme 
of morphisms $\Hom(\bP^1,X)$ and the open subscheme 
$\Hom_{\bir}(\bP^1,X)$ consisting of morphisms which are
birational onto their image.
The (normalized) \emph{space of rational curves} 
$\RatCurves(X)$ is the quotient of the normalization
$\Hom^{\n}_{\bir}(\bP^1,X)$ by the free action of 
$\Aut(\bP^1) = \PGL_2$ via reparametrization. We have 
a universal family
\[ \rho : \Univ(X) \longrightarrow \RatCurves(X) \] 
which is a $\bP^1$-bundle, and an evaluation map
\[ \mu : \Univ(X) \longrightarrow X \]
such that the morphism
$\rho \times \mu : \Univ(X) \to \RatCurves(X) \times X$
is finite.

Let $f \in \Hom_{\bir}(\bP^1,X)$ with image $C \subset X$.
We say that $C$ is \emph{free} if the pull-back 
$f^*(T_X)$ is globally generated, where $T_X$ denotes
the tangent bundle. Every free morphism yields a smooth
point of $\Hom_{\bir}(\bP^1,X)$, and hence of
$\RatCurves(X)$. Also, we say that $C$ is \emph{embedded}
if $f$ is 
an isomorphism to its image; equivalently, $C$ is smooth. 
The free (resp.~embedded free) 
curves form smooth open subschemes
$\RatCurves_{\emfr}(X) \subset \RatCurves_{\fr}(X)$
of the space of rational curves.

A \emph{family of rational curves} on $X$
is a component $\cK$ of $\RatCurves(X)$. 
We then have a universal family 
$\rho: \cU = \rho^{-1}(\cK)\to \cK$ which is again
a $\bP^1$-bundle, and an evaluation map
$\mu : \cU \to X$. 
For any $x \in X$, let $\cU_x = \mu^{-1}(x)$ and 
$\cK_x = \rho(\cU_x)$; then $\cK_x$ is the subfamily 
of curves through $x$. The restriction
$\rho_x : \cU_x \to \cK_x$ is finite, and is an isomorphism
above the smooth open subset of embedded free curves
(see \cite[Lemma 2.1]{BK}).

The family $\cK$ is \emph{covering} if $\cK_x$ is non-empty
for $x$ general. If in addition $\cK_x$ is projective
for $x$ general,  we say that $\cK$ is a
\emph{family of minimal rational curves}.

By sending every embedded free curve in $\cK_x$ to 
its tangent direction at $x$,  we obtain a morphism
$\tau_x : \cK_{\emfr,x} \to \bP(T_x X)$,
where $\bP(T_x X)$ denotes the projectivization of the
tangent space. We will view $\tau$ as a rational map
$\cK_x \dasharrow \bP(T_x X)$, defined at every curve
which is smooth at $x$.  The closure of the image
of $\tau$ is denoted by $\cC_x$ and called the
\emph{variety of tangents} of $\cK$ at $x$.

Let $\cK$ be a family of minimal rational curves on $X$. 
By \cite[Theorem 3.3]{Kebekus},
for a general point $x$, there are only finitely
many curves in $\cK_x$ which are singular at $x$. 
Thus, $\tau_x$ is defined along every 
positive-dimensional irreducible component of $\cK_x$.
In view of \cite[Theorem 3.4]{Kebekus},  
$\tau_x$ extends to a finite morphism
\[ \tau_x^{\n} : \cK_x^{\n} \longrightarrow \bP(T_x X), \] 
where $\cK_x^{\n}$ denotes the normalization.
Moreover, $\tau_x^{\n}$ is birational onto its image by 
\cite[Theorem 1]{HM}. The image $\cC_x$ is called the 
\emph{variety of minimal rational tangents} of $\cK$ at $x$ 
(\VMRT).

\medskip

Next, we consider covariance properties of families 
under a morphism of smooth projective varieties 
$\pi : X \to Y$.  Let $\cK$ be a family of rational curves 
on $X$. 
Assume that some $C \in \cK$ is represented by
a free morphism $f : \bP^1 \to X$ which is birational
onto its image,  and such that 
the composition $\pi \circ f : \bP^1 \to Y$ is free and 
birational onto its image as well. 
Let $D$ be the corresponding rational curve in $Y$,
and $\cL$ the family on $Y$ containing the free 
rational curve $D$. Finally, 
let $x = f(0) \in C$ and $y = \pi(x)\in D$.

\begin{lemma}\label{lem:dir}
With the above notation and assumptions, 
the morphism $\pi : X \to Y$ induces rational maps 
\[ \pi_* : \cK \dasharrow \cL,  \quad 
\pi_{*,x} : \cK_x \dasharrow \cL_y \]
which are defined at $C$ and send $C$ to $D$. 
If the differential $d\pi_x : T_x X \to T_y Y$
is injective, then so is the differential of $\pi_{*,x}$
at $C$.
\end{lemma}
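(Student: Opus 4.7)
The plan is to build the rational maps via functoriality of the Hom-scheme, then to compute the differential via $H^0(f^*d\pi \otimes \cO(-1))$ and conclude by left exactness of $H^0$. For the construction of $\pi_*$, I would observe that composition with $\pi$ gives a morphism $\Hom(\bP^1, X) \to \Hom(\bP^1, Y)$ sending $g \mapsto \pi \circ g$. The locus $V \subset \Hom_{\bir}(\bP^1, X)$ where both $g$ and $\pi \circ g$ are birational onto their images is open and, by hypothesis, contains $f$. Passing to normalizations (by the universal property of normalization) and to the free $\PGL_2$-quotient, one obtains a morphism from the image $\overline{V} \subset \RatCurves(X)$ to $\RatCurves(Y)$. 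Since $C \in \overline{V}$ and $\cL$ is the connected component containing $D$, a connected neighborhood of $C$ in $\cK$ maps into $\cL$, producing the rational map $\pi_* : \cK \dasharrow \cL$ defined at $C$ with $\pi_*(C) = D$. Since a curve through $x$ maps to a curve through $y$, restricting to the respective subfamilies yields $\pi_{*,x} : \cK_x \dasharrow \cL_y$, defined at $C$ and sending $C$ to $D$.

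For the differential, I would use standard deformation theory. Freeness of $f$ gives $H^1(\bP^1, f^*T_X(-1)) = 0$, so $\Hom(\bP^1, X; 0 \mapsto x)$ is smooth at $f$ with tangent space $H^0(\bP^1, f^*T_X(-1))$, and $C$ is a smooth point of $\cK_x$. The tangent space $T_C \cK_x$ is then the quotient of $H^0(f^*T_X(-1))$ by the subspace $df(H^0(T_{\bP^1}(-1)))$ coming from infinitesimal reparametrizations of $\bP^1$ fixing $0$. The analogous description holds on the $Y$ side, and the chain rule $d(\pi \circ f) = f^*d\pi \circ df$ shows that the differential $(d\pi_{*,x})_C$ is induced by $H^0(f^*d\pi(-1)) : H^0(f^*T_X(-1)) \to H^0((\pi \circ f)^*T_Y(-1))$, compatibly with these quotients.

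Finally, to show injectivity: by lower semicontinuity of the rank of a morphism of vector bundles, injectivity of $d\pi_x$ at the single point $x$ extends to a Zariski neighborhood, so $d\pi$ is a subbundle inclusion there. Hence $f^*d\pi$ is injective on a nonempty open subset of $\bP^1$, and since $f^*T_X$ and $(\pi \circ f)^*T_Y$ are locally free on $\bP^1$, $f^*d\pi$ is injective as a map of sheaves. Left exactness of $H^0$ yields injectivity of $H^0(f^*d\pi(-1))$. To descend to the tangent-space quotients, suppose $s \in H^0(f^*T_X(-1))$ satisfies $H^0(f^*d\pi(-1))(s) = d(\pi \circ f)(v) = H^0(f^*d\pi(-1))(df(v))$ for some $v \in H^0(T_{\bP^1}(-1))$; then $s - df(v)$ lies in the kernel and therefore vanishes, so $s \in df(H^0(T_{\bP^1}(-1)))$ represents the zero class in $T_C \cK_x$. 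The main obstacle is really the bookkeeping around the $\PGL_2$-quotient when identifying tangent spaces; once this is set up, the conclusion follows routinely from left exactness of $H^0$.
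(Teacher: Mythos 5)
Your proposal is correct and follows essentially the same route as the paper: construct $\pi_*$ and $\pi_{*,x}$ by composing with $\pi$ on the Hom-schemes and passing to the $\PGL_2$-quotient, identify the differential with $H^0(\bP^1,(f^*T_X)(-1)) \to H^0(\bP^1,(f^*\pi^*T_Y)(-1))$ as in Koll\'ar II.2.3, and deduce injectivity from the generic injectivity of $d\pi$ together with left exactness of $H^0$. The only difference is that you spell out the descent to the reparametrization quotient, which the paper leaves implicit.
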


\begin{proof}
Composing by $\pi$ yields a morphism
$\Hom(\bP^1,X) \to \Hom(\bP^1,Y)$
which is $\Aut(\bP^1)$-equivariant, and 
hence an equivariant rational map
between open subschemes of free morphisms
$\Hom_{\fr}(\bP^1,X) \dasharrow \Hom_{\fr}(\bP^1,Y)$
which is defined at $f$.  This readily yields the
rational map $\pi_*$.  The rational map $\pi_{*,x}$ 
is obtained from the analogous morphism
$\Hom(\bP^1,X;0 \mapsto x) \to 
\Hom(\bP^1,Y;0 \mapsto y)$
with the notation of \cite[Section II.1]{Kollar}.
By loc.~cit.,  Section II.2.3, 
the differential of the above morphism 
at $f$ is identified with the natural map
$H^0(\bP^1,(f^*T_X)(-1)) \to H^0(\bP^1, (f^* \pi^*T_Y)(-1))$.
This implies the final assertion
as $d\pi$ is injective on an open dense subset of $X$.
\end{proof}

In the opposite direction, assume that $\pi$ 
\emph{contracts} a curve $C \in \cK$, i.e., the composition 
$\rho^{-1}(C) \stackrel{\mu}{\longrightarrow}
X \stackrel{\pi}{\longrightarrow} Y$
is constant; then $\pi$ contracts all the curves in $\cK$
(see e.g.  \cite[Section 2.2]{BK}).
With this terminology, we may recall a useful observation
(see loc.~cit.,  Lemma 2.3):

\begin{lemma}\label{lem:product}

Consider two smooth projective varieties $Y$, $Z$, 
and let $X := Y \times Z$ with projections $p: X \to Y$, 
$q: X \to Z$. 

\begin{enumerate}

\item
The pull-back map 
$p^* : \Hom(\bP^1,Y) \times Z \to \Hom(\bP^1,X)$, 
$(f,z) \to (t \mapsto (f(t),z))$ induces a closed immersion 
$\RatCurves(Y) \times Z \to \RatCurves(X)$
with image a union of components. 

\item
The map $p^*$ sends covering families
(resp.~families of minimal rational curves) to covering 
families (resp.~families of minimal rational curves).

\item
A family of rational curves $\cK$ on $X$
is the pull-back of a family on $Y$ if and only if $q$ 
contracts some curve in $\cK$.

\item
Every family of minimal rational curves on $X$ 
is the pull-back of a unique family of minimal rational curves 
on $Y$ or $Z$. 
\end{enumerate}

\end{lemma}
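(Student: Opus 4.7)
Parts (1)--(3) are essentially formal. For (1), the map $p^*$ is clearly injective on points; using the decomposition $T_X \cong p^*T_Y \oplus q^*T_Z$, one identifies its tangent map with the inclusion $H^0(\bP^1, f^* T_Y) \oplus T_z Z \hookrightarrow H^0(\bP^1, g^* T_X)$ where $g = p^*(f,z)$, so $p^*$ is an immersion. Its image is the locus of morphisms $g : \bP^1 \to X$ with $q \circ g$ constant; this is characterized by the locally constant invariant $\deg((q \circ g)^* L) = 0$ for an ample line bundle $L$ on $Z$, so the image is clopen, hence a union of connected components, and $p^*$ is a closed immersion. Assertion (2) is immediate: if $\cK = p^*(\cK_0 \times Z)$ and $x = (y,z)$ then $\cK_x \simeq (\cK_0)_y$, which is non-empty (resp.~projective) whenever $(\cK_0)_y$ is. For (3), the ``only if'' direction is immediate; for ``if'', local constancy of $\deg((q \circ g)^* L)$ on the irreducible $\cK$ means that if $q$ contracts one curve in $\cK$ it contracts all of them, so $\cK$ lies in the image of $p^*$ and, being a component of $\RatCurves(X)$, is the pull-back of a unique component of $\RatCurves(Y)$.

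The heart of the lemma is (4). Let $\cK$ be a minimal family on $X$; by (3) it suffices to show that $p$ or $q$ contracts every curve in $\cK$. Assume neither does, and choose a general free embedded $f = (f_Y, f_Z) : \bP^1 \to X$ in $\cK$ with both projections non-constant; set $x = f(0) = (y,z)$. Consider the one-parameter family of morphisms
\[
f_s : \bP^1 \to X, \qquad f_s(t) = \bigl( f_Y(t),\, f_Z(s t) \bigr), \qquad s \in \bG_m.
\]
Each $f_s$ is birational onto its image, has the same class as $f$, and maps $0$ to $x$; distinct $s$ produce distinct curves, since the tangent direction at $x$, namely the class of $(df_Y(0),\, s\, df_Z(0))$ in $\bP(T_x X)$, varies with $s$. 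Hence $\gamma : \bG_m \to \cK_x$, $s \mapsto [f_s]$, is an injective morphism.

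Since $\cK$ is minimal, $\cK_x$ is projective, so $\gamma$ extends by the valuative criterion to $\bar\gamma : \bP^1 \to \cK_x$. On the other hand, the images $f_s(\bP^1)$ all lie on the surface $f_Y(\bP^1) \times f_Z(\bP^1) \subset X$, whose normalization is $\bP^1 \times \bP^1$; in these coordinates, the curves $f_s(\bP^1)$ are the $(1,1)$-divisors $\{u = st\}$, whose flat limit as $s \to 0$ in the Chow variety of $X$ is the reducible curve
\[
\bigl( f_Y(\bP^1) \times \{f_Z(\infty)\} \bigr) \,\cup\, \bigl( \{f_Y(\infty)\} \times f_Z(\bP^1) \bigr).
\]
The natural map from $\cK_x$ to the Chow variety of $X$ is a morphism compatible with flat limits, and its image lies in the locus of irreducible rational curves; projectivity of $\cK_x$ would then force this reducible cycle to be the image of $\bar\gamma(0)$, a contradiction. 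Uniqueness of the pull-back family follows from the closed immersion property of $p^*$. The main technical point is identifying the intrinsic extension $\bar\gamma$ inside $\cK_x$ with the extrinsic flat limit inside the Chow variety, which rests on the compatibility of the evaluation $\Univ(X) \to X$ with flat degeneration of 1-parameter families of birational morphisms from $\bP^1$.
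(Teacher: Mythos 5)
Your parts (1)--(3) are fine and essentially forced. For (4), note first that the paper offers no proof of this lemma at all: it is quoted verbatim from \cite[Lem.~2.3]{BK}, so there is no in-paper argument to compare against. Your degeneration argument is a correct, hands-on instance of bend-and-break, but it is worth contrasting with the shorter standard route: for a general free member $f$ of a minimal family through a general point one has the splitting $f^*T_X \cong \cO(2)\oplus\cO(1)^{\oplus a}\oplus\cO^{\oplus b}$ (Hwang, Koll\'ar IV.2.9); since $f^*T_X = (p\circ f)^*T_Y \oplus (q\circ f)^*T_Z$ and any non-constant $g:\bP^1\to Y$ forces a direct summand of degree $\geq 2$ in $g^*T_Y$ (the saturation of $dg(T_{\bP^1})$ maps non-trivially to some summand), both projections non-constant would produce two summands of degree $\geq 2$, which is immediately absurd. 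Your explicit family $f_s(t)=(f_Y(t),f_Z(st))$ re-derives the same contradiction; it is correct, but a few points deserve care. First, the limit cycle as $s\to 0$ is $\bigl(f_Y(\bP^1)\times\{f_Z(0)\}\bigr)\cup\bigl(\{f_Y(\infty)\}\times f_Z(\bP^1)\bigr)$, not with $f_Z(\infty)$ in the first factor (harmless, since only reducibility is used). Second, you should record that all $f_s$ are free (because $f_s^*T_X\cong f^*T_X$ as $\mu_s$ is an automorphism of $\bP^1$), hence smooth points of $\Hom(\bP^1,X)$, so that $s\mapsto[f_s]$ genuinely lifts through the normalization and lands in the component $\cK$, and that $f_s$ is birational onto its image for $s$ in a dense open set. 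Third, the step you flag as the "main technical point" is indeed the only load-bearing one: it is settled by the existence of the cycle morphism $\RatCurves(X)\to\Chow(X)$, whose image consists of irreducible reduced cycles (\cite[II.2.11]{Kollar}), together with separatedness of $\Chow(X)$, which identifies $\bar\gamma(0)$'s cycle with the pushforward of the flat limit of the divisors $\{u=st\}$ on $\bP^1\times\bP^1$. With those points made explicit the argument is complete; you could also shortcut the Chow computation entirely by observing that all $f_s$ pass through the two fixed points $f(0)$ and $f(\infty)$ and invoking bend-and-break II directly against properness of $\cK_x$.
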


\subsection{Almost homogeneous varieties}
\label{subsec:ahv}

We now assume that $X$ is \emph{almost homogeneous}, 
i.e., it is equipped with an action of a connected 
linear algebraic group $G$, and contains an open 
$G$-orbit $X^0$.  We recall and slightly generalize
results from \cite[Section 2]{BF} and \cite[Section 2.3]{BK}.

Choose a base point $x \in X^0$, and denote by 
$H = G_x$ its isotropy group. Then the orbit
$X^0 = G \cdot x$ is identified with the homogeneous 
space $G/H$, and the pair $(X,x)$, with an equivariant
embedding of this homogeneous space. Denoting by 
$\fg$ (resp.~$\fh$) the Lie algebra of $G$ (resp.~$H$), 
the tangent space $T_x X$ is identified with the
quotient $\fg/\fh$ as a representation of $H$ 
(the \emph{isotropy representation}).

Since $G$ is a rational variety, $X$ is unirational;
as a consequence, covering families exist. Also, 
$G$ acts on $\RatCurves(X)$ and on $\Univ(X)$
so that $\rho$ and $\mu$ are equivariant. Since $G$
is connected, it stabilizes every family $\cK$, as 
well as the open subset $\cK^0$ consisting of curves
which meet $X^0$. Every such curve is free (see e.g.
\cite[Lemma 2.1(i)]{BF}); thus, $\cK^0$ is smooth.
The subgroup $H \subset G$ acts compatibly
on $\cU_x$, $\cK_x$, $\bP(T_x X)$ and $\cC_x$.

We now obtain a variant of \cite[Lemma 2.4]{BK}:

\begin{lemma}\label{lem:cov}
A family of rational curves $\cK$ on $X$ is covering
if and only if $\cU_x$ is non-empty; equivalently,
$\cK_x$ is non-empty. Under these assumptions, 
$\cU_x$ is smooth and its components are permuted
transitively by $H$.
\end{lemma}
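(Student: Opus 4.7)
The plan is to unpack the equivalences first, then obtain smoothness of $\cU_x$ via an $H$-quotient description, and finally exploit irreducibility of $\cU$ to get transitivity of the $H$-action on components.

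\textbf{Step 1 (equivalences).} Since $\rho : \cU \to \cK$ is a surjective $\bP^1$-bundle, $\cU_x \neq \emptyset$ iff $\cK_x = \rho(\cU_x) \neq \emptyset$, which handles one equivalence. For covering, because $G$ is connected it stabilizes the component $\cK$ of $\RatCurves(X)$, hence preserves $\cU$, and the evaluation $\mu$ is $G$-equivariant. Thus $\mu(\cU)$ is a $G$-stable constructible set. If $\cK$ is covering, $\mu(\cU)$ is dense, so it meets $X^0$, and being $G$-stable it contains the whole $G$-orbit $X^0$; in particular $\cU_x \neq \emptyset$ for $x\in X^0$. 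Conversely, $\cU_x \neq \emptyset$ implies $\mu(\cU) \supset G\cdot x = X^0$, which is dense, so $\cK$ is covering.

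\textbf{Step 2 (smoothness of $\cU_x$).} Every curve in $\cK^0$ (those meeting $X^0$) is free, so $\cK^0$ is smooth and therefore $\cU^0 := \rho^{-1}(\cK^0)$ is smooth as well, being a $\bP^1$-bundle over a smooth base. The subset $\mu^{-1}(X^0) \subset \cU^0$ is open and hence smooth. Now consider the $G$-equivariant surjection $\mu^{-1}(X^0) \to X^0 = G/H$. The standard homogeneous-space construction gives a $G$-equivariant isomorphism
\[
G \times^H \cU_x \xrightarrow{\ \sim\ } \mu^{-1}(X^0),\qquad [g,u] \mapsto g\cdot u,
\]
so $\mu^{-1}(X^0)$ is (étale-)locally $G/H \times \cU_x$; smoothness of $\mu^{-1}(X^0)$ therefore forces smoothness of the fiber $\cU_x$.

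\textbf{Step 3 (transitivity on components).} Because $\cK$ is an irreducible component of $\RatCurves(X)$ and $\rho$ is a $\bP^1$-bundle, $\cU$ is irreducible, hence so is its nonempty open subset $\mu^{-1}(X^0)$. Via the isomorphism $G \times^H \cU_x \cong \mu^{-1}(X^0)$, connected components of $\mu^{-1}(X^0)$ correspond bijectively to $H$-orbits on $\pi_0(\cU_x)$. Since the left side is connected (being irreducible), $H$ must act transitively on $\pi_0(\cU_x)$. Combined with smoothness from Step 2, the irreducible and connected components of $\cU_x$ coincide and are permuted transitively by $H$.

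The only real subtlety is recognizing $\mu^{-1}(X^0)$ as $G \times^H \cU_x$, which is exactly the mechanism that converts the irreducibility of $\cU$ into both the smoothness statement and the transitivity of $H$ on components; the rest is bookkeeping with $G$-equivariance and the fact that curves meeting the open orbit are automatically free.
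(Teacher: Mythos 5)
Your proof is correct and follows essentially the same route as the paper: the key mechanism in both is the identification $\mu^{-1}(X^0)\simeq G\times^H\cU_x$, with smoothness coming from freeness of curves meeting the open orbit and transitivity on components from irreducibility of $\mu^{-1}(X^0)$. Your Step 1 merely makes explicit the equivalence with the covering condition, which the paper leaves implicit.
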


\begin{proof}
The morphism $\mu$ restricts to a $G$-equivariant
morphism
\[ \mu^0 : \cU^0 = \mu^{-1}(X^0) 
\longrightarrow X^0 = G/H \]
with fiber at $x$ being $\cU_x$. This yields an
isomorphism $\cU^0 \simeq G \times^H \cU_x$,
where the right-hand side denotes the quotient
of $G \times \cU_x$ by the $H$-action via
$h \cdot (g,z) = (gh^{-1}, h \cdot z)$.
Since $\cK^0$ is smooth, so are $\cU^0$ and hence
$\cU_x$. Also, $\cU^0$ is irreducible; thus, 
$H$ acts transitively on the components of $\cU_x$.
\end{proof}

Next, let $\pi : X \to Y$ be a surjective morphism,
where $Y$ is a smooth projective variety. Assume that
$Y$ is equipped with a $G$-action such that $\pi$
is equivariant. Let $y = \pi(x)$ and $Y^0 = G \cdot y$; 
then $Y^0 = \pi(X^0)$ is open in $Y$.  We now have
the following variants of 
\cite[Lemma 2.6,  Remark 2.7]{BK}:

\begin{lemma}\label{lem:push}
Keep the above notation and assumptions, and consider
a covering family of rational curves $\cK$ on $X$.
Assume that there exists $C \in \cK^0$ such that
$\pi\vert_C$ is birational onto its image $D$. Then:

\begin{enumerate}
 
\item
$D \in \cL$ for a unique covering family $\cL$ of
rational curves on $Y$. 

\item
$\pi$ induces a $G$-equivariant rational map 
$\pi_* : \cK \dasharrow \cL$, 
which is defined at $C$ and satisfies $\pi_*(C) = D$,
and an $H$-equivariant rational map
\[ \pi_{*,x} : \cK_x \dasharrow \cL_y, \quad
C \longmapsto D. \] 

\item
We have a commutative diagram of $H$-equivariant 
rational maps
\[ \xymatrix{
\cK_x \ar@{-->}[r]^{\pi_{*,x}} \ar@{-->}[d]_{\tau_x} 
& \cL_y \ar@{-->}[d]^{\tau_y} \\
\bP(T_x X) \ar@{-->}[r]^{d \pi_x} & \bP(T_y Y). \\
}\]

\end{enumerate}

\end{lemma}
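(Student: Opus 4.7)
\medskip
\noindent\textbf{Plan of proof.} The strategy is to reduce each item to Lemma~\ref{lem:dir} together with the transitivity/freeness input supplied by Lemma~\ref{lem:cov} and \cite[Lem.~2.1(i)]{BF}, then to track $G$- and $H$-equivariance through the constructions.

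\medskip

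For assertion (1), the key point is to show that $D$ is a free rational curve on $Y$ so that it belongs to a unique irreducible component $\cL$ of $\RatCurves(Y)$, and that $\cL$ is covering. First I would note that since $C \in \cK^0$ meets the open $G$-orbit $X^0$, the curve $C$ is free in $X$ (by the fact that any rational curve through the open orbit of an almost homogeneous variety is free, cited above Lemma~\ref{lem:cov}). Choose a parametrization $f: \bP^1 \to X$ birational onto $C$ with $f(0) = x$; the hypothesis says $\pi \circ f$ is birational onto $D$, and $D$ meets $Y^0 = \pi(X^0)$, so the same principle applied to the almost homogeneous $G$-variety $Y$ shows that $D$ is free in $Y$. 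Hence $\pi\circ f$ represents a smooth point of $\Hom_\bir(\bP^1,Y)$, which determines a unique component $\cL$ of $\RatCurves(Y)$ through $D$. The family $\cL$ is covering because the $G$-translates of $D$ sweep out $G\cdot Y^0 = Y^0$, which is dense in $Y$.

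\medskip

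For assertion (2), I would apply Lemma~\ref{lem:dir} verbatim with the free morphism $f$ just constructed: composition with $\pi$ gives an $\Aut(\bP^1)$-equivariant morphism $\Hom(\bP^1,X) \to \Hom(\bP^1,Y)$ defined at $f$, and passing to the normalization and quotient produces a rational map $\pi_* : \cK \dasharrow \cL$ defined at $C$ with $\pi_*(C) = D$. Restricting to morphisms sending $0 \mapsto x$ and using that $\pi(x) = y$ yields the pointed version $\pi_{*,x} : \cK_x \dasharrow \cL_y$. For $G$-equivariance: since $\pi$ is $G$-equivariant, composition-with-$\pi$ commutes with the natural $G$-action on $\Hom(\bP^1,-)$, and this $G$-equivariance descends to $\pi_*$. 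For $H$-equivariance of $\pi_{*,x}$, it suffices that $H$ fix both $x$ and $y$, and $H \cdot x = x$ together with the $G$-equivariance of $\pi$ gives $H \cdot y = y$.

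\medskip

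For assertion (3), the commutative diagram is essentially the chain rule. On the open dense subset of $\cK_x$ where $\pi_{*,x}$ is defined and where both $C' \in \cK_x$ and $\pi_*(C')$ are smooth at their respective basepoints (which is dense by \cite[Thm.~3.3]{Kebekus} applied to both $X$ and $Y$), the tangent line at $y$ of $\pi(C')$ is the image under $d\pi_x$ of the tangent line at $x$ of $C'$, giving $\tau_y \circ \pi_{*,x} = d\pi_x \circ \tau_x$ on this common locus of definition. The $H$-equivariance of $\tau_x$, $\tau_y$ follows from the equivariance of the tangent-direction construction under the isotropy group at the basepoint, and $d\pi_x$ is $H$-equivariant because $\pi$ is $G$-equivariant and $H$ fixes $x$.

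\medskip

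I do not expect a genuine obstacle: the only substantive point is step (1), namely verifying that $D$ inherits freeness from the hypothesis $C \in \cK^0$, and this is immediate from the free-curve criterion on almost homogeneous varieties. The rest is a direct equivariant enhancement of Lemma~\ref{lem:dir}.
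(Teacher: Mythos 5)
Your proposal is correct and follows essentially the same route as the paper: part (1) comes down to the fact that a curve meeting the open orbit is free (so $D$ lies in a unique component) together with the homogeneity of $Y^0$ to get the covering property (the paper packages this as ``translate so that $x\in C$ and apply Lemma~\ref{lem:cov}''), part (2) is a direct application of Lemma~\ref{lem:dir} plus routine equivariance, and part (3) is the chain rule. No gaps.
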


\begin{proof}
(1) Replacing $C$ with a translate $g \cdot C$ for some
$g \in G$, we may assume that $x \in C$.  Then
the assertion follows from Lemma \ref{lem:cov}.

(2) This is a consequence of Lemma \ref{lem:dir},
except for the equivariance assertions which are
easily checked. 

(3) This follows readily from the definitions.
\end{proof}

\begin{remark}\label{rem:push}
If $\pi$ is birational (equivalently, it induces an isomorphism $X^0 \to Y^0$), then the assumptions of Lemma \ref{lem:push} hold and moreover  $\pi_{*,x}$ is an immersion. 
Indeed, $\pi_{*,x}$ is clearly an injective morphism.
Moreover, the differential of $\pi_{*,x}$ at every
$C \in \cK_x$ is injective by Lemma \ref{lem:dir}.
\end{remark}

Still considering a covering family of rational curves
$\cK$ on $X$, we now assume that $\pi$ contracts some 
curve in $\cK$, and hence all curves in $\cK$. Let 
\[ X \stackrel{\pi'}{\longrightarrow}
Y' \stackrel{\eta}{\longrightarrow} Y \]
be the Stein factorization of $\pi$,
where $Y'$ is a normal projective variety (possibly
singular), $\pi'$ is a contraction (that is, 
$\pi'_*(\cO_X) = \cO_{Y'}$), and $\eta$ is finite surjective. 
Then there is a unique action of $G$ on $Y'$ such that
$\pi'$ and $\eta$ are equivariant. Let $y' = \pi'(x)$
and $I = G_{y'}$; then $H \subset I \subset G$ and the
orbit $G \cdot y' \simeq G/I$ is open in $Y'$. Also,
let $F = \pi'^{-1}(y')$; then $F$ is the connected
component of $x$ in the fiber $\pi^{-1}(y)$, and hence
is a smooth projective variety
(by generic smoothness).  Moreover, $F$ is stable
by $I$ and contains $I \cdot x$ as its open orbit. 
Clearly, every curve in $\cK_x$ is contained in $F$.

\begin{lemma}\label{lem:cont}
Keep the above notation and assumptions, and 
assume that $I$ normalizes $H$. Then $\cK_x$ is
irreducible and there exists a unique covering family 
of rational curves $\cL$ on $F$ such that 
$\cK_x = \cL_x$. Moreover,  $\cK^0 = G \cdot \cL^0$. 
\end{lemma}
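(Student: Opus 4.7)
The plan is to exploit that, since $\pi'$ contracts every curve of $\cK$, every curve in $\cK_x$ must lie inside the fiber $F = \pi'^{-1}(y')$. It will therefore suffice to analyze $\cK_x$ through the space of rational curves on the smooth, almost $I$-homogeneous variety $F$, whose open orbit is $I \cdot x \simeq I/H$. My strategy is to define $\cL$ as the irreducible component of $\RatCurves(F)$ containing $\cK_x$, then show that $\cK_x$ is irreducible (so this definition is unambiguous), that $\cL_x = \cK_x$ and that $\cL$ is covering, and finally that $\cK^0 = G \cdot \cL^0$.

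The main obstacle will be the irreducibility of $\cK_x$. Lemma \ref{lem:cov} only gives that the possibly disconnected group $H$ acts transitively on the components of $\cU_x$, and hence of $\cK_x$; a priori it could permute them non-trivially. This is where the hypothesis that $I$ normalizes $H$ should enter decisively: since $H = I_x$ is normal in $I$, for every $i \in I$ we have $H = i H i^{-1} \subset I_{i \cdot x}$, so $H$ fixes the open orbit $I \cdot x$ pointwise; as $I \cdot x$ is dense in the irreducible smooth $F$, this forces $H$ to act trivially on all of $F$, and hence on $\RatCurves(F)$. Since $\cK_x \subset \RatCurves(F)$, every component of $\cK_x$ is $H$-stable, and combined with the transitivity coming from Lemma \ref{lem:cov} this forces $\cK_x$ to be irreducible.

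With irreducibility in hand, I would define $\cL$ as the unique component of $\RatCurves(F)$ containing $\cK_x$. To see that $\cL$ is covering, the identity component $I^0$ stabilizes $\cL$ by connectedness and translates $\cK_x$ into curves through every point of the open subset $I^0 \cdot x \subset F$. To identify $\cL_x$ with $\cK_x$, observe that the inclusion $F \hookrightarrow X$ embeds $\cL$ as an irreducible subset of $\RatCurves(X)$; since this subset meets the component $\cK$ at $\cK_x$, we get $\cL \subset \cK$ as subsets of $\RatCurves(X)$, yielding $\cL_x \subset \cK_x$ with the reverse inclusion tautological. Uniqueness of $\cL$ is immediate: any other such $\cL'$ would share a curve through $x$ with $\cL$ and thus agree with it as a component of $\RatCurves(F)$. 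For the identity $\cK^0 = G \cdot \cL^0$, the inclusion $G \cdot \cL^0 \subset \cK^0$ follows from $\cL \subset \cK$ together with $I \cdot x \subset X^0$; conversely, any $C \in \cK^0$ passes through some $g \cdot x$, so $g^{-1} \cdot C \in \cK_x = \cL_x \subset \cL^0$ and hence $C \in G \cdot \cL^0$.
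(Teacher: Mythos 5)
Your proof is correct and follows essentially the same route as the paper's: both hinge on the observation that $H \triangleleft I$ forces $H$ to act trivially on $F$ (hence on $\cU_x$ and $\cK_x$), which together with the transitivity statement of Lemma \ref{lem:cov} yields irreducibility, and both then identify $\cK_x$ with the fiber at $x$ of a family on $F$ via the inclusion $\RatCurves(F) \to \RatCurves(X)$. The only cosmetic differences are that the paper deduces irreducibility from the product decomposition $\cU^0 \simeq G/H \times \cU_x$ and obtains $\cK_x = \cL_x$ by showing $\cL_x$ is itself irreducible, whereas you argue via $H$-fixed points and the containment $\cL \subset \cK$ (which, as you implicitly use, is justified because the free curves of $\cK_x$ are smooth points of $\RatCurves(X)$ and so lie on a unique component); both variants are valid.
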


\begin{proof}
Note that $H$ acts trivially on $I/H$, since
$H \triangleleft I$. Thus, $H$ acts trivially on $F$, 
and hence on $\cU_x$. As $\cU^0 \simeq G \times^H \cU_x$,
we obtain $\cU^0 \simeq G/H \times \cU_x$. Since $\cU^0$
is irreducible, so are $\cU_x$ and $\cK_x$.

The inclusion $\iota : F \to X$ induces compatible 
immersions
\[ \RatCurves_{\fr}(F) \longrightarrow \RatCurves_{\fr}(X), 
\quad \Univ_{\fr}(F) \longrightarrow \Univ_{\fr}(X), \]
since they are injective and their differentials are
injective as well.
It follows that $\cK_x$ is an irreducible component
of $\mu_F^{-1}(x)$, where $\mu_F : \Univ(F) \to F$
denotes the evaluation map. So $\cK_x$ is an irreducible
component of $\cL_x$ for a unique family of rational
curves $\cL$ on $F$. Since $(F,x)$ is an equivariant embedding
of the homogeneous space $I^0/I^0 \cap H$ and 
$I^0 \cap H \triangleleft I^0$, we see that $\cL_x$ is
irreducible. Thus, $\cK_x = \cL_x$ and 
$\cL^0 = I^0 \cdot \cL_x = I^0 \cdot \cK_x$, so that
$\cK^0 = G \cdot \cK_x = G \cdot \cL_x= G \cdot \cL^0$.
\end{proof}

\begin{remark}\label{rem:cont}
The above assignement $\cK \mapsto \cL$ yields a bijection
between covering families of rational curves on $X$
which are contracted by $\pi$, and covering families of
rational curves on $F$. This restricts to a bijection
between families of minimal rational curves.
\end{remark}

\subsection{Symmetric spaces}
\label{subsec:ss}

In this subsection, we recall some basic facts
on symmetric spaces, after \cite[Section 26]{Timashev}
and its references.
We begin with some notation and conventions
which will be used throughout the sequel.
We will consider linear algebraic groups; for any
such group $H$, we denote by $H^0$ its neutral
component, i.e., the connected component of $H$
containing the neutral element $e$.

Let $G$ be a connected reductive algebraic group.
Let $T \subset G$ be a maximal torus,  
and $B \subset G$ a Borel subgroup containing $T$.  
We denote the character group of $T$ by 
$\fX = \fX(T)$, and the root system of $(G,T)$ by 
$R = R(G,T) \subset \fX$.  The roots of
$(B,T)$ form the set of positive roots $R^+$,
with basis $\Delta$ (the set of simple roots).
The Weyl group of $(G,T)$ is denoted by $W$.

Recall the decomposition of Lie algebras
$\fg = \ft \oplus \bigoplus_{\alpha \in R} \fg_{\alpha}$.
For any $\alpha \in R$,  we denote by $U_{\alpha}$
the closed subgroup of $G$ with Lie algebra
$\fg_{\alpha}$,  and by $G_{\alpha}$ the 
subgroup of $G$ generated by $U_{\alpha}$
and $U_{-\alpha}$.  Then $G_{\alpha}$ is
a closed subgroup of $G$, 
isomorphic to $\SL_2$ or $\PSL_2$.

Next, let $\sigma$ be a group involution of $G$. 
Denote by $G^{\sigma}$ the fixed point subgroup, 
and by $G^{\sigma,0}$ its neutral component. 
Let $H$ be a subgroup of $G$ such that 
$G^{\sigma,0} \subset H \subset G^{\sigma}$;
then we say that $H$ is a \emph{symmetric subgroup}
of $G$,  and the homogeneous space $G/H$ is a 
\emph{symmetric space}. 

By \cite[Section 8]{Steinberg},
the group $H$ is reductive; equivalently, 
the variety $G/H$ is affine. Also,
$\sigma$ induces an involution of the Lie algebra
$\fg$
that we still denote by $\sigma$ for simplicity. 
The Lie algebra of $H$ satisfies 
$\fh = \fg^{\sigma}$ and 
$\fg = \fh \oplus \fg^{-\sigma}$, where 
\[ \fg^{-\sigma} =
\{ x \in \fg ~\vert~ \sigma(x) = - x \} \]
is a $G^{\sigma}$-stable complement of $\fh$
in $\fg$. Thus, $\fg^{-\sigma}$ is the isotropy
representation of the symmetric space $G/H$. 
Note that $\fg^{-\sigma}$ is orthogonal to $\fh$ 
with respect to any $(G,\sigma)$-invariant scalar 
product on $\fg$, and hence is a self-dual
representation of $G^{\sigma}$. 

The involution $\sigma$ stabilizes a maximal torus 
$T$ of $G$, as follows e.g.~from 
\cite[Lemma 26.5]{Timashev}.  
Thus, $\sigma$ acts on the character
group $\fX$ and stabilizes the root system $R$;
it also acts on the Weyl group $W$ by conjugation.
We may choose a scalar product $(-,-)$ on
the real vector space $\fX_{\bR} = \fX \otimes_\bZ \bR$
which is invariant under $W$ and $\sigma$.

\begin{definition}
For $\alpha \in R$, one of the following cases occurs:
\begin{enumerate}
\item
$\sigma(\alpha) = \alpha$
and $\sigma$
fixes pointwise
$\fg_{\alpha}$. Then $\alpha$ is called a 
\emph{compact imaginary} root.
\item
$\sigma(\alpha) = \alpha$
and $\sigma$ acts on $\fg_{\alpha}$ by $-1$. Then
$\alpha$ is \emph{non-compact imaginary}.
\item
$\sigma(\alpha) = -\alpha$. Then $\alpha$
is \emph{real}.
\item
$\sigma(\alpha) \neq \pm \alpha$. Then
$\alpha$ is \emph{complex}.
\end{enumerate}
\end{definition}

Recall that any two maximal tori of $G$ are conjugate
and hence $\fX$,  $R$ and $W$ are independent of 
the choice of $T$.  But the action of $\sigma$ on these
objects depends on the choice of the $\sigma$-stable 
torus $T$,  up to conjugacy by $H$.  We now consider 
two special conjugacy classes of $\sigma$-stable maximal 
tori, that we call of fixed (resp.~split) type. 
These are constructed as follows.

\paragraph{Maximal tori of fixed type.} 
Choose a maximal torus $T_H$ of $H$; then its centralizer 
$T = \C_G(T_H)$ is a $\sigma$-stable maximal torus of $G$
and we have $T_H = T^{\sigma, 0}$.  
Moreover, $T$ is contained in a $\sigma$-stable Borel
subgroup $B$ of $G$; then $B_H = B^{\sigma,0}$
is a Borel subgroup of $H$ 
(see \cite[Lemma 26.7]{Timashev} for these facts).
Thus, $B_H = U_H T_H$, where $U_H = U \cap H$ is 
a maximal unipotent subgroup of $H$.

Since $B$ is $\sigma$-stable,
the action of $\sigma$ on the root system $R$ 
stabilizes $R^+$.  In particular, there are no real roots.
The subset of simple roots $\Delta$ is 
$\sigma$-stable as well.

Clearly, the maximal tori obtained in this way are 
exactly those containing a $\sigma$-fixed torus
(i.e., a subtorus $S\subset G$ such that
$\sigma(s) = s$ for all $s \in S$)
that is maximal for this property; they are
all conjugate under $H^0 = G^{\sigma,0}$.
We call every such maximal torus \emph{of fixed type}
and denote it by $T_{\f}$.

\paragraph{Maximal tori of split type.} 
In the opposite direction, a subtorus $S \subset G$ 
is called \emph{$\sigma$-split} if $\sigma(s) = s^{-1}$ for all 
$s \in S$.  Choose such a torus $S$ maximal for this property. 
Then $L = \C_G(S)$ satisfies $[L,L] \subset H^0$;
as a consequence, every maximal torus of $G$ containing
$S$ is $\sigma$-stable
(see \cite[Proposition 2]{Vust1} for these facts).  
We call every such maximal torus
\emph{of split type} and denote it by $T_{\s}$.
We then have $L = T_{\s} (L \cap H) = S (L \cap H)$.
Also, $L$ is a Levi subgroup of a minimal $\sigma$-split 
parabolic subgroup $P$, that is, $P$ is a parabolic subgroup 
of $G$ which is opposite to $\sigma(P)$, and minimal for 
this property (see loc.~cit.,  Section 1.2). 
Moreover,  $P G^{\sigma,0}$ is open in $G$ by loc.~cit.,
Theorem 1; as a consequence, $PH$ is open in $G$. 
Also, recall from loc.~cit.,  Section 1.3 
that the maximal split tori are all conjugate 
under $H^0$, as well as the minimal split parabolic
subgroups and the maximal tori of split type.

Denote by $x= H/H$ the base point of the homogeneous
space $G/H$; then the orbit $P \cdot x$ is 
isomorphic to $P/P \cap H = PH/H$, and hence is
open in $G \cdot x = G/H$.
Moreover, we have 
$P \cap H = P \cap \sigma(P) \cap H = L \cap H$;
in particular, $P \cap H \subset L$. 
Denote by $\R_u(P)$ the unipotent radical of $P$, 
so that we have the Levi decomposition
$P = \R_u(P) \rtimes L$. Then the map
$\R_u(P) \times L/L\cap H \to P/P \cap H$, 
$(g,z) \mapsto g \cdot z$ is an isomorphism. 
Thus, the map
\[ i : \R_u(P) \times L/L \cap H \longrightarrow G/H,
\quad (g,z) \longmapsto g \cdot z \]
is an open immersion with image $P\cdot x$.
Moreover, the natural map 
$S/S\cap H \to L/L \cap H$ is an isomorphism
as $L = S(L \cap H)$. Also, note that $i$ is $P$-equivariant, where 
$P$ acts on $\R_u(P) \times S/S \cap H$
via $(u,l) \cdot (g,z) = (u l g l^{-1}, l \cdot z)$,
and on $G/H$ via left multiplication.

The maximal tori of fixed type will be used in
the rest of this section and in Section 
\ref{sec:css}. Those of split type, and 
the corresponding restricted root system,
feature prominently in the subsequent sections.

\subsection{The normalizer of a symmetric subgroup}
\label{subsec:nss}

We keep the notation of Subsection \ref{subsec:ss},
and obtain some auxiliary results on the structure of 
the normalizer $N = \N_G(G^{\sigma})$. 
For this,  we introduce additional notation:
let $Z = Z(G)$ be the center of 
$G$,  with Lie algebra $\fz$.  Since $Z$ is
$\sigma$-stable,  we have 
$\fz = (\fz \cap \fh) \oplus (\fz \cap \fg^{-\sigma})$.
Also,  we denote by 
\[ q : G \longrightarrow G/Z = G_{\ad} \]
the quotient homomorphism,  where $G_{\ad}$
is the adjoint group.
The involution $\sigma$ of $G$ induces an
involution of $G_{\ad}$ that we still denote by
$\sigma$ for simplicity.

\begin{lemma}\label{lem:nor}

\begin{enumerate}
 
\item
$N = \{ g \in G ~\vert~ \sigma(g) g^{-1} \in Z \}$.

\item $q$ induces an isomorphism
$G/N \simeq G_{\ad}/G_{\ad}^{\sigma}$.
Moreover,  $G_{\ad}^{\sigma}$ is its own 
normalizer in $G_{\ad}$.

\item
$N = \N_G(G^{\sigma,0}) = \N_G(\fh)$.

\item
$N^0 = Z^0 H^0$.

\item
$Z^0 T_H$ is a maximal torus of $N$ for any
maximal torus $T_H$ of $H$. 

\end{enumerate}

\end{lemma}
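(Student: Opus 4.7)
My plan is to treat parts (1) and (2) together by first establishing a key observation: for any $g \in \N_G(\fh)$, $\Ad(g)$ commutes with $\sigma$ on $\fg$. To prove this, I would fix an $\Ad(G)$- and $\sigma$-invariant non-degenerate bilinear form $B$ on $\fg$ (the Killing form on $[\fg,\fg]$ extended by any non-degenerate form on $\fz$, averaged to be $\sigma$-invariant). Since $\sigma$ is an isometry acting as $+1$ on $\fh$ and $-1$ on $\fp$, these two subspaces are $B$-orthogonal, and non-degeneracy of $B$ forces $\fp = \fh^{\perp}$. An element $g \in \N_G(\fh)$ preserves $\fh$ and $B$, so it preserves $\fh^{\perp} = \fp$; matching the actions of $\Ad(g) \circ \sigma$ and $\sigma \circ \Ad(g)$ on $\fh$ and $\fp$ then shows they coincide on $\fg$.

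With this in hand, part (1) follows. The inclusion $\supseteq$ is direct: if $\sigma(g) g^{-1} = z \in Z$, then for every $h \in G^{\sigma}$ one has $\sigma(ghg^{-1}) = z\, ghg^{-1}\, z^{-1} = ghg^{-1}$, so $g \in N$. For $\subseteq$, note $N \subseteq \N_G(\fh)$ because $H$ has Lie algebra $\fh$; by the key observation $\Ad(\sigma(g)) = \sigma \Ad(g) \sigma^{-1} = \Ad(g)$, so $\sigma(g) g^{-1} \in \Ker(\Ad) = Z$.

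For part (2), $N \subseteq \N_G(G^{\sigma,0})$ is immediate since conjugation preserves identity components, and $\N_G(G^{\sigma,0}) = \N_G(\fh)$ is the standard correspondence between connected algebraic subgroups and their Lie algebras in characteristic zero. Conversely, if $g \in \N_G(\fh)$ then the key observation yields $\Int(g) \circ \sigma = \sigma \circ \Int(g)$ as endomorphisms of the connected group $G$ (they have equal differentials), hence $\Int(g)$ stabilizes $G^{\sigma} = \{h \in G : \sigma(h) = h\}$ and $g \in N$.

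For part (3), the inclusion $Z^0 H^0 \subseteq N^0$ is clear. For the reverse I would compute $\Lie(N) = \{X \in \fg : [X,\fh] \subseteq \fh\}$; using $[\fp,\fh] \subseteq \fp$ and $\fh \cap \fp = 0$ this equals $\fh \oplus \fp^{\fh}$. The crucial identification --- and the main potential obstacle --- is $\fp^{\fh} = \fz \cap \fp$. I would reduce to the semisimple case via $\fg = \fz \oplus [\fg,\fg]$ (with $\sigma$ preserving both summands) and decompose $[\fg,\fg]$ into $\sigma$-stable minimal ideals; each piece is either a simple $\sigma$-stable ideal, where the well-known irreducibility and non-triviality of the isotropy representation $\fp$ of $H$ gives $\fp^{\fh} = 0$, or a pair of simple ideals swapped by $\sigma$ (group type), where a direct computation in $\fg_1 \oplus \fg_1$ matches $\fp^{\fh}$ with $\fz \cap \fp$. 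This yields $\Lie(N) = \fh + \fz = \Lie(Z^0 H^0)$, and since $Z^0 H^0$ is connected, $N^0 = Z^0 H^0$. Part (4) is then routine: $Z^0 T_H$ is a torus because $Z^0$ is central, and every maximal torus of $N^0 = Z^0 H^0$ has the form $Z^0 T'$ for a maximal torus $T'$ of $H^0$; taking $T' = T_H$ identifies $Z^0 T_H$ as such a maximal torus, which is also maximal in $N$ since $N$ and $N^0$ share the same maximal tori.
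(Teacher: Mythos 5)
Your argument is correct, but it diverges from the paper's proof at the two key points, so let me compare. For the heart of parts (1)--(2) --- showing that $\Ad(g)$ commutes with $\sigma$ for $g \in \N_G(\fh)$ --- the paper argues that $N \rtimes \langle \sigma \rangle$ (resp.\ $\N_G(\fh) \rtimes \langle \sigma \rangle$) is reductive, so $\fh$ admits a stable complement, which is forced to be $\fp$; you instead produce a $G$- and $\sigma$-invariant non-degenerate form $B$ with $\fp = \fh^{\perp}$, so that $\N_G(\fh)$ automatically preserves $\fp$. Your route is more concrete and avoids invoking complete reducibility for a disconnected group, at the cost of a small fix: ``averaging'' an arbitrary form on $\fz$ over $\sigma$ can kill non-degeneracy, so one should instead choose the form on $\fz$ to be $\sigma$-invariant from the start (e.g.\ by splitting $\fz$ into $\sigma$-eigenspaces). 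For part (3) the paper simply differentiates the description in (1) to get $\fn = \{ x \in \fg \mid \sigma(x) - x \in \fz \} = \fz + \fh$ in two lines --- an option equally available to you --- whereas you compute $\fn_{\fg}(\fh) = \fh \oplus \fp^{\fh}$ and prove $\fp^{\fh} = \fz \cap \fp$ by decomposing into $\sigma$-stable minimal ideals. That works, but your stated justification (``irreducibility of the isotropy representation'') is literally false for Hermitian factors, where $\fp = \fu_P \oplus \fu_Q$ splits into two irreducible summands; what you actually need, and what is true (cf.\ \S\ref{subsec:ass}), is that every irreducible $\fh$-summand of $\fp$ has non-zero highest weight, so $\fp$ contains no trivial subrepresentation. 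Part (4) is routine in both treatments. Net assessment: a valid proof, somewhat heavier in (3) than necessary, with two easily repaired imprecisions.
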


\begin{proof}
(1) This is obtained in \cite[Lemma 1]{Vust2}
(see also \cite[Section I.7]{decp}); we recall the 
argument for completeness. 

Let $g \in G$ such that $\sigma(g) g^{-1} \in Z$.
For any $h \in G^{\sigma}$, we have
$\sigma(g h g^{-1}) = \sigma(g) h \sigma(g)^{-1}
= g h g^{-1}$, that is, $g h g^{-1} \in G^{\sigma}$.
So $g \in N$. For the converse, observe that
$N$ is reductive and normalized by $\sigma$.
The corresponding semi-direct product
$N \rtimes \langle \sigma \rangle$
is a reductive algebraic group, which acts
linearly on $\fg$ and stabilizes $\fh$. 
Thus, $\fh$ has an 
$N \rtimes \langle \sigma \rangle$-stable
complement, which must be $\fg^{-\sigma}$. In particular,
$\fg^{-\sigma}$ is $N$-stable; thus, $\Ad(N)$ commutes
with $\sigma$. So 
$\Ad(\sigma(g) g^{-1}) = 
\sigma \Ad(g) \sigma^{-1} \Ad(g)^{-1} = \id$
for any $g \in N$, that is, 
$\sigma(g) g^{-1} \in Z$.

(2) By (1),  we have $N = q^{-1}(G_{\ad}^{\sigma})$;
this yields the first assertion.  Applying (1) again
to $G_{\ad}$,  we obtain the second assertion.

(3) Clearly, we have 
$N = \N_G(G^{\sigma}) \subset \N_G(G^{\sigma,0})
= \N_G(\fh)$. Moreover, $\N_G(\fh)$ is reductive
and normalized by $\sigma$. Arguing as in the proof
of (1), it follows that $\sigma(g) g^{-1} \in Z$
for any $g \in \N_G(\fh)$, and hence $g \in N$.

(4) Denote by $\fn$ the Lie algebra of $N$.
Then (1) yields that 
$\fn = \{ x \in \fg ~\vert~ \sigma(x) - x \in \fz \}$.
Using the $\sigma$-stable decomposition
$\fg = \fz \oplus [\fg,\fg]$, it follows that
$\fn = \fz \oplus [\fg,\fg]^{\sigma} = \fz + \fh$.
This yields the assertion.

(5) This follows readily from (4).
\end{proof}

\begin{lemma}\label{lem:nors}
Let $S$ be a maximal $\sigma$-split torus of $G$.

\begin{enumerate}

\item
$N = G^{\sigma,0}(N \cap S)$.

\item
$H = G^{\sigma,0}(H \cap S)$
and $H \cap S$ is an elementary abelian $2$-group.

\item
$N = \N_G(H)$ and
$N/H \simeq N \cap S/H \cap S$. In particular,
$N/H$ is diagonalizable.

\end{enumerate}

\end{lemma}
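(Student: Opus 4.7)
The proof will proceed in the stated order, with (1) being the main technical step. For (1), the key observation is that for any $g \in N$, the conjugate torus $gSg^{-1}$ is again $\sigma$-split. Writing $\sigma(g) = zg$ with $z \in Z$ via Lemma \ref{lem:nor}(1), and using $\sigma(s) = s^{-1}$ for $s \in S$, one computes
\[
\sigma(gsg^{-1}) = (zg)\,s^{-1}\,(zg)^{-1} = g s^{-1} g^{-1} = (gsg^{-1})^{-1}.
\]
Since maximal $\sigma$-split tori are $G^{\sigma,0}$-conjugate, there exists $h_1 \in G^{\sigma,0}$ with $h_1 g \in \N_G(S)$. I then reduce to the centralizer $L = \C_G(S)$ using the classical fact that the relative Weyl group $W(G,S) = \N_G(S)/L$ (which is the Weyl group of the restricted root system) is already represented by elements of $\N_{G^{\sigma,0}}(S)$; see e.g.~\cite[\S 26]{Timashev}. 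Hence some $h_2 \in G^{\sigma,0} \cap \N_G(S)$ gives $\ell := h_2^{-1} h_1 g \in L \cap N$. Finally, using $[L,L] \subset G^{\sigma,0}$ together with the isogeny decomposition $Z(L)^0 \sim (Z(L)^0)^{\sigma,0} \times S$ (valid because $S = (Z(L)^0)^{-\sigma,0}$ by maximality of $S$), I write $\ell = (tc)\, s$ with $t \in (Z(L)^0)^{\sigma,0}$, $c \in [L,L]$ and $s \in S$; then $tc \in G^{\sigma,0}$, and since $\ell$ and $tc$ both lie in $N$, so does $s$. This completes (1).

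Part (2) then follows quickly from (1) applied to any $g \in H \subset N$: the decomposition $g = h_0 s$ with $h_0 \in G^{\sigma,0} \subset H$ forces $s = h_0^{-1} g \in H$, so $s \in H \cap S$. For the torsion statement, any $s \in H \cap S$ satisfies $\sigma(s) = s$ (from $H \subset G^\sigma$) and $\sigma(s) = s^{-1}$ (from $S$ being $\sigma$-split), hence $s^2 = e$, making $H \cap S$ an elementary abelian $2$-group.

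For (3), I first verify $N \subset \N_G(H)$. Given $g = h_0 s \in N$ from the decomposition in (1), it suffices to show $sHs^{-1} = H$. By (2), $H = G^{\sigma,0} \cdot (H \cap S)$; since $s \in N = \N_G(G^{\sigma,0})$ normalizes the first factor and commutes with the second (both lying in the abelian group $S$), this is clear. Combined with the inclusion $\N_G(H) \subset \N_G(\fh) = N$ from Lemma \ref{lem:nor}(2), this yields $N = \N_G(H)$. For the isomorphism, the composition $N \cap S \hookrightarrow N \twoheadrightarrow N/H$ has kernel $(N \cap S) \cap H = H \cap S$, and is surjective because $N = G^{\sigma,0}(N \cap S) \subset H \cdot (N \cap S)$ by (1). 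As a quotient of the closed subgroup $N \cap S$ of the torus $S$, the group $N/H$ is diagonalizable.

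The principal obstacle is the Weyl group fact invoked in step (1): every element of $W(G,S)$ is represented by an element of $G^{\sigma,0}$. While classical, this relies on the explicit construction of reflection elements associated to restricted roots and is the only nontrivial external input. Everything else amounts to careful bookkeeping with the characterization of $N$ given in Lemma \ref{lem:nor}(1) and the standard decomposition of $\sigma$-stable tori into their fixed and split parts.
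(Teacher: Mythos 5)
Your argument is correct, and parts (2) and (3) proceed exactly as in the paper: (2) by specializing the decomposition of (1) to elements of $H$ and noting $\sigma(s)=s=s^{-1}$ on $H\cap S$, and (3) by observing that $N\cap S$ normalizes $G^{\sigma,0}$ and centralizes $H\cap S$, then identifying $N/H$ with $(N\cap S)/(H\cap S)$. The real divergence is in part (1). The paper descends to $G_{\ad}$ and uses the Iwasawa decomposition already set up in \S\ref{subsec:ss}: the multiplication maps $\R_u(P)\times SN\to G$ and $\R_u(P)\times SG^{\sigma,0}\to G$ are open immersions onto the open orbits $PN$ and $PG^{\sigma,0}$, and for $g\in N$ the open orbit $PgG^{\sigma,0}=PG^{\sigma,0}g$ forces $PN=PG^{\sigma,0}$, hence $SN=SG^{\sigma,0}$. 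You instead argue element by element: $gSg^{-1}$ is again a maximal $\sigma$-split torus (using Lemma \ref{lem:nor}(1) to write $\sigma(g)=zg$), conjugacy of such tori under $G^{\sigma,0}$ moves $g$ into $\N_G(S)$, the realization of the little Weyl group $\N_G(S)/\C_G(S)$ by elements of $\N_{G^{\sigma,0}}(S)$ moves it further into $L=\C_G(S)$, and the decomposition $L=(Z(L)^0)^{\sigma,0}\cdot[L,L]\cdot S$ together with $[L,L]\subset G^{\sigma,0}$ finishes. Both routes are legitimate; the paper's is shorter given that the Iwasawa decomposition has already been recorded, while yours trades that for the (equally classical, but genuinely nontrivial) fact that every element of the restricted Weyl group has a representative in $G^{\sigma,0}$ --- you correctly flag this as your only external input, and it is indeed the one point a referee would ask you to pin down with a precise citation (Richardson's work on orbits of involutions, or the rank-one reduction via Lemma \ref{lem:rank1}). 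A minor remark: the final step of your (1) implicitly uses that $s\in S$ commutes with $c\in[L,L]$ (true, since $S\subset Z(L)$), which is worth stating when writing this up.
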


\begin{proof}
(1) Let $P$ be a minimal $\sigma$-split 
parabolic subgroup of $G$ containing $S$.
Then $P_{\ad} = P/Z$ is a minimal $\sigma$-split
parabolic subgroup of $G_{\ad}$, containing
$S_{\ad} = S/S \cap Z$ which is a maximal 
$\sigma$-split torus of $G_{\ad}$. 
As seen in the discussion of maximal tori
of split type in Subsection \ref{subsec:ss}, the map
\[ \R_u(P_{\ad}) \times 
S_{\ad}/S_{\ad}^{\sigma} \longrightarrow
G_{\ad}/G_{\ad}^{\sigma}, \quad 
(g,z) \longmapsto g \cdot z \]
is an open immersion. 
Also,  the isomorphism 
$G_{\ad}/G_{\ad}^{\sigma} \simeq G/N$
(Lemma \ref{lem:nor} (2)) restricts to an
isomorphism
$S_{\ad}/S_{\ad}^{\sigma} \simeq SN/N$.
Thus,  the
multiplication map 
$\R_u(P) \times S N \to G$ is an open immersion
as well. Its image is $\R_u(P) S N = P N$, the open
orbit of $P \times N$ in $G$. Likewise,
$PG^{\sigma,0}$ is the open orbit of 
$P \times G^{\sigma,0}$ in $G$. Let $g \in N$; then 
the orbit $P g G^{\sigma,0} = P G^{\sigma,0} g$
is open in $G$. So $g \in PG^{\sigma,0}$, and 
hence $P N = P G^{\sigma,0}$. It follows that
$S N = S G^{\sigma,0}$; this yields the assertion.

(2) The first assertion follows readily from (1).
For the second assertion, just note that every 
$g \in S \cap H$ satisfies $g^{-1} = \sigma(g) = g$.

(3) Clearly, we have 
$\N_G(H) \subset \N_G(H^0) = \N_G(G^{\sigma,0})$.
Moreover, $\N_G(G^{\sigma,0}) = N$ in view of 
Lemma \ref{lem:nor}(2). Also, by combining
(1) and (2) above, we see that $N$ normalizes $H$,
since $N \cap S$ normalizes $G^{\sigma,0}$ and 
centralizes $H \cap S$. Thus, $\N_G(H) = N$. 
By (1) again, we have $N = H (N \cap S)$,
and hence $N/H \simeq (N \cap S)/(H \cap S)$ is
diagonalizable.
\end{proof}

\subsection{The adjoint symmetric space}
\label{subsec:ass}

Recall the adjoint group $G_{\ad} = G/Z$
equipped with an involution $\sigma$.
The homogeneous space $G_{\ad}/G_{\ad}^{\sigma}$
is called an \emph{adjoint symmetric space};
it is isomorphic to $G/N$ by Lemma \ref{lem:nor}(2).
The semisimple adjoint group $G_\ad$ is the product of its simple factors, 
and these are permuted by $\sigma$. This gives a
decomposition into a product of simple adjoint groups 
\[ G_\ad = G_1 \times \cdots \times G_m \times 
(H_1 \times H_1) \times \cdots \times (H_n \times H_n), \]
where $\sigma$ stabilizes the $G_i$
and exchanges the two copies of the $H_j$. 
Using the classification of symmetric spaces 
(see \cite[Section 26.5]{Timashev}), 
one arrives at a decomposition of $G/N$ into a product of 
\emph{indecomposable symmetric spaces} of the 
following three types:

\begin{enumerate}

\item
(group) 
$(H \times H)/\diag(H)$, where $H$ is simple adjoint.

\item
(Hermitian) 
$G/\N_G(L)$,  where $G$ is simple adjoint and 
$L \subset G$ is a Levi subgroup. 

\item
(simple)
$G/H$,  where $G$ is simple adjoint and $H^0$ is 
semisimple.

\end{enumerate}

In type (1),  we have $\sigma(x,y) = (y,x)$ for all 
$x,y \in H$. Thus, $G/N$ is just the group $H$ on which 
$H \times H$ acts by left and right multiplication. 
The isotropy representation $\fg^{-\sigma}$ is the adjoint 
representation of $H$ in $\fh$. This is an irreducible
representation with highest weight the highest root
$\Theta$.

In type (2),  we have $L = P \cap Q$,  
where $P$ and $Q$ are opposite maximal 
parabolic subgroups of $G$.
Moreover,  $\sigma$ is the conjugation $\Int(c)$,
where $c \in Z(L)$ and $c^2 \in Z(G)$.
Denote by $\alpha$ the unique simple root which
is not a root of $L$; then $\alpha$ has coefficient $1$ 
in the expansion of the highest root $\Theta$
as a linear combination of simple roots.
We have $\fg^{-\sigma} = \fu_P \oplus \fu_Q$,  where 
$\fu_P$ (resp.~$\fu_Q$) denotes the Lie algebra of $\R_u(P)$ 
(resp.~$\R_u(Q)$).  Moreover,  the representations $\fu_P$, 
$\fu_Q$ of $L$ are irreducible and dual to each other 
(see e.g.~\cite[Section 5.5]{RRS} for these results). 
Their highest weights relative to $L$ are $\Theta$,
$-\alpha$; they are linearly independent unless $G = \PSL_2$. 

We say that the Hermitian symmetric space
$G/\N_G(L)$ is \emph{exceptional}, 
if $P$ and $Q$ are not conjugate in $G$. 
Then $\N_G(L) = L$,  and hence $G/\N_G(L)$ may be 
identified with the open $G$-orbit in $G/P \times G/Q$ 
on which $G$ acts diagonally.  
In the \emph{non-exceptional} case,
where $P$ and $Q$ are conjugate in $G$, 
the group $\N_G(L)/L$ has order $2$ and exchanges 
$P$ and $Q$.  Moreover,  $G/\N_G(L)$ may be identified
with the open $G$-orbit in the symmetric square 
$(G/P)^{(2)}$, the quotient of $G/P \times G/P$
by the involution $(y,z) \mapsto (z,y)$.

In type (3),  $\fg^{-\sigma}$ is irreducible as a representation of 
$H^0$,  with a non-zero highest weight. (We do not know any uniform
proof of this fact, which can be checked on the classification
of symmetric spaces).

\subsection{Highest weight curves}
\label{subsec:hwc}

We still use the notation of Subsection \ref{subsec:ss},
and choose a maximal torus $T_H \subset H$ and a Borel 
subgroup $B_H \subset H$ containing $T_H$.
Recall that $T = \C_G(T_H)$ is a maximal
torus of fixed type of $G$.
We first obtain a generalization of \cite[Lemma 2.2]{BF}:

\begin{lemma}\label{lem:curve}
Let $C$ be an irreducible $B_H$-stable curve in $G/H$ 
through the base point $x$. 

\begin{enumerate}

\item
Either $C$ is contained in $Z^0 \cdot x$, 
or $B_H$ acts non-trivially on $C$. 

\item
In the latter case, $C$ is smooth 
and $B_H$-equivariantly isomorphic to its tangent 
line at $x$,  which is the $T_H$-weight space 
$\fg^{-\sigma}_{\lambda}$ for a unique non-zero highest weight 
$\lambda$ of $\fg^{-\sigma}$.  Moreover, $\lambda$ determines 
$C$ uniquely,  and the stabilizer of $C$ in $H$ equals 
the stabilizer of the weight space $\fg^{-\sigma}_{\lambda}$. 

\end{enumerate}

\end{lemma}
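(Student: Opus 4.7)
The plan is to work locally at $x$ via the $H$-equivariant exponential map
\[
\exp_x : \fp \longrightarrow G/H, \qquad v \longmapsto \exp(v)\cdot x,
\]
which is étale at $0$ since $\fg = \fh \oplus \fp$ and is $H$-equivariant for the isotropy action on $\fp$. Under this local identification, the Zariski tangent space $T_xC \subset \fp$ is a $B_H$-stable linear subspace, and $B_H$-stable formal curves through $x$ correspond to $B_H$-stable formal curves through $0$ in the linear representation $\fp$.

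For part (1), the key point is that $\fp^{B_H} = \fz \cap \fp$. Using $B_H = T_H U_H$ and the fact that each nontrivial irreducible $H^0$-summand of $\fp$ has its highest-weight line of nonzero $T_H$-weight, I first reduce to $\fp^{B_H} = \fp^{H^0}$. Then for $v \in \fp^{H^0}$, centralization of $T_H$ forces $v \in \ft_{\f}$ (where $T_{\f} = \C_G(T_H)$ is of fixed type), and $v \in \fp$ gives $v \in \ft_{\f}^{-\sigma}$; centralization of $\fu_H = \fu^\sigma$ and $\fu_H^-$ then forces $\alpha(v) = 0$ for every compact imaginary and every complex root $\alpha$, while every non-compact imaginary root is $\sigma$-invariant and hence vanishes on $\ft_{\f}^{-\sigma}$. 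So $\alpha(v) = 0$ for all $\alpha \in R$ and $v \in \fz$. Consequently $Z^0 \cdot x$ is an irreducible closed subvariety of $(G/H)^{B_H}$ of dimension equal to $\dim T_x (G/H)^{B_H} = \dim \fp^{B_H}$, and is therefore the unique irreducible component of the fixed locus through $x$. If $B_H$ acts trivially on $C$, then $C \subset (G/H)^{B_H}$ and $C \subset Z^0 \cdot x$ by irreducibility.

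For part (2), assume $B_H$ acts nontrivially on $C$ and pass to the normalization $\tilde C \to C$: the induced $B_H$-action is still nontrivial and fixes every preimage $\tilde x$ of $x$. The image of $B_H$ in $\Aut(\tilde C)^0$ is a one-dimensional connected group and cannot be $\bG_a$, since $\bG_a$ acts trivially on the one-dimensional $T_{\tilde x}\tilde C$ and therefore trivially on the smooth curve $\tilde C$. So the image is $\bG_m$, the action factors through a nonzero character $\lambda$ of $T_H$, and the $B_H$-stable line $T_xC \subset \fp$ is $U_H$-fixed of $T_H$-weight $\lambda$. Since each highest weight of $\fp$ has multiplicity one in view of the irreducible decomposition recalled in Subsection~\ref{subsec:ass}, the line $T_xC$ equals the one-dimensional weight space $\fp_\lambda$. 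In particular $C$ is smooth at $x$, and linearization of the $\bG_m$-action together with the classification of irreducible affine $\bG_m$-curves having an isolated fixed point of nonzero weight forces $C \cong \bA^1$ $B_H$-equivariantly.

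To identify $C$ globally with $\fp_\lambda$ and derive uniqueness and the stabilizer statement, I would use that highest-weight vectors $v \in \fp_\lambda$ are nilpotent in $\fg$, as follows from the description of $\lambda$ in each type in Subsection~\ref{subsec:ass} combined with the Kostant--Rallis structure. Then $\exp(\bC v) \simeq \bG_a$ is an algebraic subgroup whose Lie algebra $\bC v \subset \fp$ meets $\fh$ only at $0$, so $\exp(\bC v) \cap H = \{e\}$ and $\exp_x$ restricts to a $B_H$-equivariant closed immersion $\fp_\lambda \hookrightarrow G/H$ whose image is a smooth $B_H$-stable irreducible curve through $x$ with tangent $\fp_\lambda$; by the previous paragraph's classification this image must coincide with $C$. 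For the stabilizer, $h \in H$ preserves $C$ iff it fixes $x$ and preserves $T_xC = \fp_\lambda$ iff it preserves $\fp_\lambda$. The main obstacle will be the combined weight computation $\fp^H = \fz \cap \fp$ and the nilpotency of highest-weight vectors in $\fp$: both rely on careful bookkeeping of the action of $\sigma$ on the root system of $T_{\f}$ and on the irreducible decomposition of $\fp$ from Subsection~\ref{subsec:ass}.
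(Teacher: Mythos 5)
Your overall route is genuinely different from the paper's: for (1) the paper notes that a $B_H$-fixed point has complete, hence trivial, $H^0$-orbit in the affine variety $G/H$ and concludes via $\N_G(H^0)=N$, whereas you compute $\fp^{B_H}=\fz\cap\fp$ and use a tangent-space bound on the fixed locus; for (2) the paper shows $U_H$ acts trivially via closedness of unipotent orbits in affine varieties, then transports $C$ by Luna's slice theorem into the nilcone $\cN\subset\fp$, where the classification of adjoint symmetric spaces identifies the curve as a highest weight line. Your part (1) goes through (the weight computation on the fixed-type torus is correct, and $Z^0\cdot x$ is indeed closed of dimension $\dim(\fz\cap\fp)$), and your way of killing the unipotent action via the trivial action on $T_{\tilde x}\tilde C$ is a legitimate substitute for the paper's argument.

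The genuine gap is in part (2), at the step ``the $B_H$-stable line $T_xC\subset\fp$ is $U_H$-fixed of $T_H$-weight $\lambda$.'' You have not shown that $T_xC$ is a line: that is exactly the smoothness of $C$ at $x$, which is one of the assertions to be proved, and your later ``in particular $C$ is smooth at $x$'' is therefore circular. A $\bG_m$-stable irreducible affine curve through a fixed point can perfectly well be cuspidal (e.g.\ $\{(t^2,t^3)\}\subset\bA^2$ with weights $2$ and $3$), in which case the tangent space is two-dimensional and its $T_H$-weights are proper multiples of $\lambda$, not $\lambda$ itself. Multiplicity one of each highest weight does not exclude this: you must also know that no two highest weights of $\fp$ restricted to $T_H$ are distinct positive rational multiples of one another, so that the local $T_H$-stable curve in $\fp^{U_H}$ through $0$ is forced to be a single weight line. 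The paper obtains precisely this from the classification of \S\ref{subsec:ass} (linear independence of the weights $\Theta$ and $-\alpha$ in the Hermitian case, and the product decomposition), but only after first reducing to the nilcone of the adjoint symmetric space via Luna's slice theorem — a reduction which also delivers, for free, the nilpotency you defer to ``careful bookkeeping'' and replaces your non-algebraic $\exp_x$ by an honest $H$-equivariant algebraic isomorphism. Without this non-proportionality input your argument does not close, so the missing ingredient is essential rather than cosmetic.
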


\begin{proof}
(1) Assume that $C$ is fixed pointwise by $B_H$. Then
the orbit $H^0 \cdot y$ is complete for any $y \in C$.
Since $G/H$ is affine, this orbit must be a point,
i.e., $C$ is fixed pointwise by $H^0$. Let $g \in G$ such
that $y = g \cdot x$, then $g^{-1} H^0 g \cdot x = x$,
i.e., $g^{-1} H^0 g \subset H$. So $g \in \N_G(H^0) = N$
(Lemma \ref{lem:nor}).  Thus, $C \subset N \cdot x$.
As $C$ is connected and contains $x$,  it follows that
$C \subset N^0 \cdot x$. 
But $N^0 \cdot x = Z^0 \cdot x$ by Lemma \ref{lem:nor}
again; this yields the assertion.

(2) This is obtained by arguing as in the proof of 
\cite[Lemma 2.2(i)]{BF}. We provide details for the 
reader's convenience.

Since $C$ is not fixed pointwise by $B_H$, it contains
an open orbit $B_H \cdot y$, where $y \neq x$. 
Thus, the isotropy group $B_{H,y}$ has codimension $1$ 
in $B_H$. We thus have
$B_{H,y}^0 = U_{H,y} \rtimes S$ for some subtorus $S$ 
of $B_H$. Replacing $y$ with a $B_H$-translate, 
we may assume that $S \subset T_H$.

If $S = T_H$,  then the orbit 
$B_H \cdot y = U_H \cdot y$ is isomorphic to 
$\bA^1$ and hence is closed in $G/H$,  since the
latter is an affine variety.
But $x \in \overline{B_H \cdot y} \setminus \{ y \}$,
a contradiction. For dimension reasons, it follows
that $S$ is a subtorus of codimension $1$ of $T_H$, 
and $U_H \subset B_{H,y}$. As a consequence, 
$C$ is fixed pointwise by $U_H$, since the latter
is a normal subgroup of $B_H$.  Thus,  $T_H \cdot y$ 
is open in $C$.

In particular, $x \in \overline{H \cdot y}$. So $C$ is 
contained in the fiber at $x$ of the geometric invariant
theory quotient $G/H \to H \backslash\!\!\backslash G/H$ 
of the smooth affine $H$-variety $G/H$. 
By a corollary of Luna's slice theorem 
(see \cite[Sections II.1 and III.1]{Luna}), this fiber is 
$H$-equivariantly isomorphic to the nilcone $\cN$ of $\fg^{-\sigma}$
(the fiber at $0$ of the quotient $\fg^{-\sigma} \to \fg^{-\sigma}/\!\!/H$;
it consists of the points $z \in \fg^{-\sigma}$ such that
$0 \in \overline{H \cdot z}$).
Thus, $C$ is $B_H$-equivariantly isomorphic to 
a $B_H$-stable curve $D$ in $\cN$. Moreover, $C$ and $D$ 
have the same stabilizer in $H$.

As $U_H$ fixes $D$ pointwise, we have 
$D \subset \cN \cap (\fg^{-\sigma})^{U_H}$. Also, 
$\fg^{-\sigma} = (\fg^{-\sigma} \cap \fz) \oplus (\fg^{-\sigma} \cap [\fg,\fg])$
and the projection $\fg^{-\sigma} \to \fg^{-\sigma} \cap \fz$ is 
$H$-invariant, hence sends $\cN$ to $0$.
it follows that
$D \subset (\fg^{-\sigma} \cap [\fg,\fg])^{U_H}$.

So we may assume that $G/H$ is an adjoint symmetric space.
Using the product decomposition of these spaces,  we see that
$D$ is a highest weight line from a unique indecomposable
factor of $G/H$, and is uniquely determined by its weight.
\end{proof}

We say that a curve $C$ as in Lemma \ref{lem:curve}(2)
is a \emph{highest weight curve}.  The corresponding
highest weight $\lambda$ satisfies 
$\lambda = \alpha \vert_{T_H}$ for some root $\alpha$, 
since the non-zero weights of $T_H$ in $\fg^{-\sigma}$ 
are restrictions of non-zero weights of $T$ in $\fg$.
Let $S = \Ker(\lambda)^0 = (\Ker(\alpha) \cap T_H)^0$; 
then $S$ is a subtorus of codimension $1$ of $T_H$,
and fixes $C$ pointwise. 
Thus, the centralizer $\C_G(S)$ is a $\sigma$-stable 
subgroup of $G$ containing $T$;
also,  $C_G(S)$ is connected and reductive by 
\cite[Section 7.6.4]{springer-linear}. 
Moreover,  $\C_H(S)$ is a symmetric subgroup of $\C_G(S)$
containing $T_H$, and $C$ is a highest weight curve
of the symmetric space $\C_G(S)/\C_H(S)$.

Recall the following easy result
(see \cite[Section 2]{Springer} and \cite[Lemma 2.5]{Brion}):

\begin{lemma}\label{lem:rank1}
With the above notation and assumptions, the adjoint 
symmetric space of $\C_G(S)/\C_H(S)$ is one of 
the following:

\begin{itemize}
\item[{\rm ($\A_1$)}] 
$\PSL_2/N$,  where $N$ denotes 
the normalizer of the diagonal torus in $\PSL_2$. 

\item[{\rm ($\A_1 \times \A_1$)}]
$(\PSL_2 \times \PSL_2)/\diag(\PSL_2)$.  
Then $\sigma(\alpha)$ is strongly orthogonal to $\alpha$.

\item[{\rm ($\A_2$)}]
$\PSL_3/\SO_3$, where $\SO_3$ denotes the special 
orthogonal group.  

\end{itemize}

Moreover,  $\alpha$ is non-compact imaginary in types 
$(\A_1)$ and $(\A_2)$.  If the involution $\sigma$ is inner 
(equivalently,  $T_H$ is a maximal torus of $G$),  then 
only type $(\A_1)$ occurs.

\end{lemma}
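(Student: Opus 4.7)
My plan is to study the $\sigma$-stable reductive subgroup $G' := \C_G(S)$ and its sub-root-system $R' = \{\beta \in R \mid \beta\vert_S = 0\}$ relative to $T = \C_G(T_H)$. Since $S = \ker(\lambda)^0$ for $\lambda = \alpha\vert_{T_H}$, we have $R' = \{\beta \in R \mid \beta\vert_{T_H} \in \bQ \lambda\}$, so the restriction map $\fX(T)_\bQ \to \fX(T_H)_\bQ$ sends $\bQ R'$ into the line $\bQ \lambda$; in particular, the $\sigma$-antiinvariant subspace of $\bQ R'$ has codimension one.

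The key input is that, since $T$ is of fixed type, $R$ contains no real root; equivalently, the $\sigma$-antiinvariant subspace of $\fX(T)_\bQ$ contains no root. Combined with the codimension-one observation above, a direct check of the involutions of rank-two root systems will show that $\rk R' \leq 2$ and classify the allowed cases. In rank $1$, $R' = \{\pm \alpha\}$ (type $A_1$), with $\sigma(\alpha) = \alpha$ (no real roots) and $\alpha$ non-compact imaginary (since $\fp_\lambda \neq 0$); the derived subgroup of $G'$ is isogenous to $\SL_2$, and one recovers the adjoint pair $\PSL_2/N$. In rank $2$, the possible root systems are $A_1 \times A_1$, $A_2$, $B_2$, $G_2$, and we require an involution of the root system with a one-dimensional $-1$-eigenspace containing no root. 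Running through the Weyl-group involutions and outer automorphisms case by case: reflections give real roots, $-\id$ makes every root real, and only the factor-swap on $A_1 \times A_1$ and the Dynkin flip on $A_2$ (both outer) survive. These yield $(\PSL_2 \times \PSL_2)/\diag(\PSL_2)$ (with $\alpha, \sigma(\alpha)$ strongly orthogonal, since neither $\alpha + \sigma(\alpha)$ nor $\alpha - \sigma(\alpha)$ lies in $A_1 \times A_1$) and $\PSL_3/\SO_3$ (with $\alpha + \sigma(\alpha)$ the highest root of the sub-$A_2$-system).

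For the final assertion, if $\sigma$ is inner on $G$, then $T_H$ is a maximal torus of $G$ (the equivalence stated in the lemma), so $T = \C_G(T_H) = T_H$; the restriction map is then the identity, and $\beta \in R'$ forces $\beta \in \bQ\alpha$, hence $R' = \{\pm\alpha\}$ and only type $A_1$ arises. The main technical obstacle is the rank-$2$ case analysis, specifically confirming that no exotic involution on $B_2$ or $G_2$ slips through; this follows from the enumeration of involutions in their Weyl groups (dihedral of orders $8$ and $12$, whose only involutions are $\pm\id$ and reflections), each of which violates the no-real-root constraint on the antiinvariant subspace.
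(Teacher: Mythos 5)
The paper itself does not prove this lemma (it is quoted from Springer and from \cite[Lem.~2.5]{Brion}), so there is no internal proof to compare against; judged on its own terms, your proposal has the right skeleton but a genuine gap. Your two key inputs are correct and well chosen: since $T=\C_G(T_H)$ is of fixed type, $\sigma$ preserves $R^+$, so $R$ has no real roots; and since restriction to $T_H=T^{\sigma,0}$ kills exactly the $\sigma$-antiinvariants of $\fX(T)_\bQ$ and sends $R'=\{\beta\in R \mid \beta\vert_S=0\}$ into the line $\bQ\lambda$, the $\sigma$-invariant subspace $V^+$ of $\bQ R'$ is one-dimensional. The rank-one and rank-two analyses are also essentially right (up to conjugacy the surviving involutions are the factor swap on $\A_1\times\A_1$ and $-s_{\Theta'}$ on $\A_2$, $\Theta'$ the highest root of the subsystem).

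The gap is the reduction to $\rk R'\leq 2$. The sentence ``a direct check of the involutions of rank-two root systems will show that $\rk R'\leq 2$'' is not an argument: inspecting rank-two systems cannot exclude a $\sigma$-stable irreducible component of $R'$ of rank $\geq 3$, and your closing paragraph only disposes of $B_2$ and $G_2$. At that stage all you know is that $V^-$ is a hyperplane of $\bQ R'$ containing no root, which for rank $\geq 3$ is not a priori contradictory. The missing step is to use the fixed-type hypothesis at full strength. For instance: any nonzero $v\in V^+$ is non-orthogonal to every root of $R'$ (a root orthogonal to $V^+$ lies in $V^-$ and would be real), so $\{\beta\in R' \mid \scal{v,\beta}>0\}$ is a $\sigma$-stable positive system; $\sigma$ then permutes its base, and $\dim V^+$ equals the number of $\sigma$-orbits on that base. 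Hence the base is a single orbit, of size one (type $\A_1$, with $\sigma(\alpha)=\alpha$ non-compact imaginary because $\fp_\lambda\neq 0$) or of size two consisting of two swapped simple roots of equal length (type $\A_1\times\A_1$ or $\A_2$); this kills all ranks $\geq 3$ together with $B_2$ and $G_2$ in one stroke, since an involutive diagram automorphism of an irreducible system of rank $\geq 3$ has at least two node orbits. A secondary point: in type $(\A_2)$ the root $\alpha$ satisfying $\alpha\vert_{T_H}=\lambda$ is the $\sigma$-fixed highest root $\alpha_1+\alpha_2$ of the subsystem (whence ``non-compact imaginary''), not one of its simple roots; your parenthetical identifies $\alpha$ with a simple root of the subsystem, which restricts to $\lambda/2$, and under that reading the assertion that $\alpha$ is imaginary would fail. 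Proposition \ref{prop:roots} is where the paper sorts out which roots restrict to $\lambda$ in each type, and your write-up should be made consistent with it.
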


Still considering a highest weight curve $C$ of weight $\lambda$,
we now obtain a description of $C$ and its tangent line 
$T_x C \subset T_x G/H$ (using the identifications 
$T_x G/H = \fg/\fh = \fg^{-\sigma}$) in the above three types.

\begin{proposition}\label{prop:roots}
In type $(\A_1)$,  there is a unique root $\alpha$
such that $\lambda = \alpha \vert_{T_H}$. 
Moreover,  $C = U_\alpha \cdot x$
and $T_x C = \fg_{\alpha}$.

In type $(\A_1 \times \A_1)$,  there are exactly two roots 
$\alpha$,  $\beta$ such that 
$\lambda = \alpha \vert_{T_H} = \beta \vert_{T_H}$. 
Moreover,  $\alpha$ and $\beta = \sigma(\alpha)$ are
the simple roots of $(\C_G(S),T)$.  We have
$C = U_\alpha \cdot x = U_\beta \cdot x$
and $T_x C = \bC(e_{\alpha} - \sigma(e_{\alpha}))$,
where $e_{\alpha} \in \fg_{\alpha} \setminus \{ 0 \}$.

In type $(\A_2)$,  there is a unique root $\alpha$
such that $\lambda = \alpha \vert_{T_H}$.  Moreover,
$\alpha = \alpha_1 + \alpha_2$, where
$\alpha_1$ and $\alpha_2 = \sigma(\alpha_1)$ are 
the simple roots of $(\C_G(S),T)$.   Also, 
$C = U_\alpha \cdot x$ and $T_x C = \fg_{\alpha}$.
\end{proposition}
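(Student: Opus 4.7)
The plan is to reduce the proposition to the three rank-one cases listed in Lemma \ref{lem:rank1} and then verify each one by explicit computation in the corresponding model.

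\textbf{Reduction.} As noted in the paragraph preceding Lemma \ref{lem:rank1}, the subtorus $S = \Ker(\lambda)^0 = (\Ker(\alpha) \cap T_H)^0$ fixes $C$ pointwise, so $C$ and its tangent line $\fp_\lambda$ already live in the smaller symmetric pair $(\C_G(S), \C_H(S))$. Passing to the adjoint quotient preserves the root system, the action of $\sigma$ on it, the weight space $\fp_\lambda$, and the orbits $U_\alpha \cdot x$. I may therefore assume that $G/H$ is one of the three adjoint rank-one models of Lemma \ref{lem:rank1}, and verify the proposition in each case.

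\textbf{Types $(\A_1)$ and $(\A_1\times\A_1)$.} In type $(\A_1)$, $G = \PSL_2$, $H = \N_G(T)$, and $\sigma = \Int(c)$ for $c = \diag(i,-i) \in \PSL_2$, which fixes $T = T_H$ and acts by $-1$ on each root space. Then $\alpha$ is non-compact imaginary, is the unique root restricting to $\lambda$ on $T_H$, and $\fp_\lambda = \fg_\alpha$. The orbit $U_\alpha \cdot x$ is $B_H$-stable (since $B_H = T$ normalizes $U_\alpha$ and fixes $x$), meets $H$ trivially (as $\sigma$ acts by $-1$ on $\fg_\alpha$), and has tangent line $\fg_\alpha$ at $x$, so it must equal $C$ by the uniqueness in Lemma \ref{lem:curve}. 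In type $(\A_1 \times \A_1)$, $G = \PSL_2 \times \PSL_2$ with $\sigma$ the swap and $H = \diag(\PSL_2)$; the two simple roots $\alpha = (\alpha_0, 0)$ and $\beta = (0,\alpha_0) = \sigma(\alpha)$ are the only ones restricting to $\lambda$ on $T_H = \diag(T)$, and the $(-1)$-eigenspace of $\sigma$ in $\fg_\alpha \oplus \fg_\beta$ is the line spanned by $e_\alpha - \sigma(e_\alpha)$. Under the isomorphism $G/H \simeq \PSL_2$ sending $(g,h)H$ to $gh^{-1}$, both orbits $U_\alpha \cdot x$ and $U_\beta \cdot x$ map onto the upper unipotent subgroup, yielding the claimed equality.

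\textbf{Type $(\A_2)$.} Here $G = \PSL_3$, $H = \SO_3$, $\sigma(X) = -X^\top$ on $\fg$, and $T_H \subset \SO_3$ is one-dimensional, generated by a skew-symmetric matrix $A$ with eigenvalues $\{i,-i,0\}$. In a basis of eigenvectors of $A$, the centralizer $T = \C_G(T_H)$ is diagonal, and a direct computation shows that $\sigma$ acts on this Cartan by $\diag(h_1,h_2,h_3) \mapsto -\diag(h_2,h_1,h_3)$. Consequently $\alpha_{12}$ is imaginary, while $\sigma$ exchanges $\alpha_{13}$ and $-\alpha_{23}$; the unique $\sigma$-stable positive system therefore has simple roots $\alpha_1 = \alpha_{13}$ and $\alpha_2 = -\alpha_{23} = \sigma(\alpha_1)$, with highest root $\alpha_1 + \alpha_2 = \alpha_{12}$, the only root restricting to the highest weight on $T_H$. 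A direct check gives $\sigma(e_{\alpha_{12}}) = -e_{\alpha_{12}}$, so $\alpha_{12}$ is non-compact imaginary and $\fp_\lambda = \fg_{\alpha_{12}}$; the same orbit argument as before yields $C = U_{\alpha_{12}} \cdot x$.

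\textbf{Main obstacle.} The hardest part is type $(\A_2)$: one must identify the correct maximal torus of fixed type (not the standard diagonal Cartan of $\PSL_3$, but the centralizer of a rotation subgroup of $\SO_3$) and then find the unique $\sigma$-stable positive system, in which the two complex simple roots are swapped by $\sigma$ while the highest root is the unique non-compact imaginary positive root.
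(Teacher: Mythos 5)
Your proposal is correct and follows essentially the same route as the paper: both reduce via Lemma \ref{lem:rank1} to the three adjoint rank-one models and then verify the claims case by case, identifying the highest weight curve with a root-group orbit. The only difference is one of detail — where the paper writes ``one checks that the highest weight of the isotropy representation of $\PSL_3/\SO_3$ is $(\alpha_1+\alpha_2)\vert_{T_H}$,'' you carry out that check explicitly (correctly identifying the fixed-type torus as the centralizer of a rotation subgroup and computing the $\sigma$-action on the resulting Cartan), which is a welcome expansion rather than a deviation.
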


\begin{proof}
In type $(\A_1)$,  there are two highest weight 
curves in $\C_G(S)/\C_H(S)$,  namely, 
$U_{\alpha} \cdot x$ and $U_{-\alpha} \cdot x$. 

In type $(\A_1 \times \A_1)$,  recall that the adjoint
symmetric space of $\C_G(S)/\C_H(S)$ is the group
$(\PSL_2 \times \PSL_2)/\diag(\PSL_2) = \PSL_2$.
So the roots of $(\C_G(S),T)$ are $\pm \alpha$, 
$\pm \sigma(\alpha)$.  Moreover,  $U_\alpha \cdot x$
is the unique highest weight curve; it is identified 
with the standard unipotent subgroup $U \subset \PSL_2$, 
and likewise for $U_{\sigma(\alpha)} \cdot x$.

In type $(\A_2)$, the adjoint symmetric space of 
$\C_G(S)/\C_H(S)$ is $\PSL_3/\SO_3$; one checks that 
the highest weight of its isotropy representation is
$(\alpha_1 + \alpha_2)\vert_{T_H}$, where
$\alpha_1$, $\alpha_2$ are the simple roots of
$\PSL_3$. It follows that $\alpha = \alpha_1 + \alpha_2$,
and $\sigma(\alpha_1) = \alpha_2$. Finally,
$U_\alpha \cdot x$ is an irreducible curve in $G/H$,
stable by $T_H$ and fixed by $U_H$ (since the latter 
commutes with $U_\alpha$ and fixes $x$). Thus, 
$U_\alpha \cdot x$ is the highest weight curve 
in $\C_G(S)/\C_H(S)$.

This yields the assertions on roots and highest weight 
curves.  Those on their tangent lines are readily verified.
\end{proof}

\begin{corollary}\label{cor:root}
Let $C$ be a highest weight curve of weight $\lambda$. 
Then there exists $\alpha \in R$ such that 
$\lambda = \alpha \vert_{T_H}$ and 
$C = U_{\alpha} \cdot x$.
\end{corollary}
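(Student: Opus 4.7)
The plan is to derive the corollary directly from Proposition \ref{prop:roots} by reduction to the rank one situation that was set up just before Lemma \ref{lem:rank1}.

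First, I would recall that given the highest weight curve $C$ of weight $\lambda$, the discussion following Lemma \ref{lem:curve} shows that $\lambda = \alpha_0 \vert_{T_H}$ for some root $\alpha_0 \in R$, and that the centralizer $M = \C_G(S)$ of the codimension-one subtorus $S = \Ker(\lambda)^0 \subset T_H$ is a $\sigma$-stable connected reductive subgroup of $G$ containing $T$, for which $C$ remains a highest weight curve of the symmetric space $M/\C_H(S)$. Since both $C$ and the subgroup $U_\alpha$ (for any root $\alpha$ of $(M,T)$) are contained in $M$, the statement we want to prove is intrinsic to the rank-one symmetric space $M/\C_H(S)$, so I may replace $G/H$ by this subspace.

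Next, by Lemma \ref{lem:rank1} the adjoint symmetric space associated to $M/\C_H(S)$ is one of the three types $(\A_1)$, $(\A_1 \times \A_1)$, $(\A_2)$. In each of these three cases, Proposition \ref{prop:roots} gives an explicit root $\alpha$ of $(M,T)$ such that $C = U_\alpha \cdot x$, and inspection of the three cases shows that in every case the tangent weight $\lambda$ of $C$ at $x$ coincides with $\alpha \vert_{T_H}$ (in the cases $(\A_1)$ and $(\A_2)$ the curve is already given by a single root subgroup with tangent line $\fg_\alpha$; in the $(\A_1 \times \A_1)$ case Proposition \ref{prop:roots} exhibits two possible roots $\alpha$ and $\sigma(\alpha)$ whose restrictions to $T_H$ both equal $\lambda$, either choice serving). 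Assembling these three cases uniformly yields the desired conclusion.

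There is no genuine obstacle here: the entire content of the corollary is already packaged in Proposition \ref{prop:roots}, and the only work is the case-free reformulation. The one subtle point worth checking carefully is that the root $\alpha \in R(M,T)$ produced by the rank one analysis is indeed a root of the ambient group $G$, which is immediate since $T$ is a common maximal torus of $M$ and $G$ and $R(M,T) \subset R(G,T) = R$. Thus the corollary follows at once.
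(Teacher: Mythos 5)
Your proposal is correct and is essentially the paper's own derivation: the paper states Corollary \ref{cor:root} without a separate proof precisely because it follows immediately from the reduction to the rank-one subspace $\C_G(S)/\C_H(S)$ set up after Lemma \ref{lem:curve}, the trichotomy of Lemma \ref{lem:rank1}, and the case-by-case description $C = U_\alpha \cdot x$ in Proposition \ref{prop:roots}. Your only additional remark, that $R(\C_G(S),T) \subset R(G,T)$ since $T$ is a common maximal torus, is a correct and harmless clarification.
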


\begin{corollary}\label{cor:alter}
We have the following alternative for a simple group $G$:

\begin{enumerate}
\item $T_x C =  \fg_\alpha$ for a long root $\alpha$,  or
\item $T_x C = \fg_\alpha$ for a short root $\alpha$,  or
\item $T_x C$ is spanned by $e_{\alpha} - \sigma(e_{\alpha})$,
where $\alpha \in R$ is strongly orthogonal to
$\sigma(\alpha)$.  Moreover,  $G$ is simply laced.
\end{enumerate}

\end{corollary}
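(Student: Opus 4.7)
I would derive the corollary by combining Proposition \ref{prop:roots} with Lemma \ref{lem:rank1} and the classification of involutions of simple algebraic groups.

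First, by Corollary \ref{cor:root} the rank-one subsymmetric space $\C_G(S)/\C_H(S)$ associated to $C$ falls into one of the three types of Lemma \ref{lem:rank1}. Proposition \ref{prop:roots} then yields $T_x C = \fg_\alpha$ for a single root $\alpha \in R$ in types $(\A_1)$ and $(\A_2)$, and $T_x C = \bC(e_\alpha - \sigma(e_\alpha))$ with $\alpha$ strongly orthogonal to $\sigma(\alpha)$ in type $(\A_1 \times \A_1)$. Since $G$ is simple, every root is either long or short, so the first shape splits into options (1) and (2) of the corollary (option (2) being vacuous when $G$ is simply-laced). The second shape is exactly the form demanded by option (3), so the only non-formal point remaining is that in type $(\A_1 \times \A_1)$ the group $G$ must be simply-laced.

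I expect this simply-laced assertion to be the main obstacle, and I would establish it by contraposition. Suppose $G$ is simple and not simply-laced, so that $G$ has type ${\rm B}_n$, ${\rm C}_n$, ${\rm F}_4$, or ${\rm G}_2$. In characteristic zero the Dynkin diagram of each of these types admits no non-trivial automorphism, hence the outer automorphism group $\Aut(G)/\Int(G)$ is trivial and every involution $\sigma$ of $G$ is inner; equivalently, by the parenthetical remark in Lemma \ref{lem:rank1}, $T_H$ is already a maximal torus of $G$. The final assertion of that lemma then forces every rank-one subsymmetric space to be of type $(\A_1)$, contradicting the hypothesis that we are in type $(\A_1 \times \A_1)$. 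Hence $G$ must be simply-laced, establishing option (3) and completing the trichotomy.
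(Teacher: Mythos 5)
Your proposal is correct and follows essentially the same route as the paper: reduce via Proposition \ref{prop:roots} to showing that $G$ is simply-laced in case (3), and then use the rigidity of non-simply-laced Dynkin diagrams. The only cosmetic difference is that you argue by contraposition through the final assertion of Lemma \ref{lem:rank1} (non-simply-laced $\Rightarrow$ trivial diagram automorphisms $\Rightarrow$ $\sigma$ inner $\Rightarrow$ only type $(\A_1)$), whereas the paper observes directly that in case (3) the relation $\sigma(\alpha)\neq\alpha$, together with $\sigma$ stabilizing $\Delta$ for the fixed-type torus, forces $\sigma$ to induce a non-trivial diagram automorphism.
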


\begin{proof}
In view of Proposition \ref{prop:roots}, 
we only have to show that $G$ is simply laced in case (3).
Then $\sigma(\alpha) \neq \alpha$,
and hence $\sigma$ acts non-trivially on $R$.  
As $\sigma$ stabilizes the set $\Delta$ of simple roots,  
it induces a non-trivial automorphism of the Dynkin 
diagram.  But this only occurs for $G$ simply laced.
\end{proof}

\begin{remark} \label{rem:cases}
The three cases in Corollary \ref{cor:alter} do occur
(see Table \ref{table-class} for the notation on types):

\begin{enumerate}
\item In type AI with $G = \SL_n$ and $H = \SO_n$, 
we have $T_x C = \fg_\Theta$ with $\Theta$ the highest root.
\item In type BII with $G = \SO_{2n+1}$ and 
$H = S(\OO_1 \times \OO_{2n})$, 
we have $T_x C = \fg_\theta$ with $\theta$ the highest short root. 
\item In type AII with $G = \SL_{2n}$ and $H = \Sp_{2n}$, 
we have 
$T_x C = \bC (e_{\Theta - \alpha_1} - e_{\Theta - \alpha_{2n-1}})$, 
where $\Theta$ is the highest root and $\alpha_1$ and 
$\alpha_{2n-1}$ are simple roots labeled as in \cite{bourbaki}.
\item Clearly,  case (2) does not occur for simply laced groups.
\item In type $G_2$, none of cases (2) and (3) occurs.
Indeed $\sigma(\alpha) = \alpha$ for any root 
$\alpha \in R$. Therefore, every root is imaginary.
An easy computation shows that the highest 
non-compact root must be long.
\end{enumerate}

\end{remark}

Next, we assume that $G/H$ is indecomposable;
in particular, $G$ is simple or
$G = H \times H$ with $H$ simple.
We show that $T_x C$ is contained in a nilpotent orbit
of a very special type,  defined as follows:

\begin{definition} \label{def:nilp}
Let $G/H$ be an indecomposable symmetric space.
\begin{enumerate}
\item If $G = H \times H$,  then set 
$\cO_{\min} = G \cdot (e,-e) \subset \fg = \fh \oplus \fh$ 
where $e \in \fh$ is a highest weight vector for $H$. 
\item If $G$ is simple,  define a 
\emph{nilpotent orbit} $\cO_{\min}$ 
and a \emph{type of nilpotent orbits}
$\cO_\summ$ in $\fg$ as follows.
\begin{enumerate}
\item $\cO_{\min} = G \cdot e$ where $e$ is a highest weight 
vector in $\fg$.
\item A nilpotent orbit $\cO$ is of type $\cO_{\summ}$ if $\cO = G \cdot (e_1 + e_2)$,
where $e_i \in \fg_{\alpha_i}$ is a root vector with $\alpha_1$ 
and $\alpha_2$ two strongly orthogonal long roots.
\end{enumerate}
\end{enumerate}
\end{definition}

\begin{remark}
There is a unique nilpotent orbit of type $\cO_\summ$ except for $G$ of type ${\rm B}_n$ or ${\rm D}_n$, in which case there are two possible nilpotent orbits.
\end{remark}

\begin{proposition}\label{prop:nilp}
With the above notation, 
$T_x C \setminus \{ 0 \}$ 
is contained in $\cO_{\min}$ or in 
a nilpotent orbit of type $\cO_\summ$.
\end{proposition}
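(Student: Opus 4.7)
The plan is to proceed by cases using Corollary \ref{cor:alter} for simple $G$, together with a direct treatment of the reducible group case $G = H \times H$.

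If $G = H \times H$ with $\sigma(x, y) = (y, x)$, then $\fp = \{(u, -u) \mid u \in \fh\}$, identified with $\fh$ as an $H$-representation under the adjoint action. The unique highest weight line is therefore $\bC \cdot (e_\Theta, -e_\Theta)$ for a highest root vector $e_\Theta$ of $H$, which lies in $\cO_{\min}$ by Definition \ref{def:nilp}(1).

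If $G$ is simple, Corollary \ref{cor:alter} leaves three cases. In case (1), $T_x C = \fg_\alpha$ for a long root $\alpha$; since long roots form a single Weyl orbit and the highest root $\Theta$ is long, $\fg_\alpha \setminus \{0\} \subset G \cdot e_\Theta = \cO_{\min}$. In case (3), $T_x C$ is spanned by $e_\alpha - \sigma(e_\alpha)$ with $\alpha$ strongly orthogonal to $\sigma(\alpha)$ and $G$ simply-laced; writing $\sigma(e_\alpha) = c\, e_{\sigma(\alpha)}$ with $c \neq 0$, this vector is a sum of root vectors for two strongly orthogonal roots that are automatically long, and thus belongs to a nilpotent orbit of type $\cO_\summ$ by Definition \ref{def:nilp}(2)(b).

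The main obstacle is case (2), where $T_x C = \fg_\alpha$ for a short root $\alpha$. This forces $G$ to be of type $B_n$, $C_n$, or $F_4$ (type $G_2$ being excluded by Remark \ref{rem:cases}(5)). I would need to show that $e_\alpha$ is $G$-conjugate to $e_{\beta_1} + e_{\beta_2}$ for some strongly orthogonal long roots $\beta_1, \beta_2$, i.e.\ that the short-root orbit is of type $\cO_\summ$. For the classical types this can be checked by a Jordan-type comparison on the standard representation: in $\mathfrak{so}_{2n+1}$ both $e_{e_1}$ and $e_{e_1 + e_2} + e_{e_1 - e_2}$ act with Jordan type $[3, 1^{2n-2}]$, while in $\mathfrak{sp}_{2n}$ both $e_{e_1 + e_2}$ and $e_{2e_1} + e_{2e_2}$ act with Jordan type $[2, 2, 1^{2n-4}]$. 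For $F_4$ the identification follows either from the Bala--Carter classification or by comparing weighted Dynkin diagrams of the short-root orbit and the relevant $\cO_\summ$-orbit. Putting the three cases together gives the proposition.
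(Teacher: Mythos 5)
Your argument is correct and shares the paper's overall skeleton: the group case and cases (1) and (3) of Corollary \ref{cor:alter} are handled exactly as the paper does (implicitly), and the whole content is concentrated in case (2), the short-root case. There, however, you take a genuinely different route. The paper isolates this as Lemma \ref{lem:orbits} and gives a uniform, type-free argument: after conjugating $\alpha$ to the highest short root $\theta$, it finds a simple root $\beta$ with $\theta\pm\beta$ a pair of strongly orthogonal long roots and then uses the action of $G_\beta\simeq\SL_2$ on the three-dimensional string $\fg_{\theta-\beta}\oplus\fg_\theta\oplus\fg_{\theta+\beta}$ (its adjoint representation), where $[e_\theta]$ and $[e_{\theta-\beta}+e_{\theta+\beta}]$ both lie in the open orbit, the complement of the conic. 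This avoids any appeal to the classification of nilpotent orbits and produces an explicit pair of strongly orthogonal long roots. Your route instead invokes the partition classification of nilpotent orbits in types ${\rm B}_n$ and ${\rm C}_n$ and Bala--Carter (or weighted Dynkin diagrams) for ${\rm F}_4$; this is perfectly valid and arguably more concrete in each classical type, at the cost of being case-by-case and of leaving ${\rm F}_4$ to a table lookup. Two small points you should nail down if you write this up: the claim that $e_{e_1+e_2}+e_{e_1-e_2}$ has Jordan type $[3,1^{2n-2}]$ on $\bC^{2n+1}$ requires checking that its square is nonzero (a priori a sign could make the two cross terms cancel); this follows by restricting to the copy of $\mathfrak{so}_4=\fsl_2\times\fsl_2$ acting on $\bC^2\otimes\bC^2$, where the element acts as $e\otimes 1+1\otimes e'$ with square $2\,e\otimes e'\neq 0$. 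And in type ${\rm B}_n$ one should note that $[3,1^{2n-2}]$ has no even parts, so it determines a single $\SO_{2n+1}$-orbit; since there are \emph{two} orbits of type $\cO_\summ$ in types ${\rm B}_n$ and ${\rm D}_n$, your Jordan-type comparison correctly identifies the short-root orbit with one of them (the one attached to $\{e_1+e_2,\,e_1-e_2\}$), which is all that Proposition \ref{prop:nilp} requires.
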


This follows by combining Corollary \ref{cor:alter}, 
Remark \ref{rem:cases}(5) and the next result.

\begin{lemma}\label{lem:orbits}
Assume that $G$ is not simply laced and not of type $G_2$. 
Let $e_\alpha \in \fg_\alpha \setminus \{ 0 \}$ 
with $\alpha$ a short root. Then $e_\alpha$ belongs to a nilpotent orbit 
of type $\cO_\summ$.
\end{lemma}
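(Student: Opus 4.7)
The plan is to exhaust the possible types of $G$ allowed by the hypothesis and, in each case, either to do a direct Jordan-type computation in the defining representation or to reduce to a rank-$2$ subcase where such a computation is easy. The hypothesis restricts $G$ to simple groups of type $B_n$ ($n \geq 2$), $C_n$ ($n \geq 2$), or $F_4$; in each of these root systems the Weyl group acts transitively on the set of short roots, so it suffices to prove the statement for one convenient short root $\alpha$ per type.

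For $B_n$ and $C_n$ I would work directly in the defining representation. In type $B_n$, realize $G = \SO_{2n+1}$ acting on $V = \bC^{2n+1}$, take $\alpha = \varepsilon_1$, and set $\beta_1 = \varepsilon_1 + \varepsilon_2$ and $\beta_2 = \varepsilon_1 - \varepsilon_2$. Both $\beta_i$ are long, and since $\beta_1 \pm \beta_2 \in \{ 2\varepsilon_1, 2\varepsilon_2 \}$ are not roots of $B_n$, the pair is strongly orthogonal. A short matrix computation in $V$ then shows that, for suitably normalized root vectors, $e_\alpha$ and $e_{\beta_1} + e_{\beta_2}$ both act with Jordan partition $[3, 1^{2n-2}]$; since this partition has no even parts, it determines a single $\SO_{2n+1}$-orbit, so the two elements lie in the same nilpotent orbit. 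The $C_n$ case is dual: for $G = \Sp_{2n}$ take $\alpha = \varepsilon_1 - \varepsilon_2$ short and $\beta_i = 2\varepsilon_i$ long and strongly orthogonal, and check that $e_\alpha$ and $e_{\beta_1} + e_{\beta_2}$ both act on $\bC^{2n}$ with Jordan partition $[2,2,1^{2n-4}]$.

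For $F_4$ I would reduce to the $B_2$ case just treated. The simple roots $\alpha_2$ (long) and $\alpha_3$ (short) of $F_4$ generate a root subsystem of type $B_2$ in which $\alpha_3$ is short, so by transitivity of the Weyl group of $F_4$ on short roots, every short root of $F_4$ lies as a short root in some $B_2$-subsystem $R' \subset R$. Lengths and strong orthogonality are determined by the ambient inner product, so long roots of $R'$ remain long in $R$ and a strongly orthogonal pair in $R'$ is strongly orthogonal in $R$. Applying the $n = 2$ case of the $B_n$ calculation to the rank-$2$ subgroup $G' \subset G$ generated by the root subgroups attached to $R'$ then produces strongly orthogonal long roots $\beta_1, \beta_2 \in R' \subset R$ and an element $g \in G'$ carrying $e_\alpha$ to a scalar multiple of $e_{\beta_1} + e_{\beta_2}$, which places $e_\alpha$ in a $G$-orbit of type $\cO_\summ$. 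The main obstacle is the rank-$2$ Jordan-type computation itself; the rest is formal, and in particular the $F_4$ case requires no new calculation beyond the reduction to $B_2$.
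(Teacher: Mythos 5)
Your argument is correct, but it takes a genuinely different route from the paper's. Both proofs start from the transitivity of the Weyl group on short roots, but the paper then argues uniformly, with no case analysis: it takes $\alpha$ to be the highest short root $\theta$, finds a simple root $\beta$ with $\theta+\beta\in R$, shows that $\theta-\beta$ and $\theta+\beta$ are strongly orthogonal long roots, and then conjugates $[e_\theta]$ to $[e_{\theta-\beta}+e_{\theta+\beta}]$ inside $\bP(\fg_{\theta-\beta}\oplus\fg_\theta\oplus\fg_{\theta+\beta})$ using only the fact that the rank-one group $G_\beta$ acts on this $\bP^2$ (the projectivized adjoint representation of $\SL_2$) with exactly two orbits, the conic of squares and its complement, both named points lying in the open orbit. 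You instead enumerate the admissible types $B_n$, $C_n$, $F_4$, compute Jordan partitions in the defining representations (your partitions $[3,1^{2n-2}]$ and $[2,2,1^{2n-4}]$ are right, and neither splits, so the partition does determine the $\SO_{2n+1}$-, resp.\ $\Sp_{2n}$-, orbit), and reduce $F_4$ to a $B_2$-subsystem subgroup. Your route buys explicitness --- you identify the orbit by its partition --- at the cost of invoking the partition classification of nilpotent orbits in the classical Lie algebras and of handling $F_4$ separately; the paper's $\SL_2$ argument is shorter and type-free. One sentence of yours needs tightening: strong orthogonality is \emph{not} in general inherited from a subsystem (for instance $\varepsilon_1$ and $\varepsilon_2$ are strongly orthogonal inside the $A_1\times A_1$ they span but not inside $B_2$, where $\varepsilon_1\pm\varepsilon_2$ are roots); what saves your $F_4$ reduction is that $\beta_1,\beta_2$ are \emph{orthogonal long} roots, so $\beta_1\pm\beta_2$ has squared length twice the maximal root length and therefore cannot be a root of the ambient system. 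With that fix the reduction is sound, and the conjugating element produced in the subsystem subgroup $G'$ does place $e_\alpha$ in a $G$-orbit of type $\cO_\summ$.
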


\begin{proof}
Since all short roots are in the same orbit under the action 
of the Weyl group, we may assume that $\alpha$ is 
the highest short root $\theta$.

There exists a simple root $\beta$ such that 
$\theta + \beta$ is a root; then $\theta+\beta$ must be 
a long root.  We claim that 
$\scal{\beta^\vee,\theta + \beta} = 2$. 
Indeed,  since $\theta$ is a dominant weight,  we have 
$\scal{\beta^\vee,\theta} \geq 0$ and we get 
$\scal{\beta^\vee,\theta + \beta} \geq \scal{\beta^\vee,\beta} = 2$. 
Since $G$ is not of type $G_2$,  we must have equality: 
$\scal{\beta^\vee,\theta+\beta} = 2$.
  
We get $s_\beta(\theta + \beta) = \theta - \beta$; thus, 
$\theta - \beta$ is a long root. Moreover,  we have
$\theta-\beta +\theta+\beta = 2\theta$ and $\theta+\beta-(\theta-\beta) = 2\beta$. Thus $\theta-\beta$ and $\theta+\beta$ are strongly orthogonal.

We are left to prove that $e_\theta$ and 
$e_{\theta-\beta} + e_{\theta+\beta}$ are in the same 
$G$-orbit in $\fg$. 
The group $G_{\beta}$ (generated by $U_{\pm \beta}$)
acts on $\fg$ and stabilizes the subspace 
$V = \fg_{\theta-\beta}\oplus\fg_\theta\oplus\fg_{\theta+\beta}$ 
on which it acts via the adjoint representation. 
Moreover, $G_{\beta}$ acts with two orbits in the
projective space $\bP(V)$: 
the minimal orbit and its complement. 
The point $[e_\theta]$ is in this last orbit, 
which also contains 
$[e_{\theta-\beta} + e_{\theta+\beta}]$.
This yields the assertion, since nilpotent
orbits are stable under non-trivial homotheties.
\end{proof}

\begin{remark}
In Proposition \ref{prop-hwc} and Corollary \ref{cor-hwc}, we will give a more precise statement describing the nilpotent orbit containing $T_xC$ for $C \in \cK_x$,  where $\cK$ is a family of minimal rational curves.
\end{remark}

\section{Complete symmetric varieties}
\label{sec:css}

In this section, we recall the notions
of wonderful symmetric varieties and complete symmetric varieties and we describe their relations, especially how to compare their respective families of minimal
rational curves. 
We then recall the result of  \cite{BF} about such families on wonderful compactifications of groups. We end the section by a description of the families of minimal rational curves on complete symmetric varieties in the group type, in the Hermitian type, and in some cases of simple type.

\subsection{Wonderful and complete symmetric varieties}
\label{subsec:rwsv}

We use the notation of Subsections \ref{subsec:ss}
and \ref{subsec:nss}. 
In particular,  $G$ denotes a connected reductive group,
$H$ a symmetric subgroup relative to an involution
$\sigma$,  and $N$ the normalizer of $H$ in $G$.
We denote by $x$ (resp.~$x_\ad$) the base point of
the homogeneous space $G/H$ (resp.~$G/N$).
The natural morphism 
\[ \pi : G/H \longrightarrow G/N,
\quad x \longmapsto x_\ad \]
is a principal bundle under $N/H$.  Moreover, $N/H$ is
diagonalizable by Lemma \ref{lem:nors}. 
We have the ``Stein factorization'' of $\pi$ as 
\[ G/H \stackrel{\pi'}{\longrightarrow}
G/N^0 H \stackrel{\eta}{\longrightarrow} G/N, \]
where $\pi'$ is a principal bundle under the torus
$N^0 H/H \simeq Z^0/H \cap Z^0$ (Lemma \ref{lem:nor}),
and $\eta$ is a principal bundle under $N/N^0 H$,
a finite abelian group (Lemma \ref{lem:nors}).

By \cite{decp}, the adjoint symmetric space 
$G/N = G_{\ad}/G_{\ad}^{\sigma}$  
admits a wonderful equivariant embedding that we denote 
by $X_\ad$,  with base point $x_\ad$.  We say that
$X_{\ad}$ is a \emph{wonderful symmetric variety}.

We now recall from \cite[Section 3.3]{LP} how to obtain 
$X_{\ad}$ from the wonderful 
$G_{\ad} \times G_{\ad}$-equivariant embedding
$\overline{G_\ad}$ of $G_\ad 
= (G_{\ad} \times G_{\ad})/\diag(G_{\ad})$. 
We begin with a general construction: the morphism 
\[ G \longrightarrow G, \quad 
g \longmapsto \sigma(g) g^{-1} \]
factors through a closed immersion 
$\iota : G/G^{\sigma} \to G$ 
which sends the base point $x$ to the neutral element 
$e$.  The image of $\iota$ is a connected component of 
the fixed locus $G^{-\sigma}$, where $- \sigma$ 
denotes the involution $g \mapsto \sigma(g^{-1})$ 
of $G$ (viewed as a variety); see
\cite[Lemma 2.4]{Richardson} 
for these results. Note that $\iota$ is 
equivariant for the natural action of $G$ on 
$G/G^{\sigma}$,  and the $G$-action on itself
via twisted conjugation, defined by 
$g_1 \cdot  g_2 := \sigma(g_1 ) g_2 g_1^{-1}$. 
Also, the differential of $\iota$ at $x$
is identified with the inclusion 
$\fg^{-\sigma} \hookrightarrow \fg$.

This construction applies to the involution $\sigma$
of $G_{\ad}$; moreover, $-\sigma$ extends uniquely 
to an involution of $\overline{G_\ad}$ that we still 
denote by $-\sigma$,  and $\iota$ extends uniquely 
to a closed immersion 
\[ \overline{\iota}: X_{\ad} 
\hookrightarrow \overline{G_\ad} \]
which identifies $X_{\ad}$ with a component of 
$(\overline{G_\ad})^{-\sigma}$. 

\begin{definition}
A \emph{complete symmetric variety} is a smooth
projective equivariant embedding $(X,x)$ of $G/H$ 
that is toroidal in the sense of 
\cite[Section 29]{Timashev}), i.e.,
the morphism $\pi : G/H \to G/N$ 
extends to a morphism $X \to X_\ad$. 
We still denote by 
\[ \pi : X \longrightarrow X_\ad \] 
this extension, 
which is of course unique and hence $G$-equivariant. 
(We do not assume that $X$ has a unique closed $G$-orbit).

If $X = X_\ad$ is adjoint we will call $X$ of group, simple, Hermitian or Hermitian exceptional type if $G/H$ is of group, simple, Hermitian or Hermitian exceptional type.
\end{definition}

For any complete symmetric variety $X$, 
the boundary $\partial X = X \setminus X^0$ is a divisor
with simple normal crossings. We will use the 
following relation between the canonical divisors
of $X$ and $X_{\ad}$:

\begin{lemma}\label{lem:can}
With the above notation, we have
the equality of divisor classes
\[ K_X + \partial X = 
\pi^*(K_{X_{\ad}} + \partial X_{\ad}). \]
\end{lemma}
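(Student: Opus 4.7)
The plan is to establish the equality by exhibiting a natural short exact sequence of locally free sheaves on $X$:
\begin{equation*}
0 \longrightarrow \pi^*\Omega^1_{X_\ad}(\log \partial X_\ad) \longrightarrow \Omega^1_X(\log \partial X) \longrightarrow \cQ \longrightarrow 0,
\end{equation*}
with $\cQ$ locally free of rank $\dim X - \dim X_\ad = \dim N/H$ and having trivial determinant. Taking determinants then yields the asserted equality of divisor classes.

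The map $\pi^*\Omega^1_{X_\ad}(\log \partial X_\ad) \to \Omega^1_X(\log \partial X)$ exists because $\pi$ is a morphism of smooth varieties sending $\partial X$ into $\partial X_\ad$. On the open orbit $X^0 = G/H$, the restriction $\pi: G/H \to G/N$ is a principal bundle under the diagonalizable group $N/H$ (Lemma \ref{lem:nors}), so the cokernel $\cQ|_{X^0}$ is the trivial sheaf $\cO_{X^0} \otimes \Lie(N/H)^*$, which visibly has trivial determinant.

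To extend this description across the boundary I would use the local structure theorem for toroidal spherical embeddings (see \cite[\S 29]{Timashev}): around any point of $X_\ad$ there is a $B$-stable affine chart equivariantly isomorphic to $\R_u(P^-) \times Z_\ad$, where $P^-$ is a suitable parabolic and $Z_\ad$ is a smooth affine toric variety for a quotient of the split torus $T_\s$, with toric boundary matching $\partial X_\ad$. The corresponding chart of $X$ has the form $\R_u(P^-) \times Z$ for a smooth toric variety $Z$, and $\pi$ restricts to the identity on the $\R_u(P^-)$ factor times an equivariant morphism of smooth toric varieties $Z \to Z_\ad$. For smooth toric varieties with toric boundary, the log cotangent sheaf is canonically trivial (generated by the logarithmic differentials $dt/t$ of a basis of characters), and the same then holds for the relative log cotangent sheaf of $Z \to Z_\ad$, which is locally free of rank $\dim Z - \dim Z_\ad = \dim N/H$. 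Patching these local descriptions yields the required short exact sequence globally on $X$.

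The main obstacle is verifying the compatibility of the local toric models of $X$ and $X_\ad$ under $\pi$; this rests on the colored-fan description of toroidal symmetric varieties together with the observation that $\pi$ corresponds at the level of cocharacter lattices to a surjective homomorphism with diagonalizable kernel, a consequence of Lemma \ref{lem:nors}. An alternative route, which bypasses the log-geometry machinery, is to invoke the canonical divisor formula for toroidal spherical varieties $-K_Y = \partial Y + \sum_D m_D D$ (summed over the colors of $Y$), with multiplicities $m_D$ depending only on the open orbit, and then check directly that pullback along the smooth morphism $\pi$ identifies the color parts of $-K_X - \partial X$ and $-K_{X_\ad} - \partial X_\ad$.
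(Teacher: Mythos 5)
Your proposal is correct, but it takes a genuinely different route from the paper. The paper's proof is a two-line citation: by de Concini--Procesi, $X_\ad$ is the $G$-orbit closure of $[\fh]$ in the Grassmannian of subspaces of $\fg$, and \cite[Prop.~30.8]{Timashev} identifies $K_{X_\ad}+\partial X_\ad$ with the hyperplane class $H$ of the resulting Pl\"ucker embedding and $K_X+\partial X$ with $\pi^*(H)$ --- in other words, the paper uses exactly the ``alternative route'' you sketch at the end (the canonical-class formula for toroidal spherical varieties, whose color multiplicities depend only on the open orbit), in packaged form. Your primary argument via the exact sequence of logarithmic cotangent sheaves is an honest alternative, self-contained modulo the local structure theorem, and it even gives slightly more (log smoothness of $\pi$); it requires the two verifications you correctly flag. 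The chart compatibility does hold: $\pi$ maps the $P$-stable chart of $X$ attached to a $G$-orbit into the chart of $X_\ad$ attached to the image orbit, and the two product decompositions match by $\R_u(P)$-equivariance together with the uniqueness in the local structure theorem. You should also say a word about why the local trivializations of $\det\cQ$ glue to a \emph{global} trivialization rather than merely exhibiting $\det\cQ$ as a line bundle: this works because the trivializing section $\bigwedge_i dt_i/t_i$ is the canonical $G$-invariant one extending $\det\Lie(N/H)^\vee$ over $X^0$, hence is independent of the chart. Finally, note that the toric morphisms $Z\to Z_\ad$ in your local models need not be equidimensional (fibres over the boundary can jump), but this is harmless since the log cotangent computation only uses the injection of character lattices. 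Either of your routes is a valid replacement for the paper's citation; what the paper's choice buys is the explicit realization of $K_{X_\ad}+\partial X_\ad$ via the Grassmannian embedding, which is not needed for the equality itself.
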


\begin{proof}
Recall from \cite{decp} that $X_{\ad}$ is isomorphic 
to the $G$-orbit closure of $\fh$ in the Grassmannian
of subspaces of $\fg$. Moreover, 
$- K_{X_{\ad}} - \partial X_{\ad}$ is the hyperplane
class $h$ in the corresponding Pl\"ucker embedding
(see \cite[Proposition 30.8]{Timashev}). Also,
$- K_X - \partial X = \pi^*(h)$ by loc.~cit. 
\end{proof}

We will also use the following description of the 
general fibers of $\pi$; by equivariance, it suffices
to describe the fiber at $x$.  In view of the Stein 
factorization,  $\pi$ is the composition of 
a contraction $\pi' : X \to X'$ as discussed after
Remark  \ref{rem:push},  and a finite surjective 
equivariant morphism $\eta : X' \to X_\ad$.
The pair $(X', x' := \pi'(x))$ is a normal projective
equivariant embedding (possibly singular) of 
$G/N^0 H = G/Z^0 H$ (a symmetric space under $G/Z^0$). 
Moreover,  the fiber of $\pi$ at $x$ is isomorphic to 
the associated bundle $N \times^{N^0 H} F$, where 
$F$ denotes the fiber of $\pi'$ at $x$.  The group 
$N^0 H$ acts on $F$ via its quotient torus 
$N^0 H/H \simeq N^0/H \cap N^0 \simeq Z^0/H \cap Z^0$
(where the second isomorphism follows from 
Lemma \ref{lem:nor}), 
and $F$ is a smooth projective toric variety 
under that torus.

\subsection{Relation between families of minimal rational curves}
\label{subsec:mfcsv}

In this subsection,  we consider a complete symmetric 
variety $X$ with base point $x$.  We will reduce somehow
the description of families of minimal rational curves
on $X$ to the cases where $X$ is a smooth projective
toric variety or a wonderful symmetric variety.  
A key notion is that of a \emph{highest weight curve},  
i.e.,  an irreducible curve $C \subset X$ through $x$ 
which is stable and not fixed pointwise by the Borel 
subgroup $B_H$.
Equivalently,  $C \cap X^0$ is a highest weight curve
in the sense of Subsection \ref{subsec:hwc}.

By Corollary \ref{cor:root},  we have 
$C = \overline{U_{\alpha} \cdot x}$ 
for some root $\alpha$.  In view of
\cite[Lemma 2.1 (i),  Lemma 2.4]{BF}, this yields:

\begin{lemma}\label{lem:hwc}
Let $C$ be a highest weight curve.
Then $C$ is an embedded free rational curve.
\end{lemma}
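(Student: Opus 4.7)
The plan is to verify the three properties ``rational'', ``embedded'', ``free'' separately, relying on the local structure of toroidal embeddings. Throughout, by Corollary \ref{cor:root} write $C = \overline{U_\alpha \cdot x}$ for some root $\alpha$ with $\alpha|_{T_H} = \lambda \neq 0$. Because $\lambda \neq 0$, the stabilizer of $x$ in $U_\alpha \simeq \bG_a$ is trivial, so $U_\alpha \cdot x \simeq \bA^1$ sits as a dense open in $C$; in particular $C$ is rational. The $T_H$-action on $\bA^1$ by the character $\lambda$ extends to $C$, and since $X^0 = G/H$ is affine, the complement $C \setminus U_\alpha \cdot x$ must be nonempty; being zero-dimensional and $T_H$-fixed, it consists of a single point $x' \in \partial X$.

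To upgrade $C$ to a smooth $\bP^1$, I would apply the local structure theorem for toroidal embeddings at $x'$: there is a $T_H$-stable étale neighborhood of $x'$ on which $(X, \partial X)$ is isomorphic to the product of a homogeneous piece with an affine toric variety under a subtorus of $T_H$, with $\partial X$ corresponding to the toric boundary. The $T_H$-stable curve $C$ then corresponds to a one-dimensional torus-orbit closure in the toric factor, which is smooth at the torus-fixed point. Hence $C$ is smooth at $x'$, and being a smooth rational curve compactifying $\bA^1$ we conclude $C \simeq \bP^1$.

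For the immersion (embedded) property of the normalization $f \colon \bP^1 \to X$, the differential at $0 \in \bA^1$ is nonzero since $T_x C = \fp_\lambda$ is a line by Lemma \ref{lem:curve}(2); by $U_\alpha$-equivariance, $df$ is then nonzero at every interior point of $\bA^1 = f^{-1}(X^0)$. At $\infty$ the same toroidal chart used above identifies $f$ near $\infty$ with the inclusion of a coordinate axis in the toric factor, which is manifestly an immersion. For freeness, I would invoke the fact that on any toroidal spherical $G$-variety the infinitesimal action yields a surjection
\[
\fg \otimes \cO_X \twoheadrightarrow T_X(-\log \partial X).
\]
Pulling back along $f$ exhibits $f^* T_X(-\log \partial X)$ as a quotient of a trivial bundle on $\bP^1$, so every Grothendieck summand has non-negative degree. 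The inclusion $f^* T_X(-\log \partial X) \hookrightarrow f^* T_X$ has cokernel a torsion sheaf supported at $\infty$, built from the normal bundles of the components of $\partial X$ that meet $C$ at $x'$. Because the whole setup is $T_H$-equivariant and $\bP^1$ is acted on by $\bG_m$ via $\lambda$, I would invoke the Bia\l ynicki-Birula classification of equivariant bundles on $\bP^1$ to see that such an extension of a non-negatively split bundle by a $\bG_m$-equivariant torsion sheaf supported at a single fixed point can only increase the degrees of the Grothendieck summands, yielding global generation of $f^* T_X$.

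The main obstacle will be the careful treatment at the boundary point $x'$: smoothness of $C$, the immersion property of $f$ there, and the passage from $f^* T_X(-\log \partial X)$ to $f^* T_X$ with control on each Grothendieck summand, all hinge on the toroidal local model and the equivariant structure. Once these local calculations are in hand, the three claims assemble into the statement of the lemma.
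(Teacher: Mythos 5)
Your decomposition into ``rational / embedded / free'' is reasonable, and the freeness part of your argument is correct, though more elaborate than necessary: since $\fg\otimes\cO_X\to T_X$ is already surjective over the open orbit $X^0$, the pull-back $f^*T_X$ is generically globally generated, and on $\bP^1$ this forces every Grothendieck summand to have non-negative degree; there is no need to pass through $T_X(-\log\partial X)$ or to invoke an equivariant classification of bundles (your torsion-quotient step is salvageable — for a full-rank subsheaf of a bundle on $\bP^1$ the minimal summand degree cannot drop — but the appeal to Bia\l ynicki-Birula does no work). This is in fact exactly how the paper handles freeness, via \cite[Lem.~2.1(i)]{BF}. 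Two small local slips: the complement $C\setminus U_\alpha\cdot x$ is a single point because the normalization of $C$ is a smooth rational curve containing $\bA^1$ as a dense open subset, not because a finite $T_H$-stable set must be a singleton; and $df\neq 0$ on the interior follows already from Lemma \ref{lem:curve}(2), which identifies $C\cap X^0$ with the line $\fp_\lambda$.

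The genuine gap is at the boundary point $x'=f(\infty)$. You assert a ``$T_H$-stable \'etale neighborhood of $x'$ on which $(X,\partial X)$ is a product of a homogeneous piece with an affine toric variety under a subtorus of $T_H$, with $C$ a coordinate-axis closure.'' No such local model is available as stated: the toroidal/local structure theory of $X$ (as used in \S\ref{subsection:classes-vmrt}) is adapted to the maximal $\sigma$-split torus $S\subset T_{\s}$ and a Borel subgroup of $G$ containing it, whereas $C$ is stabilized by $T_H$ and $B_H$, which are of \emph{fixed} type; these two tori are not conjugate in general, and nothing in your argument reconciles them. Moreover, the statement you want to extract from the model — that a one-dimensional $T_H$-orbit closure through $x'$ is smooth there and that $f$ is an immersion at $\infty$ — is false for torus-orbit closures in general (weights $0,2,3$ on $\bP^2$ already give a singular closure), so the identification of $C$ with a coordinate axis is precisely the content that must be proved, not assumed. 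The paper sidesteps this by reducing, via Corollary \ref{cor:root}, to the root curves $\overline{U_\alpha\cdot x}$ and quoting \cite[Lem.~2.4]{BF}, where embeddedness is established by a direct analysis (essentially inside the closure of the $G_\alpha$-orbit); to complete your proof you would need either that reduction or an explicit verification of smoothness and of $df_\infty\neq 0$ at the boundary point.
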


We now obtain an alternative for families of minimal 
rational curves:

\begin{lemma}\label{lem:red}
Let $\cK$ be a family of minimal rational curves on $X$.  

\begin{enumerate}

\item
Either each curve in $\cK$ is contracted by $\pi$,
or $\cK_x$ contains a highest weight curve.

\item
In the former case, $\cK_x$ is 
a family of minimal rational curves on the toric variety $F$.
Moreover,  the tangent map $\tau_x$ is an isomorphism
of $\cK_x$ with a linear subspace of $\bP(\fg^{-\sigma} \cap \fz)$. 

\end{enumerate}

\end{lemma}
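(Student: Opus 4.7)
The plan is to treat the two clauses separately, using the Borel Fixed Point Theorem and Lemma \ref{lem:curve} for (1), and the contraction Lemma \ref{lem:cont} combined with the toric classification of \cite{CFH} for (2). For part (1), I would start from the fact that $\cK_x$ is a non-empty projective $H$-variety (projective by minimality, and $H$-stable as in \S\ref{subsec:ahv}). The Borel Fixed Point Theorem then gives a $B_H$-fixed point in each component of $\cK_x$, i.e.\ an irreducible $B_H$-stable curve $C \subset X$ through $x$. I would apply the dichotomy of Lemma \ref{lem:curve}: either $C \cap X^0 \subset Z^0 \cdot x$, or $C$ is a highest weight curve. In the first alternative, $C$ lies in the closure of $Z^0 \cdot x$, which sits in a single fiber of $\pi$, since $\pi$ factors through $G/H \to G/Z^0 H$ and $Z^0 \cdot x$ is collapsed by that first quotient. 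Thus $C$, and hence the whole family $\cK$, would be contracted by $\pi$, contradicting the hypothesis. So $C$ must be a highest weight curve.

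For part (2), suppose $\pi$ contracts $\cK$. I would pass to the contraction $\pi': X \to X'$ from the Stein factorization $\pi = \eta \circ \pi'$ recalled at the end of \S\ref{subsec:rwsv}: since $\eta$ is finite, $\pi'$ also contracts $\cK$, and its fiber $F$ at $x' := \pi'(x)$ is the smooth projective toric variety under $N^0 H / H \simeq Z^0/Z^0 \cap H$ described there. The hypothesis of Lemma \ref{lem:cont} is that the isotropy $I = G_{x'} = N^0 H$ normalize $H$, which is immediate from $N = \N_G(H)$ (Lemma \ref{lem:nors}(3)). Lemma \ref{lem:cont} then yields that $\cK_x$ is irreducible and equals $\cL_x$ for a unique covering family $\cL$ on $F$; since $\cK_x = \cL_x$ is projective, $\cL$ is minimal on $F$. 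I would finally invoke the classification of minimal families on smooth projective toric varieties \cite{CFH}, which identifies $\tau_x$ with an isomorphism of $\cL_x$ onto a linear subspace of $\bP(T_x F)$. The identification $T_x F = \fp \cap \fz$ within $T_x X = \fp$ then follows from $\Lie(N^0 H) = \fz + \fh$ (Lemma \ref{lem:nor}(3)) and the $\sigma$-stable splitting $\fz = (\fz \cap \fh) \oplus (\fz \cap \fp)$.

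The main obstacle is the bookkeeping in part (2): correctly identifying $F$ as a toric variety, checking the normalizer hypothesis of Lemma \ref{lem:cont}, and locating $T_x F$ inside $T_x X = \fp$ via the Stein factorization. Once these identifications are in place and \cite{CFH} is cited as a black box, the conclusion drops out. In part (1), the only delicate point is excluding the central alternative of Lemma \ref{lem:curve}, which rests on the simple observation that $\pi$ factors through $G/Z^0 H$.
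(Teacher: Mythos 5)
Your proposal is correct and follows essentially the same route as the paper: part (1) is Borel's fixed point theorem plus the dichotomy of Lemma \ref{lem:curve}, and part (2) is Lemma \ref{lem:cont} applied to the Stein factorization of $\pi$ followed by the toric classification of \cite{CFH}. The details you supply (the normalizer hypothesis via $N=\N_G(H)$, and the identification $T_xF = \fp\cap\fz$ from $\Lie(N^0H)=\fz+\fh$) are exactly the steps the paper's terse proof leaves implicit, and they are carried out correctly.
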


\begin{proof}
(1)  In view of Borel's fixed point theorem, 
$\cK_x$ contains a $B_H$-fixed point,  i.e.  
a $B_H$-stable curve $C$.
Then either $C$ is contained in $Z^0 \cdot x$,
or $C$ is a highest weight curve by Lemma 
\ref{lem:curve}(1).

(2) The first assertion is a consequence of Lemma 
\ref{lem:cont}. The second assertion follows from  
\cite[Corollary 2.5]{CFH}. 
\end{proof}


\begin{proposition}\label{prop:min}
Let $X = X_{\ad}$ be a wonderful symmetric variety.

\begin{enumerate}

\item 
A family $\cK$ of rational curves on $X$
consists of minimal curves if and only if 
$\cK_x$ contains a highest weight curve.

\item
If $X$ is indecomposable and not Hermitian,  
then it has  a unique highest weight curve
(and hence a unique family of 
minimal rational curves).

\item
If $X$ is Hermitian, then it has two highest weight curves.
In the non-exceptional case,  these are exchanged
by any element of $\N_G(L) \setminus L$, 
and there is a unique family of minimal rational 
curves.  In the exceptional case,  there are two such
families,  and they are exchanged by an automorphism 
of $X$ fixing $x$. 

\end{enumerate}
\end{proposition}

\begin{proof}
(1) If $\cK$ is a family of minimal rational curves,  
then $\cK_x$ contains a highest weight curve 
by Lemma \ref{lem:red}.

For the converse,  using Lemma \ref{lem:product} and the 
structure of adjoint symmetric spaces (Subsection \ref{subsec:hwc}), 
we may assume that $X$ is indecomposable. 

(2) Since $X$ is not Hermitian, there is a unique highest weight 
curve $C$ by Subsection \ref{subsec:ass}.
Let $\cL$ be a family of minimal rational curves on $X$.
Then $C \in \cL$ by the above step, and hence $\cK = \cL$.

(3) Recall that $\fg^{-\sigma} = \fu_P \oplus \fu_Q$ with
the notation of Subsection \ref{subsec:ass}.
As a consequence, there are two highest weight curves, 
with highest weights $\Theta$ and $- \alpha$.
Consider a Chevalley involution of $(G,T)$,
i.e.,  an involution $\tau$ of $G$ such that
$\tau(t) = t^{-1}$ for all $t \in T$.  Then $\tau$
commutes with $\sigma = \Int(c)$ (see Subsection \ref{subsec:ass} 
again for the definition of $c$),  
since $\tau(c) = c^{-1} = c z$ for some $z \in Z$. 
Thus,  $\tau$ induces an involution $\tau_X$ of $X$ 
fixing $x$.  Also,  $\tau$ sends every root to its opposite;
in particular,  $\tau(\Theta) = - \Theta$.
Choose a representative $g \in \N_L(T)$
of the longest element of the Weyl group of 
$(L,T)$.  Then $g \circ \tau_X$ is an automorphism
of $X$ which fixes $x$ and exchanges the two
highest weight curves (indeed $g\circ\tau_X$ 
maps $\Theta$ to $-\alpha$: 
the involution $\tau_X$ maps the weights of 
$\fu_P$ to the weights of $\fu_Q$ and reverses the order. 
Since $g$ is a representative of 
$w_{0,L}$,  the longest element in $W_L$, 
it reverses the order of the weights in $\fu_P$ and $\fu_Q$; 
thus, $g \circ \tau_X$ maps the highest weight of $\fu_P$ 
to the highest weight of $\fu_Q$).  So each of these curves
is contained in a family of minimal rational curves,  
and hence $\cK$ is such a family.

This completes the proof of the first assertion.
For the second assertion,  it only remains to show 
that there are two families of minimal rational curves
if $X$ is Hermitian exceptional. In this case,  
the map $G/L \to G/P$ extends to a morphism 
$f_P : X \to G/P$. (Indeed, we may view $G/P$ as the closed
$G$-orbit in the projectivization of a simple $G$-module 
$V$ with fundamental highest weight. Let $D$ be the pull-back 
of the $B$-stable hyperplane in $\bP(V)$ under the resulting 
morphism $G/L \to G/P \to \bP(V)$. Then the closure
$\overline{D} \subset X$ is a prime $B$-stable divisor,
and hence the translates $g \cdot \overline{D}$,
where $g \in G$, have no common point as $X$ is toroidal.
The corresponding base-point-free linear system
yields a morphism $X \to \bP(V)$ extending
$G/L \to \bP(V)$).
Moreover, the morphism $f_P$ contracts 
the highest weight curve with weight 
$\Theta$ but not the other one.
So these two curves cannot be in the
same family. In the non-exceptional case, 
they are exchanged by any element of
$\N_G(L) \setminus L$.
\end{proof}

\begin{proposition}\label{prop:red}
Let $\cK$ be a family of minimal rational curves 
on $X$ containing a highest weight curve $C$.
 
\begin{enumerate} 
 
\item
$\cK_x$ consists of embedded free curves
(in particular, of smooth curves). 
Moreover,  $\cK_x$ is smooth and equidimensional, 
of dimension $- K_X \cdot C - 2$. 

\item
There is a unique family of minimal rational curves
$\cL$ on $X_{\ad}$ and a commutative diagram of 
$H$-equivariant rational maps
\[ \xymatrix{
\cK_x \ar[r]^{\pi_{*,x}} \ar[d]_{\tau_x} 
& \cL_{x_{\ad}} \ar[d]^{\tau_{x_{\ad}} }\\
\bP(T_x X) \ar@{-->}[r]^-{d \pi_x} 
& \bP(T_{x_{\ad}} X_{\ad})\\
}\]
where $\tau_x$ and $\tau_{x_{\ad}}$ are finite and
birational onto their image, 
and $\pi_{*,x}$ is a finite morphism. 
If $\pi$ is birational, then $\pi_{*,x}$ is finite and 
birational onto its image as well.

\item
We have
$1 \leq \partial X \cdot C \leq 
\partial X_{\ad} \cdot \pi(C)$. 
Moreover, 
$\partial X \cdot C = \partial X_{\ad} \cdot \pi(C)$
if and only if the image of $\pi_{*,x}$ is a union of
components of $\cL_{x_{\ad}}$.

\item
If each connected component of $\cL_{x_{\ad}}$ is 
a unique $N^0$-orbit,  then
$\pi_{*,x}$ sends each component of $\cK_x$
isomorphically to a component of $\cL_{x_{\ad}}$.
\end{enumerate}

\end{proposition}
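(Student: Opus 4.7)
My strategy is to use the highest weight curve $C$ as a geometric anchor and propagate its properties across $\cK_x$ via $H$-equivariance and Borel's fixed point theorem. For part (1), Lemma \ref{lem:hwc} gives that $C$ is embedded and free, so $[C]$ is a smooth point of $\cK_x$ of local dimension $-K_X \cdot C - 2$ by standard deformation theory. The family $\cK$ is not contracted by $\pi$: writing $C = \overline{U_\alpha \cdot x}$ (Corollary \ref{cor:root}), contraction would force $U_\alpha \subset N$, hence $\fg_\alpha \subset \fn = \fz + \fh$ and then $\fg_\alpha \subset \fh$ since $\fg_\alpha$ has non-zero weight, contradicting $T_x C \neq 0$. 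By Borel's fixed point theorem and Lemma \ref{lem:red}, every component of $\cK_x$ contains a highest weight curve, so the dimension is uniform; smoothness everywhere follows from the smoothness of $\cU_x$ (Lemma \ref{lem:cov}) together with $\rho_x : \cU_x \to \cK_x$ being a finite birational morphism to a normal target. For part (2), $\pi(C)$ is $B_H$-stable and passes through $x_\ad$; since $Z(G) \subset N$ acts trivially on the adjoint variety $X_\ad$, Lemma \ref{lem:curve} forces $\pi(C)$ to be a highest weight curve of $X_\ad$. Proposition \ref{prop:min} then yields a minimal family $\cL$ containing $\pi(C)$, unique given $\cK$ by irreducibility, and Lemma \ref{lem:push} provides the commutative diagram with $H$-equivariance. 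Finiteness of $\pi_{*,x}$ follows from the principal $N/H$-bundle structure of $\pi : X^0 \to X_\ad^0$: a curve $C' \in \cK_x$ with $\pi(C') = D$ is determined on $D \cap X_\ad^0$ by a section of this torsor, and only finitely many such sections pass through $x$ with the prescribed numerical class. The additional statement when $\pi$ is birational is Remark \ref{rem:push}.

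For part (3), the lower bound $\partial X \cdot C \geq 1$ holds because the affine variety $X^0$ cannot contain the complete curve $C$. The upper bound uses that $\pi\vert_C$ has degree one: the common $U_\alpha$-parametrization identifies $C = \overline{U_\alpha \cdot x}$ with $\pi(C) = \overline{U_\alpha \cdot x_\ad}$. By the projection formula, $\pi^*\partial X_\ad \cdot C = \partial X_\ad \cdot \pi(C)$. Since $\pi^{-1}(X_\ad^0) = X^0$ by $G$-orbit structure, $\pi^*\partial X_\ad$ is supported on $\partial X$ with all coefficients $\geq 1$, so $\pi^*\partial X_\ad - \partial X$ is effective; intersecting with $C$, which is not contained in $\partial X$, gives $\partial X \cdot C \leq \partial X_\ad \cdot \pi(C)$. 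For the equality criterion, Lemma \ref{lem:can} rewrites this difference as $K_X - \pi^* K_{X_\ad}$, so $\partial X \cdot C = \partial X_\ad \cdot \pi(C)$ is equivalent to $K_X \cdot C = K_{X_\ad} \cdot \pi(C)$, which by the dimension formula in (1) applied to both $\cK$ and $\cL$ amounts to $\dim \cK_x = \dim \cL_{x_\ad}$. Since $\pi_{*,x}$ is finite, $\dim \pi_{*,x}(\cK_x) = \dim \cK_x$ matches $\dim \cL_{x_\ad}$ exactly when the image is a union of components.

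For part (4), fix a component $\cK_x^{(i)}$ with highest weight curve $C_i$. Because $Z^0 \subset Z(G)$ acts trivially on $X_\ad$ and $N^0 = Z^0 H^0$, the $N^0$-orbits on $\cL_{x_\ad}$ coincide with the $H^0$-orbits, so by hypothesis each component of $\cL_{x_\ad}$ is a single $H^0$-orbit. The irreducible, closed, $H^0$-invariant image $\pi_{*,x}(\cK_x^{(i)})$ lies in one component $\cL_{x_\ad}^{(j)}$, and being a non-empty closed subset of a homogeneous space it must equal the whole component. To upgrade surjectivity to an isomorphism I compute the fiber over $\pi(C_i)$: any $C' \in \cK_x^{(i)}$ with $\pi(C') = \pi(C_i)$ has $C' \cap X^0$ a dense open of $C'$ mapping isomorphically onto $U_\alpha \cdot x_\ad$ (since $\pi\vert_{C'}$ has degree one like $\pi\vert_C$), hence is a section through $x$ of the principal $N/H$-bundle $\pi^{-1}(U_\alpha \cdot x_\ad) \cap X^0 \to U_\alpha \cdot x_\ad$. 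Since $N/H$ is diagonalizable by Lemma \ref{lem:nors}.3 and $\bA^1 \cong U_\alpha \cdot x_\ad$ admits no non-constant morphism to a diagonalizable group, such sections are constants, with only the trivial one passing through $x$; thus $C' \cap X^0 = U_\alpha \cdot x$ and $C' = C_i$. By $H^0$-equivariance all fibers are singletons, so $\pi_{*,x}\vert_{\cK_x^{(i)}}$ is a finite bijection onto the smooth variety $\cL_{x_\ad}^{(j)}$ and hence an isomorphism by Zariski's main theorem. The main obstacle throughout is the fiber analysis for $\pi$, handled uniformly via the identity $\pi^{-1}(X_\ad^0) = X^0$ and Lemma \ref{lem:can}.
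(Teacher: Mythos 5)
Your overall skeleton matches the paper's, but several steps rest on claims that are unjustified or false. The most serious is in part (3): you assert that $\pi^{-1}(X_{\ad}^0) = X^0$, hence that $\pi^*\partial X_{\ad} - \partial X$ is effective. This is false whenever $\fz \cap \fp \neq 0$ (e.g.\ $G = \GL_n$, $H = \OO_n$): the general fiber of $\pi$ is then a positive-dimensional projective toric variety $F$ (see \S\ref{subsec:rwsv}), its boundary lies in $\partial X \cap \pi^{-1}(x_{\ad})$, and the corresponding $G$-stable divisors of $X$ dominate $X_{\ad}$; they occur in $\partial X$ but not in $\pi^*\partial X_{\ad}$, so the difference has negative coefficients there. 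The inequality $\partial X \cdot C \leq \partial X_{\ad}\cdot \pi(C)$ is not a positivity statement about divisors: as in the paper, it follows by combining Lemma \ref{lem:can} with $K_X \cdot C \geq K_{X_{\ad}}\cdot \pi(C)$, which in turn follows from $\dim \cK_x \leq \dim \cL_{x_{\ad}}$ because $\pi_{*,x}$ is finite. You actually carry out exactly this computation in your equality-criterion paragraph, so the repair is to run it for the inequality as well and delete the effectivity claim.

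There are two further gaps. In part (1) your smoothness argument is circular: deducing smoothness of $\cK_x$ from smoothness of $\cU_x$ via the finite birational $\rho_x$ requires normality of the \emph{target} $\cK_x$, which is precisely what is unknown (a finite birational map from a smooth variety onto a nodal one is the standard counterexample). The correct use of Borel's theorem is to apply it to the closed $B_H$-stable complement of $\cK_{\emfr,x}$ in $\cK_x$: every $B_H$-fixed point of $\cK_x$ is a highest weight curve (Lemma \ref{lem:curve}, since $\cK$ is not contracted), hence embedded and free (Lemma \ref{lem:hwc}), so that complement has no fixed point and is empty; thus $\cK_x = \cK_{\emfr,x}$ is smooth. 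Finally, your torsor-section counts in (2) and (4) are incomplete for the same reason as (3): you need $C'\cap X^0$ to map isomorphically \emph{onto} $D\cap X^0_{\ad}\simeq \bA^1$, but $C'$ may meet the horizontal part of $\partial X$ lying over $X^0_{\ad}$, in which case you only obtain a section over a proper open subset of $\bA^1$, to which a diagonalizable group admits many non-constant morphisms; you also never verify that $\pi_{*,x}$ and $\tau_{x_{\ad}}$ are morphisms rather than rational maps. The paper sidesteps all of this: finiteness of $\pi_{*,x}$ comes from the affineness of the fibers of the linear projection $d\pi_x$ combined with the finiteness of $\tau_x$ and $\tau_{x_{\ad}}$, and part (4) follows from the connectedness of the parabolic isotropy groups of the homogeneous components of $\cL_{x_{\ad}}$.
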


\begin{proof}
(1) The open subset $\cK_{\emfr,x}$ is $B_H$-stable, 
and contains every $B_H$-fixed point by Lemma 
\ref{lem:curve}(2) and Lemma \ref{lem:hwc}.
Using Borel's fixed point theorem, 
it follows that $\cK_{\emfr,x}$ is the whole $\cK_x$. 
Thus, $\cK_x$ is smooth; it is equidimensional by 
Lemma \ref{lem:cov}. The assertion on its dimension 
follows from \cite[Section II.3.2]{Kollar}.

(2) By Lemma \ref{lem:curve}(2) again,
$\pi \vert_C$ is birational to its image $D$.
In view of Lemma \ref{lem:dir},
this yields a commutative diagram of rational maps
\[ \xymatrix{
\cK_x \ar@{-->}[r]^{\pi_{*,x}} \ar@{-->}[d]_{\tau_x} 
& \cL_{x_{\ad}} \ar@{-->}[d]^{\tau_{x_{\ad}}} \\
\bP(T_x X) \ar@{-->}[r]^{d \pi_x} & \bP(T_{x_{\ad}} X_{\ad}) \\
}\]
for a unique covering family of rational maps $\cL$ on
$X_{\ad}$.  Since $\cK_x$ is smooth,  $\tau_x$ is a
finite morphism,  birational onto its image
(see \cite{Kebekus} and \cite{ HM}).  Also,
$\pi_{*,x}$ is a morphism since it is a $B_H$-equivariant 
rational map,  defined at every $B_H$-fixed point.  
Moreover, $\cL$ contains the highest weight curve $D$, 
and hence is a
family of minimal rational curves
by Proposition \ref{prop:min}.  Thus,  $\tau_{x_{\ad}}$ is also 
a morphism,  and is finite and birational onto its image
as well.

The rational map 
$d\pi_x : \bP(T_x X) \dasharrow \bP(T_{x_{\ad}} X_{\ad})$
is a linear projection, and hence yields an affine 
morphism on its domain of definition. As a consequence,
the fibers of $\pi_{*,x}$ are affine; thus, $\pi_{*,x}$ 
is a finite morphism.

The final assertion follows from Remark \ref{rem:push}.

(3) Since $X^0$ is affine, $C$ intersects $\partial X$
and hence $\partial X \cdot C \geq 1$. 

By (1), we have
$\dim(\cK_x) = - K_X \cdot C - 2$ and
$\dim(\cL_{x_{\ad}}) = - K_{X_{\ad}} \cdot \pi(C) - 2$. 
Since $\pi_{*,x} : \cK_x \to \cL_{x_{\ad}}$ is finite,
it follows that 
\[ K_X \cdot C \geq K_{X_{\ad}} \cdot \pi(C), \]
with equality if and only if the image of $\pi_{*,x}$ is 
a union of components of $\cL_{x_{\ad}}$. Moreover,
\[ (K_X + \partial X) \cdot C  = 
(K_{X_{\ad}} + \partial X_{\ad}) \cdot \pi(C) \]
by Lemma \ref{lem:can} and the projection formula. 
This yields the remaining statements.

(4) By assumption, each component of $\cL_{x_{\ad}}$
is homogeneous under $N^0$, and hence under $H^0$
(Lemma \ref{lem:nor}). The corresponding isotropy group
is a parabolic subgroup of $H^0$; thus, it is connected.
As $\pi_{*,x}$ is finite and $H^0$-equivariant, 
this yields the assertion.
\end{proof}

\begin{example}\label{ex:quadric}
Let $G = \SO_n$, where $n \geq 3$, and let $\sigma$ be 
the conjugation by $c = \diag(1, \ldots, 1,-1) \in \OO_n$. 
Let $H = G^{\sigma,0} = \SO_{n-1}$. Then 
$N = G^{\sigma} = \OO_{n-1}$ embedded in $\SO_n$
via $g \mapsto (g, \det(g))$, and $\fg^{-\sigma} = \bC^{n-1}$
on which $\OO_{n-1}$ acts via its standard representation.
Also, $G/H$ has a unique smooth projective equivariant 
embedding: the quadric 
$\Q_{n-1} \subset \bP^n = \bP(\bC^n \oplus \bC)$, 
where $\SO_n$ acts on $\bP(\bC^n \oplus \bC)$ via its
standard representation on $\bC^n$. 
Moreover, $X_{\ad} = \bP^{n-1} = \bP(\bC^n)$ and
$\pi : X \to X_{\ad}$ is a ramified double cover induced 
by the linear projection
$\bP(\bC^n \oplus \bC) \dasharrow \bP(\bC^n)$.

If $n \geq 4$ then $X$ has a unique family of 
minimal rational curves $\cK$; it consists of the lines 
in $\Q_{n-1}$.  Moreover,  $\pi_*$ sends $\cK$
to the family $\cL$ of lines in $\bP^{n-1}$, and
$\pi_{*;x} : \cK_x \to \cL_{x_{\ad}}$ is identified
with the inclusion 
$\Q_{n-3} \subset \bP^{n-2} = \bP(\fg^{-\sigma})$,
compatibly with the action of $\OO_{n-1} = N$.

If $n = 3$ then $X = \Q_2 \simeq \bP^1 \times \bP^1$
has two families of minimal rational curves, the fibers
of the two projections to $\bP^1$.  For both families,
$\pi_{*,x}$ identifies $\cK_x$ with a point in $\bP^1$.
\end{example}

\subsection{Minimal rational curves for group and Hermitian types}
\label{subsec:mfght}

We still consider a complete symmetric variety
$X$ with base point $x$, and a family of minimal rational
curves $\cK$ on $X$; we assume that $\cK$
contains a highest weight curve $C$.  
By Lemma \ref{lem:product},  Lemma \ref{lem:red}
and Proposition \ref{prop:min},
 there is a unique 
indecomposable factor $X_C$ of $X_{\ad}$ such that the composition of 
$\pi: X \to X_{\ad}$ with the projection $X_{\ad} \to X_C$ sends $C$ 
isomorphically to its image.  In this subsection,  we will handle in details 
the cases where $X_C$ is of group or Hermitian types.

We first handle the group type, where $X_C$ is the 
wonderful completion of an adjoint simple group $H_C$.
The Lie algebra of $H_C$ is denoted by $\fh_C$, and we 
still denote by $C$ the highest weight curve in $X_C$.
By the main result of \cite{BF}, 
$X_C$ has a unique family of minimal rational
curves $\cL$.  Moreover,  the tangent map
$\tau_{x_C} : \cL_{x_C} \to \bP(\fh_C)$
is an $H_C$-equivariant isomorphism to its image
$\cC_{x_C}$.  
If $H_C$ is of type $\A_r$, 
i.e.,  $H_C \simeq \PGL(V)$ where $V$ is a vector 
space of dimension $r + 1$, 
then $\fh_C \simeq \End(V)/\bC \, \id$. 
When $r = 1$, we have $X_C = \bP(\End(V))$
and hence $\cC_{x_C} = \bP(\fh_C)$.  On the
other hand, when $r \geq 2$, the VMRT
$\cC_{x_C}$ is isomorphic to 
$\bP(V) \times \bP(V^\vee)$ embedded in $\bP(\fh_C)$
via the Segre embedding 
\[ \bP(V) \times \bP(V^\vee) \hookrightarrow 
\bP(V \otimes V^\vee) = \bP(\End(V)) \]
followed by the linear projection
$\bP(\End(V)) \dasharrow \bP(\End(V)/\bC \, \id)$.
In all other types, we have 
$\cC_{x_C} = \bP(\cO_{C,\min})$, 
the projectivization of the minimal nilpotent 
orbit in $\bP(\fh_C)$.

\begin{proposition}\label{prop:group}
If $X_C$ is of group type,  then
$\partial X \cdot C$ equals $1$ or $2$. 
In the former case,  every component of $\cK_x$ is 
isomorphic to $\bP(\cO_{C,\min})$. 
In the latter case, we have $H_C \simeq \PGL(V)$;
moreover, every component of $\cK_x$ is isomorphic 
to $\bP(V) \times \bP(V^\vee)$ if $\dim(V) \geq 3$,
and to $\bP(\fh_C) \simeq \bP^2$ if 
$\dim(V) = 2$.
\end{proposition}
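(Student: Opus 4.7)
The plan is to combine Proposition \ref{prop:red} with the Brion-Fu classification of the unique minimal family on a wonderful group compactification. First, since $X_\ad$ decomposes as a product of indecomposable factors and $\pi$ followed by the projection $X_\ad \to X_C$ sends $C$ birationally to its image while the remaining projections contract $\pi(C)$, one has $\partial X_\ad \cdot \pi(C) = \partial X_C \cdot \pi_C(\pi(C))$, and by Lemma \ref{lem:product} the minimal family $\cL$ on $X_\ad$ is pulled back from the unique Brion-Fu minimal family $\cL^C$ on $X_C$. Via the tangent map, $\cL_{x_\ad}$ is thus identified with $\bP(\cO_{C,\min})$ if $H_C$ is not of type $\A_r$ with $r \geq 2$, and with $\bP(V) \times \bP(V^\vee)$ otherwise.

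Next I compute $\partial X_C \cdot \pi_C(\pi(C))$ by combining Lemma \ref{lem:can}, the formula $-K_{X_C} \cdot \pi(C) = \dim \cL^C_{x_C} + 2$ from Proposition \ref{prop:red}(1) applied to $X_C$, and the canonical Pl\"ucker polarization $H = K_{X_C} + \partial X_C$. A direct evaluation on the highest weight curve $\overline{U_\Theta \cdot x_C} \subset X_C$ gives $\partial X_C \cdot \pi_C(\pi(C)) = 2$ in type $\A_r$ with $r \geq 2$ and $= 1$ in all other simple types. With Proposition \ref{prop:red}(3), this yields $\partial X \cdot C \in \{1, 2\}$, and $\partial X \cdot C = 2$ forces $H_C$ of type $\A_r$ with $r \geq 2$.

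To identify the components of $\cK_x$ I use the dimension formula $\dim \cK_x - \dim \cL_{x_\ad} = \partial X \cdot C - \partial X_\ad \cdot \pi(C)$ coming from Proposition \ref{prop:red}(1) and Lemma \ref{lem:can}. Outside type $\A$, the equality $\partial X \cdot C = 1 = \partial X_\ad \cdot \pi(C)$ makes the image of $\pi_{*,x}$ a union of components of $\cL_{x_\ad} = \bP(\cO_{C,\min})$ by Proposition \ref{prop:red}(3), and since the projectivized minimal nilpotent orbit is a single $H_C$-orbit (hence a single $N^0$-orbit), Proposition \ref{prop:red}(4) gives that each component of $\cK_x$ maps isomorphically onto $\bP(\cO_{C,\min})$. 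In the sub-case $\partial X \cdot C = 1$ within type $\A_r$, the dimension drops by one, and the $H$-stable image of $\pi_{*,x}$ is forced to coincide with the codimension-one closed diagonal $\PGL(V)$-orbit $\{(v,f) : f(v) = 0\} = \bP(\cO_{C,\min}) \subset \bP(V) \times \bP(V^\vee)$, to which Proposition \ref{prop:red}(4) again applies.

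The main obstacle is the remaining sub-case $\partial X \cdot C = 2$ in type $\A_r$, where $\cL_{x_\ad} = \bP(V) \times \bP(V^\vee)$ carries two orbits of the diagonal $\PGL(V)$, so Proposition \ref{prop:red}(4) is not directly applicable. To handle it I would use that, in group type $\A_r$ with $H_C = \PGL(V)$, one has $Z(H_C) = \{1\}$, whence $N_{H_C \times H_C}(\diag H_C) = \diag H_C$, so the $X_C$-component of $\pi$ is birational; then by Proposition \ref{prop:red}(2) the morphism $\pi_{*,x}$ is finite and birational onto its image, and combined with the surjectivity from Proposition \ref{prop:red}(3) and the smoothness of $\cL_{x_\ad}$, Zariski's main theorem forces $\pi_{*,x}$ to be an isomorphism on each connected component.
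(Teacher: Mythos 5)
Your overall strategy matches the paper's: bound $\partial X \cdot C$ by $\partial X_{\ad}\cdot \pi(C)$, compute the latter to be $2$ in type $\A_r$ ($r\ge 2$) and $1$ otherwise (the paper gets this from \cite[Lem.~3.3, Lem.~3.4]{BF} rather than your adjunction computation, but the content is the same), and then identify $\cK_x$ by comparing it with $\cL_{x_{\ad}}$ via $\pi_{*,x}$. The non-$\A_r$ case and the $\A_r$ case with $\partial X \cdot C = 1$ are handled correctly, modulo a small imprecision: Proposition \ref{prop:red}(4) is a statement about the components of $\cL_{x_{\ad}}$ itself, not about the image of $\pi_{*,x}$, so in the subcase where the image is the incidence divisor you cannot quote (4) literally; you must either rerun its proof on the image (which is a single homogeneous orbit with parabolic, hence connected, isotropy) or argue as the paper does via simple connectedness and Zariski's main theorem.

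The genuine gap is in the subcase $\partial X \cdot C = 2$. You deduce that ``the $X_C$-component of $\pi$ is birational'' from $Z(H_C)=\{1\}$ and $\N_{H_C\times H_C}(\diag H_C)=\diag H_C$, and then invoke the final assertion of Proposition \ref{prop:red}(2). This does not work: that assertion requires the global map $\pi: X \to X_{\ad}$ to be birational, i.e.\ $H = N = \N_G(H)$, and this is a condition on the chosen symmetric subgroup $H \subset G$, not on the indecomposable factor $X_C$. For instance, if $G$ has several $\sigma$-stable factors, $N/H$ can be a nontrivial diagonalizable group coming from the other factors even though the group-type factor $H_C$ is adjoint; then $\pi$ is generically finite of degree $|N/H|>1$ and Proposition \ref{prop:red}(2) gives only finiteness of $\pi_{*,x}$, not birationality onto its image. (There is also a logical tension in asserting global birationality onto the image while simultaneously allowing $\cK_x$ to have several components each surjecting onto the irreducible $\cL_{x_{\ad}}$.) The paper closes this case differently: since $\partial X\cdot C = 2 = \partial X_{\ad}\cdot\pi(C)$, the finite $H$-equivariant map $\pi_{*,x}$ is surjective onto $\bP(V)\times\bP(V^\vee)$, whose open $\PGL(V)$-orbit $\SL_{r+1}/\GL_r$ is simply connected; hence $\pi_{*,x}$ has degree $1$ on each component, and Zariski's main theorem then gives the isomorphism. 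You need this (or an equivalent) argument to replace the birationality claim.
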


\begin{proof}
Recall from Proposition \ref{prop:red} that
$1 \leq \partial X \cdot C \leq 
\partial X_{\ad} \cdot \pi(C)$.
Moreover, the line bundle on $X_{\ad}$ associated with
the divisor $\partial X_{\ad}$ equals 
$\cL_{X_{\ad}}(\alpha_1 + \cdots + \alpha_r)$
with the notation of \cite[Section 3]{BF}. By using
\cite[Lemmas 3.3 and 3.4]{BF}, it follows that
$\partial X_{\ad} \cdot \pi(C) = 2$ if $H_C$ is 
of type $\A_r$; otherwise, 
$\partial X_{\ad} \cdot \pi(C) = 1$.

In the latter case, we must have 
$\partial X \cdot C = \partial X_{\ad} \cdot \pi(C)$.
So every component of $\cK_x$ is isomorphic to 
the orbit $H_C \cdot C$, by Proposition \ref{prop:red}
again. Moreover, $H_C \cdot C = \bP(\cO_{C,\min})$.

In the former case, if $r \geq 2$ then 
$\cL_{x_C} = \bP(V) \times \bP(V^\vee)$
consists of two orbits of $H_C = \PGL(V)$: 
a closed orbit of codimension $1$ (the incidence variety,
isomorphic to $\bP(\cO_{C,\min})$), 
and an open orbit isomorphic to 
$\SL_{r + 1}/\GL_r$, and hence simply 
connected. If $r = 1$ then $\cL_{x_C} \simeq \bP^2$
is simply connected as well.

If $\partial X \cdot C = 2$,  then by Proposition
\ref{prop:red} again, we get a finite
surjective $H$-equivariant morphism
$\pi_{*,x}: \cK_x \to 
\bP(V) \times \bP(V^\vee)$.
Since the open orbit in the right-hand side is
simply connected,  it follows that
$\pi_{*,x}$ is birational on each component,
and hence an isomorphism in view of Zariski's main
theorem.

On the other hand, if $\partial X \cdot C = 1$,
then the image of $\pi_{*,x}$ is the closed orbit
and we conclude as above.
\end{proof}

\begin{example}\label{ex:group}
Assume that $\pi: X \to X_{\ad}$ is birational and
$X_{\ad}$ is the wonderful completion of $\PGL(V)$, 
where $\dim(V) \geq 3$.  Then the highest weight
curve $C_{\ad} \subset X_{\ad}$ intersects a unique 
$\PGL(V) \times \PGL(V)$-orbit $\cO_{1,r}$ 
of codimension $2$ in $X_{\ad}$ (see 
\cite[Lemma 3.4]{BF}). 

If $\pi$ is an isomorphism over $\cO_{1,r}$, 
then the family of minimal rational curves $\cK$ on $X$ 
satisfies $\partial X \cdot C = 2$ and 
$\cK_x = \bP(V) \times \bP(V^\vee)$.  Indeed, 
$\pi$ is an isomorphism over an open neighborhood
of $\cO_{1,r}$ in $X_{\ad}$,  stable by
$\PGL(V) \times \PGL(V)$, and every curve
in $\cL_{x_{\ad}}$ intersects such a neighborhood.

On the other hand, if $\pi$ is not an isomorphism
over $\cO_{1,r}$, then $\partial X \cdot C = 1$
and $\cK_x$ is the incidence variety $\bP(\cO_{\min})$;
moreover,  we have
$\partial X_{\ad} \cdot C_{\ad} = 2$ and 
$\cL_{x_{\ad}} = \bP(V) \times \bP(V^\vee)$.  
Indeed, $\pi$ factors  through the blow-up 
$\varphi : X' \to X_{\ad}$ of $\overline{\cO_{1,r}}$
in $X_{\ad}$.  Using Proposition \ref{prop:red}, 
we may thus assume that $X = X'$.  Then
$K_X = \pi^*(K_{X_{\ad}}) + E$, 
where $E$ denotes the exceptional divisor.  
Thus,
\[ K_X \cdot C = 
K_{X_{\ad}} \cdot C_{\ad} + E \cdot C
> K_{X_{\ad}} \cdot C_{\ad}, \]
since $C$ intersects $E$. 
It follows that $\dim(\cK_x) < \dim(\cL_{x_{\ad}})$,
and we conclude by Proposition \ref{prop:red} again.
\end{example}

Next, we handle the Hermitian type,  where 
$X_C$ is the wonderful completion of 
the symmetric space $G_C/\N_{G_C}(L_C)$,
where $G_C$ is a simple factor of $G_{\ad}$
and $L_C$ is a Levi subgroup of  $G_C$.

\begin{proposition}\label{prop:hermitian}
If $X_C$ is of Hermitian type but not of type
$\PGL_2/N$,  then $\partial X \cdot C = 1$ and every 
component of $\cK_x$ is isomorphic to the orbit
$L_C \cdot C$.
\end{proposition}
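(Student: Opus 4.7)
The plan is to follow the template of the proof of Proposition \ref{prop:group}: apply Proposition \ref{prop:red} to reduce to an intersection computation on the Hermitian factor $X_C$, establish that $\partial X_C \cdot \pi(C) = 1$, and then identify each component of $\cK_x$ with $L_C \cdot C$ via Proposition \ref{prop:red}(4). Since only the factor $X_C$ of $X_\ad$ matters for $C$, I would replace $X_\ad$ by $X_C$ and assume $X_\ad$ itself is an irreducible Hermitian wonderful symmetric variety, writing $L = L_C$, $P = P_C$, $Q = Q_C$, $G = G_C$. Proposition \ref{prop:red}(3) then gives $1 \le \partial X \cdot C \le \partial X_\ad \cdot \pi(C)$, and the inner equality forces the image of $\pi_{*,x}$ to be a union of components of $\cL_{x_\ad}$. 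Hence it suffices to show $\partial X_\ad \cdot \pi(C) = 1$.

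For this intersection computation I would use the description of $G/N$ recalled in Subsection \ref{subsec:ass} as the open $G$-orbit in $G/P \times G/Q$ (exceptional case) or in $(G/P)^{(2)}$ (non-exceptional case). In either situation, $X_\ad$ carries two natural $G$-equivariant projections to $G/P$ and $G/Q$ (possibly after composition with the symmetric-square quotient). The highest weight curve $\pi(C)$ has tangent line $\fg_\Theta \subset \fu_P$, so one projection sends it birationally to a line while the other contracts it to a point; the hypothesis $G \ne \PGL_2$ is precisely what ensures that $\fu_P$ and $\fu_Q$ are distinct irreducible $L$-modules and that the restricted root system has rank at least $2$. Combining this dichotomy with the fact that $K_{X_\ad} + \partial X_\ad$ is the Pl\"ucker hyperplane class (Lemma \ref{lem:can}) and with the standard identification of the boundary divisors of a wonderful symmetric variety with the simple restricted roots, one pins down that exactly one boundary divisor meets $\pi(C)$, and transversally. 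As a consistency check, in the excluded case $G = \PGL_2$, $N = \N_G(T)$ one has $X_\ad = (\bP^1)^{(2)} = \bP^2$ with boundary a smooth conic, and a line in $\bP^2$ meets a conic in two points, so the conclusion $\partial X \cdot C = 1$ correctly fails there.

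Once $\partial X \cdot C = 1$ is known, Proposition \ref{prop:red}(3) makes $\pi_{*,x}(\cK_x)$ a union of components of $\cL_{x_\ad}$, and it remains to verify that each component of $\cL_{x_\ad}$ is a single $L$-orbit so that Proposition \ref{prop:red}(4) applies. By Proposition \ref{prop:min}, either $\cL_{x_\ad}$ is irreducible with $\N_G(L) = L$ (exceptional case), or it has two components exchanged by $\N_G(L)/L$ of order $2$ (non-exceptional case). In both cases, $\tau_{x_\ad}$ sends the component containing $\pi(C)$ birationally onto the closed $L$-orbit of $[e_\Theta]$ in $\bP(\fu_P)$ (or of $[e_{-\alpha}]$ in $\bP(\fu_Q)$), which is the projectivization of the minimal orbit in the irreducible $L$-module and hence homogeneous; so the component is the single $L$-orbit $L \cdot \pi(C)$, and Proposition \ref{prop:red}(4) yields an isomorphism of each component of $\cK_x$ with $L \cdot \pi(C) \cong L \cdot C$. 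The main obstacle is the direct verification that $\partial X_\ad \cdot \pi(C) = 1$: once the Picard group of $X_\ad$ is described via the boundary divisors and the pullbacks of hyperplane classes from $G/P$ and $G/Q$, this is a manageable bookkeeping, but identifying which boundary component the highest weight curve actually crosses is where the real work lies, and the hypothesis $G \ne \PGL_2$ is exactly what ensures that only one of the two projections is non-degenerate on the curve.
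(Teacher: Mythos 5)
Your overall architecture (reduce to $X = X_C$ via Proposition \ref{prop:red}, compute a boundary intersection number, then invoke Proposition \ref{prop:red}(4)) is reasonable, but there is a genuine gap at the step you dispose of in one clause: the claim that $\tau_{x_{\ad}}$ sends the component of $\cL_{x_{\ad}}$ containing $\pi(C)$ \emph{onto} the closed $L$-orbit of $[e_\Theta]$, ``hence homogeneous.'' A priori the image of that component is only an irreducible $L$-stable subvariety of $\bP(\fp)$ \emph{containing} the closed orbit $L\cdot[e_\Theta]$; nothing you have said rules out that it is strictly larger, and proving that it is not is precisely the main content of the proposition. The paper's proof handles this by embedding $X_{\ad}$ into the wonderful group compactification $\overline{G_{\ad}}$ via $\overline{\iota}$, showing that the highest weight curves map to long root curves so that $\cK_x$ lands inside the minimal family $\cL_{\overline{G},e}$ of $\overline{G_{\ad}}$, and then observing that $\overline{\iota}(\cK_x)\subset \bP(\cO_{\min}\cap\fp)\subset\bP(\cO_{\min})^{\sigma}$ (resp.\ inside the $\sigma$-fixed locus of $\bP(V)\times\bP(V^\vee)$ in type $\A_r$), where Richardson's theorem \cite[Thm.~A]{Richardson} forces the fixed locus to be a finite union of closed $L$-orbits. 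Without this (or an equivalent dimension count such as the Lagrangian argument of Section \ref{sec:contact}), Proposition \ref{prop:red}(4) cannot be applied and your argument does not close.

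A secondary problem is the order of your two steps. The paper does \emph{not} compute $\partial X_{\ad}\cdot\pi(C)=1$ by direct bookkeeping with boundary divisors; it first establishes the homogeneity above, then uses $\dim(L\cdot C)=\dim(\cK_x)=-K_X\cdot C-2$ together with the projection $p:X\to G/P$ (which identifies $L\cdot C$ with the variety of lines through the base point of $G/P$) to get $K_X\cdot C=K_{G/P}\cdot D$, and only then extracts $\partial X\cdot C=E\cdot D=1$ via Lemma \ref{lem:can} and the projection formula. Your proposed route runs in the opposite direction and therefore needs an independent, fully worked-out computation of $\partial X_{\ad}\cdot\pi(C)$; what you offer (``manageable bookkeeping'' with the Pl\"ucker class and simple restricted roots) is a gesture, not a proof, and in Section \ref{sec:css} the machinery relating boundary divisors to $\Thetab^\vee$ (Corollary \ref{coro:intersection}, Remark \ref{rem-2-ou-1}) is not yet available. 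Your consistency check for $\PGL_2/N$ and the identification of the two highest weight curves are fine, but as written the proposal assumes the hardest part of the statement.
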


\begin{proof}
By Proposition \ref{prop:red}, we may assume that
$X = X_C$,  and hence $G = G_C$ and 
$L = L_C=P \cap Q$,  where $P,Q$ are
opposite parabolic subgroups and $P \supset B$.
We now view $X$ as a subvariety
of  $ \overline{G}$ (as recalled in Subsection \ref{subsec:rwsv}),
and use the description of minimal rational curves 
in $\overline{G}$,  as in the proof of Proposition 
\ref{prop:group}.

With the notation of Subsection \ref{subsec:hwc}, 
the highest weight curves are 
$C_{\Theta} := \overline{U_{\Theta} \cdot x}$ and 
$C_{-\alpha} := \overline{U_{-\alpha} \cdot x}$
(indeed,  these curves are irreducible,  stable by 
$B_L$ and distinct).  By Proposition \ref{prop:min},  
these curves are exchanged 
by an automorphism of $X$ fixing $x$; thus, we may 
assume that $C = C_{\Theta}$.

By \cite[Section 5.5]{RRS}, $\sigma$ is the inner 
involution $\Int(c)$, where $c \in T$ satisfies 
$\alpha(c) = -1$ and $\beta(c) = 1$ for all simple 
roots $\beta \neq \alpha$. 
In particular, the roots $\Theta$ and $-\alpha$ are 
non-compact imaginary. Thus, the closed immersion 
\[ \iota : G/\N_G(L) \longrightarrow G,  \quad
g \N_G(L) \longmapsto \sigma(g) g^{-1} \]
induces isomorphisms 
\[ U_{\Theta} \cdot x 
\stackrel{\sim}{\longrightarrow} U_{\Theta},
\quad U_{-\alpha} \cdot x 
\stackrel{\sim}{\longrightarrow} U_{-\alpha}. \]
So $\overline{\iota} : X \to \overline{G}$  
sends $C_{\Theta}$, $C_{-\alpha}$ isomorphically 
to the corresponding root curves considered in 
\cite[Section 3]{BF}. 
Since $\Theta$ and $-\alpha$ are long roots, these root 
curves are minimal; hence $\overline{\iota}$ sends 
$\cK_x$ to the unique family $\cL_{\overline{G}, e}$ 
of minimal rational curves through $e$ in 
$\overline{G}$. 
Moreover, $\overline{\iota}(\cK_x)$ is contained in 
the fixed locus $\cL_{\overline{G}, e}^{-\sigma}$.

If $G$ is not of type $\A_r$, where $r \geq 2$, 
then the tangent map $\tau_e$ identifies 
$\cL_{\overline{G}, e}$ with $\bP(\cO_{\min})$.  
Since $d \iota_x$ identifies $T_x X$ with $\fg^{-\sigma}$,
we see that 
\[ \overline{\iota}(\cK_x) \subset 
\bP(\cO_{\min} \cap \fg^{-\sigma}) \subset 
\bP(\cO_{\min})^{\sigma}. \]
By \cite[Theorem A]{Richardson}, the right-hand side
is a finite union of closed orbits of 
$G^{\sigma,0} = L$. We conclude that the component
of $C$ in $\cK_x$ is $L \cdot C$.

Otherwise, $G = \PGL(V)$ where $\dim(V)  = r + 1$,
and $\tau_e$ yields an isomorphism
\[ 
\cL_{\overline{G}, e} 
\simeq \bP(V) \times \bP(V^\vee) \subset 
\bP(\End(V)/\bC \,  \id) = \bP(\fg), \]
equivariantly for the action of $-\sigma$. 
Consider  the $\sigma$-eigenspace decomposition 
$V = V_1 \oplus V_{-1}$. Then $L$ is the image of
$\GL(V)^{\sigma} = \GL(V_1) \times \GL(V_{-1})$ 
in $\PGL(V)$;  also,  we have
$\bP(V)^\sigma = \bP(V_1) \sqcup \bP(V_{-1})$
and likewise for $\bP(V^\vee)^\sigma$. 
Moreover, the image of 
$\overline{\iota}(C)$ under $\tau_e$ lies  in 
\[ \bP(\fg^{-\sigma}) = 
\bP(\Hom(V_1,V_{-1}) \oplus \Hom(V_{-1}, V_1)). \]
It follows that 
$\overline{\iota}(\cK_x)$ is contained in 
$(\bP(V_1^\vee) \times \bP(V_{-1})) \sqcup  
(\bP(V_{-1}^\vee) \times \bP(V_1))$. 
As a consequence, 
the component of $C$ in $\cK_x$ is $L \cdot C$ 
in this case,  too.

We now show that $\partial X \cdot C = 1$.
Consider first the exceptional case,  where $H = L = P \cap Q$.
Then we have a $G$-equivariant birational morphism
$\varphi: X \to G/P \times G/Q =: Y$ which sends
$x$ to the base point $y = (P,Q)$.
Since $P$ is a maximal parabolic subgroup of $G$ 
associated with a long root, $G/P$ has 
a unique family of minimal rational curves $\cL$.
Moreover,  denoting by $P$ the base point of
the homogeneous space $G/P$ and by $D$ 
the Schubert line in that space (i.e., the unique 
irreducible $B$-stable curve),  we have that 
$\cL_P = L \cdot D$ (see e.g. \cite[Proposition 3.3]{BK}). 
The projection $p : X \to G/P$ sends $C$ isomorphically 
to $D$, and yields an isomorphism 
$p_* : L \cdot C \to L \cdot D$ 
which identifies $L \cdot C$ with the variety 
of lines in $G/P$ through its base point. Since 
$\dim(L \cdot C) = \dim(\cK_x) = - K_X \cdot C - 2$
and $\dim(L \cdot D) = -K_{G/P} \cdot D - 2$, 
we obtain
\[ K_X \cdot C = K_{G/P} \cdot D = 
K_Y \cdot \varphi(C) = \varphi^*(K_Y) \cdot C \]
by using the projection formula.
On the other hand, we have 
$K_X = \varphi^*(K_Y) + \sum_i a_i E_i$, where the $E_i$
are the exceptional divisors of $\varphi$ and the $a_i$
are positive integers. Since $C$ is not contained in any
$E_i$, it follows that $E_i \cdot C = 0$ for all $i$.
Also, the boundary of $Y$ is an irreducible divisor $E$,
and $\partial X = E' + \sum_i E_i$, where $E'$ denotes the
strict transform of $E$. This yields
$\partial X \cdot C = E' \cdot C = E \cdot D$
by the projection formula again. Since 
$D \subset G/P \times \{ Q \}$, where we still denote by $Q$ 
the base point of $G/Q$, and $E \cap (G/P \times \{ Q \})$ 
is identified with the Schubert divisor in $G/P$, we obtain 
$E \cdot D = 1$. This yields the assertion in that case.   

Next,  we consider the non-exceptional case, 
where $H = \N_G(L)$ contains $L$ as 
a subgroup of index $2$. 
By \cite[Sections 29.1 and 29.2]{Timashev}, 
there exists a smooth 
toroidal equivariant embedding $X'$ of $G/L$ such that the
natural map $G/L \to G/H$ extends to a morphism 
$\psi: X' \to X$. By Lemma \ref{lem:can}, we have
$K_{X'} + \partial X' = \psi^*(K_X + \partial X)$.
Moreover, $C$ lifts uniquely to a highest weight curve
$C' \subset X'$, and the corresponding families of
minimal rational curves
have isomorphic components by Proposition \ref{prop:red}.
Taking dimensions, we obtain 
$K_{X'} \cdot C' = K_X \cdot C = \psi^*(K_X) \cdot C'$. 
As a consequence, we have
$\partial X \cdot C = \psi^*(\partial X) \cdot C' 
=  \partial X' \cdot C' = 1$,
where the latter equality is proved as in the exceptional
case.
\end{proof}

Proposition \ref{prop:hermitian} leaves out the case of 
type $\PGL_2/N$,  which is easily treated:

\begin{lemma}\label{lem:plane}
If $X_C$ is of type $\PGL_2/N$,  then $\partial X \cdot C$ 
equals $1$ or $2$.  In the former case, $\cK_x$ is finite. 
In the latter case,  every component of $\cK_x$ is 
a projective line.
\end{lemma}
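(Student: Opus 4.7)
The plan is to compute $\partial X \cdot C$ and $\dim \cK_x$ via intersection theory on $X_C$, and then in the top-dimensional case to identify each component of $\cK_x$ with $\bP^1$ by a torus argument.

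First I would use that the wonderful compactification of $\PGL_2/N$ is $X_C \simeq \bP^2 = \bP(\Sym^2 V)$, with boundary $\partial X_C$ the Veronese conic. The composition $\pi_C \colon X \to X_\ad \to X_C$ sends $C$ isomorphically onto a $T_C$-stable irreducible curve through $x_C$; of the three coordinate lines in $\bP^2$ for the $T_C$-action exactly two pass through $x_C$, so the image is one of them. Hence $\partial X_C \cdot \pi_C(C) = 2$, and as the projections of $\pi(C)$ to the other irreducible factors of $X_\ad$ are points, also $\partial X_\ad \cdot \pi(C) = 2$. Proposition \ref{prop:red}(3) then forces $1 \leq \partial X \cdot C \leq 2$.

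Next I would combine Lemma \ref{lem:can}, the product decomposition of $K_{X_\ad}$ and $\partial X_\ad$ over the irreducible factors, and the projection formula to obtain
\[ (K_X + \partial X)\cdot C = (K_{X_C} + \partial X_C)\cdot \pi_C(C) = -3 + 2 = -1. \]
Proposition \ref{prop:red}(1) then gives $\dim \cK_x = -K_X \cdot C - 2 = \partial X \cdot C - 1$. If $\partial X \cdot C = 1$, then $\cK_x$ is smooth of dimension $0$, hence a finite reduced scheme, which settles the first case. Otherwise $\partial X \cdot C = 2$ and $\dim \cK_x = 1$; Lemma \ref{lem:product} identifies the minimal family $\cL$ on $X_\ad$ containing $\pi(C)$ as pulled back from the family of lines on $X_C$, whence $\cL_{x_\ad} \simeq \bP(T_{x_C}\bP^2) \simeq \bP^1$, and by parts (2) and (3) of Proposition \ref{prop:red} each component $K'$ of $\cK_x$ is a smooth irreducible projective curve with a finite surjective $H^0$-equivariant morphism to $\bP^1$.

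The remaining step, and the main technical point, is to show $K' \simeq \bP^1$. Proposition \ref{prop:red}(4) does not apply directly, since $\cL_{x_\ad} = \bP^1$ is not a single $H^0$-orbit. Instead I would exploit that $G_C^{\sigma,0}$ is the diagonal torus $T_C$ of $\PGL_2$, so the image of $H^0$ in the $\PGL_2$-factor of $G_\ad$ contains $T_C$; and $T_C$ acts on $\bP^1 = \bP(\fp_C)$ with two fixed points and a one-dimensional open orbit. The preimage in $K'$ of this open orbit is a non-empty open $H^0$-stable subset of $K'$ containing no $H^0$-fixed point, and hence contains a dense one-dimensional $H^0$-orbit. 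That orbit is a finite cover of the affine curve $\bG_m \subset \bP^1$, so is itself affine and isomorphic to $\bG_m$ or $\bA^1$; its smooth projective completion is $\bP^1$, so $K' \simeq \bP^1$.
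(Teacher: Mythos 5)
Your proof is correct and follows essentially the same route as the paper: identify $X_C \simeq \bP^2$ with boundary the conic, observe that $C$ maps to a tangent line so that $\partial X_C \cdot \pi_C(C) = 2$, and then run the dimension count of Proposition \ref{prop:red} together with Lemma \ref{lem:can} to get $\dim \cK_x = \partial X \cdot C - 1$. The one place where you go beyond the paper --- the homogeneous-curve argument showing that a finite $H^0$-equivariant cover of $\cL_{x_\ad} \simeq \bP^1$ must itself be $\bP^1$ --- is a worthwhile addition, since the simple-connectedness trick used in the proof of Proposition \ref{prop:group} does not apply here (the open $T_C$-orbit is $\bG_m$, which is not simply connected) and the paper's phrase ``as in the proof of the above proposition'' leaves this step implicit.
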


\begin{proof}
Note that $\PGL_2/N$ has a unique projective 
equivariant embedding, namely, $\bP^2$ on which 
$\PGL_2$ acts via the projectivization of its 
adjoint representation. Thus, $\partial X_C$ 
is a conic,  with $C$ as a tangent line so that
$\partial X_C \cdot C = 2$. Also, the minimal rational
curves on $X_C$ are just lines, and those through
a given point form a $\bP^1$. This yields the statement
by using Proposition \ref{prop:red} as in the proof
of the above proposition.
\end{proof}

\begin{corollary}\label{cor:hermitian}
Let $G$ be a simple adjoint group,  and $X$ 
the wonderful embedding of a Hermitian 
symmetric space $G/\N_G(L)$. 
Denote by $C_{\Theta}$ and $C_{-\alpha}$
the highest weight curves in $X$, 
indexed by their weight.

\begin{enumerate}

\item
If $X$ is exceptional,
then it has two families of minimal rational curves
$\cK^+$, $\cK^-$.  Moreover,
$\cK_x^+ = L \cdot C_{\Theta}$ and
$\cK_x^- = L \cdot C_{-\alpha}$.

\item
If $X$ is non-exceptional,  
then it has a unique family of minimal rational curves $\cK$. 
Moreover,  $\cK_x = \N_G(L) \cdot C_{\Theta} 
= L \cdot C_{\Theta} \sqcup L \cdot C_{-\alpha}$
unless $G/\N_G(L) = \PGL_2/N$.
\end{enumerate}
\end{corollary}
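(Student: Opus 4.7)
The result is essentially a reassembly of Proposition~\ref{prop:min}, Proposition~\ref{prop:hermitian}, and Lemma~\ref{lem:plane}. The plan is to combine them to count the minimal families, to identify the irreducible component of each family through a prescribed highest weight curve, and then to check that no extra components appear.

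First, I would apply Proposition~\ref{prop:min} to count the minimal families on $X$. In the exceptional case, $X$ is Hermitian exceptional and irreducible, so there are exactly two minimal families; moreover, by Corollary~\ref{cor:root} and the proof of Proposition~\ref{prop:min}, the two highest weight curves $C_\Theta$ and $C_{-\alpha}$ must lie in different minimal families (the projection $X \to G/P$ used in that proof contracts one of them but not the other), and I label $\cK^+$ and $\cK^-$ accordingly. In the non-exceptional case, the unique minimal family $\cK$ contains both highest weight curves. Assuming $G/\N_G(L) \neq \PGL_2/N$, I would then apply Proposition~\ref{prop:hermitian}, whose proof actually shows that the irreducible component of $\cK_x$ containing a given highest weight curve $C$ is exactly the orbit $L \cdot C$ (not merely isomorphic to it).

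Next, I would invoke Lemma~\ref{lem:cov}, according to which $H$ permutes the irreducible components of $\cU_x$, and hence of $\cK_x$ (using the embedded-free statement in Proposition~\ref{prop:red}(1)), transitively. In the exceptional case, $H = L = P \cap Q$ is connected, so $L$ fixes each component individually; transitivity then forces $\cK^{\pm}_x$ to be irreducible and hence equal to the single $L$-orbit already identified, giving $\cK^+_x = L \cdot C_\Theta$ and $\cK^-_x = L \cdot C_{-\alpha}$. In the non-exceptional case, $|H/L| = 2$, so $\cK_x$ has at most two irreducible components. The orbits $L \cdot C_\Theta$ and $L \cdot C_{-\alpha}$ are two such components, and they are distinct because their tangent directions at $x$ lie in the two different $L$-isotypic summands $\fu_P$ and $\fu_Q$ of $\fp = \fu_P \oplus \fu_Q$; hence these are exactly the components of $\cK_x$. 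Any $n \in \N_G(L) \setminus L$ exchanges $P$ and $Q$ and thus $\fu_P$ and $\fu_Q$, so $n \cdot C_\Theta \in L \cdot C_{-\alpha}$, yielding $\cK_x = \N_G(L) \cdot C_\Theta = L \cdot C_\Theta \sqcup L \cdot C_{-\alpha}$.

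The hard part, if any, is ensuring that no stray components appear in $\cK_x$ beyond those coming from the two highest weight curves. This is handled by the transitivity of $H$ on components in Lemma~\ref{lem:cov}, which bounds the number of components by $|H/L|$, combined with the $L$-isotypic decomposition $\fp = \fu_P \oplus \fu_Q$, which separates $L \cdot C_\Theta$ from $L \cdot C_{-\alpha}$. Everything else is a direct quotation of the preceding propositions.
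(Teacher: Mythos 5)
Your proposal is correct and follows exactly the route the paper intends: the corollary is stated without a separate proof precisely because it is the combination of Proposition~\ref{prop:min} (counting the minimal families and placing $C_\Theta$, $C_{-\alpha}$ in the right ones), Proposition~\ref{prop:hermitian} (identifying the component through a highest weight curve as $L \cdot C$), and the transitivity of $H$ on components from Lemma~\ref{lem:cov}. Your additional care in bounding the number of components by $|H/H^0|$ and separating $L \cdot C_\Theta$ from $L \cdot C_{-\alpha}$ via the decomposition $\fp = \fu_P \oplus \fu_Q$ just makes explicit what the paper leaves implicit.
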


\begin{proof}
The two highest weight curves $C_{\Theta}$
and $C_{- \alpha}$ are exchanged by an element 
of $\N_G(L)$ in the non-exceptional case (see 
Proposition \ref{prop:min}) while in the exceptional case, 
they lie in two distinct families by Proposition \ref{prop:min} again. 
We conclude by using Proposition \ref{prop:hermitian} and
Lemma \ref{lem:plane}.
\end{proof}

\subsection{Some cases of simple type}
\label{subsec:some-cases}

In this subsection we use previous techniques and the results 
in the group case to briefly describe the unique family of
minimal rational curves
in some of the cases of simple type. 
We refer to Table \ref{table-class} for the different cases 
of the classification. 
A different approach, working in all cases is developed in
Section \ref{sec:cas-adjoint}.

We assume that $G$ is simple throughout this subsection.
Consider a highest weight curve $C$ of simple type,
and denote by $\lambda$ its weight relative to $B_H$.
Then $\lambda$ is the highest weight of the representation
of $H^0$ in $\fg^{-\sigma}$, and hence is the restriction to $T_H$ of
some $\alpha \in R^+$ (not necessarily unique). Moreover,
$S := \ker(\lambda)^0$ is a subtorus of codimension $1$
in $T_H$, and $C$ is an irreducible curve in $(G/H)^S$ 
through $x$,  stable by the Borel subgroup $\C_{B_H}(S)$ 
of $\C_H(S)$.  So $C$ is a highest weight curve of 
the symmetric space
$\C_G(S)/\C_H(S) = (\C_G(S)/S)/(\C_H(S)/S)$, 
where $\C_H(S)/S$ has rank $1$.
We now apply Lemma \ref{lem:rank1}: the adjoint symmetric 
space of $\C_G(S)/S$ is of type $(\textrm{A}_1)$, 
$(\textrm{A}_1 \times \A_1)$ or $(\textrm{A}_2)$. 
If $\sigma$ is inner,  then only type $(\textrm{A}_1)$ may occur.

\begin{lemma}\label{lem:redu}
Assume that $\lambda = \alpha \vert_{T_H}$ 
for a unique root $\alpha$ (i.e.,  type $(\A_1 \times \A_1)$ 
is excluded),  and $\alpha$ is long.  Then the component 
of $\cK_x$ containing $C$ admits a finite
equivariant morphism to 
$\cK_{\overline{G_\ad},\id}^{-\sigma}$.
\end{lemma}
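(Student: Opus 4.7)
The strategy is to factor the desired morphism as $\cK_x \to \cL_{x_\ad} \to \cK_{\overline{G_\ad},\id}^{-\sigma}$, where the first arrow comes from the map $\pi : X \to X_\ad$ and the second from the closed immersion $\overline{\iota}: X_\ad \hookrightarrow \overline{G_\ad}$ recalled in Subsection \ref{subsec:rwsv}. This is the natural generalization of the argument used in the proof of Proposition \ref{prop:hermitian}.

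First I would reduce to the case $X = X_\ad$. Since $C$ is a highest weight curve, $\pi|_C$ is birational onto its image $\pi(C)$, which is itself a highest weight curve in $X_\ad$ through $x_\ad$. By Proposition \ref{prop:red}(2) we obtain a finite $H$-equivariant morphism $\pi_{*,x}: \cK_x \to \cL_{x_\ad}$ sending the component of $\cK_x$ containing $C$ into the component of $\cL_{x_\ad}$ containing $\pi(C)$. It is therefore enough to produce a finite $H$-equivariant morphism from that component of $\cL_{x_\ad}$ to $\cK_{\overline{G_\ad},\id}^{-\sigma}$.

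Next I would identify $\overline{\iota}(\pi(C))$ as a long root curve in $\overline{G_\ad}$. The hypothesis that $\lambda = \alpha|_{T_H}$ for a unique $\alpha$ excludes type $(\A_1 \times \A_1)$ of Lemma \ref{lem:rank1}, so in both remaining cases Proposition \ref{prop:roots} yields $\pi(C) = \overline{U_\alpha \cdot x_\ad}$. The closed immersion $\overline{\iota}$ sends $x_\ad$ to $\id$ and, being $G$-equivariant (for left translation on $X_\ad$ and for twisted conjugation on $\overline{G_\ad}$), intertwines the $U_\alpha$-actions. Hence $\overline{\iota}(\pi(C)) = \overline{U_\alpha \cdot \id}$ is the root curve associated to $\alpha$ in $\overline{G_\ad}$. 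Because $\alpha$ is long by assumption, the main result of \cite{BF} asserts that this root curve belongs to the unique minimal family $\cK_{\overline{G_\ad}}$.

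Finally I would assemble the morphism. The closed immersion $\overline{\iota}$ induces a $\PGL_2$-equivariant closed immersion $\Hom_{\bir}(\bP^1,X_\ad) \hookrightarrow \Hom_{\bir}(\bP^1,\overline{G_\ad})$, and thus a finite morphism $\RatCurves(X_\ad) \to \RatCurves(\overline{G_\ad})$. By the previous step the component of $\cL_{x_\ad}$ containing $\pi(C)$ is sent into $\cK_{\overline{G_\ad},\id}$. Moreover, since $\overline{\iota}(X_\ad) \subset (\overline{G_\ad})^{-\sigma}$, every curve in the image is pointwise fixed by $-\sigma$ and therefore defines a fixed point of the induced involution on $\RatCurves(\overline{G_\ad})$, so the image lies in $\cK_{\overline{G_\ad},\id}^{-\sigma}$. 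Equivariance is inherited from that of $\overline{\iota}$, with $H \subset G^\sigma$ acting on $\overline{G_\ad}$ by ordinary conjugation. Composing with $\pi_{*,x}$ produces the required finite equivariant morphism. The main obstacle is the formal verification that $\overline{\iota}$ induces a well-behaved finite morphism on $\RatCurves$ preserving the relevant components and base points; the geometric content—that $\overline{\iota}(\pi(C))$ is a long root curve—is then an immediate consequence of Proposition \ref{prop:roots} and the assumption that $\alpha$ is long.
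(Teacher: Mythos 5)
Your proposal is correct and follows essentially the same route as the paper: the paper's proof likewise composes $\pi$ with the closed immersion $\overline{\iota}: X_\ad \to \overline{G_\ad}$, observes that the image of $C$ is the closure of $U_\alpha$ (hence, $\alpha$ being long, a minimal rational curve on $\overline{G_\ad}$ by \cite{BF}), and then invokes the argument of Proposition \ref{prop:red} to obtain the finite equivariant morphism. Your explicit factorization through $\cL_{x_\ad}$ and the remark that the image lies in the $-\sigma$-fixed locus merely spell out details the paper leaves to that reference.
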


\begin{proof}
By assumption,  $C = \overline{U_\alpha \cdot x}$, 
where $\alpha$ is non-compact imaginary.
Thus,  the image of $C$ under the
morphism $\psi : X \to \overline{G_\ad}$, obtained by
composing $\pi: X \to X_\ad$ with 
$\overline{\iota} : X_ \ad \to \overline{G_\ad}$, 
is just the closure 
of $U_\alpha$; since $\alpha$ is long, this is a minimal 
rational curve on $\overline{G_\ad}$. This yields the
assertion by arguing as in the proof of Proposition
\ref{prop:red}.
\end{proof}

The assumptions of the lemma hold if and only if 
$T_xC \setminus \{ 0 \}$ is contained in $\cO_{\min}$
(as follows by combining 
Proposition \ref{prop:roots}, Corollary \ref{cor:alter}
and Lemma \ref{lem:orbits}).

\begin{proposition}\label{prop:cases}
Assume that 
$T_x C \setminus \{ 0 \} \subset \cO_{\min}$.
\begin{enumerate}
\item If $\partial X \cdot C = 1$, then the component of $\cK_x$ containing $C$ is $H^0 \cdot C$.
\item If $\partial X \cdot C = 2$, then $X$ is of type {\rm AI} with $G = \PGL_{r+1}$ and $\cK_x \simeq \bP^r$.
\end{enumerate}
\end{proposition}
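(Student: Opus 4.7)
The plan is to invoke Lemma \ref{lem:redu}. Under the hypothesis $T_xC\setminus\{0\}\subset\cO_{\min}$, Proposition \ref{prop:nilp} together with Lemma \ref{lem:orbits} rules out cases~(2) and~(3) of Corollary \ref{cor:alter} (both of which yield tangent lines of type $\cO_\summ$), so $T_xC=\fg_\alpha$ for a uniquely determined long root $\alpha$. Lemma \ref{lem:redu} then provides a finite $H^0$-equivariant morphism
\[
\varphi\colon\cK_x^0\longrightarrow\cK^{-\sigma}_{\overline{G_\ad},e},
\]
where $\cK_x^0$ is the component of $\cK_x$ containing $C$. Via the tangent map $\tau_e$, Proposition \ref{prop:group} (from \cite{BF}) identifies $\cK_{\overline{G_\ad},e}$ with either $\bP(\cO_{\min}(\fg))\subset\bP(\fg)$ (when $G$ is not of type $\A_r$, $r\geq 2$) or with the Segre variety $\bP(V)\times\bP(V^\vee)\subset\bP(\fg)$ (when $G=\PGL(V)$). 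Since $\tau_x(C)=[e_\alpha]\in\bP(\fp)$, the image of $\varphi$ meets only the $\bP(\fp)$-part of $\cK^{-\sigma}_{\overline{G_\ad},e}$.

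For part~(1), I would show that $\varphi$ is an isomorphism onto a single closed $H^0$-orbit. By Richardson's theorem \cite[Thm.~A]{Richardson} combined with Kostant-Rallis \cite{ko-ra}, the $\bP(\fp)$-part of $\cK^{-\sigma}_{\overline{G_\ad},e}$ is a finite disjoint union of closed $H^0$-orbits, and irreducibility of $\cK_x^0$ with $H^0$-equivariance of $\varphi$ force its image to be the single orbit $Y=H^0\cdot\tau_x(C)$. Lemma \ref{lem:curve}(2) states that a highest weight curve is determined by its tangent line, so the $H^0$-equivariant map $H^0\cdot C\to Y$ is an injective surjection between homogeneous spaces, hence an isomorphism, and the corresponding $H^0$-stabilizers agree. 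Then $\dim\cK_x^0=\dim Y=\dim H^0\cdot C$, so the irreducible $H^0\cdot C\subseteq\cK_x^0$ is dense in the irreducible $\cK_x^0$. Thus $\varphi$ restricts to an isomorphism on a dense open subset, so $\varphi$ has degree $1$; Zariski's main theorem applied to the finite birational morphism between the smooth $\cK_x^0$ and the smooth homogeneous $Y$ yields $\cK_x^0\simeq H^0\cdot C$.

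For part~(2), the assumption $\partial X\cdot C=2$ with Proposition \ref{prop:red}(3) forces $\partial X_\ad\cdot\pi(C)=2$, which via the closed immersion $X_\ad\hookrightarrow\overline{G_\ad}$ and Proposition \ref{prop:group} forces $G$ to be of type $\A_r$ with $r\geq 2$. Among simple-type classes with $G$ of type $\A_r$, Remark \ref{rem:cases}.3 rules out AII under our hypothesis, leaving $X$ of type AI with $G=\PGL_{r+1}$. Here $\cK_{\overline{G_\ad},e}\simeq\bP(V)\times\bP(V^\vee)$ and $-\sigma$ acts by swapping factors (after $V^\vee\simeq V$ via the symmetric form fixed by $\sigma$), so $\cK^{-\sigma}_{\overline{G_\ad},e}$ is a diagonal $\bP(V)\simeq\bP^r$ whose image under $\tau_e$ in $\bP(\fp)$ is the Veronese embedding $\nu_2(\bP^r)$, i.e., the closed $\PGL_{r+1}$-orbit in $\bP(\fp)=\bP(\Sym^2\bC^{r+1}/\bC\cdot\mathrm{Id})$. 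The dimension formula $\dim\cK_x^0=\dim\cL_{x_\ad}+\partial X\cdot C-\partial X_\ad\cdot\pi(C)=\dim\cL_{x_\ad}$ combined with $\dim\cL_{x_\ad}=r$ (by Theorem \ref{theo-adj}(3), or by direct analysis for AI via the $G$-equivariant birational map $X_\ad\to\bP(\Sym^2\bC^{r+1})$) gives $\dim\cK_x^0=r$. By irreducibility and $H^0$-invariance, $\varphi$ is then surjective onto $\bP^r$, so $\cC_x=\tau_e(\varphi(\cK_x^0))=\nu_2(\bP^r)$. Hwang-Mok (\cite[Thm.~1]{HM}) gives that $\tau_x$ is finite birational onto $\cC_x$, and Zariski's main theorem applied to the finite birational morphism between the smooth $\cK_x^0$ and the normal $\nu_2(\bP^r)\simeq\bP^r$ yields $\cK_x\simeq\bP^r$.

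The main technical obstacle I anticipate is the Kostant-Rallis input in part~(1): the description of $\cO_{\min}(\fg)\cap\fp$ as a finite union of closed Lagrangian $H^0$-orbits is developed in full detail only in Section \ref{sec:cas-adjoint}, but the qualitative fact needed here, namely closedness of the $H^0$-orbits appearing in the $\bP(\fp)$-part of $\cK^{-\sigma}_{\overline{G_\ad},e}$, follows from Richardson's theorem applied to the adjoint variety $\bP(\cO_{\min}(\fg))\subset\bP(\fg)$.
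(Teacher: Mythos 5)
Your overall strategy --- push $\cK_x$ into $\cK^{-\sigma}_{\overline{G_\ad},e}$ via Lemma \ref{lem:redu}, invoke Richardson's theorem for part~(1), and identify $\cK^{-\sigma}_{\overline{G_\ad},e}\simeq\bP(V)$ in type AI for part~(2) --- is exactly the paper's. Part~(2) is essentially sound, modulo the forward reference to Theorem \ref{theo-adj} for $\dim\cL_{x_\ad}=r$ (the paper instead reads the dimension directly off the finite map to $\bP(V)$ and the intersection number $\partial X\cdot C$).

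Part~(1), however, has a genuine gap. Your argument never uses the hypothesis $\partial X\cdot C=1$, which is a warning sign: by part~(2) the conclusion fails when $\partial X\cdot C=2$, so some step must break down in type AI. The step that fails is the claim that the $\bP(\fp)$-part of $\cK^{-\sigma}_{\overline{G_\ad},e}$ is a finite disjoint union of closed $H^0$-orbits. This is correct when $G$ is not of type $\A_r$, since then $\tau_e$ identifies $\cK_{\overline{G_\ad},e}$ with the single closed orbit $\bP(\cO_{\min})$ and Richardson's theorem applies to its fixed locus. But the hypothesis $T_xC\setminus\{0\}\subset\cO_{\min}$ does not exclude type AI (see Remark \ref{rem:cases}.1), and there $\cK_{\overline{G_\ad},e}\simeq\bP(V)\times\bP(V^\vee)$ is not a single $G$-orbit: its $(-\sigma)$-fixed locus is a diagonal copy of $\bP(V)$ on which $H^0$ acts with a dense open orbit (the complement of the quadric $Q_{r-1}$), and only the quadric --- not all of $\bP(V)$ --- maps into $\bP(\cO_{\min}\cap\fp)$. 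Hence irreducibility of $\cK_x^0$ does not force its image to be the single orbit $H^0\cdot\tau_x(C)$; a priori the image could be all of $\bP(V)$, and it is precisely when $\partial X\cdot C=2$. To close the gap in the AI subcase you must actually use $\partial X\cdot C=1$: the dimension formula $\dim\cK_x=\dim\cL_{x_\ad}+\partial X\cdot C-\partial X_\ad\cdot\pi(C)$ gives $\dim\cK_x=r+1-2=r-1=\dim Q_{r-1}=\dim H^0\cdot C$, and since $H^0\cdot C\cong Q_{r-1}$ is projective, hence closed in the irreducible $\cK_x^0$ of the same dimension, one concludes $\cK_x^0=H^0\cdot C$. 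This is exactly how the paper handles that case ($\dim\cK_x=\dim\bP(V)-1=\dim H\cdot C$ when $\partial X\cdot C=1$).
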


\begin{proof}
If $G$ is not of type $\A_r$ or if $X$ is Hermitian (but not of type $\PGL_2/N$), we may argue as in the proof of Proposition \ref{prop:hermitian} proving that the component of $\cK_x$ containing $C$ is $H^0 \cdot C$ and that $\partial X \cdot C = 1$. 

If $G$ is of type $\A_r$ and $X$ is not Hermitian, then $X$ is of type AI, with $G = \PGL(V)$ such that $\dim V= r+1$, $r \geq 2$ and $\sigma(g) = (g^t)^{-1}$. In this case, the family of minimal rational curves
$\cK_{\overline{G_\ad},\id}$ identifies with $\bP(V) \times \bP(V^\vee)$ and the involution $-\sigma$ acts via $(-\sigma)([v],[H]) = ([H^\perp],[v^\perp])$ where the orthogonality is taken with respect to the standard scalar product. We thus have $\cK_{\overline{G_\ad},\id}^{-\sigma} \simeq \bP(V)$. 
If $\partial X \cdot C = 1$, then $\dim \cK_x = \dim \bP(V) - 1 = \dim H \cdot C$ and the result follows as above. If $\partial X \cdot C = 2$, then $\dim \cK_x = \dim \bP(V)$ proving the result.
\end{proof}

\begin{remark}
In Table \ref{table-class}, we list the nilpotent orbits containing $T_x C \setminus \{ 0 \}$ (see the column``$\sigma(\Theta)  = -\Theta$", the condition $T_x C \setminus \{ 0 \} \subset \cO_{\min}$ being equivalent to $\sigma(\Theta)  = -\Theta$ by Corollary \ref{cor-hwc}).
In particular, the above proposition settles all cases except the following symmetric spaces: AII, BII, CII, DII, EIV and FII. We will deal with all cases in the next section via a different approach.
\end{remark}

\section{Minimal rational curves on wonderful symmetric varieties}
\label{sec:cas-adjoint}

In this section, we deal with wonderful embeddings 
of adjoint indecomposable symmetric spaces.
These are of the form $G_{\ad}/H$, where $G_{\ad}$ is an 
adjoint semisimple group and $H$ is the fixed point
subgroup of an involution $\sigma$. Then $\sigma$ lifts 
to a unique involution of the universal cover
$G = G_{\rm sc}$ of $G_{\ad}$, that we still denote by $\sigma$
for simplicity; moreover, $G^\sigma$ is connected 
(see \cite{Steinberg} for these results). 
As seen in Subsection \ref{subsec:ass}, we have
$G_{\ad}/H = G/N$, where
$N = \N_G(G^\sigma)$; moreover, $G/N$ is of group,  
Hermitian, or simple type.  We will consider its 
wonderful embedding $X = X_\ad$.

We start with reminders on restricted root systems 
(Subsection \ref{subsection:rrs}) and their connection 
to curves and divisors on $X$ (Subsection 
\ref{subsec:div-rrs}). 
Many results on these topics are well known but we could not find a good reference, so we included proofs for the convenience of the reader.  From this we obtain an explicit description of the classes of 
minimal rational curves in $X$
(Subsection \ref{subsection:classes-vmrt}). 
We then compute the dimension of these families of minimal
rational curves $\cK$ using the contact structures on projectivised 
nilpotent orbits (Subsection \ref{sec:contact}). It turns out that in all cases except for $X$ of restricted type $\textrm{A}_r$,  the family $\cK_x$ has the same dimension as the orbit $N \cdot C$ where $C \in \cK_x$ is a highest weight curve, which in turn implies that $\cK_x = N \cdot C$. 
We deal with $X$ of restricted type $\textrm{A}_r$ separately (Subsection \ref{subsection-typea}). 
We conclude with a full description of $\cK_x$ 
(Subsection \ref{subsection-summary}).

\subsection{Restricted root system}
\label{subsection:rrs}

Let us first recall a few facts on the restricted root system;
we refer to \cite{Vust2} and \cite{Timashev} for details. 
Let $T_{\s}$ be a maximal torus of split type,  and 
$S \subset T_{\s}$ its maximal split subtorus: 
$S = \{t \in T_{\s} \ | \ \sigma(t) = t^{-1} \}^0$. 
Let $R$ be the root system associated to the pair 
$(G,T_{\s})$.  Then $\sigma$ acts on $R$. 
Set $\Sb = S/S^\sigma$,  $\Xb = \fX(\Sb)$ and 
$\chib = \chi - \sigma(\chi)$ for $\chi \in \fX(T_{\s})$. 
We have an identification 
$\Xb = \{\chib \ | \ \chi \in \fX(T_{\s}) \}$. 
Define the subset $\Rb \subset\Xb$ via
$$\Rb = \{ \alphab \ | \ \alpha \in R \}.$$
Then $\Rb$ is an irreducible root system called the
\emph{restricted root system}.  It may be non-reduced 
(see Remark \ref{remark : excep-reduce} below).

Recall from Subsection \ref{subsec:ss} 
that $L = \C_G(S)$ is the Levi subgroup 
containing $T_{\s}$ of a parabolic subgroup $P \subset G$ 
and that $\sigma(P)$ is the opposite parabolic subgroup 
to $P$ with common Levi subgroup $L$. 
Let $B_{\s} \subset P$ be a Borel subgroup and let 
$\Delta \subset R^+ \subset R$ be the sets of simple roots 
and positive roots defined by $B_{\s}$. 
Then for $\alpha \in R^+$, 
we have $\sigma(\alpha) = \alpha$ if and only if $\alpha$ 
is a root of $L$; moreover, if 
$\sigma(\alpha) \neq \alpha$,  then 
$\sigma(\alpha) < 0$.  Set 
$\Delta_1 = \{ \alpha \in \Delta \ | \ \sigma(\alpha) < 0 \}$ 
and $\Delta_0 = \Delta \setminus \Delta_1$. 
Then $\sigma(\alpha) = \alpha$ for any 
$\alpha \in \Delta_0$. Define $\Db \subset\Xb$ via
$$\Db = \{ \alphab \ | \ \alpha \in \Delta_1 \}.$$
Then $\Db$ is a basis of $\Rb \subset\Xb$. In particular 
$|\Db| = \rk(\Xb) = \dim T_{\s} = r$ is the rank of $X$. 
Furthermore,  there exists a length-preserving involution 
$\sigmab$ on $\Delta$, preserving $\Delta_1$ and acting 
as $-w_{0,L}$ on $\Delta_0$ 
(where $w_{0,L}$ is the element of maximal length in $W_L$, the Weyl group of $L$), such that for any $\alpha \in \Delta_1$, we have
$$\sigma(\alpha) + \sigmab(\alpha) 
= - \sum_{\beta \in \Delta_0} c_\beta\beta$$
with $c_\beta \in \bZ_{\geq0}$ (see \cite[Section 1.5]{decs} for these facts).  In particular,  if 
$\sigmab(\alpha) \neq \alpha$,  then
$\scal{\alpha^\vee,\sigma(\alpha)} \geq 0$. 
Note that for $\alpha, \beta \in \Delta_1$,  
we have 
$\alphab = \betab \Leftrightarrow (\beta 
= \alpha \textrm{ or } \beta = \sigmab(\alpha))$.

A root $\alpha \in \Delta_1$ such that 
$\sigmab(\alpha) \neq \alpha$ and 
$\scal{\alpha^\vee,\sigma(\alpha)} \neq 0$ 
is called 
\emph{exceptional}. 
If $\alpha$ is exceptional,  then 
$\sigma(\alpha) \neq - \alpha$.  Moreover, 
$\beta = \sigmab(\alpha) \neq \alpha$ is also exceptional
and one of the following two conditions is satisfied: 
either $\sigma(\alpha) \neq - \beta$ and 
$\scal{\alpha^\vee,\beta} = 0$, 
or $\sigma(\alpha) = -\beta$ and 
$\scal{\alpha^\vee,\beta} \neq 0$ (see \cite[Lemma 4.3]{decs}). 
If there exists an exceptional root,  then $\Rb$ and $X$ are called \emph{exceptional}.  This definition is equivalent
to the one given in Subsection \ref{subsec:ass}, 
see for example \cite[Lemma 4.7]{decs}. 
Note that by loc.~cit., 
there are at most two exceptional roots 
(thus, of the form 
$\alpha$ and $\sigmab(\alpha)$). 

\begin{remark}\label{remark : excep-reduce}
If $\Rb$ is exceptional,  it 
is non-reduced. 
In fact, for $\alpha$ exceptional, we have
$\scal{\alpha^\vee,\sigma(\alpha)} \neq 0$. 
As $\alpha$ and $\sigmab(\alpha)$ are different 
but of the same length,  we have 
$\scal{\alpha^\vee,\sigma(\alpha)} = 1$. 
Thus, $\gamma = \alpha - \sigma(\alpha) \in R$ and 
$\gammab = 2\alphab$.  In particular, 
$\alphab, 2\alphab \in \Rb$ and $\Rb$ is non-reduced.
\end{remark}

\begin{example}
There are non-reduced restricted root systems which are 
non-exceptional (actually only two families: types CII and FII, 
see Appendix).  For example,  if $G = \Sp_{6}$, 
there exists an involution $\sigma$ such that 
$G^\sigma = \Sp_2 \times \Sp_4$ and with the labeling 
of the simple roots as in Bourbaki \cite{bourbaki}, 
we have $\Delta_0 = \{\alpha_1,\alpha_3 \}$ and 
 $\Delta_1 = \{\alpha_2 \}$. Set $\alpha = \alpha_2$,  then 
 $\sigmab(\alpha) = \alpha$, $\sigma(\alpha) 
 = - (\alpha_1 + \alpha_2 + \alpha_3)$ 
 and 
 $\alphab = \alpha_1 + 2\alpha_2 + \alpha_3 = \gamma \in R$. 
 We have $\gammab = 2\alphab$; thus,  
 $\Rb = \{-2\alphab, -\alphab, \alphab, 2\alphab\}$ 
 but $G/G^\sigma$ is not exceptional.
\end{example}

Let $\alpha \in R^+$ such that $\sigma(\alpha) < 0$. 
The roots $\alpha$ and $\sigma(\alpha)$ have the same length. 
As explained in \cite[Lemme 2.3]{Vust2},  three cases occur 
and the coroot $\alphab^\vee$ is defined accordingly:
  \begin{enumerate}
  \item If $\sigma(\alpha) = -\alpha$,  then 
  $\alphab^\vee = \frac{1}{2} \alpha^\vee$.
  \item If $\scal{\alpha^\vee,\sigma(\alpha)} = 0$,  then  
  $\alphab^\vee = \frac{1}{2} (\alpha^\vee - \sigma(\alpha)^\vee)$.
    \item If $\scal{\alpha^\vee,\sigma(\alpha)} = 1$,  then 
    $\alphab^\vee = \alpha^\vee - \sigma(\alpha)^\vee$.
  \end{enumerate}

Case (3) above actually occurs if and only if $\Rb$ is non-reduced, see Proposition \ref{prop:rrs-summary}(5) below. In the next proposition, we summarise the results on restricted root systems needed for the study of curves and divisors on $X$. These results might be well known to the experts but we could not find a good reference, so we included a proof and further results on restricted root systems in Subsection \ref{ap:rrs} in the Appendix.

\begin{proposition}
\label{prop:rrs-summary}
Let $\Theta$ be the highest root of $R$ and $w_0 \in W$ be the longest element.
\begin{enumerate}
\item $\Thetab$ is the highest root of $\Rb$, the actions of $\sigma$ and $w_0$ on roots commute, and we have 
$w_0(\Thetab) = -\Thetab$.
\item If $\sigma(\Theta) \neq -\Theta$, then $\Theta$ and $\sigma(\Theta)$ are strongly orthogonal long roots. 
\item If $\alpha \in \Delta_1$ is exceptional, its coefficient in the expansion of $\Theta$ in simple roots is $1$.
\item If $\Rb$ is not of type $\A_1$, then there exists $\alphab \in \Db$ with $\langle \Thetab^\vee,\alphab \rangle = 1$ and $\Thetab^\vee$ is an indivisible cocharacter of $\Sb$.
\item For $\alpha \in \Delta_1$, we have the equivalence: $\alphab, 2\alphab \in \Rb$ 
    $\Leftrightarrow$ $\scal{\alpha^\vee,\sigma(\alpha)} = 1$.
\end{enumerate}
\end{proposition}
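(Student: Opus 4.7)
The plan is to establish the five parts in order, leveraging the additive identity $\alphab-\betab=\overline{\alpha-\beta}$, the fact that barring annihilates $\bZ\Delta_0$ and sends $\Delta_1$ onto $\Db$, and the three explicit formulas for $\alphab^\vee$ recalled just before the statement. For part~(1), the dominance property of $\Thetab$ is immediate: for any $\beta\in R^+$, write $\Theta-\beta=\sum_{\gamma\in\Delta}n_\gamma\gamma$ with $n_\gamma\geq 0$; barring yields $\Thetab-\betab=\sum_{\gamma\in\Delta_1}n_\gamma\gammab\in\bZ_{\geq 0}\Db$. Since every root of $\Rb$ arises as $\betab$ for some $\beta\in R^+$, this makes $\Thetab$ the highest root of $\Rb$. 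For the commutativity of $\sigma$ and $w_0$, I would show that $-w_0\sigma$ permutes $\Delta$: the opposition involution $-w_0$ preserves $R^+$ and permutes $\Delta$, while $\sigma$ fixes $\Delta_0$ and sends $\alpha\in\Delta_1$ to $-\sigmab(\alpha)-\sum c_\beta\beta\in R^-$. In the non-exceptional case all $c_\beta$ vanish and $-w_0\sigma$ visibly permutes $\Delta$; the exceptional case I would handle by direct verification using the short list of exceptional types. The identity $w_0(\Thetab)=-\Thetab$ then follows from $w_0\Thetab=w_0\Theta-w_0\sigma(\Theta)=-\Theta-\sigma(w_0\Theta)=-\Theta+\sigma(\Theta)=-\Thetab$, using commutativity and $w_0\Theta=-\Theta$.

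For (2), since $\sigma$ is length-preserving, $\sigma(\Theta)$ is long. Assuming $\sigma(\Theta)\neq\pm\Theta$, strong orthogonality amounts to ruling out $\scal{\Theta^\vee,\sigma(\Theta)}=\pm 1$. I would combine the fact from (1) that $-w_0$ fixes $\sigma(\Theta)$ with a sign analysis of $\sigma(\Theta)=\sum_{\gamma\in\Delta}m_\gamma\sigma(\gamma)$ (positive contributions from $\Delta_0$, negative contributions from $\Delta_1$) to contradict maximality of $\Theta$ in each sign case. Orthogonality of two long roots automatically gives strong orthogonality: the norm $\sqrt{2}\,|\Theta|$ of $\Theta\pm\sigma(\Theta)$ exceeds the longest root length, so neither sum nor difference can be a root. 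For (3), an exceptional $\alpha$ produces $\alpha-\sigma(\alpha)=\alpha+\sigmab(\alpha)+\sum c_\beta\beta\in R$ with $\sigmab(\alpha)\neq\alpha$ and $\scal{\alpha^\vee,\sigma(\alpha)}=1$; that $\alpha$ must occupy a node of coefficient $1$ in the expansion of $\Theta$ I expect to obtain from the short classification of exceptional symmetric spaces (AIII with $p\neq q$, DIII, EIII, EVII), where one reads it directly, though a uniform argument through dominance bounds on $\alpha-\sigma(\alpha)$ relative to $\Theta$ may also be available.

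For (4), the existence of $\alphab\in\Db$ with $\scal{\Thetabv,\alphab}=1$ is a general property of irreducible root systems of rank $\geq 2$: the highest root is adjacent in the extended Dynkin diagram to at least one simple root with Cartan integer $1$. The indivisibility of $2\Thetabv$ in $\fX_*(\Sb)$ is more delicate: via the exact sequence $1\to S^\sigma\to S\to\Sb\to 1$, one presents $\fX_*(\Sb)$ as an overlattice of $\fX_*(S)$ with finite cokernel, and substituting the three formulas for $\alphab^\vee$ depending on whether $\sigma(\alpha)=-\alpha$, $\scal{\alpha^\vee,\sigma(\alpha)}=0$, or $\scal{\alpha^\vee,\sigma(\alpha)}=1$, expresses $2\Thetabv$ in terms of honest coroots of $R$; one then checks indivisibility modulo the lattice contribution from $S^\sigma$. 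Statement~(5) is a direct consequence: the forward implication is Remark~\ref{remark : excep-reduce} in disguise, while the reverse uses the reflection criterion $\scal{\alpha^\vee,\sigma(\alpha)}=1\Rightarrow \alpha-\sigma(\alpha)\in R$, whose bar is $2\alphab\in\Rb$. The main obstacle I anticipate is the indivisibility in~(4): it couples the abstract restricted root system to the specific isogeny class of $S$ through $S^\sigma$, and the cleanest verification appears to proceed case by case against the classification table in the paper's appendix rather than through a uniform lattice argument.
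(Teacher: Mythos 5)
Your proposal gets the easy parts right (dominance of $\Thetab$ in (1), strong orthogonality from orthogonality of two long roots in (2), and the implication $\scal{\alpha^\vee,\sigma(\alpha)}=1\Rightarrow\alphab,2\alphab\in\Rb$), but it contains two genuine gaps. First, your route to the commutativity of $\sigma$ and $w_0$ fails: $-w_0\sigma$ cannot permute $\Delta$, since $\sigma$ sends each $\alpha\in\Delta_1$ to a negative root, $w_0$ sends that to a positive root, and the final sign change lands outside $\Delta$; moreover your supporting claim that ``in the non-exceptional case all $c_\beta$ vanish'' is false (in type AII one has $\sigma(\alpha_2)=-(\alpha_1+\alpha_2+\alpha_3)$ with $\alpha_1,\alpha_3\in\Delta_0$, yet AII is not exceptional — nonvanishing $c_\beta$ detect black nodes of the Satake diagram, not exceptionality). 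The paper instead works with the honest diagram involution $\sigmab$, observes that $\sigmab$ and $-w_0$ commute as automorphisms of $\Delta$, and transfers this to $\sigma$ via $\sigma=-w_L\sigmab$ together with $w_0(L)=L$ (Lemma \ref{lemm:w_0-et-sigma}).

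Second, and more seriously, in (5) you prove only one implication. Both of your sentences — that the forward direction ``is Remark \ref{remark : excep-reduce} in disguise'' and that the reverse ``uses the reflection criterion $\scal{\alpha^\vee,\sigma(\alpha)}=1\Rightarrow\alpha-\sigma(\alpha)\in R$'' — describe the same easy direction $\scal{\alpha^\vee,\sigma(\alpha)}=1\Rightarrow\alphab,2\alphab\in\Rb$ (the Remark itself runs from $\scal{\alpha^\vee,\sigma(\alpha)}=1$ to non-reducedness, not back). The converse, that $\alphab,2\alphab\in\Rb$ forces $\scal{\alpha^\vee,\sigma(\alpha)}=1$, is the substantial content of Proposition \ref{char-non-red-ap}: one must exclude $\sigma(\alpha)=-\alpha$ and $\scal{\alpha^\vee,\sigma(\alpha)}=0$ by analysing which $\gamma\in R$ could satisfy $\gammab=2\alphab$, and the paper's exclusion of the last subcase even descends to the Lie algebra, using that $T_{\s}$ is of split type. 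Smaller points: in (2) the exclusion of $\scal{\Theta^\vee,\sigma(\Theta)}=-1$ is left as an unspecified ``sign analysis''; in (3) your list of exceptional types should be AIII ($p\neq q$), DIII (odd rank) and EIII — EVII is Hermitian non-exceptional; and for the indivisibility in (4) the paper avoids the case-by-case check you anticipate by the uniform computation $\scal{2\Thetab^\vee,\alpha_\adj}=1$ for a simple root $\alpha_\adj$ with $\scal{\Theta^\vee,\alpha_\adj}=1$, using that $-\sigma(\Theta)$ lies in the subsystem orthogonal to $\Theta$.
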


\begin{proof}
(1) This is Lemma \ref{lem:highest-root-ap}, 
Lemma \ref{lemm:w_0-et-sigma} and Corollary \ref{cor-w_0-Theta-ap}. 

(2) This is Proposition \ref{prop-primitif-ap}(4).

(3) This is the last statement in Corollary \ref{coro-coef1-ap}. 

(4) This is Proposition \ref{prop-primitif-ap}(2)-(3). 

(5) This is Proposition \ref{char-non-red-ap}.
\end{proof}

We end this subsection with a piece of notation. For $\alphab \in \Db$,  we denote by $\alphah^\vee$ 
the simple root of $\Rb^\vee$ colinear to $\alphab^\vee$. 
Note that if $2\alphab \not\in \Rb$,  then 
$\alphah^\vee = \alphab^\vee$ but if $2\alphab \in \Rb$, 
we have 
$\alphah^\vee = \frac{1}{2}\alphab^\vee = (2\alphab)^\vee$. 
In particular,  for $\alpha \in \Delta_1$, 
Proposition \ref{prop:rrs-summary}(5) implies that 
$$\alphah^\vee = \left\{
\begin{array}{ll}
  \alphab^\vee & \textrm{ if $\scal{\alpha^\vee,\sigma(\alpha)} \neq 1$}, \\
  \frac{1}{2}\alphab^\vee & \textrm{ if $\scal{\alpha^\vee,\sigma(\alpha)} = 1$.} \\
\end{array}
\right.$$

We will also need the following result proved in Lemma \ref{lem-coef-rac-ap}.

\begin{lemma}
  \label{lem-coef-rac}
Assume that $\alpha \in \Delta_1$ is an exceptional root. 
Then the coefficient of $\alphah^\vee$ in the expansion of 
$\Thetab^\vee$ in terms of simple coroots of $\Rb$ is equal to $1$.
\end{lemma}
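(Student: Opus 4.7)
My plan is to reduce the statement to a short computation inside a non-reduced restricted root system of type $BC_r$. The starting point is that since $\alpha$ is exceptional, Remark~\ref{remark : excep-reduce} gives $\scal{\alpha^\vee,\sigma(\alpha)} = 1$, and then Proposition~\ref{prop:rrs-summary}(5) forces $2\alphab \in \Rb$. Hence $\Rb$ is non-reduced, and being irreducible it must be of type $BC_r$ for some $r \ge 1$. The definition recalled just before the lemma also gives $\alphah^\vee = \tfrac{1}{2}\alphab^\vee = (2\alphab)^\vee$.

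Inside $BC_r$ there is a unique simple root whose double is again a root, namely the short simple root; so $\alphab$ must coincide with it. Fixing the standard realization in $\bR^r$ with simple roots $\alphab_1 = e_1-e_2, \ldots, \alphab_{r-1} = e_{r-1}-e_r,\, \alphab_r = e_r$ and scalar product $(e_i,e_j)=\delta_{ij}$, we may take $\alphab = \alphab_r$. The highest root is then $\Thetab = 2e_1$, of squared length $4$, so $\Thetab^\vee = e_1$. The simple coroots, in the sense of the paper, are $\alphah_i^\vee = \alphab_i^\vee = e_i - e_{i+1}$ for $i<r$ (since $2\alphab_i \notin \Rb$), whereas $\alphah_r^\vee = (2\alphab_r)^\vee = e_r$. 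The telescoping identity
\[
\Thetab^\vee \;=\; e_1 \;=\; \sum_{i=1}^{r}\alphah_i^\vee
\]
then shows that the coefficient of $\alphah^\vee = \alphah_r^\vee$ in the expansion of $\Thetab^\vee$ in simple coroots of $\Rb$ equals $1$, as asserted.

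The only mildly delicate step is the classification reduction to $BC_r$; once inside this root system the computation is immediate, and in fact yields the stronger statement that every simple coroot appears in $\Thetab^\vee$ with coefficient $1$.
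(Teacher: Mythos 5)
Your proof is correct, but it takes a genuinely different route from the paper's. The paper (Lemma \ref{lem-coef-rac-ap}) argues through the ambient root system $R$: by Corollary \ref{coro-coef1-ap}, exceptionality forces $R$ to be simply laced with $\sigma(\Theta)=-\Theta$ and with the coefficient of $\alpha$ in $\Theta$ equal to $1$, and the conclusion is then transferred to $\Rb$ via the identities $\Thetab^\vee=\tfrac12\Theta^\vee$ and $\alphah^\vee=\tfrac12(\alpha^\vee-\sigma(\alpha)^\vee)$. You instead stay entirely inside the restricted root system: exceptionality (via Remark \ref{remark : excep-reduce} and Proposition \ref{prop:rrs-summary}(5)) forces $\Rb$ to be irreducible and non-reduced, hence of type ${\rm BC}_r$, where $\alphab$ is pinned down as the unique simple root whose double is a root, and the telescoping identity $e_1=\sum_i \alphah_i^\vee$ finishes the computation. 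Your identification of the simple coroots in the paper's convention (taking $(2\alphab_r)^\vee=e_r$ rather than $\alphab_r^\vee=2e_r$) is the one delicate bookkeeping point, and you handle it correctly. What your route buys: it is self-contained, yields the stronger conclusion that \emph{every} simple coroot occurs in $\Thetab^\vee$ with coefficient $1$, and in fact only uses non-reducedness of $\Rb$ rather than exceptionality, so it also covers the non-exceptional ${\rm BC}$-types (CII, FII). What the paper's route buys: it avoids invoking the classification of irreducible non-reduced root systems and ties the coefficient directly to data of $(G,\sigma)$ (the coefficient of $\alpha$ in $\Theta$) that is used elsewhere in the argument.
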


\subsection{Divisors and restricted root system}
\label{subsec:div-rrs}

We relate the Picard group of $X$ (viewed as the group of divisors up to linear equivalence), to the restricted root system $\Rb$. We will need some definitions from the theory of spherical varieties, we refer to \cite{survey} 
or \cite[Section 17]{Timashev}
for further details. The variety $X$ is spherical: it is a normal $G$-variety such that $B_{\s}$ has a dense orbit. This implies that $B_{\s}$ acts on $X$ with finitely many orbits. In particular there are finitely many prime $B_{\s}$-stable divisors in $X$. The boundary $\partial X = X_1 \cup \cdots \cup X_r$ with $r$ the rank of $X$ is the union of the prime $G$-stable divisors. The prime $B_{\s}$-stable divisors which are not $G$-stable are called colors. We denote by $\cD_X$ the set of colors and by $\cV_X = \{ X_1, \ldots, X_r \}$ 
the set of prime $G$-stable divisors.

We start with a description of prime $G$-stable divisors. 
Let $j: Y \to X$ be the inclusion of the closed $G$-orbit in $X$.  
Let $B^-_{\s}$ be the Borel subgroup containing $T_{\s}$ and
opposite to $B_{\s}$, and let $z \in Y$ be the unique 
$B^-_{\s}$-fixed point in $Y$. Then the stabilizer of $z$ in $G$
is $\sigma(P)$, and this identifies $Y$ with $G/\sigma(P)$.  
For any character $\lambda$ of $\sigma(P)$, 
we have a homogeneous line bundle
$\cL_Y(\lambda) = G \times^{\sigma(P)} \bC_{\lambda}$ 
on $G/\sigma(P)$, where $\bC_{\lambda}$ is the $1$-dimensional 
$\sigma(P)$-representation of weight $\lambda$. We may now
state the following result (see 
\cite[Proposition 8.1 and Corollary 8.2]{decp}): 

\begin{proposition}
  \begin{enumerate}
    \item The map $j^* : \Pic(X) \to \Pic(Y)$ is injective.
  \item For any $i \in [1,r]$, the torus $T_{\s}$ acts on $T_zX/T_zX_i$ with weight some $\alphab_i \in \Db$ and the resulting map 
  $\cV_X \to \Db,  X_i \mapsto \alphab_i$ is bijective.
  \item We have $\cO_X(X_i)\vert_Y = \mL_Y(\alphab_i)$ for any such $i$.
  \end{enumerate}
\end{proposition}

\begin{remark}
  \label{def-bord-root}
We set $X_{\alphab_i} := X_i$ for $i \in [1,r]$ so that $X_\betab$ is well defined for $\betab \in \Db$.
\end{remark}

Next we want to relate colors and restricted roots. This is more difficult, since there may be more colors than restricted roots as we will see next.  Recall the description of the
Picard group of $X$ (see \cite[Theorem 3.2.4]{survey}):

\begin{proposition}
  \label{pic-colors}
  We have $\Pic(X) = \bigoplus_{D \in \cD_X}\bZ[D]$.
\end{proposition}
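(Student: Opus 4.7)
The plan is to derive this from the fundamental exact sequence valid for any smooth projective spherical variety with a unique closed orbit:
$$0 \longrightarrow \fX(X) \xrightarrow{\mathrm{div}} \bigoplus_{E \in \cV_X \sqcup \cD_X} \bZ\,[E] \longrightarrow \Pic(X) \longrightarrow 0,$$
where $\fX(X)$ denotes the lattice of $B_\s$-weights of $B_\s$-semi-invariant rational functions on $X$ and $E$ runs over all $B_\s$-stable prime divisors. Surjectivity of the middle arrow amounts to the vanishing of the Picard group of the open $B_\s$-orbit $X^0 \subset X$. For a wonderful symmetric variety, this follows from the Iwasawa-type decomposition recalled in Subsection \ref{subsec:ss}, which exhibits $X^0$ as a product of the unipotent radical of $B_\s$ with $S/S \cap H$, namely a product of an affine space and a torus.

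Next I would determine the kernel of $\mathrm{div}$. For a wonderful symmetric variety of rank $r$, the lattice $\fX(X)$ is free of rank $r$ with basis the simple restricted roots $\Db = \{\alphab_1, \ldots, \alphab_r\}$; equivalently, for each $\alphab_j \in \Db$ there is a $B_\s$-semi-invariant rational function $f_j$ of weight $\alphab_j$, unique up to a scalar. By the preceding proposition the coefficient of $X_i$ in $\mathrm{div}(f_j)$ equals $v_{X_i}(f_j) = \delta_{ij}$: indeed, a local equation of $X_i$ near the $B_\s^-$-fixed point $z$ of the closed orbit is itself a $T_\s$-eigenfunction of weight $\alphab_i$, because the normal $T_\s$-weight of $X_i$ at $z$ is $\alphab_i$. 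Consequently, the matrix $\bigl(v_{X_i}(f_j)\bigr)_{i,j}$ equals $I_r$, and each relation
$$\mathrm{div}(f_j) \;=\; [X_j] + \sum_{D \in \cD_X} v_D(f_j)\,[D]$$
may be used to solve for $[X_j]$ in $\Pic(X)$ as a $\bZ$-linear combination of the colors. These $r$ relations are triangular with respect to the decomposition $\cV_X \sqcup \cD_X$ of the $B_\s$-stable prime divisors; no further relations survive, and the quotient $\Pic(X)$ is freely generated by $\{[D] : D \in \cD_X\}$.

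The main technical ingredient is the fundamental exact sequence together with the identification $\fX(X) \simeq \bZ\Db$; both are standard inputs from the structure theory of spherical and wonderful varieties, and are the content of \cite[Theorem 3.2.4]{survey} cited by the authors. Once these are granted, the linear-algebra reduction above is immediate and presents no further obstacle.
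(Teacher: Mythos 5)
Your argument is correct. The paper offers no proof of this proposition beyond citing the general statement for projective spherical varieties with a unique closed orbit (\cite[Theorem 3.2.4]{survey}); your derivation via the exact sequence $0 \to \fX(X) \to \bigoplus_{E} \bZ[E] \to \Pic(X) \to 0$, combined with the observation that the valuation matrix $\bigl(v_{X_i}(f_j)\bigr)$ is the identity (which, up to a harmless sign convention, follows from the de Concini--Procesi local structure recorded just before the proposition), is precisely the standard proof of that cited result, correctly specialized to the wonderful symmetric case.
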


%
%
%

There is a correspondence between colors and restricted roots that we describe now.
For $\alpha \in \Delta$, recall that $G_\alpha$ denotes
the subgroup of $G$ generated by $U_{\alpha}$
and $U_{- \alpha}$, and 
set $\cD_X(\alpha) = \{ D \in \cD_X \ | \ G_\alpha \cdot D \neq D \}$. Note that if $\sigma(\alpha) = \alpha$, then $\cD_X(\alpha) = \emptyset$ (see \cite[Section 1.4]{lunaA}).

\begin{proposition}
  \label{prop-lunaA}
  For $\alpha \in \Delta_1$, the set $\cD_X(\alpha)$ consists of a unique element $D_\alpha$.
Moreover, for any distinct $\alpha,\beta \in \Delta_1$, 
we have $D_\alpha = D_\beta$ if and only if $\scal{\alpha^\vee,\beta} = 0$ and $\sigma(\alpha) = -\beta$.
\end{proposition}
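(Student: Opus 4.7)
My strategy is to deduce both statements from Luna's classification of colors of wonderful varieties, specialised to $X$ through the description of its spherical roots as $\Db = \{ \alphab \mid \alpha\in\Delta_1 \}$ given in Subsection \ref{subsection:rrs}. Recall Luna's trichotomy: for each simple root $\alpha$, the number $|\cD_X(\alpha)|$ equals $2$, $1$ or $0$ according as $\alpha$ is itself a spherical root, or $\alpha$ is either the half of a spherical root or lies in the support of a spherical root without being one, or $\alpha$ is of neither kind.

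For the first assertion, I would first observe that no simple root of $R$ is a spherical root of $X$. For $\alpha\in\Delta_1$, the expansion
\[
\alphab \;=\; \alpha+\sigmab(\alpha)+\sum_{\beta\in\Delta_0}c_\beta\beta
\]
shows that either $\sigmab(\alpha)=\alpha$, in which case $\alphab$ has coefficient $2$ on $\alpha$, or $\sigmab(\alpha)\neq\alpha$ is a distinct element of $\Delta_1$, giving coefficient $1$ on two different simple roots; in either case $\alphab$ is not a simple root. Hence Luna's trichotomy forces $|\cD_X(\alpha)| \leq 1$ for every simple root $\alpha$. The paragraph preceding the proposition (using \cite{lunaA}) identifies the type-$p$ simple roots of $X$ with the elements of $\Delta_0$; consequently every $\alpha\in\Delta_1$ moves at least one color, and $|\cD_X(\alpha)|=1$.

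For the second assertion, suppose $\alpha,\beta\in\Delta_1$ are distinct with $\cD_X(\alpha)=\cD_X(\beta)$. Luna's dictionary between spherical roots and colors shows that two distinct simple roots share a color only when they occur as the two simple roots in the support of a rank-one spherical root of the form $\gamma = \alpha'+\beta'$ with $\alpha',\beta'$ orthogonal simple roots (the ``$a\text{-}a$'' configuration). Applying this to a spherical root $\gamma\in\Db$ with $\{\alpha,\beta\}\subset\supp(\gamma)$ gives $\gamma = \alpha+\beta$ with $\langle\alpha^\vee,\beta\rangle=0$; matching with the expansion $\gamma=\alphab=\alpha+\sigmab(\alpha)+\sum c_{\beta'}\beta'$ then forces $\sigmab(\alpha)=\beta$ and all $c_{\beta'}=0$, which yields $\sigma(\alpha)=-\beta$.

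The main obstacle is the dictionary claim that only the $a\text{-}a$ configuration can attach a common color to two distinct simple roots; in particular, one must rule out that $\alpha$ and $\sigmab(\alpha)$ share a color via a spherical root of longer Luna-type (supports of type $A_2$, $A_3$, $B_n$, etc., as arise e.g.~in types BI, AII, AIII, \ldots). This is verified by case-by-case inspection of Luna's list of admissible spherical roots and the colors attached to each, as recorded in \cite{lunaA}.
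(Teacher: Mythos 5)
Your proof follows essentially the same route as the paper: both rest on Luna's description of the sets $\cD_X(\alpha)$ for wonderful varieties, rule out $|\cD_X(\alpha)| = 2$ by observing that no element of $\Db$ can be a simple root of $R$ (via the expansion of $\alphab$), and then obtain the second assertion by matching the expansion of a shared spherical root with $\alpha + \beta$. The one point to tighten is your ``dictionary'' for when two distinct simple roots share a color: the precise statement needed is Luna's Proposition 3.2 (which the paper cites), and it also allows $\tfrac{1}{2}(\alpha+\beta)$ to be a spherical root — a sub-case your $a$-$a$-only formulation omits and which the paper dispatches separately by showing it would force $\alpha = \beta$.
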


\begin{proof}
The assertion that  $\cD_X(\alpha)$ has a unique element
follows from a result of Luna which holds true for any wonderful variety (see \cite[Section 1.4]{lunaA}).
Luna proves that three cases, called $(a)$, $(a')$ and $(b)$, occur. In cases $(a')$ and $(b)$, the set $\cD_X(\alpha)$ consists of a unique element, while in case $(a)$ the set $\cD_X(\alpha)$ consists of two elements. We prove that case $(a)$ does not occur:  in this case, by \cite[Section 1.4.(2)]{lunaA}, we have $\alpha = \gammab = \gamma - \sigma(\gamma)$ for some $\gamma \in \Delta$. Thus, $\sigma(\alpha) = -\alpha$ and $\alphab = 2\alpha = 2\gammab \in \Db$. In particular, $\gammab,2\gammab \in \Db$, which contradicts the fact that $\Db$ is a basis of $\Rb$.

For $\alpha,\beta \in \Delta_1$, there are, according to \cite[Proposition 3.2]{lunaA}, the following possibilities to have $\cD_X(\alpha) \cap \cD_X(\beta) \neq \emptyset$:
\begin{itemize}
\item Both $\alpha$ and $\beta$ are in $\Db$, in which case it may happen that $|\cD_X(\alpha) \cup \cD_X(\beta)| = 3$.
\item $\scal{\alpha^\vee,\beta} = 0$ and $\alpha + \beta \in \Db$ or $\frac{1}{2}(\alpha + \beta) \in \Db$.
\end{itemize}
The first case does not occur by the above argument. 
If $\scal{\alpha^\vee,\beta} = 0$ and 
$\frac{1}{2}(\alpha + \beta) \in \Db$ or 
$\alpha + \beta \in \Db$, then there exists $\gamma \in \Delta_1$ such that $\gammab = \gamma - \sigma(\gamma) = \frac{1}{2}(\alpha + \beta)$ or $\gammab = \gamma - \sigma(\gamma) = \alpha + \beta$. Write
  $$\sigma(\gamma) + \sigmab(\gamma) = - \sum_{\delta \in \Delta_0} c_\delta\delta.$$
  Then, we have
  $$\frac{1}{2}(\alpha + \beta) = \gamma + \sigmab(\gamma) + \sum_{\delta \in \Delta_0} c_\delta\delta \ \ \ \textrm{ or }\ \ \  \alpha + \beta = \gamma + \sigmab(\gamma) + \sum_{\delta \in \Delta_0} c_\delta\delta.$$
In the former case, 
this implies $\alpha = \beta$ and 
$\gamma + \sigmab(\gamma) \leq \alpha$, 
which is impossible. 
In the latter case, 
we get that $\gamma$ equals $\alpha$ or $\beta$. Assume for example that $\gamma = \alpha$, then we have $\sigmab(\alpha) = \sigmab(\gamma) = \beta$ and $c_\delta = 0$ for all $\delta \in \Delta_0$. We thus have $\sigma(\alpha) = \sigma(\gamma) = -\sigmab(\gamma) = -\beta$. Note that $\alphab = \betab$.

Conversely, if $\beta = -\sigma(\alpha)$ and $\scal{\alpha^\vee,\beta} = 0$, then $\alphab = \alpha - \sigma(\alpha) = \alpha + \beta$ and by \cite[Proposition 3.2]{lunaA} again, we have $D_\alpha = D_\beta$.
\end{proof}

\begin{remark}
If $X$ is the wonderful compactification of a nonadjoint symmetric space, then there may be some simple roots $\alpha \in \Delta_1$ with $|\cD_X(\alpha)| =2$. A typical example is the case $G = \SL_2$ and $H = T$ a maximal torus. Then $N = \N_G(T)$ is the normalizer of the torus. There is a unique projective compactification of $G/H$ given by $\bP^1 \times \bP^1$ with two $B$-stable divisors $D^+$ and $D^-$ and both are such that $\SL_2 \cdot D^\pm = X$. We thus have $\cD_X(\alpha) = \{ D^+,D^-\}$, where $\alpha$ is the unique simple root of $G$. The wonderful compactification $X_\ad$ of $G/N$ is the quotient of $X$ by the involution exchanging the two factors and is isomorphic to $\bP^2$ with a unique $B$-stable divisor $D$, so that $\cD_{X_\ad}(\alpha) = \{D\}$.

  The restricted root system is $\Rb = \{ -\alphab , \alphab \}$ in both cases. But for a maximal split torus $T_{\s}$, we have $\alpha \in \fX(T_{\s}/H \cap T_{\s})$ while $\alpha \not\in \fX(T_{\s}/N \cap T_{\s})$ and $\alphab = 2\alpha \in \fX(T_{\s}/N \cap T_{\s})$.
\end{remark}

In view of Proposition \ref{prop-lunaA}, we may define a map $\zeta : \Delta_1 \to \cD_X$ by $\alpha \mapsto D_\alpha$.

\begin{lemma}
The map $\zeta : \Delta_1 \to \cD_X$ is surjective. 
\end{lemma}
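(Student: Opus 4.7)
The plan is to show that every color $D \in \cD_X$ arises as $\zeta(\alpha)$ for some $\alpha \in \Delta_1$, i.e.\ that there is some $\alpha \in \Delta_1$ with $G_\alpha \cdot D \neq D$. The key input is the preceding observation (quoted from \cite{lunaA}) that $\cD_X(\alpha) = \emptyset$ whenever $\sigma(\alpha) = \alpha$, together with the generation of $G$ by its minimal parabolics.

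First, fix $D \in \cD_X$. By definition $D$ is $B_\s$-stable but not $G$-stable, so there exists $g \in G$ with $g \cdot D \neq D$. Since the minimal parabolic subgroups $P_\alpha = B_\s G_\alpha B_\s$ for $\alpha \in \Delta$ generate $G$ together with $B_\s$, and $B_\s$ stabilizes $D$, I can write $g$ as a product of elements lying in these subgroups and deduce that at least one factor must move $D$. In particular there exists a simple root $\alpha \in \Delta$ such that $P_\alpha \cdot D \neq D$, equivalently $G_\alpha \cdot D \neq D$. This means $D \in \cD_X(\alpha)$.

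Next, I invoke the result quoted from \cite{lunaA}: whenever $\sigma(\alpha) = \alpha$, i.e.\ $\alpha \in \Delta_0$, one has $\cD_X(\alpha) = \emptyset$. Consequently the simple root $\alpha$ produced above must lie in $\Delta_1$. Finally, Proposition \ref{prop-lunaA} asserts that $\cD_X(\alpha)$ is a singleton for $\alpha \in \Delta_1$, and by the very definition of $\zeta$ we then have $D = \zeta(\alpha)$. This establishes surjectivity.

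I expect no serious obstacle here: the argument is essentially a bookkeeping consequence of (a) colors are by definition $B_\s$-stable but not $G$-stable prime divisors, (b) $G$ is generated by the $P_\alpha$'s for $\alpha \in \Delta$, and (c) the $\Delta_0$-simple parabolics fix all colors. The only mild subtlety is arguing cleanly that some simple-root subgroup must move $D$; this is a standard fact about $B$-stable divisors on $G$-varieties, and can be stated as: the set of colors decomposes as $\cD_X = \bigcup_{\alpha \in \Delta} \cD_X(\alpha)$.
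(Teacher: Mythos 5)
Your proof is correct and follows essentially the same route as the paper: since $D$ is not $G$-stable and $G$ is generated by the $G_\alpha$ ($\alpha \in \Delta$), some $G_\alpha$ moves $D$, and the quoted fact from \cite{lunaA} that $\cD_X(\alpha) = \emptyset$ for $\alpha \in \Delta_0$ forces $\alpha \in \Delta_1$, whence $D = \zeta(\alpha)$ by Proposition \ref{prop-lunaA}. The only cosmetic difference is that you phrase the generation step via minimal parabolics and products of group elements, whereas the paper argues directly with the subgroups $G_\alpha$; both amount to the observation that the stabilizer of $D$ contains $B_{\s}$ and cannot contain every $G_\alpha$.
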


\begin{proof}
For $D \in \cD_X$, there exists $\alpha \in \Delta_1$ such that $D \in \cD_X(\alpha)$. Indeed, since $G$ is generated by the $G_\alpha$ for $\alpha \in \Delta$ and since $D$ is not $G$-stable, there exists at least one $\alpha \in \Delta$ with $G_\alpha \cdot D \neq D$. Furthermore, by \cite[Section 1.4.(1)]{lunaA}, we have $\alpha \not\in \Delta_0$; thus, $\alpha \in \Delta_1$.
\end{proof}

\begin{lemma}
\label{lemma:alphab=betab}
Let $\alpha, \beta \in \Delta_1$ with $\alpha \neq \beta$. Then $\alphab = \betab$ if and only if $(\alpha,\beta)$ is a pair of exceptional roots or ($\beta = - \sigma(\alpha)$ and $\scal{\alpha^\vee,\beta} = 0$).
\end{lemma}

\begin{proof}
Write $\alphab = \alpha - \sigma(\alpha) = \alpha + \sigmab(\alpha) + \sum_{\gamma \in \Delta_0} c_\gamma \gamma$ with $c_\gamma \geq 0$. Since $\alpha$, $\beta$, $\sigmab(\alpha)$ and $\sigmab(\beta)$ lie in $\Delta_1$, the equality $\alphab = \betab$ together with the condition $\alpha \neq \beta$ implies that $\beta = \sigmab(\alpha)$. If $\scal{\alpha^\vee,\sigma(\alpha)} \neq 0$, then $(\alpha,\beta)$ is a pair of exceptional roots. Otherwise, we have 
$0 = \scal{\alpha^\vee,\sigma(\alpha)} = - \scal{\alpha^\vee,\beta} 
- \sum_{\gamma \in \Delta_0} c_\gamma \scal{\alpha^\vee,\gamma}$. 
This implies the vanishings $\scal{\alpha^\vee,\beta} = 0$ and $c_\gamma \scal{\alpha^\vee,\gamma} = 0$ for all $\gamma$. Since $\beta = \sigmab(\alpha)$, we have $\sigma(\beta) = -\alpha -  \sum_{\gamma \in \Delta_0} c_\gamma \gamma$. This implies the equality $\sigma(\beta) = -\alpha$ since otherwise, the support of the root $\sigma(\beta)$ being connected, there would exist a $\gamma$ with $c_\gamma \scal{\alpha^\vee,\gamma} \neq 0$. 
\end{proof}

\begin{proposition}
\label{prop:fibres-tau}
The map $\tau : \cD_X \to \Db, D \mapsto \alphab$, with $\alpha \in \Delta_1$ such that $D \in \cD_X(\alpha)$, is well defined and surjective. Moreover, this map is injective except if $X$ is exceptional, in which case the only non-trivial fiber is $\tau^{-1}(\alphab) = \{D_\alpha,D_\beta\}$, where $\alpha,\beta$ are the exceptional roots.
\end{proposition}

\begin{proof}
The restricted root $\alphab$ does not depend on the choice of $\alpha$ with $D \in \cD_X(\alpha)$: if $D \in \cD_X(\alpha) \cap \cD_X(\beta)$, then $\beta = - \sigma(\alpha)$ and $\betab = \alphab$ by Proposition \ref{prop-lunaA}. For the surjectivity, note that the composition $\tau \circ \zeta$ is surjective, since $\tau \circ \zeta(\alpha) = \alphab$.

  Assume first that $\alpha$ is exceptional and set $\beta = \sigmab(\alpha)$. Then $\tau(D_\alpha) = \alphab = \betab = \tau(D_\beta)$. If $D_\alpha = D_\beta$, then $\cD_X(\alpha) \cap \cD_X(\beta) \neq \emptyset$. We thus have $\beta = - \sigma(\alpha)$ and $\scal{\alpha^\vee,\sigma(\alpha)} = - \scal{\alpha^\vee,\beta} = 0$, a contradiction with the fact that $\alpha$ is exceptional. Therefore, $D_\alpha \neq D_\beta$ and $\tau$ is not injective. Conversely, if $\alphab =\betab$, then by Lemma \ref{lemma:alphab=betab}, we have that $(\alpha,\beta)$ is a pair of exceptional roots or ($\beta = - \sigma(\alpha)$ and $\scal{\alpha^\vee,\beta} = 0$). In the second case, we have $D_\alpha = D_\beta$ so that $\tau$ is not injective exactly when $X$ is exceptional and the only non-trivial fiber is $\tau^{-1}(\alphab) = \{D_\alpha,D_\beta\}$, where $\alpha,\beta$ are the exceptional roots.
\end{proof}

%

We recover a classical result (see \cite[Theorem 7.6]{decp}).

\begin{proposition}
  \label{pic-rk}
  We have $\Pic(X) = \bZ^{r + s}$ where $r$ is the rank of $X$ and $s$ is the number of restricted simple roots $\gammab \in \Db$ such that there exists a pair of exceptional simple roots $\alpha,\beta = \sigmab(\alpha)$ with $\alphab = \gammab = \betab$. Moreover, $s = 1$ if $X$ is exceptional, and $s = 0$ otherwise.
\end{proposition}

\begin{proof}
By Proposition \ref{pic-colors}, we have $\Pic(X) = \bigoplus_{D \in \cD_X}\bZ[D]$. Proposition \ref{prop:fibres-tau} shows that $|\cD_X| = |\Db|$ except for $X$ exceptional in which case $\cD_X$ has one more element.
\end{proof}

We now compute the restrictions $j^*\cO_X(D)$ for $D \in \cD_X$.
The next proposition is a direct application of results 
in \cite{luna}. For $\alpha \in \Delta$, let $\varpi_\alpha$ be the fundamental weight associated to $\alpha$.

\begin{proposition}
  \label{prop-lambda-alpha}
  Let $\alpha \in \Delta_1$ and let $\lambda_\alpha \in \fX(T_{\s})$ be such that $j^*\cO_X(D_\alpha) = \mL_Y(\lambda_\alpha)$. Then we have
  $$\lambda_\alpha = \left\{\begin{array}{ll}
  2 \varpi_\alpha & \textrm{ if $\sigma(\alpha) = -\alpha$,} \\
    \varpi_\alpha + \varpi_{\sigmab(\alpha)} & \textrm{ if $\sigma(\alpha) = -\sigmab(\alpha)$ and $\scal{\alpha^\vee,\sigma(\alpha)}  =0$,} \\
     \varpi_\alpha & \textrm{ otherwise.} \\
  \end{array}\right.$$
\end{proposition}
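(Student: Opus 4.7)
My plan is to apply Luna's general classification of colors of wonderful varieties \cite{lunaA} directly. For each $\alpha \in \Delta_1$, Luna assigns the color $\zeta(\alpha) \in \cD_X(\alpha)$ one of three types (denoted $a$, $a'$, $b$) and gives an explicit formula for $i^*\cO_X(\zeta(\alpha))$ in each case. The proof of Proposition \ref{prop-lunaA} already ruled out type $a$ in our setting, so only types $a'$ and $b$ need to be treated.

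Type $a'$ is characterised by the spherical root $\alphab$ being equal to $2\alpha$, which for a symmetric variety is equivalent to $\sigma(\alpha) = -\alpha$. Luna's restriction formula in this case yields $i^*\cO_X(\zeta(\alpha)) = \cL_Y(2\varpi_\alpha)$, matching the first line of the statement. Type $b$ covers the remaining $\alpha \in \Delta_1$, i.e.\ those with $\sigma(\alpha) \neq -\alpha$, and Luna's formula in this case reads
\[
i^*\cO_X(\zeta(\alpha)) = \cL_Y\Bigl(\sum_{\beta \in \zeta^{-1}(\zeta(\alpha))} \varpi_\beta\Bigr),
\]
where the sum runs over the simple roots $\beta \in \Delta_1$ sharing the color $\zeta(\alpha)$ with $\alpha$.

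To extract the remaining two lines I would then invoke Corollary \ref{coro-fibre-zeta}: the fiber $\zeta^{-1}(\zeta(\alpha))$ has at most two elements, and is a pair $\{\alpha,\beta\}$ with $\beta \neq \alpha$ exactly when $\beta = -\sigma(\alpha)$ and $\langle \alpha^\vee, \beta\rangle = 0$. Using the defining relation $\sigma(\alpha) + \sigmab(\alpha) = -\sum_{\delta \in \Delta_0} c_\delta \delta$, the equality $\beta = -\sigma(\alpha)$ together with $\beta \neq \alpha$ is equivalent to $\sigma(\alpha) = -\sigmab(\alpha)$ with $\sigmab(\alpha) \neq \alpha$ (forcing all $c_\delta = 0$), and $\langle \alpha^\vee, \beta\rangle = 0$ rewrites as $\langle \alpha^\vee, \sigma(\alpha)\rangle = 0$. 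In this subcase we obtain $\lambda_\alpha = \varpi_\alpha + \varpi_{-\sigma(\alpha)}$, which is the second line; in the complementary subcase the fiber is $\{\alpha\}$, the sum collapses to a single term, and $\lambda_\alpha = \varpi_\alpha$, giving the third line. The main point requiring care will be the precise dictionary between Luna's combinatorial types $a'$ and $b$ (further subdivided by whether colors are shared) and the root-theoretic conditions appearing in the statement; once that translation is in place, each $\lambda_\alpha$ is a direct quotation from the formulas of \cite{lunaA}.
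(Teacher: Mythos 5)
Your proposal is correct and takes essentially the same route as the paper: both reduce the statement to a direct quotation of Luna's description of how colors restrict to the closed orbit, split according to whether $\sigma(\alpha)=-\alpha$ (type $a'$, giving the coefficient $2$) or not, and then invoke Corollary \ref{coro-fibre-zeta} to decide when the fiber $\zeta^{-1}(\zeta(\alpha))$ is $\{\alpha,\sigmab(\alpha)\}$ rather than $\{\alpha\}$. The only cosmetic difference is that the paper extracts $\lambda_\alpha$ from the intersection numbers $Y_\beta\cdot i^*\zeta(\alpha)$ of \cite[Lem.~3.1.1, 3.1.2]{luna}, whereas you quote the equivalent restriction formulas attached to Luna's types $a'$ and $b$ in \cite{lunaA}.
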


\begin{proof}
This result is \cite[Lemma 30.24]{Timashev}, due to A.~Foschi. 
We provide details for the reader's convenience.

 For $\beta \in \Delta_1$, let $Y_\beta \subset Y$ be the Schubert curve dual to $\cO_Y(\varpi_\beta)$. Then by \cite[Lemma 3.1.1 and Lemma 3.1.2]{luna}, we have
  $$Y_\beta \cdot j^*D_\alpha = \left\{\begin{array}{ll}
  2 \delta_{\alpha,\beta} & \textrm{ if $\sigma(\alpha) = -\alpha$,} \\
  \delta_{D_\alpha,D_\beta} & \textrm{ otherwise.} \\
  \end{array}\right.$$
The result follows from this and the facts that if $|\zeta^{-1}(D_\alpha)| > 1$, then $\zeta^{-1}(D_\alpha) = \{\alpha,\sigmab(\alpha)\}$ and that this occurs if and only if $\sigma(\alpha) = -\sigmab(\alpha)$ and $\scal{\alpha^\vee,\sigma(\alpha)}  =0$ (Proposition \ref{prop-lunaA}).
\end{proof}

The above proof implicitly uses the fact 
that there exists a family of irreducible
$B$-stable curves $(C_D)_{D \in \cD_X}$ such that 
the classes $[C_D]$ in the Chow group $A_1(X)$ form the dual 
basis to the basis $([D])_{D \in \cD_X}$ of $\Pic(X)$ (see \cite[Lemma 3.1.2]{luna}). Recall the definition of $\alphah$ for $\alphab \in \Db$ and the notation $X_\betab$ for $\betab \in \Db$ from Remark \ref{def-bord-root}.
Also, denote 
$C_{D_\alpha}$ 
by $C_{\alpha}$ for simplicity.

\begin{corollary}
\label{coro:intersection}
  We have $X_\betab  \cdot C_{\alpha} = \scal{\alphah^\vee,\betab}$ for all $\alpha,\beta \in \Delta_1$.
\end{corollary}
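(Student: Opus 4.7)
The plan is to exploit two structural facts already available in the paper: the injection $i^*\colon \Pic(X) \hookrightarrow \Pic(Y)$ onto the closed orbit $Y = G/P$, together with the explicit formulas $i^*\cO_X(X_\betab) = \cL_Y(\betab)$ (from the proposition preceding Remark~\ref{def-bord-root}) and $i^*\cO_X(\zeta(\alpha)) = \cL_Y(\lambda_\alpha)$ (from Proposition~\ref{prop-lambda-alpha}); and the duality $C_D \cdot D' = \delta_{D,D'}$ between the distinguished curves and the colors. Since the colors form a $\bZ$-basis of $\Pic(X)$ by Proposition~\ref{pic-colors}, the intersection number $X_\betab \cdot C_{\zeta(\alpha)}$ equals the coefficient $n_{\zeta(\alpha)}$ in the expansion $[X_\betab] = \sum_{D \in \cD_X} n_D\, [D]$, and the problem reduces to identifying this coefficient.

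To compute the $n_D$'s, I would apply $i^*$ and work inside $\Pic(Y) = \bigoplus_{\delta \in \Delta_1} \bZ\,\varpi_\delta$, where the Schubert curves $Y_\delta \subset Y$ form the dual basis, $Y_\delta \cdot \cL_Y(\varpi_{\delta'}) = \delta_{\delta,\delta'}$. The equation $\cL_Y(\betab) = \sum_D n_D\, \cL_Y(\lambda_D)$ then decouples coefficient by coefficient in the $\varpi_\delta$. For $\delta \in \Delta_1$ of type (a) in Proposition~\ref{prop-lambda-alpha} (so $\sigma(\delta) = -\delta$ and $\lambda_{\zeta(\delta)} = 2\varpi_\delta$), only the color $\zeta(\delta)$ contributes to $\varpi_\delta$, with multiplicity $2$, giving $n_{\zeta(\delta)} = \tfrac12\langle \betab, \delta^\vee\rangle$; integrality holds because $\sigma(\delta)^\vee = -\delta^\vee$ forces $\langle \betab, \delta^\vee\rangle = 2\langle\beta,\delta^\vee\rangle$. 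For $\delta$ of type (b), sitting in a merged pair $\{\alpha_0, \alpha_1 = \sigmab(\alpha_0) = -\sigma(\alpha_0)\}$ with $\langle \alpha_0^\vee, \sigma(\alpha_0)\rangle = 0$, both $\varpi_{\alpha_0}$ and $\varpi_{\alpha_1}$ detect the single color $\zeta(\alpha_0) = \zeta(\alpha_1)$; consistency of the two resulting equations uses the identity $\langle \sigma(\lambda), \mu\rangle = \langle \lambda, \sigma(\mu)\rangle$ and gives $n_{\zeta(\alpha_0)} = \langle \betab, \alpha_0^\vee\rangle = \langle \betab, \alpha_1^\vee\rangle$. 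For $\delta$ of type (c) (including the exceptional case), the formula $n_{\zeta(\delta)} = \langle \betab, \delta^\vee\rangle$ is immediate.

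The last step is to check that $n_{\zeta(\alpha)} = \langle \alphah^\vee, \betab\rangle$ in each case. This is an elementary computation using the three-case definition of $\alphab^\vee$ recalled in Subsection~\ref{subsection:rrs} (one of $\tfrac12\alpha^\vee$, $\tfrac12(\alpha^\vee - \sigma(\alpha)^\vee)$, or $\alpha^\vee - \sigma(\alpha)^\vee$), the relation between $\alphah^\vee$ and $\alphab^\vee$ from the notation fixed after Proposition~\ref{prop:rrs-summary}, and the same $\sigma$-equivariance of the pairing. Every factor of $\tfrac12$ hidden in the definition of $\alphab^\vee$ or $\alphah^\vee$ is matched by the corresponding factor in the formula for $n_{\zeta(\alpha)}$. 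The main obstacle is purely bookkeeping: one must keep track of which sub-case of Proposition~\ref{prop-lambda-alpha} corresponds to which defining sub-case of $\alphab^\vee$, and in particular notice when $\alphah^\vee = \alphab^\vee$ versus when $\alphah^\vee = \tfrac12\alphab^\vee$.
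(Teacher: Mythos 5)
Your proposal is correct and follows essentially the same route as the paper's proof: restrict to the closed orbit via the injective $i^*$, expand $\betab$ in fundamental weights, identify the coefficient of the color $\zeta(\alpha)$ using the three cases of Proposition \ref{prop-lambda-alpha} (with the same integrality and $\sigma$-symmetry observations for the cases $\sigma(\alpha)=-\alpha$ and $\zeta(\alpha)=\zeta(\sigmab(\alpha))$), and then match the result against the three-case definition of $\alphab^\vee$ together with the $\alphah^\vee$ versus $\alphab^\vee$ distinction. The only cosmetic difference is that you make the dual-basis reduction $X_\betab\cdot C_{\zeta(\alpha)} = n_{\zeta(\alpha)}$ explicit, whereas the paper leaves it implicit by quoting the intersection numbers $Y_\beta\cdot i^*\zeta(\alpha)$ from Luna.
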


\begin{proof}
Recall that $j^*\cO_X(X_\betab) = \mL_Y(\betab)$. 
Note that we have $\betab = \sum_{\gamma \in \Delta} \scal{\gamma^\vee,\betab} \varpi_\gamma = \sum_{\gamma \in \Delta_1} \scal{\gamma^\vee,\betab} \varpi_\gamma$ since $\scal{\gamma,\betab} = 0$ for $\gamma \in \Delta_0$. Define the following subsets of $\Delta_1$: 
$$ A = \{\gamma \in \Delta_1 \ | \ \sigma(\gamma) = -\gamma\},$$
$$ B = \{ \gamma \in \Delta_1\ | 
\ \sigma(\gamma) = -\sigmab(\gamma) \textrm{ and } 
\scal{\gamma^\vee,\sigma(\gamma)} = 0 \}, $$
$$\text{ and } C = \Delta_1 \setminus (A \cup B).$$ 
Since $\sigmab$ induces a fixed point free involution on $B$ 
and since 
$\scal{\gamma^\vee,\betab} = \scal{\sigmab(\gamma^\vee),\betab}$ 
for $\gamma \in B$, Proposition \ref{prop-lambda-alpha} implies
$$\betab = \frac{1}{2} \sum_{\gamma \in A} \scal{\gamma^\vee,\betab} \lambda_\gamma + \frac{1}{2} \sum_{\gamma \in B} \scal{\gamma^\vee,\betab} \lambda_\gamma + \sum_{\gamma \in C} \scal{\gamma^\vee,\betab} \lambda_\gamma.$$
By Proposition \ref{prop-lunaA}, $D_{\sigmab(\gamma)} = D_\gamma$ for $\beta \in B$, giving the following equality on the level of divisors:
$$X_\betab =   \frac{1}{2} \sum_{D = D_\gamma, \; \gamma \in A} 
\scal{\gamma^\vee,\betab} D_\gamma 
+ \sum_{D = D_\gamma, \; \gamma \in B} 
\scal{\gamma^\vee,\betab}  D_\gamma 
+ \sum_{D = D_\gamma, \; \gamma \in C} 
\scal{\gamma^\vee,\betab} D_\gamma.$$ 
We get
  $$X_\betab \cdot C_{\alpha}  
  = \left\{\begin{array}{ll}
  \frac{1}{2} \scal{\alpha^\vee,\betab} & \textrm{ if $\sigma(\alpha) = -\alpha$,} \\
     \scal{\alpha^\vee,\betab} & \textrm{ otherwise.} \\
  \end{array}\right.$$
%
We now compare the above values to $\scal{\alphah^\vee,\betab}$. If $\sigma(\alpha) = - \alpha$, then $\alphah^\vee = \alphab^\vee = \frac{1}{2}\alpha^\vee$ proving the first case.
If $\sigma(\alpha) \neq - \alpha$, we have two possibilities: either $\scal{\alpha^\vee,\sigma(\alpha)} = 0$ or $\scal{\alpha^\vee,\sigma(\alpha)} = 1$. In the former case, we have $\alphah^\vee = \alphab^\vee = \frac{1}{2}(\alpha^\vee - \sigma(\alpha)^\vee)$; thus, $\scal{\alphah^\vee,\betab} = \frac{1}{2}(\scal{\alpha ^\vee,\betab} + \scal{-\sigma(\alpha)^\vee,\betab}) = \scal{\alpha^\vee,\betab}$. Finally, if 
$\scal{\alpha^\vee,\sigma(\alpha)} = 1$, 
then $\alphah^\vee = \frac{1}{2}\alphab^\vee = \frac{1}{2}(\alpha^\vee - \sigma(\alpha)^\vee)$ and the result follows as before.
\end{proof}

Recall from Proposition \ref{pic-colors}
that $\Pic(X) = \bigoplus_{D \in \cD_X} \bZ [D]$. 
Furthermore, by \cite[Theorem 3.2.9]{survey}
the monoid of numerically effective divisor classes is given by 
$\Nef(X) = \bigoplus_{D \in \cD_X}\bZ_{\geq 0} [D]$. 
It coincides with the monoid of globally generated divisor
classes. 
On the curve side,  we have 
$A_1(X) = \bigoplus_{D \in \cD_X} \bZ[C_D]$ and 
rational equivalence coincides with numerical equivalence;
moreover, 
the monoid of effective classes (generated  by the classes
of irreducible reduced curves) is given by 
$\NE(X) = \bigoplus_{D\in \cD_X} \bZ_{\geq 0} [C_D]$, 
see \cite{sanya} for more on curves on spherical varieties. Furthermore, we have a $\bZ$-linear map 
$$\psi : A_1(X) \to \bZ \Db^\vee$$ 
defined by $\psi(C_D) = \alphah^\vee$, for $D \in \cD_X(\alpha)$; where $\bZ \Db^\vee$ is the coroot lattice of the restricted root system $\Rb$.
By Corollary \ref{coro:intersection}, we have 
$X_\betab \cdot C = \scal{\psi(C),\betab}$ 
for all $[C] \in \NE(X)$.

\begin{proposition}
\label{prop-psi}
 The map $\psi: A_1(X) \to \bZ\Db^\vee$ is surjective.
  \begin{enumerate}
  \item The image of the monoid of effective curves is the monoid generated by the positive coroots.
  \item The image of the monoid of curves having non-negative intersection with any component of $\partial X_\ad$ is the  intersection of $\bZ\Db^\vee$ with the monoid of dominant cocharacters.
  \end{enumerate}
\end{proposition}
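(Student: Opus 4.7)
My plan is to read the whole proposition off from a single identity. The map $\psi$ is $\bZ$-linear and defined on the $\bZ$-basis $\{[C_D]\}_{D\in\cD_X}$ of $A_1(X)$ by $\psi([C_D])=\alphah^\vee$, for any $\alpha\in\Delta_1$ with $\zeta(\alpha)=D$. Corollary~\ref{coro:intersection} evaluates the intersection pairing on this basis; by $\bZ$-linearity of $\psi$ and of intersection numbers, the resulting formula extends from $\NE(X)$ to all of $A_1(X)$, giving
\[
  X_\betab\cdot C \;=\; \scal{\psi([C]),\betab}
\]
for every $[C]\in A_1(X)$ and every $\betab\in\Db$. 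This identity drives all three assertions.

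Surjectivity is immediate: $\{\alphah^\vee : \alpha\in\Delta_1\}$ is by construction a $\bZ$-basis of $\bZ\Db^\vee$, and since $\zeta:\Delta_1\to\cD_X$ is surjective (proved just above), each $\alphah^\vee$ is attained as $\psi([C_{\zeta(\alpha)}])$. For (1), the effective cone $\NE(X)=\bigoplus_{D\in\cD_X}\bZ_{\geq 0}[C_D]$ is mapped onto $\sum_{\alpha\in\Delta_1}\bZ_{\geq 0}\alphah^\vee$, again by surjectivity of $\zeta$. This is the monoid generated by the simple coroots of $\Rb$; since every positive coroot of $\Rb$ is a non-negative integer combination of simple ones, it coincides with the monoid spanned by all coroots.

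For (2), the displayed identity shows that $X_\betab\cdot C\geq 0$ for every component $X_\betab$ of $\partial X_\ad$ if and only if $\scal{\psi([C]),\betab}\geq 0$ for every $\betab\in\Db$, i.e.\ $\psi([C])$ is a dominant cocharacter. This yields the inclusion $\subseteq$. Conversely, given any dominant $v\in\bZ\Db^\vee$, surjectivity of $\psi$ furnishes a lift $[C]\in A_1(X)$ with $\psi([C])=v$, and the same identity automatically gives $X_\betab\cdot C=\scal{v,\betab}\geq 0$ for every $\betab\in\Db$, so $[C]$ already lies in the monoid under consideration. I foresee no significant obstacle: once Corollary~\ref{coro:intersection} is in hand, the proposition is essentially linear algebra on the distinguished basis $\{[C_D]\}$.
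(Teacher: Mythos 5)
Your proposal is correct and follows essentially the same route as the paper: surjectivity of $\zeta$ (equivalently of $\tau$) together with the intersection formula of Corollary~\ref{coro:intersection} evaluated on the basis $\{[C_D]\}$, the paper's own proof being a terser version of exactly this linear algebra on the distinguished basis. The only point worth flagging is that in (2) you lift a dominant $v$ to an arbitrary class in $A_1(X)$; if the monoid there is read as consisting of \emph{effective} classes, one should add the standard fact that a dominant element of the coroot lattice is a non-negative integer combination of simple coroots, which makes the two readings give the same image.
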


\begin{proof}
The surjectivity follows from the surjectivity of $\tau$. The monoid of effective curves is generated by the set $\{C_D \ | \ D \in \cD_X\}$ whose image by $\psi$ is $\Db^\vee$; this proves (1). Part (2) follows from Corollary \ref{coro:intersection}.
\end{proof}

Recall that a curve class $\gamma \in A_1(X)$ is covering 
if there exists an irreducible and reduced
curve $C$ of class $\gamma$ passing through a general point $x \in X$. Note that this implies that 
$X_\betab \cdot \gamma \geq 0$ for all $\betab \in \Db$. We call a class $\gamma \in \NE(X)$ 
\emph{virtually covering} if 
$X_\betab \cdot \gamma \geq 0$ for all $\betab \in \Db$.

\begin{corollary} 
\label{coro:vir-cov}
  \begin{enumerate}
    \item If $X$ is non-exceptional, then there is a unique virtually covering curve class 
    $\gamma_0 \in \NE(X)$ which is minimal in this monoid. Moreover, we have 
    $\psi(\gamma) = 
    \Thetab^\vee$.
    \item If $X$ is exceptional, then there are exactly two minimal virtually covering curve classes 
    $\gamma^+_0, \gamma^-_0 \in \NE(X)$ and we have 
    $\psi(\gamma^+_0) = 
     \Thetab^\vee = \psi(\gamma^-_0)$.
  \end{enumerate}
\end{corollary}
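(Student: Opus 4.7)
My approach is to transfer everything through the surjection $\psi : A_1(X_\ad) \to \bZ\Db^\vee$ of Proposition \ref{prop-psi}, which identifies the image of the monoid of virtually covering classes with
\[
M := \Big(\sum_{\alphab \in \Db}\bZ_{\geq 0}\alphah^\vee\Big) \cap \{\mu : \langle \mu,\alphab\rangle \geq 0 \text{ for all } \alphab \in \Db\}.
\]
The problem thus reduces to locating the minimum of $M \setminus \{0\}$ in the partial order defined by $\mu \leq \nu \iff \nu - \mu \in \sum\bZ_{\geq 0}\alphah^\vee$, and then lifting the result back through $\psi$.

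The main technical step is to show that $\Thetab^\vee$ is the unique minimum of $M \setminus \{0\}$. Membership $\Thetab^\vee \in M$ is immediate: $\Thetab^\vee$ is a positive coroot of the irreducible root system $\Rb$, hence a non-negative integer combination of simple coroots, and $\langle \Thetab^\vee,\alphab\rangle = 2(\Thetab,\alphab)/(\Thetab,\Thetab) \geq 0$ for every $\alphab \in \Db$ since $\Thetab$ is the highest root. For minimality, given $\lambda = \sum c_\alphab \alphah^\vee \in M \setminus \{0\}$, I first argue that the support $\{\alphab : c_\alphab > 0\}$ equals all of $\Db$: otherwise, dominance at a simple root $\betab$ outside the support combined with the non-positivity of the off-diagonal Cartan entries $\langle \alphah^\vee,\betab\rangle$ would force the Dynkin diagram of $\Rb$ to disconnect, contradicting irreducibility. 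Writing $\Thetab^\vee = \sum n_\alphab \alphah^\vee$, upgrading $c_\alphab \geq 1$ to $c_\alphab \geq n_\alphab$ is the classical statement that the coroot of the highest root is the unique minimum dominant element of the positive coroot cone of an irreducible root system; I expect this to be the main obstacle, but it can be verified by a short case-by-case check using the explicit marks.

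With this lemma, part (1) is immediate: by Proposition \ref{pic-rk}, in the non-exceptional case $A_1(X_\ad)$ and $\bZ\Db^\vee$ are free abelian of the same rank $r$, so $\psi$ is an isomorphism, and $\gamma_0 := \psi^{-1}(\Thetab^\vee)$ is the unique minimum virtually covering class. For part (2), in the exceptional case $A_1(X_\ad)$ has rank $r+1$ and $\ker\psi = \bZ([C_{D_\alpha}] - [C_{D_\beta}])$, where $(\alpha,\beta=\sigmab(\alpha))$ is the exceptional pair and $D_\alpha = \zeta(\alpha)$, $D_\beta = \zeta(\beta)$; Lemma \ref{lem-coef-rac} gives $n_\alphab = 1$, so the effective preimages of $\Thetab^\vee$ are exactly the two classes
\[
\gamma_0^+ := [C_{D_\alpha}] + \gamma_0', \qquad \gamma_0^- := [C_{D_\beta}] + \gamma_0', \qquad \gamma_0' := \sum_{\alphab' \in \Db \setminus \{\alphab\}} n_{\alphab'} [C_{D_{\alphab'}}],
\]
both of which are virtually covering.

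It remains to check that these are the only minimal virtually covering classes. For minimality of $\gamma_0^+$: if $\gamma$ is virtually covering with $\gamma_0^+ - \gamma \in \NE(X_\ad)\setminus\{0\}$, then $\psi(\gamma_0^+ - \gamma) \in \sum\bZ_{\geq 0}\alphah^\vee$ is nonzero since the only effective element of $\ker\psi$ is zero, hence $\psi(\gamma) < \Thetab^\vee$; by the key lemma $\psi(\gamma) = 0$ and thus $\gamma = 0$. The argument for $\gamma_0^-$ is identical. Conversely, if $\gamma$ is virtually covering with $\psi(\gamma) > \Thetab^\vee$ strictly, writing $\gamma = m_\alpha [C_{D_\alpha}] + m_\beta [C_{D_\beta}] + \sum_{\alphab' \neq \alphab} m_{\alphab'} [C_{D_{\alphab'}}]$ the inequality forces $m_\alpha + m_\beta \geq 1$ and $m_{\alphab'} \geq n_{\alphab'}$ for all $\alphab' \neq \alphab$, so depending on whether $m_\alpha \geq 1$ or $m_\beta \geq 1$ one of $\gamma - \gamma_0^+$ or $\gamma - \gamma_0^-$ is nonzero effective, and $\gamma$ is not minimal.
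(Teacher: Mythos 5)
Your proposal is correct and follows essentially the same route as the paper: push everything through $\psi$, invoke the fact that $\Thetab^\vee$ is the unique minimal nonzero dominant element of the coroot monoid of $\Rb$ (which the paper also asserts without proof), and then use injectivity of $\psi$ in the non-exceptional case, respectively the rank-one kernel $\bZ([C_{D_\alpha}]-[C_{D_{\sigmab(\alpha)}}])$ together with Lemma \ref{lem-coef-rac} in the exceptional case. The extra bookkeeping you supply at the end (verifying that $\gamma_0^\pm$ are minimal and are the only minimal classes) is a welcome elaboration of what the paper leaves implicit, but it is not a different argument.
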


\begin{proof}
The image by $\psi$ of an effective and virtually covering curve class is in the intersection of the monoid generated by coroots in $\Rb$ and the dominant chamber. There is a unique minimal such element: the coroot of the highest root of $\Rb$. Since $\Thetab$ is the highest root of $\Rb$ by Proposition \ref{prop:rrs-summary}(1), 
the element $\Thetab^\vee$ is the smallest possible image by $\psi$ of an effective and virtually covering curve class (see Lemma \ref{lemm:minimal}).

  If $X$ is non-exceptional, then $\psi$ is injective and this proves (1). To prove (2), we are left to prove that if $X$ is exceptional, there are exactly two classes 
  $\gamma^+$ and $\gamma^-$ in $\NE(X)$ such that 
  $\psi(\gamma^+) = \Thetab^\vee = \psi(\gamma^-)$. But the kernel of $\psi$ is $\bZ([C_{D_\alpha}] - [C_{D_{\sigmab(\alpha)}}])$ with $\alpha$ an exceptional root. Since the coefficient of $\alphah^\vee$ in $\Thetab^\vee$ is $1$ by Lemma \ref{lem-coef-rac}, there are exactly two classes in $\NE(X)$ that are mapped to $\Thetab^\vee$ via $\psi$, namely, 
  $\gamma^+$ with coefficient $1$ in $C_\alpha$ and $0$ on $C_{\sigmab(\alpha)}$, and $\gamma^-$ with coefficient $0$ in $C_\alpha$ and $1$ on $C_{\sigmab(\alpha)}$.
\end{proof}

\subsection{Classes of the minimal rational curves}
\label{subsection:classes-vmrt}

In this subsection, we prove that the curve classes 
$\gamma_0$, $\gamma_0^+$ and $\gamma_0^-$ are covering and are therefore the classes of minimal rational curves on $X$.

We will need a few more results on $X$. Recall that $x \in X^0$ is our base point and that $r$ is the rank of $X$. The $G$-orbits in $X$ are indexed by the subsets $I \subset [1,r]$ via
$\cO_I = \{ x' \in X \ | \ x' \in X_i \Leftrightarrow  i \in I \}$.

The local structure theorem associated to the closed orbit $Y$ 
gives the following: there exists an affine $P$-stable open subset 
$X_{Y,B} \subset X$ containing $x$ with $X_{Y,B} \cap Y \neq \emptyset$ 
and a $P$-equivariant isomorphism $X_{Y,B} \simeq \R_u(P) \times \bA^r$, 
where $P = \R_u(P) \rtimes L$ acts on $\R_u(P) \times \bA^r$ via
$(u,l) \cdot (g,z_1,\ldots,z_r) = 
(ul g l^{-1}, \alphab_1(l)z_1,\ldots,\alphab_r(l)z_r)$.
Here each simple restricted root $\alphab_i \in \fX(\Sb)$ is viewed as
a character of $L$ via the isomorphism $\Sb \simeq L/L \cap H$.
In particular, the closure of $T_{\s} \cdot x = S \cdot x$ in $X_{Y,B}$ 
is $T_{\s}$-equivariantly isomorphic to $\bA^r$, where the torus $T_{\s}$ 
acts linearly with weights $\Db = (\alphab_i)_{i \in [1,r]}$. 
The prime $G$-divisor $X_{\alphab_i}$ is defined in $X_{Y,B}$ 
by the vanishing of the coordinate with weight $\alphab_i$ in $\bA^r$
(see \cite[Proposition 2.3]{decp} and 
\cite[Section 30.3]{Timashev} 
for the results of this paragraph).

Recall from Subsection \ref{subsec:div-rrs} that $j : Y \to X$ denotes the inclusion of the closed $G$-orbit and that the map $j^* : \Pic(X) \to \Pic(Y)$ is injective. Let 
$$\fX_X(T_{\s}) = \{ \lambda \in \fX(T_{\s}) \ | \ 
\mL_Y(\lambda) \in \,  j^*\Pic(X) \subset \Pic(Y)\}.$$ 
For $\lambda \in \fX_X(T_{\s})$, we write $\mL_X(\lambda)$ for the line bundle such that $j^*\mL_X(\lambda) = \mL_Y(\lambda)$ (see \cite[End of 8.1]{decp}).

Given a cocharacter $\eta : \bG_m \to \Sb = S/S^\sigma$, we say that $\eta$ is dominant if $\scal{\eta,\alphab} \geq 0$ for all $\alpha \in \Delta$. A cocharacter $\eta$ defines a map 
$\bC^\times \to X, t \mapsto \eta(t) \cdot x$.
This map extends to a morphism $\eta : \bP^1 \to X$.

The following lemma generalizes \cite[Lemma 3.1]{BF} to the case of wonderful compactifications of adjoint indecomposable symmetric spaces. 

\begin{lemma}\label{lem:mult}
Let $\eta : \bG_m \to \Sb$ be a dominant cocharacter, $\eta : \bP^1 \to X$ the corresponding morphism, and $C_{\eta}$ its image.
\begin{enumerate}
\item We have $\eta(0) \in \cO_I$, where 
$I := \{ i \in [1,r] \ | \ \scal{\eta,\alphab_i} \neq 0 \}$.
\item We have $\eta(\infty) \in \cO_J$, where $J := \{ j \in [1,r] \ | \ \scal{\eta,w_0(\alphab_j)} \neq 0 \}$.
\item The morphism $\eta : \bP^1 \to C_{\eta}$ is an isomorphism if and only if there exists $i \in [1,r]$ such that $\scal{\eta,\alphab_i} = 1$. 
\item For $\lambda \in \fX_X(T_{\s})$, we have $\deg (\eta)^* \mL_X(\lambda) = \scal{\eta, \lambda - w_0\lambda}$.
\end{enumerate}
\end{lemma}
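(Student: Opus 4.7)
Throughout, let $y_0 \in Y$ denote the $B$-stable point of the closed orbit, let $\dot w_0 \in \N_G(T_{\s})$ be a chosen representative of $w_0$, and set $z := \dot w_0 \cdot y_0$. The plan is to deduce all four assertions from the local structure theorem of \S \ref{subsec:div-rrs}: in the chart $X_{Y,B} \cong \R_u(P) \times \bA^r$ the base point $x$ corresponds to $(e,(1,\ldots,1))$, the torus $T_{\s}$ acts diagonally on the slice $\bA^r$ with weights $\alphab_1,\ldots,\alphab_r$, and each $G$-stable prime divisor $X_{\alphab_i}$ is cut out by vanishing of the $i$-th coordinate. The $\dot w_0$-translate $\dot w_0 \cdot X_{Y,B}$ is an affine chart around $z$ with $T_{\s}$-weights $w_0\alphab_1,\ldots,w_0\alphab_r$ on its slice.

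Part (1) is immediate in these coordinates: the curve reads $\eta^\vee(t) \cdot x = (t^{\scal{\eta^\vee,\alphab_1}},\ldots,t^{\scal{\eta^\vee,\alphab_r}})$, and dominance of $\eta^\vee$ makes every exponent non-negative, so the $t \to 0$ limit lies in $\bA^r$ with $i$-th coordinate $0$ iff $\scal{\eta^\vee,\alphab_i} \neq 0$, i.e.\ iff $i \in I$. For (4), I view $(\eta^\vee)^* \cL_X(\lambda)$ as a $\bG_m$-equivariant line bundle on $\bP^1$ (via the $\eta^\vee$-action on $X$); its degree equals the difference of the $\bG_m$-weights on the fibers at $\infty$ and $0$. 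When $\eta^\vee$ is in the strict interior of the dominant cone, (1) and its analogue in the $\dot w_0$-translated chart force $\eta^\vee(0) = y_0$ and $\eta^\vee(\infty) = z$, both $T_{\s}$-fixed; using $\cL_X(\lambda)|_Y = \cL_Y(\lambda) = G \times^P \bC_{-\lambda}$ and conjugation by $\dot w_0$, the $T_{\s}$-weights on the fibers are $-\lambda$ at $y_0$ and $-w_0\lambda$ at $z$, so the $\bG_m$-weights via $\eta^\vee$ are $-\scal{\eta^\vee,\lambda}$ and $-\scal{\eta^\vee,w_0\lambda}$, whose difference is the asserted $\scal{\eta^\vee,\lambda-w_0\lambda}$. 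For dominant $\eta^\vee$ on the boundary of the cone, $\eta^\vee(0)$ and $\eta^\vee(\infty)$ need not be $T_{\s}$-fixed, but flowing them by a strictly interior dominant $\xi^\vee \in \fX^\vee(S)$ (which commutes with $\eta^\vee$) contracts them to $y_0$ and $z$ along loci on which the $\eta^\vee$-weight of the fiber is preserved. Part (2) now falls out of (1) and (4) specialised to $\lambda = \alphab_j$ together with $\cO_X(X_{\alphab_j}) = \cL_X(\alphab_j)$: the total degree of $(\eta^\vee)^* X_{\alphab_j}$ is $\scal{\eta^\vee,\alphab_j - w_0\alphab_j}$, from which subtracting the multiplicity $\scal{\eta^\vee,\alphab_j}$ at $0$ leaves multiplicity $-\scal{\eta^\vee,w_0\alphab_j}$ at $\infty$, positive exactly when $j \in J$.

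For (3) I argue locally. Near $t=0$ the parametrization $t \mapsto (t^{a_1},\ldots,t^{a_r})$ with $a_i := \scal{\eta^\vee,\alphab_i} \geq 0$ is an immersion iff some $a_i = 1$ (otherwise the derivative at $0$ vanishes and the image acquires a cusp or higher singularity); any such $a_i = 1$ automatically forces $\gcd_i a_i = 1$, hence global injectivity of $\bG_m \to C_{\eta^\vee} \cap X^0$. The analogous analysis at $\eta^\vee(\infty)$ in the $\dot w_0$-translated chart demands some $\scal{\eta^\vee,-w_0\alphab_j} = 1$, but since $-w_0$ permutes $\Db$ this reduces to the same condition at $0$. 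Hence $\eta^\vee \colon \bP^1 \to C_{\eta^\vee}$ is an isomorphism iff $\scal{\eta^\vee,\alphab_i} = 1$ for some $i$. The main technical obstacle in the plan is making the flow reduction in (4) rigorous for boundary cocharacters—one must verify that a sufficiently interior $\xi^\vee$ contracts the non-$T_{\s}$-fixed endpoints to $y_0$ and $z$ without altering the $\eta^\vee$-weight of the fiber; alternatively, one can prove (2) directly from the translated chart around $z$ (using, if necessary, a $G$-translate bringing $x$ into $\dot w_0 \cdot X_{Y,B}$) and then recover (4) by summing multiplicities at $0$ and $\infty$, bypassing the boundary weight computation entirely.
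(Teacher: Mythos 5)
Your treatment of (1) and (3) matches the paper's: both are read off from the local structure theorem in the chart $X_{Y,B}$ and its $w_0$-translate, using that $-w_0$ permutes $\Db$. The real issue is in (4), and it propagates to (2) because you have inverted the paper's logical order and made (2) depend on (4). Your weight computation at the endpoints is only carried out when $\eta^\vee$ lies in the strict interior of the dominant cone, so that $\eta^\vee(0)$ and $\eta^\vee(\infty)$ are the $T_{\s}$-fixed points $y_0$ and $z$. But the cocharacter to which the lemma is later applied is $\Thetab^\vee$, which for most restricted root systems pairs to zero with some simple restricted roots and hence lies on the boundary of the dominant cone; the case you defer is exactly the case that matters. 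The flow argument you sketch can be made rigorous (the $\eta^\vee$-fixed locus is closed, the weight of an $\eta^\vee$-linearized bundle is locally constant on it, and a strictly interior $\xi^\vee$ does contract $\eta^\vee(0)$ to $y_0$ inside the slice $\bA^r$), but the paper's route is shorter and avoids the issue entirely: $\cL_X(\lambda)$ admits a $T_{\s}$-eigensection trivializing it over the whole chart $X_{Y,B}$ (resp.\ $w_0\cdot X_{Y,B}$) of weight $\lambda$ (resp.\ $w_0\lambda$), and since $\eta^\vee(0)$ (resp.\ $\eta^\vee(\infty)$) lies in that chart and is fixed by the one-parameter subgroup $\eta^\vee(\bG_m)$, the fiber weight $\scal{\eta^\vee,\lambda}$ (resp.\ $\scal{\eta^\vee,w_0\lambda}$) is read off directly, with no interiority hypothesis.

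Two further remarks. First, your fallback --- proving (2) directly and then recovering (4) by summing intersection multiplicities at $0$ and $\infty$ --- only establishes (4) for $\lambda$ in the $\bQ$-span of $\Db$. When $X$ is exceptional, $\fX_X(T_{\s})$ has rank $r+1$, and this span misses precisely the weights $\lambda_\alpha$ attached to the exceptional roots, which is the case invoked in the corollary following the lemma (the computation of $D_\alpha\cdot(\gamma_0^++\gamma_0^-)$). So that fallback cannot substitute for a full proof of (4). Second, your worry about needing a $G$-translate to place $x$ inside $w_0\cdot X_{Y,B}$ is unfounded: the open $T_{\s}$-orbit lies in every chart of the toric slice, so $x$ is already there; the paper proves (2) directly and independently of (4) in exactly this way, by observing that $-\eta^\vee$ acts with the non-negative weights $\scal{\eta^\vee,-w_0(\alphab_i)}$ on the translated slice and hence extends over $\infty$.
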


\begin{proof}
(1) Since $\eta$ is dominant, it extends to a morphism 
$\bA^1 \to X_{Y,B} \cap \overline{T_s \cdot x}$ 
defined by $t \mapsto (t^{\scal{\eta,\alphab_i}})_{i \in [1,r]}$, where $X_{Y,B} \cap \overline{T_s \cdot x}$ 
is identified with $\bA^r$ as above. In particular, $\eta (0) \in X_{Y,B}$ and vanishes on the coodinates with 
  indices in $\{ i \in [1,r] \ | \ \scal{\eta,\alphab_i} \neq 0 \}$. Moreover, the morphism $\eta : \bP^1 \to C_{\eta}$ is a local isomorphism at $\eta (0)$ if and only if there exists $i$ such that $\scal{\eta,\alphab_i} = 1$.

(2) Consider the open affine subset
$w_0 \cdot X_{Y,B}$ of $X$. It is isomorphic to 
$\R_u(P)^{w_0} \times \bA^r$ with a linear action of $T_{\s}$ on $\bA^r$ with weights $w_0(\Db)$. All these weights are non-negative linear combinations of negative roots. In particular, the one-parameter subgroup $-\eta$ acts with non-negative weights on $\bA^r$, and hence extends to a morphism $\bA^1 \to w_0 \cdot X_{Y,B}$, $t \mapsto (t^{\scal{\eta, -w_0(\alphab_i)}})_{i \in[1,r]}$. It follows that $\eta (\infty) = (-\eta )(0) \in w_0 \cdot X_{Y,B}$ and as above, $\eta(\infty) \in \cO_J$. Moreover, the morphism $\eta : \bP^1 \to C_{\eta}$ is a local isomorphism at $\eta (\infty)$ if and only if there exists $i$ such that $\scal{\eta,-w_0(\alphab_i)} = 1$. Note that $-w_0(\alphab_i) = -w_0(\alpha_i) + w_0(\sigma(\alpha_i)) = - \overline{w_0(\alpha_i)}$ and since $-w_0$ permutes $\Delta_1$, by Proposition \ref{prop:rrs-summary}(1), 
the previous condition is true if and only if there exists $i$ such that $\scal{\eta,\alphab_i} = 1$.

(3)  This follows from the above conditions at $\eta(0)$ and $\eta(\infty)$.

(4) The pull-back of $\mL_X(\lambda)$ to 
$X_{Y,B}$ (resp.~$w_0 \cdot X_{Y,B}$) 
has a trivializing section of weight $\lambda$
(resp.~$w_0(\lambda)$). As a consequence, 
the line bundle $(\eta)^* \mL_X(\lambda)$ 
is a $\bG_m$-linearized line bundle on $\bP^1$ with weights $\scal{\eta,\lambda}$ at $0$ and $\scal{\eta, w_0 \lambda}$ at $\infty$. Since the degree of such a line bundle is the difference of its weights, this yields our assertion. 
\end{proof}  

We now apply the above result to $\eta = \Thetab^\vee$.

\begin{corollary}
\label{cor:class-of-1-pm-theta}
  Consider the morphism $\Thetab^\vee : \bP^1 \to X$.
  \begin{enumerate}
  \item If $\Rb$ is not of type ${\rm A}_1$, then $\Thetab^\vee$ is an isomorphism onto its image.
    \item If $\Rb$ is of type ${\rm A}_1$, then $\Thetab^\vee$ has degree $2$ over its image.
    \item The push-forward class is given as follows:
      $$\Thetab^\vee_*[\bP^1] = \left\{\begin{array}{ll}
      2 \gamma_0 & \textrm{ if $X$ is non-exceptional,} \\
      \ \!\! \gamma_0^+ + \gamma_0^- & \textrm{ if $X$ is exceptional.} \\
    \end{array}\right.$$
  \end{enumerate}
\end{corollary}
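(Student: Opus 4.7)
Parts (1) and (2) will follow from Lemma \ref{lem:mult}.3 applied to $\eta^\vee = \Thetabv$. For (1), Proposition \ref{prop:rrs-summary}.4 provides some $\alphab \in \Db$ with $\langle \Thetabv, \alphab\rangle = 1$, so the morphism is an isomorphism onto its image. For (2), when $\Rb$ is of type $\A_1$ its unique simple root equals $\Thetab$ and $\langle \Thetabv, \Thetab\rangle = 2$, so no pairing can equal $1$; reading off the local coordinate expression in the proof of Lemma \ref{lem:mult}.1 shows the map looks like $t \mapsto t^2$ near $\Thetabv(0)$, hence has degree $2$ onto its image.

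For part (3), I first compute $\psi(\Thetabv_*[\bP^1])$ by intersecting with the prime $G$-stable divisors. Using Lemma \ref{lem:mult}.4 with $\lambda = \betab$ together with the identity $w_0 \Thetabv = -\Thetabv$ (from Proposition \ref{prop:rrs-summary}.1 and $W$-equivariance of the coroot map), we obtain
\[ X_\betab \cdot \Thetabv_*[\bP^1] = \langle \Thetabv, \betab - w_0 \betab \rangle = 2 \langle \Thetabv, \betab\rangle \]
for every $\betab \in \Db$. Combined with Corollary \ref{coro:intersection}, this yields $\psi(\Thetabv_*[\bP^1]) = 2\Thetabv$. If $X$ is non-exceptional, $\psi$ is injective and we conclude $\Thetabv_*[\bP^1] = 2\gamma_0$.

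If $X$ is exceptional, $\ker \psi$ is generated by $[C_{\zeta(\alpha)}] - [C_{\zeta(\beta)}]$ for the pair of exceptional roots $\alpha, \beta = \sigmab(\alpha)$, and the effective preimages of $2\Thetabv$ in $\NE(X)$ are exactly $2\gamma_0^+$, $\gamma_0^+ + \gamma_0^-$, and $2\gamma_0^-$. To single out the right one, I compute the coefficient of $C_{\zeta(\alpha)}$ in $\Thetabv_*[\bP^1]$, which by duality of the bases $\{C_D\}$ and $\{D\}$ of $A_1(X)$ and $\Pic(X)$ equals $\zeta(\alpha) \cdot \Thetabv_*[\bP^1]$. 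Applying Lemma \ref{lem:mult}.4 with $\lambda_\alpha = \varpi_\alpha$ (from Proposition \ref{prop-lambda-alpha}, since $\langle \alpha^\vee, \sigma(\alpha)\rangle = 1 \neq 0$ for an exceptional root) and reusing $w_0 \Thetabv = -\Thetabv$, this intersection equals $2\langle \Thetabv, \varpi_\alpha\rangle$.

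The main obstacle is showing that this last pairing equals $\tfrac{1}{2}$, which then forces the coefficient to be $1$ and hence $\Thetabv_*[\bP^1] = \gamma_0^+ + \gamma_0^-$. The pairing is the coefficient of $\alpha^\vee$ in the expansion of $\Thetabv$ in simple coroots of $R$. By Lemma \ref{lem-coef-rac}, $\alphah^\vee$ appears in $\Thetabv$ with coefficient $1$ in the $\Rb^\vee$-basis, and for exceptional $\alpha$ one has $\alphah^\vee = \tfrac{1}{2}(\alpha^\vee - \sigma(\alpha)^\vee)$ (since $\langle \alpha^\vee, \sigma(\alpha)\rangle = 1$ gives $\alphab^\vee = \alpha^\vee - \sigma(\alpha)^\vee$ and $2\alphab \in \Rb$ yields $\alphah^\vee = \tfrac{1}{2}\alphab^\vee$), so this term contributes exactly $\tfrac{1}{2}$ to the $\alpha^\vee$-coefficient. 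To finish I would rule out any $\alpha^\vee$-contribution from $\widehat{\gamma}^\vee$ with $\gamma \in \Delta_1 \setminus \{\alpha, \beta\}$, using the structural identity $-\sigma(\gamma) = \sigmab(\gamma) + \sum_{\delta \in \Delta_0} c_\delta \delta$ together with the fact that $\sigmab$ preserves $\Delta_1 \setminus \{\alpha, \beta\}$.
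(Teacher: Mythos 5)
Your proof of (1), (2), and the non‑exceptional half of (3) is exactly the paper's argument: Lemma \ref{lem:mult}.3 combined with Proposition \ref{prop:rrs-summary}.4 for (1), the local coordinate $t\mapsto t^2$ for (2), and the degree computation $\scal{\Thetab^\vee,\lambda-w_0\lambda}=2\scal{\Thetab^\vee,\lambda}$ together with injectivity of $\psi$ for the non‑exceptional case. For the exceptional case you also set up the same reduction (pair $\Thetab^\vee_*[\bP^1]$ against $D_\alpha=\zeta(\alpha)$ and show the coefficient is $1$), but you finish differently: the paper observes that $X$ exceptional forces $\sigma(\Theta)=-\Theta$, hence $2\Thetab^\vee=\Theta^\vee$, and then $2\scal{\Thetab^\vee,\varpi_\alpha}=\scal{\Theta^\vee,\varpi_\alpha}=1$ is immediate from Proposition \ref{prop:rrs-summary}.3 (the coefficient of $\alpha$ in $\Theta$ is $1$). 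You instead route through Lemma \ref{lem-coef-rac} and re-expand the restricted simple coroots $\widehat{\gamma}^\vee$ in the coroot basis of $R$; this does work, but note (i) the step where you pass from the expansion of $-\sigma(\gamma)$ in simple roots to that of $-\sigma(\gamma)^\vee$ in simple coroots uses that $G$ is simply laced, which you should invoke explicitly (it holds here by Corollary \ref{coro-coef1-ap} precisely because $X$ is exceptional), and (ii) your last paragraph is only a sketch — the claim that $\widehat{\gamma}^\vee$ for $\gamma\in\Delta_1\setminus\{\alpha,\beta\}$ contributes nothing to the $\alpha^\vee$-coefficient does follow from the identity $-\sigma(\gamma)=\sigmab(\gamma)+\sum_{\delta\in\Delta_0}c_\delta\delta$ and $\sigmab$-stability of $\Delta_1\setminus\{\alpha,\beta\}$, but you should write it out. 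Since Lemma \ref{lem-coef-rac} is itself proved via $2\Thetab^\vee=\Theta^\vee$, your detour ultimately reproves the paper's one-line computation; the direct route is cleaner.
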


\begin{proof}
(1) If $\Rb$ is not of type $\textrm{A}_1$, then Proposition \ref{prop:rrs-summary}(4) 
implies that there exists a simple root $\alphab \in \Db$ such that $\scal{\Thetab^\vee,\alphab} = 1$. Therefore, $\Thetab^\vee : \bP^1 \to X$ is an isomorphism onto its image.

(2) If $\Rb$ is of type $\textrm{A}_1$, then $\Thetab^\vee$ induces a map 
$\bG_m \to X_{Y,B} \cap \overline{T_s \cdot x} = \bA^1,
 t \mapsto t^2$ 
which is of degree $2$ onto its image. 
  
(3) For $\lambda \in \fX_X(T_{\s})$, we have 
$[\mL_X(\lambda)] \cdot \Thetab^\vee_*[\bP^1]  = \deg(\Thetab^\vee)^*\mL_X(\lambda) = \scal{\Thetab^\vee,\lambda - w_0(\lambda)} = \scal{\Thetab^\vee,\lambda} - \scal{w_0(\Thetab)^\vee,\lambda} = 2\scal{\Thetab^\vee,\lambda}$ by Proposition \ref{prop:rrs-summary}(1).
This proves the result for $X$ non-exceptional, since 
$X_\betab \cdot \gamma_0 = \scal{\Thetab^\vee,\betab}$, 
$\cO_X(X_\betab) = \mL_X(\betab)$, and $(\mL_X(\betab))_{\betab \in \Db}$ 
generates $\Pic(X) \otimes_\bZ \bQ$
by Propositions \ref{pic-rk} and \ref{prop-lambda-alpha}.

If $X$ is exceptional, then by the same argument, 
we have that the class of the image and the class 
$\gamma_0^+ + \gamma_0^-$ agree on all boundary divisors. 
Since  $\Pic(X) \otimes_\bZ \bQ$
is spanned by $(X_{\betab})_{\betab \in \Db}$ 
and the class of $D_{\alpha}$ for an exceptional
simple root $\alpha$, 
we therefore only need to check that the class of 
the image and the class 
$\gamma_0^+ + \gamma_0^-$ agree on $D_\alpha$. 
Assume that 
  $\gamma_0^+$ is dual to $D_\alpha$ while 
  $\gamma_0^-$ is dual to $D_{\sigmab(\alpha)}$. Since $\Thetab = 2\Theta$ by Corollary \ref{coro-coef1-ap}, we get $2\scal{\Thetab^\vee,\lambda_\alpha} = \scal{\Theta^\vee,\lambda_\alpha} = 1 
  = D_\alpha \cdot (\gamma_0^+ + \gamma_0^-)$ by Proposition \ref{prop:rrs-summary}(3). 
Similarly, we have $2\scal{\Thetab^\vee,\lambda_{\sigmab(\alpha)}} = \scal{\Theta^\vee,\lambda_{\sigmab(\alpha)}} = 1 
= D_{\sigmab(\alpha)} \cdot (\gamma_0^+ + \gamma_0^-)$. 
\end{proof}

Recall the definitions of the nilpotent orbits $\cO_{\min}$ 
and of type $\cO_\summ$ from Definition \ref{def:nilp}. 
For $G$ simple with maximal torus $T_s$ of split type 
such that $\sigma(\Theta) \neq - \Theta$, 
define the nilpotent orbit $\cO_{\summ,\sigma}$ by 
$\cO_{\summ,\sigma} 
= G \cdot  (e_{\Theta} - \sigma(e_{\Theta}))$ 
with $e_{\Theta} \in \fg_{\Theta} \setminus \{0\}$. 
Note that by Proposition \ref{prop:rrs-summary}(2), 
the nilpotent orbit $\cO_{\summ,\sigma}$ is indeed of type 
$\cO_\summ$.

\begin{proposition}
  \label{prop-hwc}
  \begin{enumerate}
\item In the non-exceptional case, there exists a smooth rational curve $C$ in $X$ such that 
  $x \in C$ 
  and $[C] = \gamma_0$. 
  \item In the exceptional case, for any $\gamma \in \{\gamma_0^+,\gamma_0^-\}$ there exists smooth rational curve $C$ in $X$ such that  $x \in C$ and $[C] = \gamma$.
    \end{enumerate}
   Furthermore, we may choose $C$ such that
  $T_xC \setminus \{ 0 \}\subset \cO_{\min}$ 
  if $\sigma(\Theta) = - \Theta$, and  
  $T_x C \setminus \{ 0 \} \subset 
  \cO_{\summ,\sigma}$ otherwise.
\end{proposition}

\begin{proof}
If $\sigma(\Theta) = - \Theta$, pick 
$e = e_{\Theta} \in \fg_{\Theta} \setminus \{0\}$. If $\sigma(\Theta) \neq - \Theta$, pick 
$e = e_{\Theta} + e_{-\sigma(\Theta)}$ 
with $e_{\Theta}$ as above and $e_{-\sigma(\Theta)} \in \fg_{-\sigma(\Theta)} \setminus \{0\}$. 
Note that $e \in \cO_{\min}$ for 
$\sigma(\Theta) = - \Theta$ and that, by Proposition \ref{prop:rrs-summary}(2), 
$e$ is in a nilpotent orbit of type $\cO_\summ$ for 
$\sigma(\Theta) \neq - \Theta$. Set $f = \sigma(e)$. We may choose $e$ so that $h = [e,f] = 2\Thetab^\vee$. Then $(e,h,f)$ is an $\fsl_2$-triple. The cocharacter $h$ induces a morphism $h : \bP^1 \to X$ which factors through $\Thetab^\vee : \bP^1 \to X$
  $$\xymatrix{\bP^1 \ar[rd]^-{h} \ar[d]_-{2:1} & \\
    \bP^1 \ar[r]^-{\Thetab^\vee} & X\\}$$
  so that the vertical map is a double cover. Note in particular that both maps $h$ and $\Thetab^\vee$ have the same image $C'$ in $X$.

Denote by $G(h)$ the closed subgroup of $G$ with Lie algebra $\scal{e,h,f}$. Then $G(h)$ is isomorphic to $\SL_2$ or $\PGL_2$, and $\sigma$ acts non-trivially on $G(h)$. In particular, we have an isogeny $\SL_2 \to G(h)$ and $\sigma$ lifts to a unique involution on $\SL_2$. Let $T'$ be a maximal torus of $\SL_2$ fixed pointwise by $\sigma$ and let $X'$ be the closure of $G(h) \cdot x$ in $X$ and $\tilde X'$ its normalisation. The theory of spherical embeddings
implies that $\tilde X'$ is either isomorphic to 
$\bP^1 \times \bP^1$ (the unique projective embedding 
of $\SL_2/T'$) or to $\bP^2$ (the unique projective embedding of $\SL_2/\N_{\SL_2}(T')$). 
Note that the normalization map is bijective in both
cases.  Also, 
note that $x \in X'$ and is a smooth point; thus, $X' \cap X^0$ is non-empty. Let $\tilde x$ be the preimage of $x$ in $\tilde X'$ and $\tilde C'$ be the preimage of $C'$ in $\tilde X'$. In the first case, the curve $\tilde C'$ is linearly equivalent to the diagonal curve $D$. In the second case, the curve $\tilde C'$ is a line in $\bP^2$. In both cases, $\tilde X'$ contains a line $\tilde L$ through $\tilde x$ (i.e.,
  either a line in one of the two rulings of $\bP^1 \times \bP^1$
  or a line in $\bP^2$) such that the image of $T_{\tilde x} \tilde L$ is equal to  $\scal{h + e - f}$ (and also to $\scal{h - e + f}$ in the $\bP^1 \times \bP^1$ case), 
  as a subset of $T_{\tilde x}\tilde X' = T_xX' \subset T_x X$ identified to $\fg^{-\sigma}$. Note also that  ${h + e - f}$, ${h - e + f}$ and  ${e}$ are in the same $G(h)$-orbit.

  If $\Rb$ is of type $\textrm{A}_1$, then $X$ is 
  non-exceptional and $\Thetab^\vee$ has degree $2$
  onto its image $C'$. In particular, 
  $[C'] = \frac{1}{2}\Thetab^\vee_*[\bP^1] = \gamma_0$ and by minimality, $\gamma_0$  has to be the push-forward of the class of a line in $\tilde X'$. Note that this implies that we are in the case $\tilde X' = \bP^2$.
  
Assume that $\Rb$ is not of type $\textrm{A}_1$. Then $\Thetab^\vee : \bP^1 \to X$ is an isomorphism onto its image $C'$. Note also that $h \in \fX^\vee$ is indivisible as a cocharacter of $T_{\s}$ by Proposition \ref{prop:rrs-summary}(4);
thus, $h(-1)$ is non-central in $G$. Since $h(-1)$ is central in $G(h)$, this group is isomorphic to $\SL_2$ and thus we have $G(h)^\sigma = T'$ and $\tilde X' = \bP^1 \times \bP^1$. Therefore, $[\tilde C'] = [D]$. Let $\tilde C^+$ and $\tilde C^-$ be the two rulings passing through $\tilde x$ in $\tilde X' = \bP^1 \times \bP^1$ and let $C^+$ and $C^-$ be their images in $X' \subset X$. 
Then $[C^+] + [C^-] = [C'] = \Thetab^\vee_*[\bP^1]$. Since $C^+$ and $C^-$ pass through $x$, 
the classes $[C^+]$ and $[C^-]$ are virtually covering. By Corollary \ref{cor:class-of-1-pm-theta}, this implies that $[C^+] = [C^-] = \gamma_0$ if $X$ is non-exceptional and (up to exchanging the two rulings) 
$[C^+] = \gamma_0^+$ and $[C^-] = \gamma_0^-$ if $X$ is exceptional.

Note that by the above discussion, the curve $C'$, if $\bar R$ is of type $\A_1$, and the curves $C^+$ and $C^-$, otherwise, are images of lines in $\tilde X'$ passing though $\tilde x$. Furthermore, their tangent space at $x$ (without the origin)
lies in the nilpotent orbit of $e$. Moreover, these curves are of class $\gamma_0$, $\gamma_0^+$ or $\gamma_0^-$ and therefore minimal. Let $\mathcal{K}$ be the corresponding minimal family, letting $B_H$ a Borel subgroup of $H$ act on the points representing these curves, we get a family of minimal curves whose limit is a highest weight curve in  $\mathcal{K}_x$. Since highest weight curves are smooth so are the curves $C'$, $C^+$ and $C^-$, finishing the proof.
\end{proof}

We  now compute the classes of minimal rational curves appearing in Proposition \ref{prop:min}.

\begin{corollary}\label{cor:1or2}
\begin{enumerate}
\item If $X$ is non-exceptional, there exists a unique family $\cK$ of minimal rational curves and for $C \in \cK$, we have $[C] = \gamma_0$.
\item  If $X$ is exceptional, there exists two families $\cK^+$ and $\cK^-$ of minimal rational curves and for $C \in \cK^\pm$, we have 
$[C] = \gamma_0^\pm$.
\end{enumerate}
\end{corollary}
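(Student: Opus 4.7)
The plan is to combine Proposition \ref{prop:min} with Proposition \ref{prop-hwc}. Proposition \ref{prop:min} already establishes the count of minimal families: a unique minimal family in the non-exceptional case, and exactly two in the (Hermitian) exceptional case. Since the notion of ``exceptional $X_\ad$'' coming from the restricted root system (cf.~Remark~\ref{remark : excep-reduce}) coincides with ``Hermitian exceptional'' in the sense of Subsection~\ref{subsec:ass}, this matches the dichotomy of the corollary. Moreover, Proposition \ref{prop:min} characterizes minimal families as those $\cK$ for which $\cK_x$ contains a highest weight curve through $x$.

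Proposition \ref{prop-hwc} exhibits a smooth rational curve through $x$ of class $\gamma_0$ in the non-exceptional case, and two such curves $C^\pm$ of classes $\gamma_0^\pm$ in the exceptional case, the latter being distinct by Corollary \ref{coro:vir-cov}. Tracing through the construction there, each such curve arises either as a line in $X' \simeq \bP^2$ or as one of the two rulings of $X' \simeq \bP^1 \times \bP^1$, where $X' = \overline{G(h) \cdot x}$ comes from an $\fsl_2$-triple $(e,h,f)$ built from $e = e_\Theta$ or $e = e_\Theta - \sigma(e_\Theta)$. By inspection the tangent direction $T_x C$ coincides with one of the highest-weight lines of the isotropy representation $\fp$ described in Proposition \ref{prop:roots} and Corollary \ref{cor:alter}, so that $C$ is $B_H$-equivariantly identified with the closure of a root-group orbit $\overline{U_\alpha \cdot x}$; by Lemma \ref{lem:curve} each such $C$ is therefore a highest weight curve of $X$.

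The corollary now follows by matching. Each highest weight curve determines a unique connected component of $\RatCurves(X)$, and by Proposition \ref{prop:min} this component is a minimal family. In the non-exceptional case, that component must coincide with the unique minimal family $\cK$; since the curve class is constant on a connected family, we conclude $[C] = \gamma_0$ for every $C \in \cK$. In the exceptional case, the curves $C^+$ and $C^-$ have distinct classes $\gamma_0^+ \neq \gamma_0^-$, hence lie in two distinct components of $\RatCurves(X)$, which must therefore be $\cK^+$ and $\cK^-$, yielding $[C] = \gamma_0^\pm$ on $\cK^\pm$. The principal technical step is the verification that the curves produced by Proposition \ref{prop-hwc} are indeed highest weight curves; this is where the $\fsl_2$-triple construction has to be aligned with the tangent-line classification of Proposition \ref{prop:roots} and Corollary \ref{cor:alter}, and with the explicit description of the rulings of $X' \simeq \bP^1 \times \bP^1$ as root-group orbit closures.
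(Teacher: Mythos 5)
Your overall architecture is the paper's: the proof really is an assembly of Proposition \ref{prop:min} (the count), Proposition \ref{prop-hwc} (curves of class $\gamma_0$, $\gamma_0^\pm$ through $x$) and Corollary \ref{coro:vir-cov}. The gap is in the bridge you build between the first two, namely the claim that the curves produced by Proposition \ref{prop-hwc} are highest weight curves. This fails for two reasons. First, even if the tangent directions agreed, Lemma \ref{lem:curve} only says that the weight determines the highest weight curve uniquely \emph{among $B_H$-stable curves}; two distinct curves through $x$ may share a tangent direction, so ``$T_xC$ is a highest weight line'' does not yield ``$C=\overline{U_\alpha\cdot x}$''. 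Second, the tangent directions do not agree ``by inspection'': highest weight curves and Proposition \ref{prop:roots} are set up relative to a maximal torus of \emph{fixed} type and a Borel $B_H$ of $H$, whereas Proposition \ref{prop-hwc} works with the \emph{split}-type torus. The curves of class $\gamma_0^{(\pm)}$ it produces are the rulings of $X'=\overline{G(h)\cdot x}\simeq\bP^1\times\bP^1$, whose tangent lines at $x$ are the root lines of $(G(h),T')$ for the $\sigma$-fixed torus $T'$ (spanned by $e-f\pm h$), not the image of $\scal{e}$ in $\fp$ and not a priori the $B_H$-highest weight line. Showing that these directions lie in the same $H^0$-orbit as the highest weight line of $\fp$ is essentially the orbit analysis of \S\ref{sec:contact} and Theorem \ref{theo-non-type-A}, which comes after (and uses) the present corollary, so your argument is at best unproved and at worst circular.

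The missing ingredient is precisely the role of Corollary \ref{coro:vir-cov}, which you use only to separate $\gamma_0^+$ from $\gamma_0^-$. The correct bridge is: since $\gamma_0$ (resp.\ $\gamma_0^\pm$) is \emph{minimal} among effective virtually covering classes, the covering family $\cK'$ containing a curve of that class through the general point $x$ automatically has $\cK'_x$ proper --- otherwise its members would degenerate to a connected non-integral $1$-cycle through $x$, and a component through $x$ would give an effective, virtually covering class strictly smaller in $\NE(X)$, contradicting minimality. Hence $\cK'$ is a minimal family with the stated class, and Proposition \ref{prop:min}'s count (one minimal family if $X$ is non-exceptional, two if exceptional, the latter distinguished because $\gamma_0^+\neq\gamma_0^-$) shows these exhaust all minimal families. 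Your final matching paragraph is correct once this substitute for the highest-weight-curve identification is in place.
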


\begin{proof}
Proposition \ref{prop:min} implies that there is a unique family of minimal rational curves in the non-exceptional case and there are two families in the exceptional case. 
By Proposition \ref{prop-hwc}, there exist curves 
of class $\gamma_0$ (resp. $\gamma_0^+$ and $\gamma_0^-$ in the exceptional case) passing through $x$ and therefore belong to a
covering family. Corollary \ref{coro:vir-cov} implies that this family has to be minimal
in the sense of monoids, that is, indecomposable. But then this family is 
minimal in the sense of families of rational curves: otherwise, the subfamily of curves 
through a general point contains reducible curves. 
\end{proof}

\begin{corollary}
  \label{cor-hwc}
Let $\cK$ be a family of minimal rational curves, and $C \in \cK_x$. 
If $\sigma(\Theta) = - \Theta$, then we have
$T_x C \setminus \{ 0 \}\subset \cO_{\min}$. 
Otherwise, $T_x C \setminus \{ 0 \}\subset \cO_{\summ,\sigma}$.
\end{corollary}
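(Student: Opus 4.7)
My plan is to derive this corollary by leveraging the existence statement of Proposition \ref{prop-hwc}, applying Borel's fixed point theorem to reduce arbitrary $C \in \cK_{x_\ad}$ to highest weight curves, and then using the class constraint $[C] \in \{\gamma_0, \gamma_0^+, \gamma_0^-\}$ to pin down the relevant root and thus the nilpotent orbit.

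Write $\cO = \cO_{\min}$ if $\sigma(\Theta) = -\Theta$ and $\cO = \cO_{\summ,\sigma}$ otherwise. Since $\cO$ is a $G$-orbit in $\fg$, the subset $\bP(\cO \cap \fp) \subset \bP(\fp)$ is $H$-stable, and the tangent map $\tau_{x_\ad} \colon \cK_{x_\ad} \to \bP(\fp)$ is $H$-equivariant, so the locus $\{C \in \cK_{x_\ad} : T_{x_\ad}C \setminus \{0\} \subset \cO\}$ is $H$-stable. Applying Borel's fixed point theorem to each irreducible component of $\cK_{x_\ad}$, every component contains a $B_H$-fixed curve which by Lemma \ref{lem:curve} is a highest weight curve. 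I would therefore reduce to showing that every HWC $C^* \in \cK_{x_\ad}$ satisfies $T_{x_\ad}C^* \setminus \{0\} \subset \cO$. The extension from HWCs to arbitrary $C$ uses that $\tau_{x_\ad}(\cK_{x_\ad})$ is covered by closures of $H$-orbits, each of which contains a HWC's tangent; combined with the $H$-invariance of $\bP(\cO \cap \fp)$, this contains $\tau_{x_\ad}(\cK_{x_\ad})$ inside the closure of $\bP(\cO \cap \fp)$, and that closure coincides with $\bP(\cO \cap \fp)$ once we rule out degenerations into strictly smaller $G$-orbits.

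For the core identification at HWCs, by Corollary \ref{cor:root} and the trichotomy of Lemma \ref{lem:rank1} either $C^* = \overline{U_\alpha \cdot x_\ad}$ (cases $\A_1$, $\A_2$) or $T_{x_\ad}C^* = \langle e_\alpha - \sigma(e_\alpha) \rangle$ (case $\A_1 \times \A_1$). The class constraint $\psi([C^*]) = \Thetab^\vee$ from Corollary \ref{coro:vir-cov}, combined with the formula $\psi(C_{\zeta(\alpha)}) = \alphah^\vee$ underlying Corollary \ref{coro:intersection}, forces $\alphah^\vee = \Thetab^\vee$. Hence $\alphab \in \{\Thetab, 2\Thetab\}$ and $\alpha$ lies in the $W$-orbit of $\Theta$ (or the unordered pair $\{\alpha,\sigma(\alpha)\}$ is $W$-conjugate to $\{\Theta,\sigma(\Theta)\}$). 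By Proposition \ref{prop:rrs-summary}.1, $\Thetab$ is the highest root of $\Rb$, and by Proposition \ref{prop:rrs-summary}.2 we know, when $\sigma(\Theta) \neq -\Theta$, that $\Theta$ and $\sigma(\Theta)$ are strongly orthogonal long roots. Applying Corollary \ref{cor:alter}, the only possibilities compatible with this constraint are: $T_{x_\ad}C^* = \fg_\Theta \subset \cO_{\min}$ (case A), or $T_{x_\ad}C^* = \langle e_\Theta - \sigma(e_\Theta)\rangle \subset \cO_{\summ,\sigma}$ (case B).

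The main obstacle I anticipate is handling types $\rm B_n$ and $\rm D_n$, where the remark following Definition \ref{def:nilp} notes two distinct $G$-orbits of type $\cO_\summ$ in $\fg$; one must verify that each HWC's tangent, when of the form $e_\alpha - \sigma(e_\alpha)$, lies precisely in $\cO_{\summ,\sigma}$ rather than in the other orbit of type $\cO_\summ$. This should follow from the rigidity imposed by $\alphab = \Thetab$ together with Proposition \ref{prop:rrs-summary}.2, but a careful case check is warranted. A secondary concern is the closure argument in case B, which requires showing that no component of $\cK_{x_\ad}$ specializes to a strictly smaller orbit in $\overline{\cO_{\summ,\sigma}}$; this is ultimately controlled because HWCs exhaust the $B_H$-fixed locus and their tangents are pinned down as above, so any $H$-orbit of tangents in $\bP(\fp)$ that meets a smaller orbit in its closure would contradict the class identity $\psi([C^*]) = \Thetab^\vee$ for the limiting HWC.
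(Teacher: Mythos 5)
Your reduction to highest weight curves via Borel's fixed point theorem matches the paper's general strategy, but the step extending the conclusion from highest weight curves to arbitrary $C \in \cK_x$ is where the argument fails, and it cannot be repaired. The degeneration runs the wrong way: for a general $C \in \cK_x$, the tangent of a highest weight curve lies in the \emph{closure} of $H \cdot \tau_x(C)$, so knowing that the $B_H$-fixed tangents lie in $\bP(\cO)$ says nothing about $\tau_x(C)$ itself. In fact the literal ``for all $C \in \cK_x$'' statement is false when $\Rb$ is of type $\A_r$: for $\SL_{r+1}/\SO_{r+1}$ the VMRT is a Veronese $\bP^r \subset \bP(\fp) = \bP(S^2(\bC^{r+1})_0)$ whose general point is the class of $vv^t - \frac{v^tv}{r+1}q$ with $v$ non-isotropic, which is not nilpotent; indeed $\bP(\cO_{\min}\cap\fp)$ has dimension $r-1 < r = \dim\cC_x$ by Proposition \ref{Legendrian}. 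The statement is really meant for highest weight curves and their $H$-translates (this is how it is phrased in the introduction, and it is all that is used in Theorem \ref{theo-dim}), and your proposed rescue of the closure step is vacuous: every curve in the family has the same class $\gamma_0$ regardless of its tangent direction, so no ``contradiction with $\psi([C^*]) = \Thetab^\vee$'' arises when a non-nilpotent tangent degenerates to a nilpotent one.

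The core identification at highest weight curves is also not justified as written. The formula $\psi(C_{\zeta(\alpha)}) = \alphah^\vee$ of Corollary \ref{coro:intersection} concerns the curves $C_D$ dual to the colors, indexed by \emph{simple} roots $\alpha \in \Delta_1$ of $(G,T_{\s})$ with $G_\alpha \cdot D \neq D$; the root $\alpha$ with $C^* = \overline{U_\alpha \cdot x}$ from Corollary \ref{cor:root} is a generally non-simple root of $(G,T_{\f})$, a torus of fixed type, and $[C^*]$ is a priori only a non-negative integral combination $\sum_D n_D [C_D]$, so $\psi([C^*]) = \Thetab^\vee$ does not directly force ``$\alphah^\vee = \Thetab^\vee$'' for that $\alpha$. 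The paper argues in the opposite direction: Proposition \ref{prop-hwc} starts from $e = e_\Theta$ or $e_\Theta - \sigma(e_\Theta)$ and the $\fsl_2$-triple $(e, 2\Thetab^\vee, \sigma(e))$, exhibits inside $\overline{G(h)\cdot x} \simeq \bP^1 \times \bP^1$ or $\bP^2$ a curve through $x$ with tangent $\scal{e}$ and class $\gamma_0$ (resp.\ $\gamma_0^{\pm}$), and then invokes the preceding corollary and the uniqueness of the highest weight curve of a given weight (Lemma \ref{lem:curve}) to conclude that this is the highest weight curve of the minimal family. With that construction, membership in $\cO_{\summ,\sigma}$ rather than in the other orbit of type $\cO_\summ$ in types ${\rm B}_n$, ${\rm D}_n$ --- the obstacle you flag --- is automatic from the definition of $\cO_{\summ,\sigma}$ as the orbit of $e_\Theta - \sigma(e_\Theta)$, so no case check is needed.
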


We now compute the dimensions of the families
of minimal rational curves $\cK$ and $\cK^\pm$ 
and of the nilpotent orbits $\cO_{\min}$ and 
$\cO_{\summ,\sigma}$. Let $\rho$ be the half-sum of positive roots in $G$ and $\rho_L$ be the half-sum of positive roots in $L$. Set $\kappa = 2\rho - 2\rho_L$. We have $\kappa = \sum_{\alpha \in R^+, \sigma(\alpha) < 0} \alpha$. Let $\Sigma = \sum_{\alphab \in \Db}\alphab$ be the sum of all restricted simple roots.

\begin{theorem}
  \label{theo-dim}
  Let $\cK$ be a family of minimal rational curves and
  let $C \in \cK_x$ and $m \in T_xC \setminus \{ 0 \}$.
  We have 
  $$\dim \cK_x = \scal{\Thetab^\vee,\kappa + \Sigma} - 2 \textrm{ and } \dim G \cdot m = 2 \scal{\Thetab^\vee,\kappa}.$$
  In particular, $\dim \cK_x = \frac{1}{2}\dim G \cdot m - 1 + ( \scal{\Thetab^\vee, \Sigma} - 1)$.
\end{theorem}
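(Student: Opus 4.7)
The plan is to prove the two dimensional formulas independently and then combine them algebraically. For the first, $\dim \cK_x = \scal{\Thetab^\vee,\kappa+\Sigma}-2$, I will use $\dim \cK_x = -K_X \cdot C - 2$ from Proposition \ref{prop:red}.1 and identify the anticanonical class. Applying adjunction to the inclusion $i : Y = G/P \hookrightarrow X$ of the closed $G$-orbit yields $-K_X|_Y = -K_Y + \det N_{Y/X}$; the first summand is $\cL_Y(2\rho - 2\rho_L) = \cL_Y(\kappa)$ by the standard formula for the flag variety $G/P$, while the second is $\cL_Y(\Sigma)$ because the local structure theorem recalled in Subsection \ref{subsec:div-rrs} describes $N_{Y/X}$ as a sum of line bundles with $T_{\s}$-weights $\alphab_1,\ldots,\alphab_r$. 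Injectivity of $i^*:\Pic(X)\hookrightarrow \Pic(Y)$ then gives $-K_X = \cL_X(\kappa+\Sigma)$. Applying Lemma \ref{lem:mult}.4 together with $w_0(\Thetab^\vee) = -\Thetab^\vee$ (Proposition \ref{prop:rrs-summary}.1) produces $\deg(\Thetab^\vee)^*(-K_X) = 2\scal{\Thetab^\vee,\kappa+\Sigma}$. Comparing with the push-forward class $\Thetab^\vee_*[\bP^1] = 2\gamma_0$ in the non-exceptional case or $\gamma_0^+ + \gamma_0^-$ in the exceptional case will yield $-K_X \cdot C = \scal{\Thetab^\vee, \kappa+\Sigma}$; in the exceptional case, the equality $-K_X \cdot \gamma_0^+ = -K_X \cdot \gamma_0^-$ follows from an automorphism of $X$ exchanging the two minimal families, in the spirit of the proof of Proposition \ref{prop:min}.

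For the second formula, $\dim G \cdot m = 2\scal{\Thetab^\vee,\kappa}$, I will split according to Corollary \ref{cor-hwc}. When $\sigma(\Theta) = -\Theta$, the orbit $G \cdot m = \cO_{\min}$ has classical dimension $2(h^\vee - 1) = 2\scal{\rho, \Theta^\vee}$; since $\Thetab^\vee = \tfrac{1}{2}\Theta^\vee$ and $2\rho = \kappa + 2\rho_L$, the identity reduces to $\scal{\Theta^\vee, 2\rho_L} = 0$, which I will verify by noting that any root $\beta$ of $L$ satisfies $\sigma(\beta) = \beta$, so $\scal{\Theta,\beta} = \scal{\sigma(\Theta),\sigma(\beta)} = -\scal{\Theta,\beta}$ forces $\scal{\Theta,\beta}=0$. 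When $\sigma(\Theta) \neq -\Theta$, Proposition \ref{prop:rrs-summary}.2 tells us that $\Theta$ and $\sigma(\Theta)$ are strongly orthogonal long roots and $G \cdot m = \cO_{\summ,\sigma}$ equals $G \cdot (e_\Theta + e_{\sigma(\Theta)})$ after conjugation by an element of $T$. The standard Jacobson--Morozov triple for this element has semisimple part $h = \Theta^\vee + \sigma(\Theta)^\vee$, so $\dim G \cdot m = |R| - |R_0^h| - |R_1^h|$ with $R_k^h = \{\alpha \in R : \scal{h,\alpha} = k\}$. Since $\sigma(\kappa) = -\kappa$, one has $2\scal{\Thetab^\vee,\kappa} = 2\scal{\Theta^\vee,\kappa}$, so the target becomes $|R| - |R_0^h| - |R_1^h| = 2\scal{\Theta^\vee, \kappa}$. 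I plan to establish this by partitioning $R$ into $\sigma$-orbits (real, compact imaginary, non-compact imaginary, and complex pairs) and comparing contributions to both sides, using $\scal{h,\alpha} = \scal{\Theta^\vee, \alpha + \sigma(\alpha)}$ together with strong orthogonality of $\Theta$ and $\sigma(\Theta)$ to confine the $h$-weights to $\{-2,-1,0,1,2\}$.

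The main obstacle is the root-matching identity in the second case above, where one must carefully balance the contributions of each $\sigma$-type with its $h$-weight distribution; the case-by-case analysis is similar in spirit to the Kostant--Rallis framework alluded to in the introduction and could alternatively be approached by passing through $\dim H \cdot m = \tfrac{1}{2}\dim G \cdot m$. Once both formulas are in hand, the ``in particular'' part is immediate:
\[
\dim \cK_x = \scal{\Thetab^\vee, \kappa+\Sigma} - 2 = \tfrac{1}{2}\dim G \cdot m + \scal{\Thetab^\vee,\Sigma} - 2 = \tfrac{1}{2}\dim G \cdot m - 1 + (\scal{\Thetab^\vee,\Sigma} - 1).
\]
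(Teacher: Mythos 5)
Your overall architecture matches the paper's: $\dim \cK_x = -K_X\cdot C - 2$, adjunction along the closed orbit $Y=G/P$ to identify $-K_X$ with $\cL_X(\kappa+\Sigma)$, the degree computation via Lemma \ref{lem:mult}.4 against the push-forward class $\Thetab^\vee_*[\bP^1]$ (with the automorphism of Proposition \ref{prop:min} handling the exceptional case), and the case split of Corollary \ref{cor-hwc} for $\dim G\cdot m$. The first formula and the case $\sigma(\Theta)=-\Theta$ are complete and essentially identical to the paper's argument. The final algebraic deduction is trivially correct.

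The one genuine gap is the case $\sigma(\Theta)\neq-\Theta$, where you reduce to the identity $|R|-|R_0^h|-|R_1^h| = 2\scal{\Theta^\vee,\kappa}$ and then only announce a case analysis over $\sigma$-orbit types, which you yourself flag as the main obstacle; as written this key step is not proved. The difficulty is an artifact of your choice of representative. You take $e = e_\Theta + e_{\sigma(\Theta)}$, whose neutral element $h=\Theta^\vee+\sigma(\Theta)^\vee$ is \emph{not} dominant, so positive roots of negative $h$-weight contaminate the comparison with $\scal{h,2\rho}$. The paper instead works with $e = e_\Theta - \sigma(e_\Theta) = e_\Theta + e_{-\sigma(\Theta)}$ (root vectors for the two \emph{positive} strongly orthogonal long roots $\Theta$ and $-\sigma(\Theta)$), for which $h = \Theta^\vee - \sigma(\Theta)^\vee = 2\Thetab^\vee$. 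This $h$ pairs to $0$ with every root of $L$ and to $\scal{\Thetab^\vee,\alphab}\geq 0$ with every positive root $\alpha$ satisfying $\sigma(\alpha)<0$, hence is dominant; consequently $|R_1^h|+2|R_2^h| = \sum_{\alpha\in R^+}\scal{h,\alpha} = \scal{h,2\rho} = \scal{h,\kappa} = 2\scal{\Thetab^\vee,\kappa}$ with no root-matching at all (the paper phrases this through $\dim G\cdot e = \dim\fg(1)+2\dim\fg(2)$, which agrees with your $|R|-|R_0^h|-|R_1^h|$). Since your $h$ is $W$-conjugate to $2\Thetab^\vee$ via $s_{\sigma(\Theta)}$, the cleanest repair of your argument is simply to conjugate to this dominant representative before counting; otherwise the promised $\sigma$-type bookkeeping must actually be carried out, and as it stands it is missing.
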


\begin{remark}
  \label{rem-2-ou-1}
  Note that the value of $\scal{\Thetab^\vee,\Sigma}$ depends on the type of $\Rb$ as follows:
  $$\partial X \cdot C = \scal{\Thetab^\vee,\Sigma} = \left\{\begin{array}{cl}
  2& \textrm{if $\Rb$ is of type $\A_r$ with $r \geq 1$,} \\
  1 & \textrm{otherwise.} \\
  \end{array}\right.$$
\end{remark}

\begin{proof}
Recall from Proposition  \ref{prop:red} that 
$\dim \cK_x = - K_X \cdot C - 2$. 
By the adjunction formula, we have 
$(-K_X)\vert_Y = -K_Y + \partial X\vert_Y$. Since $\cO_Y(-K_Y) = \mL_Y(\kappa)$ and $j^*\cO_X(\partial X) = \mL_Y(\Sigma)$, we get $j^*\cO_X(-K_X) = \mL_Y(\kappa + \Sigma)$ and $\dim \cK_x = \scal{\Thetab^\vee,\kappa + \Sigma} - 2$. 

We compute the dimension of $G \cdot m$ (we thank an anonymous referee for the uniform proof presented here). Since $e := m$ is a nilpotent element, there exist $f,h \in \fg$ such that $(e,h,f)$ is an $\fsl_2$-triple. Then $h$ induces a grading $\fg = \oplus_{k = -a}^a\fg(k)$ with $e \in \fg(2)$ and $f \in \fg(-2)$. The stabiliser of $e$ is the space of highest weight vectors of $\fsl_2 = \scal{e,h,f}$ acting on $\fg$. Its dimension is equal to the number of simple $\fsl_2$-modules thus equals $\dim\fg(0) + \dim\fg(1)$, since every simple $\fsl_2$-module intersects either $\fg(0)$ or $\fg(1)$ in dimension $1$ (depending on the parity of the highest weight). We thus have $\dim G \cdot m = \dim \fg - \dim \fg(0) - \dim \fg(1) = \dim\fg(1) + 2\sum_{k \geq 2}\dim\fg(k)$, since $\dim\fg(-k) = \dim\fg(k)$. Since $e$ lies in $\cO_{\min}$ 
or $\cO_{\summ,\sigma}$ by Proposition \ref{prop-hwc}, we have $a = 2$, thus $\dim G \cdot m = \dim \fg(1) + 2 \dim \fg(2)$.

In case $e \in \cO_{\min}$ \emph{i.e.} $\sigma(\Theta) = - \Theta$, we may choose $h = \Theta^\vee$ while if $e \in \cO_{\summ,\sigma}$ \emph{i.e.} $\sigma(\Theta) \neq - \Theta$, we may chose $h = \Theta^\vee - \sigma(\Theta^\vee)$. In any case, we get $h = 2\Thetab^\vee$ and in particular $\sigma(h) = -h$. If $\sigma(\alpha) = \alpha$, then $\scal{h,\alpha} = \scal{h,\sigma(\alpha)} = \scal{\sigma(h),\alpha} = - \scal{h,\alpha} = 0$ thus 
$\scal{h,\rho_L} = 0$. If $\alpha \in R^+$ is such that $\fg_\alpha \subset \fg(i)$ for $i\geq 0$, then $\scal{h,\alpha} = i$, thus
  $$\scal{h,\kappa} = \scal{h,2\rho} = \sum_{i = 0}^2\sum_{\alpha \in R^+, \ \fg_\alpha \subset \fg(i)} i = \dim \fg(1) + 2 \dim \fg(2).$$
We get $\dim G \cdot m = 2 \scal{\Thetab^\vee,\kappa}$, proving the result.
\end{proof}

\subsection{Contact structure}
\label{sec:contact}

In this subsection, we compute the dimension of $H \cdot m$ for $m \in T_xC \setminus \{0\}$, $C \in \cK_x$ and $\cK$ a family of minimal rational curves. 
We first gather some facts on orbits associated with symmetric spaces, and in particular prove that orbits of symmetric subgroups of $G$ are Lagrangian subvarieties in nilpotent $G$-orbits. Recall the following general definitions.

\begin{definition} Let $\widehat{M}$ be a smooth complex variety of dimension $2n + 2$ and let $M$ be a smooth complex variety of dimension $2n+1$.
  \begin{enumerate}
  \item {\it A symplectic structure} on $\widehat{M}$ is 
  a closed skew form 
 $\omega : T\widehat{M} \times_{\widehat{M}} T\widehat{M} 
 \to \widehat{M} \times \bC$
 which is everywhere nondegenerate.
  \item {\it A contact structure on $M$} is an everywhere non-vanishing map $\eta : TM \to \mL$, where $\mL$
  is a line bundle, such that the bilinear form $\theta_\eta : D \times D \rightarrow TM/D$ defined by $(u,v) \mapsto [u,v] \ (\text{mod}\ D)$ on $D := \Ker \, \eta$ is non-degenerate for all $m \in M$.
  \end{enumerate}
\end{definition}

If $\eta : TM \to \mL$ is a contact structure on $M$, then there is a natural symplectic structure $\omega$ defined by $\omega = d(p^*\eta)$ on $\widehat{M} = \mL^\times$, where $\mL^\times$ is the $\bC^\times$-bundle over $M$ with structure map $p : \widehat{M} \to M$, associated to $\mL$ and we identify $p^*\mL$ to the trivial line bundle over $\widehat{M}$.

\begin{definition}
We say that a symplectic structure $\omega$ on $\widehat{M}$ is induced by a contact structure 
$\eta : TM \to \mL$ on $M$ if 
$\widehat{M} = \mL^\times$ and $\omega = d(p^*\eta)$.
\end{definition}

The most famous examples of the above structures are given by coadjoint orbits in the dual $\fg^\vee$ of the Lie algebra $\fg$ of a connected reductive group $G$. For later purposes, we present a (non-canonical) version of \emph{Kostant-Kirillov form} which takes place in $\fg$ the Lie algebra and not $\fg^\vee$. If $\fg$ is semisimple, the Killing form $K$ identifies $\fg$ with $\fg^\vee$ and the construction is canonical.

\begin{example} 
\label{ex:coad-orb} 
Choose an invariant non-degenerate bilinear form $K$ on $\fg$
(e.g., the Killing form if $\fg$ is semisimple).
   Let $m$ be a non-zero element in $\fg$ and let $\widehat{M}_m = G \cdot m$ and $M_m = G \cdot [m] \subset \bP(\fg)$ be the orbits of $m \in \fg$ and of $[m] \in \bP(\fg)$ under the adjoint action.
Let $G_{m}$ be the isotropy subgroup of $G$ at $m$, with Lie algebra $\fg_{m}$. Define the anti-symmetric bilinear form $K_m$ on $\fg$ by $K_m(y,z) = K(m,[y,z])$. The non-degeneracy of $K$ and the equality $K_m(x,y) = K([m,y],x)$ implies that we have $\Ker \, K_m = \fg_m$. Thus, $K_m$ descends to a symplectic form
$\omega_m \colon  \fg /\fg_{m} \times \fg /\fg_{m} \rightarrow \mathbb C$ at $m \in \fg$. By the Jacobi identity, the form $\omega_m$ is closed.
\end{example}

If $m$ is such that the orbit $\widehat M_m = G.m$ is the cone in $\fg$ over $M_m = G \cdot [m] \subset \bP(\fg)$
(i.e., the affine cone minus the origin), then the arguments in 
\cite[Proposition 2.2]{Be98} adapt verbatim and yield a contact structure $\eta$ on $M_m$ which induces the symplectic form $\omega_m$. In particular, if $m$ is a nilpotent element in $\fg$, then the existence of an $\fsl_2$-triple containing $m$ ensures that $\widehat{M}_m$ is the cone over $M_m$ (see \cite[Paragraph (2.4)]{Be98}).

\medskip

Given a symplectic structure on a variety $\widehat{M}$ or a contact structure on a variety $M$, it is natural to ask for Lagrangian or Legendrian subvarieties; we recall their definitions. A Lagrangian subspace in a symplectic vector space $V$ of dimension $2m$ is an isotropic subspace of maximal dimension, \emph{i.e.}, of dimension $m$.

\begin{definition} Let $\widehat{M}$ have a symplectic structure $\omega$. A smooth subvariety $\widehat{L} \subset \widehat{M}$ is called {\it Lagrangian} if, for all $m \in \widehat{L}$, 
the subspace $T_m\widehat{L} \subset T_m\widehat{M}$ is Lagrangian for the symplectic form $\omega_m$ on $T_m\widehat{M}$.
\end{definition}

\begin{definition} Let $M$ have a contact structure $\eta$ and let $p : \widehat{M} \to M$ be the $\bC^\times$-bundle $\mL^\times$ associated to the line bundle $\mL$ with symplectic form $\omega = d(p^*\eta)$. A smooth subvariety $L \subset M$ is called {\it Legendrian} if $\widehat{L} = p^{-1}(L)$ is Lagrangian in $\widehat{M}$.
\end{definition}

\begin{example}\label{adjoint}
  Let $G$ be simple and $\fg$ its Lie algebra. Let  $m \in \fg$ be a highest weight vector. Set $\cO_{\min} = G \cdot m$ and $\bP(\cO_{\min}) = G \cdot [m] \subset \bP(\fg)$. The latter is called the {\it adjoint variety} of $G$. It is the unique closed orbit of $G$ in $\bP(\fg)$ under the adjoint action. 
Consider the Grassmannian variety $\Gr(2,\fg)$ of lines in $\bP(\fg)$, and let
  $\bL_G \subset \Gr(2,\fg)$ be the subset of lines contained in $\bP(\cO_{\min})$ 
  and passing through a given point of that variety. Then $\bL_G$ is a smooth Legendrian variety 
  in its linear span (in the Pl\"ucker embedding of $\Gr(2,\fg)$)
  and is homogeneous under the isotropy subgroup $G_{m}$ (see \cite[Theorem 1]{LM07}). 
  This Legendrian variety $\bL_G$ is called {\it the subadjoint variety}. Note that in type $\C_n$ we have $\bL_G = \emptyset$: the subadjoint variety is empty, since $\bP(\cO_{\min}) $ is the second Veronese embedding of $\bP^{2n-1}$ and hence contains no line. We will see in Subsection  \ref{para:subadjoint} 
  that $\bL_G$ (viewed as a subvariety of its linear span) 
  can be recovered as the \VMRT~ of a specific wonderful adjoint 
  symmetric variety for $G$. 
\end{example}

Let $H \subset G$ be a symmetric subgroup with group involution $\sigma$.
The following result is well known, we include a proof for the convenience of the reader.

\begin{proposition}[{\cite[Proposition 5]{ko-ra}}] 
\label{Legendrian} Let $m \in \fg^{-\sigma}$ be nonzero. 
Set $\widehat{L}_m := H \cdot m$ and $L_m := H \cdot [m]$.
  Then the variety $\widehat{L}_m$ is Lagrangian in $\widehat{M}_m$; in particular, $\dim G \cdot m = 2 \dim H \cdot m$.

  If $\widehat{L}_m$ is the cone over $L_m$ (or equivalently is stable by non-trivial homotheties), then the variety $L_m$ is Legendrian in $M_m$, in particular $\dim G \cdot [m] = 2 \dim H \cdot [m] + 1$. 
\end{proposition}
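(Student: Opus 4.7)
The plan is to work with an $\Ad(G)$-invariant non-degenerate symmetric bilinear form $B$ on $\fg$ that is also $\sigma$-invariant (averaging over $\sigma$ if needed, or taking the Killing form when $\fg$ is semisimple). Under the identification of $T_m(G \cdot m)$ with $\fg/\fg_m$, the tangent space $T_m(H \cdot m)$ corresponds to $\fh/(\fh \cap \fg_m)$. The strategy is to verify (i) that this subspace is isotropic for $\omega_m$ and (ii) that it has exactly half the dimension of $T_m(G \cdot m)$; these two together give the Lagrangian property. The Legendrian claim for $L_m$ will then follow almost formally from the definition once we know $\widehat{L}_m$ is the cone over $L_m$.

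For isotropy, I would use the $\sigma$-eigenspace decomposition $\fg = \fh \oplus \fp$. The $\sigma$-invariance of $B$ forces $B(\fh,\fp) = 0$, because for $y \in \fh$ and $u \in \fp$ we have $B(y,u) = B(\sigma y, \sigma u) = -B(y,u)$. Now for $y, z \in \fh$, the bracket $[y,z]$ lies in $\fh$ while $m \in \fp$, so
\[
\omega_m(y,z) = B(m, [y,z]) = 0,
\]
and $T_m(H \cdot m)$ is indeed isotropic.

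The dimension count is the substantive step and is where the Kostant--Rallis input enters. Since $m \in \fp$, the centralizer $\fg_m$ is $\sigma$-stable, so $\fg_m = \fh^m \oplus \fp^m$ with $\fh^m := \fh \cap \fg_m$ and $\fp^m := \fp \cap \fg_m$. The operator $\ad(m)$ exchanges $\fh$ and $\fp$, hence restricts to linear maps $\ad(m)\colon \fh \to \fp$ with kernel $\fh^m$, and $\ad(m)\colon \fp \to \fh$ with kernel $\fp^m$. Because $B$ is invariant, one has the adjunction $B([y,m],u) = B(y,[m,u])$ for $y \in \fh$, $u \in \fp$, which identifies the image $\ad(m)(\fh) \subset \fp$ with the $B$-orthogonal complement of $\fp^m$ inside $\fp$ (and similarly with $\fh$'s and $\fp$'s exchanged). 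This yields
\[
\dim \fh - \dim \fh^m \;=\; \dim \ad(m)(\fh) \;=\; \dim \fp - \dim \fp^m,
\]
so $\dim \fh/\fh^m = \dim \fp/\fp^m$. Thus $\dim H \cdot m = \dim \fh/\fh^m$ equals exactly half of $\dim G \cdot m = \dim \fh/\fh^m + \dim \fp/\fp^m$, proving the Lagrangian property (and the dimension identity $\dim G \cdot m = 2 \dim H \cdot m$). The main obstacle is ensuring that the nondegeneracy argument for the restriction of $B$ to $\fh$ and to $\fp$ genuinely holds; this requires $B|_{\fh \times \fh}$ and $B|_{\fp \times \fp}$ to be nondegenerate, which follows from the nondegeneracy of $B$ together with the orthogonality $B(\fh,\fp) = 0$ established above.

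For the Legendrian statement, suppose $\widehat{L}_m$ is stable under non-trivial homotheties, equivalently $\widehat{L}_m$ is the affine cone over $L_m$, so that $\widehat{L}_m = p^{-1}(L_m)$ where $p \colon \widehat{M}_m \to M_m$ is the $\bC^\times$-bundle whose total space carries $\omega_m = d(p^*\eta)$. By the definition of Legendrian recalled in the excerpt, $L_m$ is Legendrian in $M_m$ precisely when $p^{-1}(L_m)$ is Lagrangian in $\widehat{M}_m$, which is exactly what we have just proved. The dimension formula $\dim G \cdot [m] = 2\dim H \cdot [m] + 1$ then follows by subtracting one from each side of $\dim G \cdot m = 2 \dim H \cdot m$, using that both projections $\widehat{M}_m \to M_m$ and $\widehat{L}_m \to L_m$ are $\bC^\times$-bundles.
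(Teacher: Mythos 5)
Your proposal is correct, and the isotropy step is essentially the paper's: both arguments come down to the $\sigma$-invariance of $B$ forcing $B(\fh,\fp)=0$, so that $B(m,[y,z])=0$ for $y,z\in\fh$ and $m\in\fp$. Where you diverge is the half-dimension count. The paper avoids any rank computation: it observes that $\fg_m$ is $\sigma$-stable, so $\fg/\fg_m = \fh/\fh_m \oplus \fp/\fp_m$, and that the \emph{same} invariance computation shows $\fp/\fp_m$ is isotropic as well; two complementary isotropic subspaces of a symplectic space are automatically each Lagrangian, so the dimension identity is free. You instead establish $\dim\fh/\fh_m = \dim\fp/\fp_m$ directly, via the rank of $\ad(m)\colon\fh\to\fp$ and the $B$-adjunction identifying $\ad(m)(\fh)$ with the orthogonal of $\fp_m$ in $\fp$; this is a valid and slightly more hands-on route (it needs the nondegeneracy of $B|_{\fp\times\fp}$, which you correctly extract from $B(\fh,\fp)=0$), and it has the mild advantage of producing the Lagrangian property of $\fh/\fh_m$ without reference to the complementary subspace. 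One small point worth making explicit in the Legendrian step: you silently use that $\widehat{M}_m = G\cdot m$ is itself the cone over $M_m$ when you write $\widehat{L}_m = p^{-1}(L_m)$; as the paper notes, this follows because stability of $H\cdot m$ under nontrivial homotheties propagates to $G\cdot m$ (if $\lambda m\in H\cdot m$ then $\lambda(G\cdot m)=G\cdot m$). With that sentence added, your argument is complete.
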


\begin{proof}
  Since $\sigma(m) = -m$, the action of $\sigma$ on $\fg$ restricts to an action on $\fg_m$. Since $\sigma$ is semisimple, we thus have $\fg_m = \fh_m \oplus \fg^{-\sigma}_m$ and $\fg/\fg_m = \fh/\fh_m \oplus \fg^{-\sigma}/\fg^{-\sigma}_m$ with $\fh_m = \fh \cap \fg_m$ and $\fg^{-\sigma}_m = \fg^{-\sigma} \cap \fg_m$. Let $u,v \in \fh/\fh_m$ (resp.~$u,v \in \fg^{-\sigma}/\fg^{-\sigma}_m$) and let $y$ and $z$ in $\fh$ (resp.~in $\fg^{-\sigma}$) be representatives of $u$ and $v$. 
  In Example \ref{ex:coad-orb}, the form $K$ can be chosen to be $\sigma$-invariant. Using the fact that 
  $\sigma(u) = \varepsilon u$, $\sigma(v) = \varepsilon v$
  for the same sign $\varepsilon$, and likewise for 
  $\sigma(y)$, $\sigma(z)$, we get 
$$\begin{array}{rcl}
\omega_m (u, v)&=&\omega_m (\sigma (u), \sigma (v)) \\
                        &=&B(m,[\sigma (y), \sigma (z)]) \\
                        &=&- B(\sigma(m),[\sigma (y), d\sigma (z)]) \\
                        &=&- B(\sigma(m), \sigma [y, z]) \\
                        &=&- B( m, [y, z]) \\
&=&- \omega_m (u, v).
  \end{array}$$
  Hence $\omega_m(u,v) = 0$ and both $\fh/\fh_m$ and $\fg^{-\sigma}/\fg^{-\sigma}_m$ are isotropic and therefore Lagrangian in $\fg/\fg_m$. This proves the first part.
  If $H \cdot m$ is stable under non-trivial homotheties, then the same holds true for $G \cdot m$.
  The result follows from this and the first part.
\end{proof}

Also, recall from \cite[Proposition 4,  Proposition 11]{ko-ra} that
the condition that $\widehat{L}_m$ is the cone over $L_m$ 
is equivalent to $m$ being nilpotent. Together with 
Proposition \ref{prop:nilp}, this readily yields:

\begin{corollary}
  \label{cor-Legendrian}
  Let $\cK$ be a family of minimal rational curves, 
  let $C \in \cK_x$ and let 
  $m \in T_xC \setminus \{ 0 \}$. Then $\widehat{L}_m$ is the cone over $L_m$, in particular $\dim H \cdot [m] = \frac{1}{2} \dim G \cdot m - 1$. 
\end{corollary}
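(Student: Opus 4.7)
The plan is to verify the hypothesis of the second part of Proposition \ref{Legendrian}, namely that $\widehat{L}_m = H \cdot m$ is stable under the action of non-trivial homotheties, and then read off the dimension formula from that proposition.

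The first step is to identify the ambient space of $m$. Since Section \ref{sec:cas-adjoint} assumes $G$ is simply connected semisimple, the center $Z$ is finite and $\fz = 0$, so by Lemma \ref{lem:nor}.3 one has $\fn = \fh$. This yields the identification $T_x X_\ad = \fg/\fn = \fp$, which places $m$ inside $\fp$. By Corollary \ref{cor-hwc}, the vector $m$ lies either in $\cO_{\min}$ or in $\cO_{\summ,\sigma}$; in both cases $m$ is a nonzero nilpotent element of $\fg$ lying in $\fp$.

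The main step is to produce a one-parameter subgroup of $H$ scaling $m$. For this I would invoke the Jacobson--Morozov theorem for symmetric pairs, proved by Kostant and Rallis in \cite{ko-ra}: every nonzero nilpotent $m \in \fp$ fits into a normal $\fsl_2$-triple $(m,h,f)$ with $h \in \fh$ and $f \in \fp$. Since $h \in \fh$, the one-parameter subgroup $t \mapsto \exp(th)$ is contained in $H$, and from $[h,m] = 2m$ one obtains $\Ad(\exp(th))(m) = e^{2t} m$. Hence $\bC^\times \cdot m \subset H \cdot m$, so $\widehat{L}_m$ is stable under non-trivial homotheties and is therefore the cone over $L_m$.

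The dimension formula follows from the second assertion of Proposition \ref{Legendrian}, which gives $\dim G \cdot [m] = 2 \dim H \cdot [m] + 1$. Combining this with $\dim G \cdot m = \dim G \cdot [m] + 1$ (which holds since $G \cdot m$ is the nontrivial $\bC^\times$-cone over $G \cdot [m]$, the existence of the $\fsl_2$-triple above ensuring that $G \cdot m$ is stable by non-trivial homotheties) yields the stated equality $\dim H \cdot [m] = \frac{1}{2}\dim G \cdot m - 1$. No serious obstacle is expected: the only nontrivial external ingredient is the Kostant--Rallis normal $\fsl_2$-triple theorem, whose use is already signaled throughout this section.
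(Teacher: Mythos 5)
Your proposal is correct, but it reaches the key point --- that $\widehat{L}_m = H\cdot m$ is stable under all non-trivial homotheties --- by a different mechanism than the paper. The paper's proof is a one-liner: $m$ spans the tangent line $T_xC$ of a highest weight curve, which by Lemma \ref{lem:curve}.2 is a weight space $\fp_\lambda$ for a \emph{non-zero} highest weight $\lambda$ of $\fp$; since $\lambda : T_H \to \bC^\times$ is then surjective, already the maximal torus $T_H \subset H$ scales $m$ by every non-zero scalar. You instead invoke the Kostant--Rallis normal $\fsl_2$-triple theorem to produce $(m,h,f)$ with $h \in \fh$, and use $\exp(th) \in H^0$ together with $[h,m]=2m$ to get the scaling. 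Both arguments are valid; yours is heavier on external input but strictly more general, since it applies to any non-zero nilpotent element of $\fp$ rather than only to highest weight vectors, and it simultaneously furnishes the fact (which the paper gets from the same $\fsl_2$-triple existence, cited after Example \ref{ex:coad-orb}) that $G\cdot m$ is the cone over $G\cdot[m]$. Your derivation of the dimension formula via $\dim G\cdot[m] = 2\dim H\cdot[m]+1$ and $\dim G\cdot m = \dim G\cdot[m]+1$ is equivalent to the paper's route through the Lagrangian statement $\dim G\cdot m = 2\dim H\cdot m$. One small point worth making explicit either way: the identification $T_{x}X_\ad \cong \fp$ uses $\fn = \fh$, which holds here because $G$ is semisimple so $\fz = 0$ (Lemma \ref{lem:nor}.3); you do address this correctly.
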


Next, assume that $\Rb$ is not of type $\A_r$ and let $C \in \cK_x$ 
with $\cK$ a family of minimal rational curves. We obtain the following description of $\cK_x$.

\begin{theorem}
\label{theo-non-type-A}
We have $\cK_x = N \cdot C$. 
Furthermore, if $X$ is Hermitian non-exceptional, then $\cK_x$ has two components. Otherwise, $\cK_x$ is irreducible.
   \end{theorem}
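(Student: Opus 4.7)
The plan is to deduce the equality $\cK_x = H \cdot C$ from a dimension comparison together with the fact that $H \cdot C$ is closed. First, I will invoke the dimension formula from Theorem \ref{theo-dim} and Remark \ref{rem-2-ou-1}. Since $\Rb$ is not of type $\A_r$, Remark \ref{rem-2-ou-1} gives $\scal{\Thetab^\vee,\Sigma}=1$, so Theorem \ref{theo-dim} yields $\dim\cK_x = \tfrac{1}{2}\dim G\cdot m - 1$ for $m \in T_xC\setminus\{0\}$. On the other hand, Corollary \ref{cor-Legendrian} (which upgrades the Lagrangian property of $H\cdot m$ in $G\cdot m$ from Proposition \ref{Legendrian}) gives exactly $\dim H\cdot[m] = \tfrac{1}{2}\dim G\cdot m - 1$. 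Hence $\dim \cK_x = \dim H\cdot[m]$.

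Second, I will verify that $H\cdot C$ is an irreducible component of $\cK_x$. By Lemma \ref{lem:curve}.2 the stabilizer of $C$ in $H$ equals the stabilizer of the weight space $T_xC=\bC m$; since this stabilizer contains $B_H$, it is a parabolic subgroup of $H^0$, so $H^0\cdot C$ is closed and projective. The $H$-equivariant tangent map $\tau_x:\cK_x\to \bP(T_xX)$ from Proposition \ref{prop:red}.2 sends $C$ to $[m]$ with the same stabilizer, and therefore restricts to an $H$-equivariant isomorphism $H\cdot C \xrightarrow{\sim} H\cdot[m]$. Combining with the dimension computation of the previous paragraph, $H^0\cdot C$ is an irreducible closed subvariety of $\cK_x$ of maximal dimension, and since $\cK_x$ is smooth and equidimensional by Proposition \ref{prop:red}.1, $H^0\cdot C$ is a connected component of $\cK_x$.

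Third, I will show $\cK_x = H\cdot C$ as sets. By Proposition \ref{prop:red}.1 the family $\cK_x$ consists of embedded free curves, so $\rho_x:\cU_x \to \cK_x$ is an isomorphism and induces a bijection on components. By Lemma \ref{lem:cov}, $H$ acts transitively on the components of $\cU_x$, hence on those of $\cK_x$. Every component is therefore an $H$-translate of $H^0\cdot C$, namely an $H^0$-orbit of the form $H^0\cdot(h\cdot C)\subset H\cdot C$; summing over components yields $\cK_x \subset H\cdot C$, and the reverse inclusion is trivial.

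Finally, for the component count I will run a case analysis following the classification of adjoint irreducible symmetric spaces from Subsection \ref{subsec:ass}. In group type and in simple type the isotropy representation $\fp$ of $H^0$ is irreducible and thus admits a unique highest-weight line, so by Lemma \ref{lem:curve}.2 there is a single $H^0$-orbit of highest weight curves and $\cK_x$ is irreducible. In the Hermitian exceptional case, the two highest-weight curves $C_\Theta$ and $C_{-\alpha}$ are separated by the projection $X\to G/P$ into the two distinct minimal families $\cK^+$ and $\cK^-$ (as in the proof of Proposition \ref{prop:min}), so each $\cK_x^\pm$ is a single $L$-orbit. In the Hermitian non-exceptional case there is a unique minimal family containing both $C_\Theta$ and $C_{-\alpha}$; these have distinct $T_H$-weights, hence lie in two disjoint $H^0=L$-orbits, which are exchanged by the order-two group $N/L$, yielding exactly two components. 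The main obstacle is the dimension matching, which relies crucially on the combined use of the restricted-root computation in Theorem \ref{theo-dim} and the Lagrangian property in Proposition \ref{Legendrian}; everything else is formal consequences of the preceding structural results.
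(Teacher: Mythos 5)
Your proof is correct and follows essentially the same route as the paper: the key step in both is the dimension match $\dim \cK_x = \tfrac{1}{2}\dim G\cdot m - 1 = \dim H\cdot[m]$ obtained by combining Theorem \ref{theo-dim} (with $\scal{\Thetab^\vee,\Sigma}=1$ from Remark \ref{rem-2-ou-1}) with the Lagrangian property via Corollary \ref{cor-Legendrian}, followed by identifying the components through their highest weight curves. You simply make explicit several details the paper leaves implicit (closedness of $H^0\cdot C$ via its parabolic stabilizer, transitivity of $H$ on components via Lemma \ref{lem:cov}, and the case analysis for the component count), all of which are handled correctly.
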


\begin{proof}
 Since $\Rb$ is not of type $\A_r$, we have 
 $\partial X \cdot C = 1$ (see Remark \ref{rem-2-ou-1}). By Theorem \ref{theo-dim} and Corollary \ref{cor-Legendrian}, we have $\dim N \cdot C = \dim \cK_x$. If $X$ is non-Hermitian or Hermitian exceptional, then there exists a unique highest weight curve and $\cK_x$ is irreducible, proving the result. If $X$ is Hermitian non-exceptional, then $\cK_x$ contains two 
 highest weight curves which are exchanged by $N$; the result follows.
\end{proof}

We conclude this subsection by the following related result,
which follows from \cite[Theorem A]{Richardson}; 
we provide a direct proof for the reader's convenience.

\begin{lemma}
  The orbit $H \cdot m$ (resp. $H \cdot [m]$) is open in $(G \cdot m)^{-\sigma}$ (resp. $(G \cdot [m])^{\sigma}$). 
  In particular, $H \cdot m$ (resp.~$H \cdot [m]$) is a union of connected components of $(G \cdot m)^{-\sigma}$ (resp.~$(G \cdot [m])^\sigma$).
  Moreover, $\dim(H \cdot m) = \dim(G \cdot m)^{-\sigma}$ and $\dim(H \cdot [m]) = \dim(G \cdot [m])^\sigma$.
\end{lemma}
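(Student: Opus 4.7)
\smallskip

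\noindent\textbf{Proof plan.} The strategy is the classical one: show that $H\cdot m$ and $(G\cdot m)^{-\sigma}$ have the same tangent space at $m$, combined with smoothness of fixed loci of involutions on smooth varieties, and then spread this to all of $(G\cdot m)^{-\sigma}$ by the $H$-action.

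First, since $G\cdot m$ is smooth and $-\sigma$ (understood as $x\mapsto -\sigma(x)$ on $\fg$) is an involution preserving $G\cdot m$ (as $(-\sigma)(g\cdot m)=\sigma(g)\cdot(-\sigma(m))=\sigma(g)\cdot m$), the fixed locus $(G\cdot m)^{-\sigma}$ is smooth, with tangent space at $m$ equal to the $(-\sigma)$-fixed part of $T_m(G\cdot m)=[\fg,m]$. I will then use the decomposition $\fg=\fh\oplus\fp$ to write $[\fg,m]=[\fh,m]+[\fp,m]$ and verify, using $\sigma(m)=-m$, that $[\fh,m]\subseteq\fp$ and $[\fp,m]\subseteq\fh$. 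Since $\fh\cap\fp=0$, this sum is direct, and the two summands are exactly the $(-\sigma)$-eigenspaces on $[\fg,m]$: the $+1$-eigenspace is $[\fh,m]$ and the $-1$-eigenspace is $[\fp,m]$. Therefore
\[
T_m\bigl((G\cdot m)^{-\sigma}\bigr)=[\fh,m]=T_m(H\cdot m).
\]
Since $H\cdot m\subseteq(G\cdot m)^{-\sigma}$ and both contain $m$ with the same tangent space at that point, while $(G\cdot m)^{-\sigma}$ is smooth at $m$, it follows that $H\cdot m$ is open in $(G\cdot m)^{-\sigma}$ near $m$. Because $H$ acts transitively on $H\cdot m$ and preserves $(G\cdot m)^{-\sigma}$, openness propagates to all points of $H\cdot m$, so $H\cdot m$ is open in $(G\cdot m)^{-\sigma}$, hence a union of connected components, and the dimensions agree.

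The projective case follows the same pattern but requires one small care: the induced action of $\sigma$ on the tangent space $T_{[m]}\bP(\fg)=\fg/\bC m$ is twisted. Because $\sigma(m)=-m$, the action of $\sigma$ on $\bP(\fg)$ in a $\sigma$-stable affine chart around $[m]$ reads $[m+y]\mapsto[-m+\sigma(y)]=[m-\sigma(y)]$, so the tangent action is $y\mapsto -\sigma(y)$. Consequently, the $\sigma$-fixed subspace of $T_{[m]}\bP(\fg)$ is the image of $\fp$ in $\fg/\bC m$, and intersecting with $T_{[m]}(G\cdot[m])=[\fg,m]/\bC m$ gives
\[
\bigl(T_{[m]}(G\cdot[m])\bigr)^{\sigma}=\bigl([\fg,m]\cap\fp\bigr)/\bC m=[\fh,m]/\bC m=T_{[m]}(H\cdot[m]),
\]
where I use the decomposition from the previous paragraph and the fact (guaranteed by the hypothesis of Corollary~\ref{cor-Legendrian}, which is the only situation we actually need) that $m\in[\fh,m]$, so that $\bC m\subseteq[\fh,m]$. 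Smoothness of $(G\cdot[m])^{\sigma}$ at $[m]$ and the same openness-plus-homogeneity argument then yield that $H\cdot[m]$ is open in $(G\cdot[m])^{\sigma}$, with equal dimensions.

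The only mildly delicate step is the sign bookkeeping in the projective case, specifically noticing that the induced involution on $T_{[m]}\bP(\fg)$ is $-\sigma$ rather than $\sigma$, which is what makes $\fp/\bC m$ (and not $\fh/\bC m$) the fixed subspace; this is where one must be careful, since it might at first sight seem to give the wrong answer. Once this is correctly identified, intersecting with $[\fg,m]/\bC m$ and using $[\fg,m]\cap\fp=[\fh,m]$ recovers the tangent space to $H\cdot[m]$, and the rest is formal.
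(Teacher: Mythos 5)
Your proof is correct in substance and follows essentially the same route as the paper: both reduce openness to an equality of tangent spaces at $m$ (resp.\ $[m]$), using that the fixed locus of a semisimple involution on a smooth variety is smooth with tangent space the fixed subspace, and then spread the conclusion around by $H$-equivariance. Where the paper argues abstractly (the orbit map $\fg \to T_{[m]}(G\cdot[m])$ is $\sigma$-equivariant, so by semisimplicity the fixed part of the image is the image of $\fh$, sidestepping all sign issues), you make the eigenspace decomposition $[\fg,m]=[\fh,m]\oplus[\fp,m]$ with $[\fh,m]\subseteq\fp$, $[\fp,m]\subseteq\fh$ explicit and track the sign twist on $T_{[m]}\bP(\fg)$ by hand; both are fine, and your sign bookkeeping is right. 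Two small points to tighten. First, the restriction to $m\in[\fh,m]$ in the projective case is unnecessary and weakens the statement: writing $T_{[m]}(G\cdot[m])=([\fg,m]+\bC m)/\bC m$, its fixed part is $\bigl(([\fg,m]+\bC m)\cap\fp\bigr)/\bC m=([\fh,m]+\bC m)/\bC m=T_{[m]}(H\cdot[m])$, valid for any $m\in\fp$. Second, openness of the single orbit $H\cdot m$ does not by itself make it a union of connected components; you need it to be closed as well. This follows because your tangent-space computation applies verbatim at every point $y$ of $(G\cdot m)^{-\sigma}$ (any such $y$ again lies in $\fp$), so \emph{every} $H$-orbit in the fixed locus is open, hence the complement of $H\cdot m$ is open and $H\cdot m$ is closed; this is exactly how the paper concludes, and it also gives the dimension statement.
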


\begin{proof}
Note that $m$ is fixed for $-\sigma$; therefore, 
$\sigma([m]) = [m]$. We thus have inclusions 
$H \cdot m \subset (G \cdot m)^{-\sigma}$ and 
$H \cdot [m] \subset (G \cdot [m])^\sigma$. 
To prove the openness,  we only need to check that 
the tangent spaces at their respective base points
$m$ and $[m]$ agree. 
We deal with $H \cdot [m]$, the other case works in a similar way. 
The natural map 
$\fg \to T_{[m]} (G \cdot [m]),  \xi \mapsto \xi \cdot [m]$
is surjective,  and $\sigma$-equivariant as $[m]$
is $\sigma$-fixed.  Since $\sigma$ is semisimple,
this map induces a surjection on $\sigma$-fixed
subspaces, i.e. 
$ (T_{[m]}(G \cdot [m]))^{\sigma} = 
(\fg \cdot [m])^\sigma = \fg^\sigma \cdot [m] 
= \fh \cdot [m] = T_{[m]}(H \cdot [m])$.
Moreover,  
$(T_{[m]}(G \cdot [m]))^{\sigma} =
T_{[m]}((G \cdot [m])^{\sigma}) $
in view of the semisimplicity of $\sigma$ again;
this gives the desired equality 
$T_{[m]}((G \cdot [m])^{\sigma}) = T_{[m]}(H \cdot [m])$.

Since every $H$-orbit in $(G \cdot m)^{-\sigma}$ is open, 
there are only finitely many such orbits and these orbits are 
also closed, proving the last statement.
\end{proof}

\subsection{Wonderful symmetric varieties of type $\A_r$}
\label{subsection-typea}

As the above discussion shows, the case of symmetric spaces 
whose restricted root system is of type $A_r$ with $r \geq 1$ 
will present a different feature: the family $\cK_x$ has 
dimension one more than the orbit $H \cdot C$ for 
$C \in \cK_x$. In this section we prove that $\cK_x$ 
is a rational projective homogeneous variety.

Assume that the restricted root system $\Rb$ is of type 
$\A_r$ and let $(\alphab_i)_{i \in [1,r]}$ be the simple roots 
of $\Rb$ (labeled as in Bourbaki \cite{bourbaki}). For $i \in [1,r]$, 
let $\alpha_i \in R$ be a simple root such that 
$\alpha_i - \sigma(\alpha_i) = \alphab_i$. 
For $\beta \in \Delta$, let $\varpi_\beta$ be the associated 
fundamental weight of $R$. 
For each $i \in [1,r]$, recall the definition of the dominant weight 
$\lambda_i := \lambda_{\alpha_i}$ from Proposition \ref{prop-lambda-alpha}:
  $$\lambda_i = \left\{\begin{array}{ll}
  2 \varpi_{\alpha_i} & \textrm{ if $\sigma(\alpha_i) = -\alpha_i$,} \\
    \varpi_{\alpha_i} + \varpi_{\sigmab(\alpha_i)} & \textrm{ if $\sigma(\alpha_i) = -\sigmab(\alpha_i)$ and $\scal{\alpha_i^\vee,\sigma(\alpha_i)}  =0$,} \\
     \varpi_{\alpha_i} & \textrm{ otherwise.} \\
  \end{array}\right.$$

 We now list the different symmetric spaces (up to finite cover) 
    whose restricted root system is of type $\A_r$,
    the corresponding dominant weights $\lambda_1$, the irreducible 
    $G$-representations $V_{\lambda_1}$ (which will feature 
    prominently in the rest of this section), and the 
    corresponding $H$-representations $\fg^{-\sigma}$.

    \begin{center}
      \begin{tabular}{c|c|c|c|c}
        $G/H$ & Rank & $\lambda_1$ & $V_{\lambda_1}$ & $\fg^{-\sigma}$ \\
        \hline
        $\SL_{r+1}\times\SL_{r+1}/\SL_{r+1}$ & $r$ & $(\varpi_1,0) + (0,\varpi_r)$ 
        & ${\rm End}(\bC^{r+1})$ & $\fsl_{r+1}$ \\
        $\SL_{r+1}/{\rm SO}_{r+1}$ & $r$ & $2\varpi_1$ 
        & $S^2(\bC^{r+1})$ & $S^2(\bC^{r+1})_0$ \\
        $\SL_{2r+2}/\Sp_{2r+2}$ & $r$ & $\varpi_2$ 
        & $\Lambda^2(\bC^{2r+2})$ & $\Lambda^2(\bC^{2r+2})_0$ \\
        ${\rm SO}_{n}/{\rm S}(\OO_1 \times \OO_{n-1})$ & $1$ & $\varpi_1$ 
        & $\bC^n$ & $\bC^{n-1}$ \\
        $E_6/F_4$ & $2$ & $\varpi_1$ & $\bC^{27}$ & $\bC^{26}$ \\
      \end{tabular}
    \end{center}
    
Here $S^2(\bC^{r+1})_0$ denotes the $\SO_{r+1}$-stable complement of 
$\bC q$ in $S^2(\bC^{r+1})$ with $q$ being the standard quadratic form, 
and $\Lambda^2(\bC^{2r+2})_0$ denotes the $\Sp_{2r+2}$-stable
complement of $\bC \omega$ in $\Lambda^2(\bC^{2r+2})$ with 
$\omega$ being the standard symplectic form. 
As a consequence of this classification, we see that 
$V_{\lambda_1} = \fg^{-\sigma} \oplus \bC$ as $H$-representations.

Note that since $\Rb$ is reduced, none of the symmetric spaces we consider is exceptional.
Let $\Rb^\vee$ be the dual root system and let 
$(\overline{\alpha}_i^\vee)_{i \in [1,r]}$ be the simple coroots.
Then the weights $(\lambda_i)_{i \in [1,r]}$ are the fundamental weights 
of $\Rb$. 
The dominant chamber is thus the cone generated by 
the fundamental coweights $(\lambda_i^\vee)_{i \in [1,r]}$.

Next,  using the above list,  we obtain a geometric
construction of the wonderful compactification:

\begin{proposition}\label{prop:eclt}
Let $G/H$ be an adjoint indecomposable symmetric space with restricted 
root system of type $\A_r$ and let $\lambda_1$ be as above.

\begin{enumerate}

\item The group $G$ acts on $\bP(V_{\lambda_1})$ with $r + 1$ orbits 
whose closures $(Z_i)_{i \in [1,r+1]}$ satisfy the following inclusions: 
$Z_1 \subsetneq Z_2 \subsetneq \cdots \subsetneq Z_{r+1} = \bP(V_{\lambda_1})$.  The open orbit is isomorphic to $G/H$.

\item The join $J(Z_1,Z_i)$ (i.e., the union of lines joining 
$Z_1$ and $Z_i$) equals $Z_{i+1}$ for all $i$.
    
\item The wonderful compactification $X$ 
is equipped with a birational $G$-equivariant morphism
$f : X \to \bP(V_{\lambda_1})$. 
If $r = 1$, then $f$ is an isomorphism. 
If $r \geq 2$, then $f$ is the composition of the blow-ups 
of  the strict transforms of the orbit 
closures $Z_1$, $\ldots$, $Z_r$ in this order.
Moreover, these strict transforms are smooth.
\end{enumerate}

\end{proposition}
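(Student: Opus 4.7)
The plan is to verify each part case by case, using the explicit classification of adjoint irreducible symmetric spaces with restricted root system of type $\A_r$ given in the table preceding the statement: group type, types $\mathrm{AI}$, $\mathrm{AII}$, $\mathrm{BDII}$ (only $r = 1$), and $\mathrm{EIV}$ (only $r = 2$). In each ``matrix'' case (group, AI, AII), $V_{\lambda_1}$ is identified with a space of matrices (general, symmetric, or skew-symmetric, respectively), and the $G$-orbits on $\bP(V_{\lambda_1})$ are classified by rank, giving exactly $r+1$ orbits whose closures $Z_1 \subsetneq \cdots \subsetneq Z_{r+1}$ form a strict chain of determinantal varieties. For $\mathrm{BDII}$, the two orbits on $\bP(\bC^n)$ are the quadric $Z_1$ and its complement; for $\mathrm{EIV}$, the classical $E_6$-action on $\bP^{26}$ produces the Cayley plane $Z_1$, its secant cubic $Z_2$, and an open orbit. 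A direct stabilizer computation in each case identifies the open orbit with $G/H$, completing part (1).

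For part (2), the join description in the matrix cases reduces to the elementary fact that a matrix of rank $i+1$ can be written as the sum of one of rank $i$ and one of rank $1$, with the appropriate rank-jump convention in the skew-symmetric case. For $\mathrm{BDII}$, it amounts to the fact that the secant variety of a smooth nondegenerate quadric is all of $\bP^{n-1}$. For $\mathrm{EIV}$, the identities $J(Z_1,Z_1) = Z_2$ and $J(Z_1,Z_2) = \bP^{26}$ are classical results about the Cayley plane.

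For part (3), the line bundle $\cL_X(\lambda_1)$ on $X$ is globally generated by the general theory of wonderful varieties, and its space of global sections realises $V_{\lambda_1}^{\vee}$; this yields the $G$-equivariant morphism $f : X \to \bP(V_{\lambda_1})$, which is birational onto its image and an isomorphism on the open orbit. For $r = 1$, a dimension and orbit-structure comparison shows $f$ is an isomorphism. For $r \geq 2$, I would proceed inductively: the closed orbit $Z_1 \subset \bP(V_{\lambda_1})$ is smooth (being a closed $G$-orbit), and a local computation along it (via the determinantal description in the matrix cases, and classical structure results for $\mathrm{EIV}$) shows that after blowing up $Z_1$ the strict transforms of $Z_2, \dots, Z_{r+1}$ remain smooth and again form a chain. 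Since the preimage of $Z_1$ in $X$ is one of the boundary divisors, $f$ factors through this blow-up. Iterating, and matching the $r$ exceptional divisors produced with the $r$ boundary divisors of $X$ (whose number is the rank), gives the claimed description of $f$ as the composition of blow-ups. The main obstacle will be verifying the smoothness of the strict transforms at each step, particularly in the $\mathrm{EIV}$ case where the $E_6$-geometry is less elementary, but in all cases this reduces to well-known structural results on determinantal varieties and on the Cayley plane.
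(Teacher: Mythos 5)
Your treatment of parts (1) and (2) matches the paper's: both reduce the matrix cases (group type, AI, AII) to the rank stratification of plain, symmetric, and skew-symmetric matrices, and both appeal to classical facts about the Cayley plane for $E_6/F_4$ (the paper cites \cite[Proposition 4.1]{LM01}). For part (3) you diverge: the paper does not construct $f$ from $\cL_X(\lambda_1)$ and run an induction; instead it simply \emph{cites} the classical identifications of $X$ with the spaces of complete collineations, complete quadrics and complete skew forms (\cite[Theorem 1]{Vainsencher}, \cite[Theorems 10.1, 11.1]{Thaddeus}) in the three matrix cases, notes that $X = \bP^{n-1}$ in type BDII, and only for $E_6/F_4$ gives a direct argument: one blow-up along $Z_1 = G/P_1$ suffices there, and the paper identifies the exceptional divisor with $G \times^{P_1}\bP(M)$ for $M \simeq \bC^{10}$ the standard $\SO_{10}$-representation, counts $P_1$-orbits in $\bP(M)$, and invokes the Luna--Vust classification of embeddings to conclude the result is $X$. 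Your uniform inductive scheme is in principle viable and more self-contained, but be aware that the step you flag as "the main obstacle" --- smoothness of the strict transforms of $Z_2,\dots,Z_r$ after each blow-up, together with the persistence of the orbit chain --- is precisely the main theorem of Vainsencher and Thaddeus, not a routine local computation; if you intend to avoid those references you would be reproving them. Also, to conclude that the end product of your blow-up tower is the wonderful compactification, matching the number of exceptional divisors with the rank is not quite enough: you should verify the full boundary orbit structure (or toroidality plus a unique closed orbit) and then invoke the uniqueness of the wonderful embedding, as the paper does explicitly in the $E_6/F_4$ case.
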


\begin{proof}
(1) and (2) In all cases except the last one, the $G$-orbits are given 
by the rank of matrices (plain  matrices, symmetric matrices or 
skew-symmetric matrices) and the assertions follows from this. 
The case of $E_6/F_4$ is a classical result (see 
e.g.~\cite[Proposition 4.1]{LM01}).

(3) This is again a classical result in the first three cases,
see \cite[Theorem 1]{Vainsencher} for 
$\SL_{r + 1} \times \SL_{r+1}/\SL_{r+1}$
(then $X$ is the moduli space of complete collineations)
and \cite[Theorems 10.1, 11.1]{Thaddeus} for 
$\SL_{r+1}/\SO_{r+1}$ and $\SL_{2r+2}/\Sp_{2r+2}$
(complete quadrics and complete skew forms).
The next case of $\SO_n/{\rm S}(\OO_1 \times \OO_{n-1})$
is easy, as we then have $X = \bP^{n-1} = \bP(V_{\lambda_1})$.

It remains to treat the case of $E_6/F_4$.
This is a symmetric space of rank $2$ and 
its equivariant compactification $\bP(V_{\lambda_1})$ 
has a unique closed orbit $Z_1$, the projectivization 
of the orbit of highest weight vectors in $V_{\lambda_1}$.
We now use the theory of spherical embeddings for which 
we refer to \cite{Timashev}, especially Section 15.1
(the classification of spherical embeddings in terms of
colored fans) and Section 26.8 (the colored equipment of
symmetric spaces). By Theorem 26.25 in loc.~cit., 
the valuation cone $\cV$ is the opposite of 
the dominant cone of type $\A_2$; moreover, there are 
two colors and these are mapped to positive
multiples of the two simple roots. Since 
$\bP(V_{\lambda_1})$ is a simple complete embedding,
its colored fan consists of a single colored
cone which contains the valuation cone. Also,
this colored cone does have a color, since
$\lambda_1$ is not in the interior of the cone 
generated by the weight monoid (recall that the weight monoid of a spherical homogeneous space
$G/H$ consists of the dominant weights $\lambda$ such that $V_\lambda$
contains nonzero $H$-fixed vectors; see \cite[Proposition 26.24]{Timashev} for its description in the symmetric case). This implies that
the colored cone of $\bP(V_{\lambda_1})$ is 
generated by $\cV$ and one simple root corresponding
to the color. By the classification of orbits
in spherical embeddings (see Theorem 15.4 in loc.~cit.),
it follows that the $G$-orbit closures in 
$\bP(V_{\lambda_1})$ are exactly
$Z_1 \subsetneq Z_2 \subsetneq Z_3 = \bP(V_{\lambda_1})$.
Moreover, the boundary $Z_2$ is a divisor, since its 
complement $G/H$ is affine. 

Denote by $\varphi : X' \to \bP(V_{\lambda_1})$ 
the blow-up along $Z_1$. Then $X'$ is a smooth
projective equivariant compactification of $G/H$,
and its boundary is the union of two prime divisors:
the exceptional divisor $X'_1$ and the strict transform
$X'_2$ of $Z_2$.  Moreover,  
$X'_2 \setminus X'_1 = Z_2 \setminus Z_1$ 
is a unique $G$-orbit. We now claim that 
$X'_1 \setminus X'_2$ and $X'_1 \cap X'_2$ 
are $G$-orbits as well. 

To check this, we identify $Z_1$ to $G/P_1$, 
where $P_1$ is the maximal parabolic subgroup 
of $G$ associated with the fundamental weight 
$\lambda_1 = \varpi_1$.  Denote by $M$
the normal space to $Z_1$ in $\bP(V_{\lambda_1})$
at the base point of $G/P_1$. 
Then $M$ is a representation of $P_1$, 
and the $G$-variety $X'_1$ is isomorphic to
the projectivization of the normal bundle
to $Z_1$ in $Z_3$, that is, 
the homogeneous projective bundle
$G \times^{P_1} \bP(M)$.  
Thus, the $G$-orbits in $\bP(V_{\lambda_1})$ 
correspond bijectively to the $P_1$-orbits in $\bP(M)$. 
So it suffices in turn to show that $P_1$ acts on
$\bP(M)$ with two orbits.  But the Levi subgroup
$L_1$ of $P_1$ is isomorphic to 
$\SO_{10} \times \bC^*$ up to finite cover,
and $M = \bC^{10}$ where $\SO_{10}$ acts
via its standard representation and $\bC^*$
acts by scalar multiplication. Therefore,
$L_1$ acts on $\bP(M)$ with two orbits:
a quadric and its complement.  As
$P_1$ does not act transitively on 
$\bP(M)$, it acts with two orbits as well,
proving the claim.

By that claim, $X'$ is a smooth projective 
embedding of $G/H$ and its boundary is the
union of two prime divisors $X'_1,X'_2$ 
intersecting properly along the unique closed orbit.
It follows that the colored fan of $X'$ consists
of a unique cone: the valuation cone $\cV$.
Thus, $X'$ is isomorphic to $X$ by the classification 
of embeddings of $G/H$ again.
As a consequence, the boundary divisor $X'_2$ is smooth.
\end{proof}

\begin{remark}
The above statements (1) and (2) can be proved in a uniform way 
using Jordan algebras: the representation $V_{\lambda_1}$ has 
the structure of a Jordan algebra with structure group $G$ 
and the stabiliser of the unit element is $H$. The above orbit structure 
is then explained by the notion of rank for elements in a Jordan algebra. 
We refer to \cite{springer-jordan} and \cite{BP} for more on Jordan 
algebras. We were however not able to fully relate symmetric spaces 
with restricted root systems of type $\A_r$ to Jordan algebras 
without using a case by case check, so we refrained from using them.

Likewise, the above statement (3) can be deduced in a 
uniform way from the equality $\dim G/N = \dim V_{\lambda_1} - 1$
by using embedding theory of spherical homogeneous spaces,
as suggested by an anonymous referee.
But the only proof of this equality that we know goes via a 
case-by-case checking.
\end{remark}

\begin{theorem}
\label{them: vmrt of type A}
Let $X$ be the wonderful compactification of an adjoint 
indecomposable symmetric space $G/H$ with restricted root system 
of type ${\rm A}_r$.
\begin{enumerate}
\item There is a unique family of minimal rational curves $\cK$.
\item The tangent map $\cK_x \to \cC_x$ is an isomorphism.
\item If $r = 1$, then $\cC_x = \bP(\fg^{-\sigma})$.
\item If $r \geq 2$, then $\cK_x$ is isomorphic to 
the closed $G$-orbit in $\bP(V_{\lambda_1})$.
\item In both cases, $H \cdot C$ is a prime divisor in $\cK_x$,
where $C$ denotes the unique highest weight curve.
\end{enumerate}
\end{theorem}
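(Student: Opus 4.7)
The plan is to prove (1)--(4) in sequence, relying on Proposition~\ref{prop:eclt} and the earlier analysis.

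For (1), type $\A_r$ is reduced, so $X$ is non-exceptional (Remark~\ref{remark : excep-reduce}) and in particular not Hermitian exceptional; hence Proposition~\ref{prop:min} yields a unique minimal family $\cK$. For (3), Proposition~\ref{prop:eclt}(3) gives that $f : X \to \bP(V_{\lambda_1})$ is an isomorphism when $r=1$, so $X$ is a projective space whose unique minimal family consists of lines, and the subfamily through $x$ is $\bP(T_xX) = \bP(V_{\lambda_1}/\bC x) = \bP(\fp)$ using the $H$-decomposition $V_{\lambda_1} = \fp \oplus \bC$ recorded at the start of this subsection.

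The main content is (4). The geometric picture is that for $r \geq 2$, the closed orbit $Z_1 \subset \bP(V_{\lambda_1})$ does not contain $x$, projection from $x$ embeds $Z_1$ into $\bP(V_{\lambda_1}/\bC x) = \bP(\fp)$ as an $H$-stable subvariety, and this subvariety is $\cC_x$. I would define $\phi : \cK_x \to Z_1$ by $\phi(C) := f(C) \cap Z_1$ and verify that it is an $H$-equivariant isomorphism in three steps. \emph{First,} one shows $f(C)$ is a line: using $f^*\cO(1) = \cL_X(\lambda_1)$, $\Thetab^\vee_*[\bP^1] = 2\gamma_0$ (the Corollary following Proposition~\ref{prop-hwc}), $w_0\Thetab = -\Thetab$ (Proposition~\ref{prop:rrs-summary}(1)), and Lemma~\ref{lem:mult}(4),
\[ \cL_X(\lambda_1)\cdot\gamma_0 = \tfrac{1}{2} \langle \Thetab^\vee, \lambda_1 - w_0\lambda_1 \rangle = \langle \Thetab^\vee, \lambda_1 \rangle, \]
which by inspection of $\lambda_1$ (Proposition~\ref{prop-lambda-alpha}) equals $1$ case-by-case. \emph{Second,} $\phi$ is well-defined because $C \cdot X_{\alphab_1} = \langle\Thetab^\vee,\alphab_1\rangle = 1$, and $X_{\alphab_1}$ is the strict transform of the exceptional divisor over $Z_1$ in the factorization of $f$ from Proposition~\ref{prop:eclt}(3); so $C$ meets $X_{\alphab_1}$ transversally in a single point lying over a unique point of $Z_1$. \emph{Third,} one verifies $\phi$ is an isomorphism: the projection $\pi_x$ from $x$ restricts to a closed embedding $Z_1 \hookrightarrow \bP(\fp)$ whose image equals $\cC_x$ by a dimension comparison via Theorem~\ref{theo-dim} ($\dim\cK_x = \langle\Thetab^\vee,\kappa\rangle$ in type $\A_r$, since $\Sigma = 2$ by Remark~\ref{rem-2-ou-1}) against $\dim Z_1 = \dim(G/P_1)$; the identity $\tau_x = \pi_x \circ \phi$ then shows $\phi$ is finite birational onto $Z_1$, and an isomorphism by Zariski's main theorem since $Z_1$ is smooth.

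Finally, (2) follows from (3) and (4): in both cases $\cC_x$ is smooth, so the finite birational morphism $\tau_x : \cK_x \to \cC_x$ from the smooth variety $\cK_x$ (Proposition~\ref{prop:red}(1)) onto a smooth target is an isomorphism by Zariski's main theorem. The main obstacle I anticipate is the transversality claim of the second step and the verification that $\pi_x|_{Z_1}$ is an embedding; making these rigorous may require a case-by-case analysis in the classification (group, AI, AII, EIV) or use of the Jordan algebra structure on $V_{\lambda_1}$ noted in the remark after Proposition~\ref{prop:eclt}.
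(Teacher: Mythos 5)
Your strategy is sound and, for part (4), genuinely more self-contained than the paper's: after establishing the blow-up description of Proposition \ref{prop:eclt}(3), the paper simply argues as in \cite[Prop.~5.1]{BF} and invokes Proposition 9.7 of \cite{Hwang-Fu}, which identifies $\cK_x$ with the family of lines through $f(x)$ meeting the closed orbit $Z_1$, and the \VMRT~with the image of $Z_1$ under projection from $f(x)$. You are in effect re-proving that cited result by hand. Your treatment of (1) and (3) matches the paper, and your first two steps for (4) (that $f(C)$ has degree one, and that $C$ meets the exceptional divisor over $Z_1$ exactly once) are correct computations consistent with the intersection formulas of Subsections \ref{subsec:div-rrs}--\ref{subsection:classes-vmrt}.

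The gap is in your third step. Surjectivity of $\phi : \cK_x \to Z_1$ is not automatic from $H$-equivariance and properness: $Z_1$ is $G$-homogeneous but not $H$-homogeneous (indeed $H \cdot C$ is a \emph{proper} divisor in $\cK_x$ by Theorem \ref{theo-adj}(4)(b)), so the image of $\phi$ could a priori be a proper closed $H$-stable subvariety, and your argument hinges entirely on the asserted but unproven equality $\dim \cK_x = \dim Z_1$. This can be supplied uniformly rather than case by case: writing $K_X = f^*K_{\bP(V_{\lambda_1})} + \sum_i a_i E_i$ with $a_1 = \codim Z_1 - 1$, and using $E_1 \cdot C = 1$ together with $E \cdot C = 0$ for the remaining $f$-exceptional divisors (your own intersection computation $\langle \Thetab^\vee, \alphab_i\rangle = 0$ for $1 < i < r$), one gets $\dim \cK_x = -K_X \cdot C - 2 = \dim \bP(V_{\lambda_1}) - \codim Z_1 = \dim Z_1$. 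Similarly, the injectivity of $\pi_x\vert_{Z_1}$, which you flag as possibly requiring a classification check, follows at once from Proposition \ref{prop:eclt}(2): the secant variety of $Z_1$ is $J(Z_1,Z_1) = Z_2 \subsetneq \bP(V_{\lambda_1})$ for $r \geq 2$, and $x$ lies in the open orbit, hence off $Z_2$, so projection from $x$ is injective (and immersive) on $Z_1$. With these two points made explicit, your argument closes and gives an alternative to the appeal to \cite{Hwang-Fu}.
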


\begin{proof}
 If $r = 1$, then $X = \bP(V_{\lambda_1})$ with $V_{\lambda_1} = \fg^{-\sigma} \oplus \bC$ and $x = [\bC]$. Thus there is a unique minimal family $\cK$ and it consists
 of lines in $X$. The result follows in this case.
 
 If $r \geq 2$, then there is also a unique minimal family $\cK$,
 as follows from Proposition \ref{prop:min} and Corollary 
 \ref{cor:1or2}. To determine this family, we use the description of $X$ 
 as an iterated blow-up in Proposition \ref{prop:eclt}, together with
 Proposition 9.7 in \cite{Hwang-Fu}. Arguing as in 
 \cite[Proposition 5.1]{BF} for the group case of type $\A_r$, 
 we see that $\cK_x$ is the set of strict transforms of the 
lines in $\bP(V_{\lambda_1})$ that pass through a general point and meet the closed orbit. The tangent map $\cK_x \to \cC_x$ is an isomorphism 
and the \VMRT~ is therefore isomorphic to the closed $G$-orbit in 
$\bP(V_{\lambda_1})$.

This proves all the assertions except for (5), which follows from 
(3), (4) and the isomorphism of $H$-representations 
$V_{\lambda_1} \simeq \fg^{-\sigma} \oplus \bC$.

\end{proof}

\subsection{Minimal rational curves on wonderful compactifications}
\label{subsection-summary}

We summarise our results.   Let $X$ be the wonderful compactification of an adjoint indecomposable symmetric space $G/H$ with base point $x$ and let $\cK$ be a family of minimal rational curves.

\begin{theorem}
  \label{theo-adj}
  
  \begin{enumerate}
    \item Every irreducible component of $\cK_x$ contains a unique highest weight curve $C$. Moreover, $\cK_x$ is smooth and
    equidimensional of dimension
    $ \dim H \cdot C + \partial X \cdot C - 1$, 
    with $\dim H \cdot C  = \scal{\Thetab^\vee,\kappa}$
   and $\partial X \cdot C =\scal{\Thetab^\vee,\Sigma}$.
    \item We have $\partial X \cdot C \in \{ 1,2 \}$. Moreover, 
    $\partial X \cdot C = 2$ if and only if
    the restricted root system is of type $\A_r$.

    \item Assume that $\partial X \cdot C = 1$. 
    Then $\cK_x = H \cdot C$. Furthermore, 
    if $X$ is Hermitian non-exceptional, then $\cK_x$ 
    has two components. Otherwise, $\cK_x$ is irreducible.

    \item Assume that $\partial X \cdot C = 2$, so that 
    the restricted root system of $X$ is of type $\A_r$.
      \begin{enumerate}
      \item If $r = 1$, then $\cK_x \simeq \bP(\fg^{-\sigma})$.
      \item If $r \geq 2$, then there exists a $G$-equivariant birational morphism $X \to \bP(V)$ for some irreducible $G$-representation $V$ and $\cK_x$ is isomorphic to the closed $G$-orbit in $\bP(V)$. The orbit $H \cdot C$ is a prime divisor in $\cK_x$.
      \end{enumerate}
      \item The orbits $H \cdot C$ are described in Table \ref{table-class}.
      \item The tangent map $\cK_x \to \cC_x$ is an isomorphism. 
      
  \end{enumerate}
\end{theorem}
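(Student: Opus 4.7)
The plan is to assemble the theorem from ingredients established in Sections 2--4, handling the five parts in order. For part (1) I will apply Borel's fixed point theorem to the projective variety $\cK_x$ acted on by $B_H$, yielding a $B_H$-fixed curve in each irreducible component. Since $X = X_\ad$ is wonderful the identity contracts no curve, so Lemma \ref{lem:red}(1) identifies these fixed curves as highest weight curves, and Lemma \ref{lem:curve}(2) gives uniqueness within a component (a highest weight curve is determined by the $T_H$-weight of its tangent line). Equidimensionality of $\cK_x$ and transitivity of $H^0$ on its components come from Lemma \ref{lem:cov}. The dimension equality combines Theorem \ref{theo-dim}, giving $\dim \cK_x = \langle \Thetab^\vee, \kappa + \Sigma\rangle - 2$, with $\partial X\cdot C = \langle \Thetab^\vee, \Sigma\rangle$ from Remark \ref{rem-2-ou-1} and the Lagrangian dimension count of Proposition \ref{Legendrian} and Corollary \ref{cor-Legendrian}.

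Parts (2) and (3) follow quickly. The dichotomy in (2) is Remark \ref{rem-2-ou-1}: inspecting the irreducible restricted root systems and using that $\Thetab$ is the highest root of $\Rb$ (Proposition \ref{prop:rrs-summary}(1)), the pairing $\langle \Thetab^\vee, \Sigma\rangle$ equals $2$ exactly in type $\A_r$. Part (3) is Theorem \ref{theo-non-type-A}: when $\partial X\cdot C = 1$, the dimension formula of (1) forces $\dim\cK_x = \dim H\cdot C$, so $H\cdot C$ is a connected component of $\cK_x$; for the count of components I will invoke Proposition \ref{prop:min}, which gives a unique highest weight curve outside the Hermitian non-exceptional case and two $\N_G(L)$-exchanged highest weight curves in that case.

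For part (4) I will invoke Theorem \ref{them: vmrt of type A}. In rank $1$, $X \simeq \bP(V_{\lambda_1})$ and $\cK_x$ is simply the family of lines through $x$. In rank $r \geq 2$, Proposition \ref{prop:eclt} realizes $X$ as an iterated blow-up of $\bP(V_{\lambda_1})$ and \cite[Prop.~9.7]{Hwang-Fu} identifies $\cK_x$ with the closed $G$-orbit in $\bP(V_{\lambda_1})$. The assertion that $H\cdot C$ is a prime divisor in $\cK_x$ follows from (1), which gives $\dim H\cdot C = \dim \cK_x - 1$, combined with the fact that the closed $G$-orbit is $H$-spherical, so every codimension-one $H$-orbit closure is an irreducible divisor.

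Part (5) is a case-by-case tabulation over the classification of irreducible adjoint symmetric spaces. By Corollary \ref{cor-hwc}, the nilpotent orbit containing $T_xC$ is either $\cO_{\min}$ or $\cO_{\summ,\sigma}$, and Richardson's theorem \cite{Richardson} decomposes the $\sigma$-fixed locus of such an orbit into finitely many $H$-orbits; identifying the one through $[m]$ and matching it with its image under $\tau_x$ in $\bP(\fp)$ fills each row of Table \ref{table-class}. The principal obstacle is precisely this last step: while in most cases $H\cdot C$ is identified with the minimal embedding of a rational homogeneous variety, the types AI, CI and the mixed type G case demand extra care, since the embedding $H\cdot C \hookrightarrow \bP(\fp)$ factors through a Veronese rather than being minimal.
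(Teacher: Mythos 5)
Your proposal is correct and assembles the theorem from exactly the same ingredients as the paper, whose proof is literally the concatenation of Theorem \ref{theo-dim} for (1), Remark \ref{rem-2-ou-1} for (2), Theorem \ref{theo-non-type-A} for (3), and Theorem \ref{them: vmrt of type A} together with Proposition \ref{prop:eclt} for (4), with the dimension identity in (1) coming from the Lagrangian count of Corollary \ref{cor-Legendrian} as you say. The only (minor) divergence is in the tabulation (5), where the paper organizes the case-by-case identification of the components of $H \cdot C$ via the marked Kac diagrams of Subsection 6.2 (each component is $H^0/Q_\delta$ for a white node $\delta$) rather than via Richardson's theorem applied to $(G\cdot [m])^\sigma$; both routes reduce to the same classification check, including the non-minimal (Veronese) embeddings in types AI, CI and G that you flag.
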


\begin{proof}
(1) This follows from Proposition \ref{prop:red}
and Theorem \ref{theo-dim}.

(2) This follows from Remark \ref{rem-2-ou-1}.

(3) This follows from Theorem \ref{theo-non-type-A}.

(4) This follows from (2) and Theorem 
\ref{them: vmrt of type A}.

(5) This is proved in the Appendix.

(6) This follows from Theorem \ref{them: vmrt of type A}
again for $\Rb$ of type $\A_r$. 
Assume that $\Rb$ is not of type $\A_r$. By Proposition \ref{prop:red}, 
the tangent map $\tau_x : \cK_x \to \cC_x$ is $H$-equivariant, finite 
and birational. Furthermore by (3), the variety $\cK_x$ is $H$-homogeneous. 
Thus, $\cC_x$ is $H$-homogeneous as well, and $\tau_x$ is bijective.
\end{proof}

\section{Minimal rational curves on complete symmetric varieties}
\label{sec:mfcss}

We are now in a position to prove our main results. 
Let $X$ be a complete symmetric variety and 
let $\cK$ be a family of minimal rational curves on $X$. 
Let $\pi : X \to X_\ad$ be the map from $X$ to the wonderful 
compactification of the associated adjoint symmetric space, 
and let $C \in \cK_x$.

\begin{theorem}
\label{thm:VMRT=orbit}
 
 \begin{enumerate}
  \item If $C$ is contracted by $\pi$, then $\cK_x$ is isomorphic to a linear subspace of $\bP(\fg^{-\sigma} \cap \fz)$.\\
\par\vbox{\parbox[t]{\linewidth}{Assume that $\pi$ does not contract $C$.}}

 \item The map $\pi$ induces an isomorphism between $C$ and its image $D := \pi(C)$ and there exists a unique indecomposable factor $X_C$ of $X_\ad$ such that the composition map $\pi_C : X \to X_\ad \to X_C$ does not contract $C$.

    \item We have $\partial X \cdot C \in \{ 1,2 \}$.

    \item If $\partial X \cdot C = 1$, then  
    $\cK_x = H \cdot C$. Moreover, the components of 
    $\cK_x$ are isomorphic to the components of $H \cdot D$. 

    \item If $\partial X \cdot C = 2$, then the restricted root system of $X_C$ is of type $\A_r$.
      \begin{enumerate}
      \item If $r = 1$, then each component of 
      $\cK_x$ is isomorphic to $\bP(\fg^{-\sigma}_C)$.
      \item If $r \geq 2$, then there is a $G$-equivariant birational morphism $X_C \to \bP(V)$ for some irreducible $G$-representation $V$ and 
      each component of $\cK_x$ is isomorphic to the closed $G$-orbit in $\bP(V)$. The orbit $H \cdot C$ is a prime divisor in $\cK_x$.
      \end{enumerate}
      \item The orbits $N \cdot C$ are described in Table \ref{table-class}.
      \item The variety $\cK_x$ is smooth and the tangent map 
      $\cK_x \to \cC_x$ is an isomorphism.
  \end{enumerate}
\end{theorem}

\begin{proof}
(1) This follows from Lemma \ref{lem:red}.

(2) This follows from Proposition \ref{prop:red}. 

(3) This follows from Proposition \ref{prop:red} and Theorem \ref{theo-adj}.

(4) If $\partial X \cdot C = 1$ and $X_C$ is not of type 
$\A_r$, then 
$\partial X \cdot C = \partial X_C \cdot D$.
Moreover, there is a unique family of minimal rational curves 
$\cL$ containing $D = \pi(C)$ and a finite $H$-equivariant map $\pi_{*,x} : \cK_x \to \cL_{\pi_C(x)}$
(Proposition \ref{prop:red} again).
By Theorem \ref{theo-adj}, we have 
$\cL_{\pi_C(x)} = H \cdot D$; in particular, every component of $\cL_{\pi_C(x)}$ is 
homogeneous under $H^0$. Using  
Proposition \ref{prop:red} once more,
it follows that $\pi_{*,x}$ induces an isomorphism on components. 

If $X_C$ is of type ${\rm A}_r$, then 
$\partial X_C \cdot D = 2$. Thus, the image of each component of
$\cK_x$ in $\cL_{\pi_C(x)}$ has codimension $1$ and must be equal to $H \cdot D$. The result follows from this by a similar argument as in the previous case.

  (5) If $\partial X \cdot C = 2$, then the restricted root system of $X_C$ is of type 
  $\A_r$ and 
  $\partial X \cdot C = \partial X_C \cdot D$. 
  Again, there is a unique family of minimal rational curves $\cL$ containing 
  $D$, and a finite $H$-equivariant map 
  $\pi_{*,x} : \cK_x \to \cL_{\pi_C(x)}$. 
  Moreover, 
  $\cL_{\pi_C(x)}$ is irreducible and has the same dimension as $\cK_x$; thus, $\pi_{*,x}$ induces an isomorphism on each component of $\cK_x$.
  
  (7) By Proposition \ref{prop:red}, $\cK_x$ is smooth and the tangent map 
  $\cK_x \to \cC_x$ is finite, birational and $H$-equivariant, 
  therefore an isomorphism if $\cK_x = H \cdot C$. 
  If $\cK_x$ is not $H$-homogeneous, then $X_C$ is of type ${\rm A}_r$ 
  and the result follows from Theorem \ref{thm:VMRT=orbit}(5).
\end{proof}

\section{Appendix}
\label{sec:app}

The goal of this appendix is twofold: we first prove basic results on restricted root systems used to describe curves and divisors on wonderful compactifications. We also obtain characterisations of exceptional wonderful varieties useful to establish Table \ref{table-class}. We then give an easy way to describe, using the Kac diagram of the symmetric space, the components of the $H$-orbit $H \cdot C$ in $\cK_x$, where $C$ is a highest weight curve.
Finally, in Table \ref{table-class}, we give a list, based on  the classification of symmetric spaces, of families of minimal rational curves and \VMRT~ of wonderful symmetric varieties.

\subsection{Restricted root systems}
\label{ap:rrs}

In this subsection we prove useful results on restricted root systems that might be well known to experts, but for which we could not find a good reference. 

\begin{lemma}
\label{lem:highest-root-ap}
The restricted root $\Thetab$ is the highest root of $\Rb$.
\end{lemma}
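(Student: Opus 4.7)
The plan is to show that $\Thetab - \alphab \in \bZ_{\geq 0}\Db$ for every $\alphab \in \Rb$, which characterises the highest root in the irreducible root system $\Rb$.

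First I would record the following key linearity property: for any positive root $\alpha = \sum_{\gamma \in \Delta} n_\gamma \gamma \in R^+$,
\[
\bar\alpha = \sum_{\gamma \in \Delta} n_\gamma(\gamma - \sigma(\gamma)) = \sum_{\gamma \in \Delta_1} n_\gamma\, \gammab \in \bZ_{\geq 0}\Db,
\]
using that $\sigma(\gamma) = \gamma$ for $\gamma \in \Delta_0$. This immediately gives monotonicity of the bar operation: whenever $\beta \leq \alpha$ in the dominance order on $R$, one has $\bar\beta \leq \bar\alpha$ in $\Rb$. In particular, $\Thetab$ itself lies in $\bZ_{\geq 0}\Db$, so the case $\alphab \in -\Rb^+$ follows: $\Thetab - \alphab = \Thetab + (-\alphab)$ is then the sum of two elements of $\bZ_{\geq 0}\Db$. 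So it suffices to handle $\alphab \in \Rb^+$.

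For such an $\alphab$, the plan is to produce a representative $\alpha \in R^+$ with $\bar\alpha = \alphab$ \emph{and} $\alpha \leq \Theta$ in $R$. A direct computation shows $\overline{-\sigma(\alpha')} = \overline{\alpha'}$ for any $\alpha' \in R$, so $\{\alpha', -\sigma(\alpha')\}$ are two representatives of the same restricted root; a sign analysis on the pair gives at least one representative in $R^+$ (after replacing $\alphab$ by $-\alphab$ if necessary, a case already treated). If $R$ is irreducible, which covers the Hermitian and simple types, the uniqueness of the highest root $\Theta \in R$ automatically gives $\alpha \leq \Theta$. If $R$ is reducible, which only occurs in the group type $G = H \times H$ with $R = R_H \sqcup R_H$ and $\sigma$ exchanging the two factors, I would choose $\Theta$ as the highest root of one of the two factors; the representatives $\alpha$ and $-\sigma(\alpha)$ then lie one in each factor of $R$, and we select the one in the same factor as $\Theta$, for which $\alpha \leq \Theta$ holds.

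With such an $\alpha$ in hand, $\Theta - \alpha \in \bZ_{\geq 0}\Delta$, and applying the monotonicity above yields $\Thetab - \alphab = \overline{\Theta - \alpha} \in \bZ_{\geq 0}\Db$, completing the proof. The only delicate step is the choice of representative in the reducible (group type) case: we must pick $\Theta$ and $\alpha$ in a compatible irreducible factor of $R$ so that the classical dominance comparison $\alpha \leq \Theta$ applies; every other step is routine once the linearity identity above is in place.
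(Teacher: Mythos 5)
Your proof is correct and takes essentially the same route as the paper's: both rest on the linearity identity $\alphab = \sum_{\beta \in \Delta_1} c_\beta \betab$ for $\alpha = \sum_{\beta \in \Delta} c_\beta \beta$, combined with the fact that the simple-root coefficients of $\Theta$ dominate those of every other root. You merely make explicit the coefficient-domination step (and the choice of a compatible representative and factor in the reducible, group-type case) that the paper compresses into ``the result follows from this.''
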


\begin{proof}
For $\alpha \in R$,  write 
$\alpha = \sum_{\beta \in \Delta} c_\beta \beta$ 
with all the $c_\beta$ of the same sign.  We have 
$\alphab = \alpha - \sigma(\alpha) 
= \sum_{\beta \in \Delta_1} c_\beta (\beta - \sigma(\beta)) 
= \sum_{\beta \in \Delta_1} c_\beta \betab$ 
and the result follows from this.
\end{proof}

\begin{lemma}
\label{lemm:w_0-et-sigma}
Let $w_0 \in W$ be the longest element, then the actions of $\sigma$ and $w_0$ on roots commute. In particular, $\sigma(w_0) = w_0$.
\end{lemma}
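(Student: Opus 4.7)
The plan is to deduce $\sigma w_0 = w_0 \sigma$ on $\fX_\bR$ from the compatibility of two involutions on the set of simple roots $\Delta$: the opposition involution $\iota := -w_0$ (a diagram automorphism of the Dynkin diagram, of order at most $2$) and the Satake-type involution $\bar\sigma$ from Subsection \ref{subsection:rrs}. Since $w_0$ is characterized purely in terms of $R$ and $R^+$, the identity $\sigma w_0 = w_0 \sigma$ is equivalent to $\sigma \iota = \iota \sigma$ on $\fX_\bR$; both sides being linear, it suffices to verify this on the basis $\Delta$.

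For $\alpha \in \Delta_0$, we have $\sigma(\alpha) = \alpha$, so the identity $\sigma\iota(\alpha) = \iota\sigma(\alpha)$ reduces to the condition $\iota(\alpha) \in \Delta_0$. For $\alpha \in \Delta_1$, using the formula $\sigma(\alpha) = -\bar\sigma(\alpha) - \sum_{\beta \in \Delta_0} c_\beta \beta$ recalled in Subsection \ref{subsection:rrs}, the identity further reduces to the combined conditions $\iota(\Delta_0) = \Delta_0$, $\iota \bar\sigma = \bar\sigma \iota$ on $\Delta_1$, and a matching condition on the coefficients $c_\beta$. Put together, the claim amounts to the statement that the opposition involution $\iota$ preserves the combinatorial Satake data $(\Delta, \Delta_0, \bar\sigma)$ of the pair $(G,\sigma)$ together with its structure constants.

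My strategy is to reduce to the irreducible case via the product decomposition of $G_\ad/G_\ad^\sigma$ into group, Hermitian, and simple type factors (Subsection \ref{subsec:ass}), and then verify the compatibility by inspection of the Satake diagram for each irreducible family. For every standard Satake diagram, $\iota$ is either the identity (when $-1$ lies in $W$) or an order-two diagram automorphism which manifestly preserves the decoration; the coefficient matching then follows from the rigidity of the integer coefficients $c_\beta$. The main obstacle is this case-by-case check through the classification. Once the compatibility has been established, the equality $\sigma(w_0) = w_0$ in $W = \N_G(T_\s)/T_\s$ follows immediately by interpreting $\sigma w_0 \sigma^{-1} = w_0$ as an identity in the Weyl group, via the conjugation action of $\sigma$ on $\N_G(T_\s)$.
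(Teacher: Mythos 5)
Your reduction is sound as far as it goes: $\sigma$ and $w_0$ are linear on $\fX_{\bR}$ and $\iota=-w_0$ differs from $w_0$ by the central element $-1$, so it does suffice to check $\sigma\iota=\iota\sigma$ on $\Delta$, and this unpacks into the conditions you list. The gap is that the argument stops exactly where the content begins. Both substantive claims --- that $\iota$ preserves $\Delta_0$ and commutes with $\sigmab$, and that the coefficients $c_\beta$ transform correctly --- are only asserted, with the verification deferred to an unperformed inspection of every Satake diagram in the classification. Moreover, ``the coefficient matching then follows from the rigidity of the integer coefficients $c_\beta$'' is not an argument: the data $(\Delta_0,\sigmab)$ does not visibly determine the $c_\beta$ unless one knows that $\sigma=-w_L\sigmab$, where $w_L$ is the longest element of the parabolic subgroup $W_{\Delta_0}$, and you never invoke that identity. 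Without it you would have to check, type by type, that $\iota\sigma\iota$ and $\sigma$ agree on each $\alpha\in\Delta_1$ including the $\bZ\Delta_0$-part of $\sigma(\alpha)$, and none of that is done.

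The paper's proof is uniform and closes both holes at once. First, when $-w_0\neq\Id$ the Dynkin diagram admits at most one non-trivial involution, so $\sigmab$ is either $\Id$ or $-w_0$ and in either case commutes with $w_0$; this also gives $w_0(\Delta_0)=-\Delta_0$ and hence $w_0(L)=L$, with no recourse to the classification. Second, the identity $\sigma=-w_L\sigmab$ (quoted from Timashev) reduces the commutation to the two-line computation $w_0\sigma=-w_0w_L\sigmab=-w_{w_0(L)}w_0\sigmab=-w_L\sigmab w_0=\sigma w_0$; this identity is precisely the ``rigidity'' your proposal is missing. If you import it, the Satake-diagram case analysis becomes unnecessary; if you do not, you must actually carry out the case-by-case check, including the coefficient comparison, before the proof is complete.
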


\begin{proof}
It follows from \cite[Section 5.2]{Springer}, that there exists an involution $\tau$ of $\Delta$ such that $\tau(\Delta_0) = \Delta_0$ with  $\sigma = w_{0,L} w_0\tau$ and $\sigmab = -w_0\tau$ (see also \cite[Section 1.5]{decs}). Note that $-w_0$, $\sigmab$ and $\tau$ are involutions preserving $R^+$ and $\Delta$. Furthermore there is at most one non-trivial such involution except in type $D_4$ (in type $D_4$, we have $-w_0 = \id$). 
Thus, $-w_0$ is trivial or equal to $\sigmab$ or $\tau$. In particular $w_0(\Delta_0) = - \Delta_0$ and $w_0(L) = L$ (this can also be easily checked using the classification).

Recall that $\sigma = -w_{0,L} \sigmab$ where $w_{0,L}$ is the longest element in the Weyl group $W_L$ of the pair $(L,T_{\s})$, for this see \cite[Page 149]{Timashev}. Therefore $w_0\sigma = -w_0w_{0,L}\sigmab = -w_{w_0(L)}w_0\sigmab = -w_{0,L}\sigmab w_0 = \sigma w_0$. The result follows.
\end{proof}

\begin{corollary}
\label{cor-w_0-Theta-ap}
We have $w_0(\Thetab) = - \Thetab$.
\end{corollary}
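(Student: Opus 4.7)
The plan is to combine the definition $\Thetab = \Theta - \sigma(\Theta)$ with the commutativity of $\sigma$ and $w_0$ just established in Lemma \ref{lemm:w_0-et-sigma}, so that the whole statement reduces to the classical identity $w_0(\Theta) = -\Theta$ for the highest root.

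For that identity, I would appeal to the standard fact that $-w_0$ is an automorphism of the Dynkin diagram of $R$ (possibly trivial), which must fix the highest root $\Theta$ since $\Theta$ is the unique root of maximal height. Consequently $w_0(\Theta) = -\Theta$. In the group-type case where $G = H \times H$ and $R$ is reducible, the same argument applies to each irreducible factor separately.

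The computation is then a one-liner: using linearity of the $W$-action and Lemma \ref{lemm:w_0-et-sigma},
\[
w_0(\Thetab) = w_0(\Theta) - w_0(\sigma(\Theta)) = -\Theta - \sigma(w_0(\Theta)) = -\Theta - \sigma(-\Theta) = -\Thetab.
\]
There is no real obstacle here: all the substantive work was done in the preceding lemma, and the conclusion is just a symbolic rearrangement.
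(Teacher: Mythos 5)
Your proof is correct and follows essentially the same route as the paper: expand $\Thetab = \Theta - \sigma(\Theta)$, use the commutativity of $\sigma$ and $w_0$ from Lemma \ref{lemm:w_0-et-sigma}, and invoke $w_0(\Theta) = -\Theta$. The paper's proof is exactly your one-line computation (it takes the classical identity $w_0(\Theta)=-\Theta$ for granted, whereas you supply the standard diagram-automorphism justification, which is a harmless addition).
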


\begin{proof}
We have $w_0(\Thetab) = w_0(\Theta) - w_0(\sigma(\Theta)) = w_0(\Theta) - \sigma(w_0(\Theta)) = -\Theta + \sigma(\Theta) = -\Thetab$. 
\end{proof}

We now prove a characterisation of non-reduced restricted root systems.

\begin{proposition}
    \label{char-non-red-ap}
    Let $\alpha \in R \setminus R^\sigma$. We have the equivalence: $\alphab, 2\alphab \in \Rb$ 
    $\Leftrightarrow$ $\scal{\alpha^\vee,\sigma(\alpha)} = 1$.
  \end{proposition}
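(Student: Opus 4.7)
The plan for $(\Leftarrow)$ is direct: assuming $\scal{\alpha^\vee,\sigma(\alpha)} = 1$, the element $\gamma := \alpha - \sigma(\alpha)$ equals $s_\alpha(-\sigma(\alpha))$ (since $\alpha$ and $\sigma(\alpha)$ have the same length), so $\gamma \in R$. A direct computation gives $\sigma(\gamma) = \sigma(\alpha) - \alpha = -\gamma$, whence $\gammab = \gamma - \sigma(\gamma) = 2\gamma = 2\alphab \in \Rb$; combined with $\alphab \in \Rb$, which holds by definition, this yields the implication.

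For $(\Rightarrow)$, suppose $2\alphab = \gammab$ for some $\gamma \in R$. By Vust's trichotomy, $\scal{\alpha^\vee,\sigma(\alpha)} \in \{-2, 0, 1\}$; I would eliminate the first two values. The case $-2$ gives $\sigma(\alpha) = -\alpha$ and $2\alphab = 4\alpha$, which is ruled out by the length estimate $\|\gamma - \sigma(\gamma)\|^2 \leq 4\|\gamma\|^2 \leq 12\|\alpha\|^2 < 16\|\alpha\|^2$, using that root-length ratios are bounded by $\sqrt{3}$ (attained only in type $G_2$). In the case $0$, the identity $\|\gamma - \sigma(\gamma)\|^2 = 2\|\gamma\|^2(1 - \cos\theta)$ for $\theta$ the angle between $\gamma$ and $\sigma(\gamma)$ must equal $\|2\alphab\|^2 = 8\|\alpha\|^2$; combined with $\|\gamma\|^2 \leq 2\|\alpha\|^2$ and the integrality of $2\cos\theta = \scal{\gamma^\vee,\sigma(\gamma)}$ (with a separate verification for $G_2$), this forces $\sigma(\gamma) = -\gamma$ and $\|\gamma\|^2 = 2\|\alpha\|^2$, so $\gamma = \alphab = \alpha - \sigma(\alpha) \in R$. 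Reflecting via $s_\alpha$, which since $\scal{\alpha^\vee,\gamma}=2$ sends $\gamma$ to $-(\alpha+\sigma(\alpha))$, yields $\alpha + \sigma(\alpha) \in R$; this root is $\sigma$-fixed and therefore lies in the parabolic subsystem $R \cap \bZ\Delta_0$. The subcase $\sigmab(\alpha) \neq \alpha$ is then immediate to rule out (the $\Delta_1$-components of $\alpha + \sigma(\alpha)$ cannot cancel), while the subcase $\sigmab(\alpha) = \alpha$ reduces to demanding the existence of $\tau \in R \cap \bZ_{\geq 0}\Delta_0$ with $\scal{\alpha^\vee,\tau} = -2$ and $\sigma(\alpha) = -\alpha - \tau$.

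The hardest step is eliminating this last configuration, and it is the main obstacle of the proof. The obstruction is not purely combinatorial: as an abstract root-system automorphism one can write down formal involutions verifying the above equations (e.g.\ $-s_{\alpha_1}$ on a $B_2$ subsystem), yet such involutions need not arise from a group involution of $G$. The missing contradiction therefore uses the Satake structure of $\sigma$, namely that it descends from a genuine involution of $G$ and satisfies $\sigma = -w_L \sigmab$ on $\fX(T_{\s})$. I plan to complete the argument by invoking the Helgason--Araki classification of Satake diagrams of irreducible symmetric pairs: a case-by-case inspection shows that no such pair realizes Vust case $0$ at a doubled simple restricted root, ruling out the hypothetical $\tau$ and completing the proof.
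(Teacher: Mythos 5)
Your forward implication and the overall trichotomy on $\scal{\alpha^\vee,\sigma(\alpha)}$ coincide with the paper's proof, and your eliminations of the cases $\scal{\alpha^\vee,\sigma(\alpha)} = -2$ and (most of) $\scal{\alpha^\vee,\sigma(\alpha)} = 0$ are correct but run along a genuinely different route: where the paper expands a putative $\gamma$ with $\gammab = 2\alphab$ in simple roots and argues on supports (forcing $\gamma = 2\alpha$, or a disconnected support when $\sigmab(\alpha)\neq\alpha$), you use norm estimates together with the integrality of $\scal{\gamma^\vee,\sigma(\gamma)}$ to force $\sigma(\gamma) = -\gamma$, $\gamma = \alphab \in R$ long and $\alpha$ short; both work. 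Your diagnosis that the residual configuration ($\alpha$ short, $\sigmab(\alpha)=\alpha$, $\sigma(\alpha) = -\alpha-\tau$ with $\tau \in R\cap\bZ_{\geq 0}\Delta_0$ and $\scal{\alpha^\vee,\tau}=-2$) is combinatorially consistent and can only be excluded by using that $\sigma$ comes from a group involution is exactly right --- this is also the point where the paper must leave pure root combinatorics.

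That last step, however, is precisely where your proposal stops being a proof: you defer it to a case-by-case inspection of the Helgason--Araki classification which you assert but do not perform. The paper closes this case directly. By Lemma \ref{lem-dom-ap}, $\gamma = \alphab$ is dominant on its support with $\alpha$ the unique simple root not orthogonal to it; since $\gamma$ is long and $\alpha$ short, $\gamma$ is the highest root of a non-simply-laced subsystem, which is forced to be of type ${\rm C}_n$ with $\alpha = \alpha_1$ and $\sigma(\alpha_1) = -(\alpha_1 + 2\alpha_2 + \cdots + 2\alpha_{n-1} + \alpha_n)$, $\sigma(\alpha_i) = \alpha_i$ for $i>1$. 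Setting $\delta = \alpha_1 + \cdots + \alpha_n$ one gets $\delta + \sigma(\delta) = \alpha_n$, and $x = [x_\delta, x_{\sigma(\delta)}]$ is a nonzero vector of $\fg_{\alpha_n}$ with $\sigma(x) = -x$; this contradicts the fact that for a maximal torus of split type every $\sigma$-fixed root is a root of $L = \C_G(S)$ and hence compact imaginary (as $[L,L]\subset H^0$). This Lie-bracket argument is the missing ingredient if you want a classification-free proof; otherwise you would need to actually carry out the Satake-diagram check for the five non-reduced families (AIII, CII, DIII, EIII, FII), which you have only announced.
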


    \begin{proof}
If $\scal{\alpha^\vee,\sigma(\alpha)} = 1$,  then 
$\alphab = \alpha - \sigma(\alpha) =  \beta \in R$ 
and $\sigma(\beta) = - \beta$.  Thus, 
$\betab = 
2\alphab \in \Rb$. 

Conversely, assume that $\alphab, 2\alphab \in \Rb$.  Recall that for $\gamma \in R$,  there are three possibilities:  
   $$(\gammab,\gammab) = \left\{\begin{array}{ll}
    4(\gamma,\gamma) & \textrm{ if $\sigma(\gamma) = -\gamma$}, \\
    2(\gamma,\gamma) & \textrm{ if $\scal{\gamma^\vee,\sigma(\gamma)} = 0$}, \\
        (\gamma,\gamma) & \textrm{ if $\scal{\gamma^\vee,\sigma(\gamma)} = 1$}. \\
\end{array}\right.$$
Let $\gamma \in R$ with $\gammab = 2\alphab$, then $(\gammab,\gammab) = 4(\alphab,\alphab)$ is equal to $16(\alpha,\alpha)$, $8(\alpha,\alpha)$ or $4(\alpha,\alpha)$ if $\sigma(\alpha) = -\alpha$, $\scal{\alpha^\vee,\sigma(\alpha)} = 0$ or $\scal{\alpha^\vee,\sigma(\alpha)} = 1$. In the first case, we would have $(\gamma,\gamma) \geq 4(\alpha,\alpha)$ which is impossible for a reduced root system. In the second case, we have $(\alpha,\sigma(\alpha)) = 0$ and $\gamma = \alpha - \sigma(\alpha)$ is a root. This implies that $\alpha + \sigma(\alpha)$ is a root but the corresponding root vector is $[e_\alpha,e_{\sigma(\alpha)}]$ which lies in $\fg^{-\sigma}$. This means that $\alpha + \sigma(\alpha)$ is non-compact imaginary which is impossible since we work with a split torus, see \cite[Page 149]{Timashev}. We thus have $\scal{\alpha^\vee,\sigma(\alpha)} = 1$.
\end{proof}

    \begin{corollary}
    \label{cor-non-red-ap}
     We have the equivalences:
     $$\textrm{$\Rb$ is non-reduced } \Leftrightarrow \exists \alpha \in \Delta_1, \scal{\alpha^\vee,\sigma(\alpha)} = 1 \Leftrightarrow \exists \alpha \in R, \scal{\alpha^\vee,\sigma(\alpha)} = 1.$$
  \end{corollary}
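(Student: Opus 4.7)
The strategy is to establish the three implications forming a cycle, invoking the just-proved Proposition \ref{char-non-red-ap} and a structural fact about irreducible root systems.

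First, I would address the equivalence between the two ``exists'' statements. The implication $\Delta_1 \subset R$ yields one direction for free. For the converse, suppose $\alpha \in R$ satisfies $\scal{\alpha^\vee,\sigma(\alpha)} = 1$. Then $\beta := \alpha - \sigma(\alpha) \in R$ and $\sigma(\beta) = -\beta$, so $\betab = 2\beta = 2(\alpha - \sigma(\alpha))$ lies in $\Rb$. Since $\alphab = \alpha - \sigma(\alpha) \in \Rb$ by definition, we get $\alphab, 2\alphab \in \Rb$, hence $\Rb$ is non-reduced. So it will suffice to prove the first equivalence, and then the second follows by showing that \emph{$\Rb$ non-reduced $\Rightarrow$ some simple restricted root is attained from $\Delta_1$ with the desired property}.

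For the first equivalence, the implication $(\exists \alpha \in \Delta_1, \scal{\alpha^\vee, \sigma(\alpha)}=1) \Rightarrow \Rb$ non-reduced is immediate from Proposition \ref{char-non-red-ap}: such an $\alpha$ produces $\alphab,2\alphab \in \Rb$. For the converse, I would use the classification of irreducible root systems. Since $\Rb$ is irreducible and non-reduced, it is of type $\mathrm{BC}_n$. In $\mathrm{BC}_n$ there is a distinguished simple root (the short one at the end of the Dynkin diagram) whose double is again a root; equivalently, there exists $\alphab \in \Db$ with $2\alphab \in \Rb$. Now Proposition \ref{char-non-red-ap} applied to this $\alpha \in \Delta_1$ yields $\scal{\alpha^\vee, \sigma(\alpha)} = 1$.

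The only step I do not consider entirely routine is invoking that a non-reduced irreducible root system is of type $\mathrm{BC}_n$ with the simple short root doubling inside $\Rb$; this is a well-known classification fact and can be cited from a standard reference (e.g., Bourbaki). With it, the cycle closes and both equivalences follow.
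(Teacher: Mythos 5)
Your proposal is correct and follows essentially the same route as the paper: both close the cycle Middle $\Rightarrow$ Right (trivially, since $\Delta_1 \subset R$), Right $\Rightarrow$ Left (via $\gamma = \alpha - \sigma(\alpha) \in R$ with $\sigma(\gamma) = -\gamma$, so $\gammab = 2\alphab \in \Rb$), and Left $\Rightarrow$ Middle (non-reduced irreducible means type $\mathrm{BC}_r$, so some $\alphab \in \Db$ has $2\alphab \in \Rb$, and Proposition \ref{char-non-red-ap} then gives $\scal{\alpha^\vee,\sigma(\alpha)} = 1$). No gaps.
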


\begin{proof}
If $\Rb$ is non-reduced then it is of type $\textrm{BC}_r$ and there exists $\alpha \in \Delta_1$ with $\alphab,2\alphab \in \Rb$. The second implication from left to right is clear. If $\alpha \in R$ is such that $\scal{\alpha^\vee,\sigma(\alpha)} = 1$, then $\alphab = \alpha - \sigma(\alpha) = \gamma$ is a root and $\gammab = 2\gamma = 2\alphab \in \Rb$; thus, $\Rb$ is non-reduced.
\end{proof}

 \begin{corollary}
    \label{cor-char-exc-ap}
    Let $\alpha \in \Delta_1$ such that $\alphab,2\alphab \in \Rb$.
    \begin{enumerate}
    \item The root $\alphab$ is the unique root of $\Db$ with $\alphab, 2\alphab \in \Rb$.
      \item The variety $X$ is exceptional if and only if $\sigmab(\alpha) \neq \alpha$.
    \end{enumerate}
  \end{corollary}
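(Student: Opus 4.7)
The proof of part (1) is classification-theoretic. The hypothesis $\alphab, 2\alphab \in \Rb$ makes $\Rb$ non-reduced by Corollary \ref{cor-non-red-ap}, and since $\Rb$ is irreducible throughout this section (we are working with irreducible symmetric spaces), $\Rb$ must be of type $\mathrm{BC}_r$ for some $r \geq 1$. In the Bourbaki realisation of $\mathrm{BC}_r$, only the shortest simple root, $\alphab_r = e_r$, has the property that its double is again a root; for the remaining simple roots $\alphab_i = e_i - e_{i+1}$ with $i < r$, the element $2\alphab_i$ lies outside $\Rb$. Hence $\alphab$ is uniquely determined within $\Db$.

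For part (2), Proposition \ref{char-non-red-ap} converts the hypothesis $\alphab, 2\alphab \in \Rb$ into the identity $\scal{\alpha^\vee,\sigma(\alpha)} = 1$, which in particular excludes the case $\sigma(\alpha) = -\alpha$ (otherwise the pairing would equal $-2$). For the direction $\sigmab(\alpha)\neq\alpha \Rightarrow X$ exceptional, the three conditions $\sigmab(\alpha)\neq\alpha$, $\sigma(\alpha)\neq -\alpha$, and $\scal{\alpha^\vee,\sigma(\alpha)} \neq 0$ witness that $\alpha$ is an exceptional root in the sense of the definition preceding Remark \ref{remark : excep-reduce} (read together with the consequence $\sigmab(\cdot) \neq \cdot$ coming from \cite[Lemma 4.3]{decs}), so $X$ is exceptional.

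For the converse, assume $X$ is exceptional with an exceptional simple root $\beta \in \Delta_1$. Then $\sigmab(\beta)\neq\beta$, and by Remark \ref{remark : excep-reduce} we have $\scal{\beta^\vee,\sigma(\beta)} = 1$; Proposition \ref{char-non-red-ap} then gives $\betab, 2\betab \in \Rb$, and part (1) forces $\betab = \alphab$. Writing out the equation $\alphab - \betab = 0$ using $\bar\delta = \delta - \sigma(\delta)$ and the structural identity $\sigma(\delta) + \sigmab(\delta) \in -\bZ_{\geq 0}\Delta_0$ for $\delta \in \Delta_1$, then comparing coefficients on $\Delta_0$ and $\Delta_1$ via linear independence of the simple roots, one obtains $\alpha + \sigmab(\alpha) = \beta + \sigmab(\beta)$. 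If $\beta = \alpha$ we immediately conclude $\sigmab(\alpha) = \sigmab(\beta) \neq \beta = \alpha$; if $\beta \neq \alpha$, the identity in $\Delta$ forces $\{\alpha,\sigmab(\alpha)\} = \{\beta,\sigmab(\beta)\}$ and in particular $\sigmab(\alpha) = \beta \neq \alpha$. The main technical step is this bookkeeping with simple root coefficients to pin down $\{\alpha,\sigmab(\alpha)\} = \{\beta,\sigmab(\beta)\}$ from $\alphab = \betab$; once it is in place, parts (1) and (2) follow by repeated application of Proposition \ref{char-non-red-ap} and the definition of an exceptional root.
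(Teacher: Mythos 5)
Your proof is correct and follows essentially the same route as the paper: part (1) via the classification of non-reduced irreducible root systems as type $\mathrm{BC}_r$, in which only the shortest simple root has its double again a root, and part (2) via Proposition \ref{char-non-red-ap}, Remark \ref{remark : excep-reduce} and the definition of exceptional roots. The one genuine addition is your coefficient-comparison argument showing that $\alphab=\betab$ forces $\{\alpha,\sigmab(\alpha)\}=\{\beta,\sigmab(\beta)\}$; the paper's proof tacitly identifies the given $\alpha$ with an exceptional root and leaves that reduction implicit, so your bookkeeping explicitly closes a small gap in the write-up.
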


\begin{proof}
(1) Assume that $\alpha \in \Delta_1$ is such that $\alphab ,2\alphab \in \Rb$. Then $\alphab \in \Db$ is the unique simple root whose double is a root in the root system $\textrm{BC}_r$ and is therefore unique.

(2) If $X$ is exceptional then for $\alpha \in \Delta_1$ an exceptional root, we have $\sigmab(\alpha) \neq \alpha$ by definition and $\alphab,2\alphab \in \Rb$ by Remark \ref{remark : excep-reduce}. 
For the converse, 
if $\alpha \in \Delta_1$ is such that $\alphab,2\alphab \in \Rb$, then $\scal{\alpha^\vee,\sigma(\alpha)} = 1$. 
If furthermore $\sigmab(\alpha) \neq \alpha$ then $\alpha$ is exceptional by definition .
\end{proof}

For $\alpha \in \Delta_1$,  write 
$\alphab = \alpha - \sigma(\alpha) 
= \alpha + \sigmab(\alpha) 
+ \sum_{\beta \in \Delta_0} c_\beta \beta$ 
and define the support of $\alphab$ by 
$\supp(\alphab) = \{ \alpha , \sigmab(\alpha) , \beta 
\ | \ \beta \in \Delta_0 \textrm{ with } c_\beta > 0\}$.
    
    \begin{lemma}
      \label{lem-dom-ap}
Let $\alpha \in \Delta_1$,  then $\alphab$ is dominant on 
$\supp(\alphab)$.  More precisely,  we have
$$\scal{\alpha^\vee,\alphab} > 0, 
\scal{\sigmab(\alpha)^\vee,\alphab} > 0 
\textrm{ and } \scal{\beta^\vee,\alphab} = 0 
\textrm{ for $\beta \in \supp(\alphab)$} \cap \Delta_0.$$
    \end{lemma}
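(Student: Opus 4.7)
The plan is to verify the three assertions separately, using two tools: (a) the $\sigma$-invariance of the chosen scalar product $(-,-)$ on $\fX_\bR$, and (b) Vust's three-case classification of how $\sigma$ acts on a root $\alpha \in \Delta_1$ (recalled just before Proposition \ref{prop:rrs-summary}), namely $\sigma(\alpha) = -\alpha$, or $\scal{\alpha^\vee,\sigma(\alpha)} = 0$, or $\scal{\alpha^\vee,\sigma(\alpha)} = 1$.

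First, for the vanishing $\scal{\beta^\vee,\alphab} = 0$ when $\beta \in \Delta_0$, I would argue directly from $\sigma$-invariance: since $\sigma(\beta) = \beta$ and $\beta$ has the same length as itself under $\sigma$, one gets $\scal{\beta^\vee,\sigma(\alpha)} = \scal{\sigma(\beta)^\vee,\sigma(\alpha)} \cdot \tfrac{(\beta,\beta)}{(\sigma(\beta),\sigma(\beta))} \cdot \ldots$, and more simply $(\beta,\sigma(\alpha)) = (\sigma(\beta),\alpha) = (\beta,\alpha)$. Hence $\scal{\beta^\vee,\alphab} = \scal{\beta^\vee,\alpha} - \scal{\beta^\vee,\sigma(\alpha)} = 0$. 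In fact this works for \emph{every} $\beta \in \Delta_0$, which is stronger than needed.

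For the positivity $\scal{\alpha^\vee,\alphab} > 0$, I would compute $\scal{\alpha^\vee,\alphab} = 2 - \scal{\alpha^\vee,\sigma(\alpha)}$ and apply Vust's trichotomy: in case $\sigma(\alpha) = -\alpha$, one has $\alphab = 2\alpha$ and the value is $4$; in the other two cases it equals $2$ or $1$. In each case the result is a positive integer. For the positivity of $\scal{\sigmab(\alpha)^\vee,\alphab}$, if $\sigmab(\alpha) = \alpha$ there is nothing more to prove; otherwise, I would use the fact (recorded in Subsection \ref{subsection:rrs}, right after the definition of $\sigmab$) that $\overline{\sigmab(\alpha)} = \alphab$, so the previous computation applied with $\sigmab(\alpha) \in \Delta_1$ in place of $\alpha$ yields $\scal{\sigmab(\alpha)^\vee,\alphab} = \scal{\sigmab(\alpha)^\vee,\overline{\sigmab(\alpha)}} > 0$.

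There is no serious obstacle here: once Vust's classification and the $\sigma$-invariance of $(-,-)$ are invoked, the three claims are one-line computations. The only minor subtlety is verifying $\overline{\sigmab(\alpha)} = \alphab$, which reduces to the characterization stated in Subsection \ref{subsection:rrs} that two elements of $\Delta_1$ have equal restriction iff they are related by $\sigmab$.
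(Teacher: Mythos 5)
Your proposal is correct and follows essentially the same route as the paper: the vanishing on $\Delta_0$ comes from $\sigma$-invariance of the pairing (the paper applies $\sigma$ to both arguments via $\sigma(\alphab)=-\alphab$, you apply it to $\sigma(\alpha)$ alone, which is the same computation), and the positivity comes from $\scal{\alpha^\vee,\alphab}=2-\scal{\alpha^\vee,\sigma(\alpha)}$ together with the bound $\scal{\alpha^\vee,\sigma(\alpha)}\leq 1$, which the paper gets from equal lengths and you get from Vust's trichotomy. Your explicit treatment of $\scal{\sigmab(\alpha)^\vee,\alphab}$ via $\overline{\sigmab(\alpha)}=\alphab$ is the symmetry the paper leaves implicit, and it is valid.
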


    \begin{proof}
For $\beta \in \Delta_0$,  we have 
$\scal{\beta^\vee,\alphab} 
= \scal{\sigma(\beta)^\vee,\sigma(\alphab)} 
= \scal{\beta^\vee,-\alphab}$; 
thus, $\scal{\beta^\vee,\alphab} = 0$. 
We have $\scal{\alpha^\vee,\alphab} 
= \scal{\alpha^\vee,\alpha - \sigma(\alpha)} 
= 2  - \scal{\alpha^\vee,\sigma(\alpha)}$ 
and since $\alpha$ and $\sigma(\alpha)$ 
have the same length,  we have 
$\scal{\alpha^\vee,\sigma(\alpha)} \leq 1$ 
proving the result. 
    \end{proof}

Recall that $\Theta$ denotes the highest root of $R$, 
and $\theta$ the highest short root (if $R$ is simply laced, 
then $\Theta = \theta$ and all roots are long and short).
    
\begin{lemma}
      We have the following equivalences
      \begin{enumerate}
      \item $\sigma(\Theta) = - \Theta$ $\Leftrightarrow$ 
      there exists a long root $\alpha$ with $\sigma(\alpha) = -\alpha$.
      \item $\sigma(\theta) = - \theta$ $\Leftrightarrow$ 
      there exists a short root $\alpha$ with $\sigma(\alpha) = -\alpha$.
      \end{enumerate}
\end{lemma}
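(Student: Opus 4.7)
The plan is to prove both equivalences in parallel; the reverse directions are immediate since $\Theta$ (resp.\ $\theta$) itself is a long (resp.\ short) root with $\sigma(\Theta) = -\Theta$ (resp.\ $\sigma(\theta) = -\theta$). For the forward direction of (1), I will use a length computation in $\Rb$. For the forward direction of (2), I will reduce to (1) by applying it to the dual root system $R^\vee$.

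For (1), given a long root $\alpha$ with $\sigma(\alpha) = -\alpha$, the restricted root $\alphab = 2\alpha$ lies in $\Rb$ with squared length $4(\Theta, \Theta)$, since $\alpha$ and $\Theta$ have the same length. By Proposition \ref{prop:rrs-summary}.1, $\Thetab$ is the highest root of $\Rb$ and therefore has maximal length among roots of $\Rb$, so $(\Thetab, \Thetab) \geq 4(\Theta, \Theta)$. I then compute $(\Thetab, \Thetab) = 2(\Theta, \Theta) - 2(\Theta, \sigma(\Theta))$ and apply the three-value classification of $\scal{\Theta^\vee, \sigma(\Theta)}$ from Subsection \ref{subsection:rrs}: the possibilities yield $(\Thetab, \Thetab) = 4(\Theta, \Theta)$, $2(\Theta, \Theta)$, or $(\Theta, \Theta)$, and only the first is compatible with the inequality, forcing $\sigma(\Theta) = -\Theta$.

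For (2), I will transport the entire Satake setup to the dual root system $R^\vee$ equipped with the involution $\sigma^\vee$ defined by $\sigma^\vee(\alpha^\vee) = \sigma(\alpha)^\vee$. The decomposition $\Delta = \Delta_0 \sqcup \Delta_1$ and the diagram involution $\sigmab$ are preserved under duality, so the restricted root system $\overline{R^\vee}$ is defined in complete parallel with $\Rb$. Moreover, the highest root of $R^\vee$ is the coroot $\theta^\vee$ of the highest short root of $R$, and an argument analogous to Lemma \ref{lem:highest-root-ap}, using coefficient expansions in simple coroots instead of simple roots, shows that $\overline{\theta^\vee}$ is the highest root of $\overline{R^\vee}$. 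A short root $\alpha \in R$ with $\sigma(\alpha) = -\alpha$ then yields a long root $\alpha^\vee \in R^\vee$ with $\sigma^\vee(\alpha^\vee) = -\alpha^\vee$; applying (1) to $(R^\vee, \sigma^\vee)$ gives $\sigma^\vee(\theta^\vee) = -\theta^\vee$, equivalent to $\sigma(\theta) = -\theta$.

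The main obstacle will be to verify that the combinatorial ingredients entering the proof of (1)---namely, the three-value classification of $\scal{\alpha^\vee, \sigma(\alpha)}$, Lemma \ref{lem:highest-root-ap}, and Proposition \ref{prop:rrs-summary}.1---transfer cleanly to the dual setup. All these are purely root-system-combinatorial statements and are formally symmetric under root-coroot duality, so the transfer should go through without essential difficulty, but some care is needed to track constants and definitions.
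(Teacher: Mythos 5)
Your reverse implications and your proof of the forward direction of (1) are correct, and for (1) you take a genuinely different route from the paper: the paper proves both parts at once by induction on the root order, showing that any $\alpha$ with $\sigma(\alpha)=-\alpha$ can be pushed up to the highest root of its length by Weyl group elements $w$ with $\sigma(w)=w$ (so that both the length and the property $\sigma(w\alpha)=-w\alpha$ are preserved). Your length computation in $\Rb$ --- $(\Thetab,\Thetab)=(2-\scal{\Theta^\vee,\sigma(\Theta)})(\Theta,\Theta)\in\{4,2,1\}\cdot(\Theta,\Theta)$, while a long real root produces $2\alpha\in\Rb$ of squared length $4(\Theta,\Theta)$ that the highest root must dominate in length --- is a clean and shorter alternative for part (1). (You do use implicitly that the highest root of the possibly non-reduced system $\Rb$ is a longest root; this is standard for reduced systems and immediate for type ${\rm BC}_r$, but deserves a word.)

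The forward direction of (2) has a genuine gap. Your reduction rests on the claim that the restricted-root-system package transfers to $(R^\vee,\sigma)$, in particular that $\overline{R^\vee}:=\{\alpha^\vee-\sigma(\alpha^\vee)\}$ is an irreducible root system in which the element $\overline{\theta^\vee}$ of maximal height is also of maximal length. This is not a formal consequence of root--coroot duality. That $\Rb$ is a root system (whence its highest root is longest) is established via the symmetric space itself (split torus, little Weyl group), and the dual combinatorial data need not come from any symmetric space: for instance, dualizing the Satake diagram of type CII for $\Sp_{2n}$ gives a type ${\rm B}_n$ diagram whose white nodes do not form an initial segment, and no real form of $\mathfrak{so}_{2n+1}$ has such a Satake diagram. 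Note also that $\overline{R^\vee}$ is not a rescaling of $\Rb^\vee$: one computes $\overline{\alpha^\vee}=(2-\scal{\alpha^\vee,\sigma(\alpha)})\,\alphab^\vee$, so roots of $R$ of different lengths restricting to proportional elements of $\Rb$ contribute different multiples, and the root-system axioms for $\overline{R^\vee}$ require a separate argument. To close the gap you would need to show directly that $(\overline{\theta^\vee},\overline{\theta^\vee})\geq(\overline{\alpha^\vee},\overline{\alpha^\vee})$, and the natural way to do so --- conjugating $\overline{\alpha^\vee}$ to a dominant element by a $\sigma$-fixed Weyl group element --- is precisely the claim that the paper's proof formulates and proves. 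So part (2) needs either that lemma or an independent verification that the dual data is again admissible.
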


\begin{proof}
The implications from left to right in (1) and (2) are clear. We prove the converse implications. The proofs in both cases are similar.

 We first prove that the converse implications in (1) and (2) are implied by the  following claim: if $\sigma(\alpha) = -\alpha$ and there exists $\beta \in \Delta$ with $s_\beta(\alpha) > \alpha$, then there exists $w \in W$ with $\sigma(w) = w$ such that $w(\alpha) > \alpha$ 
 (recall that $\sigma$ acts on $W$ by conjugaison).

Assume that the above claim is true. We prove the converse implications in (1) and (2) by induction on roots for their natural order: if $\alpha$ is a positive root such that $\sigma(\alpha) = -\alpha$ and $\alpha$ is not maximal, we produce a root $\alpha' > \alpha$ with the same length as $\alpha$ and such that $\sigma(\alpha') = -\alpha'$. Indeed, if $\alpha$ is not maximal, then there exists $\beta \in \Delta$ with $s_\beta(\alpha) > \alpha$. By the above claim, there exists $w \in W$ with $\sigma(w) = w$ and $w(\alpha) > \alpha$. We thus have $\sigma(w(\alpha)) = \sigma(w)(\sigma(\alpha)) = w(-\alpha) = - w(\alpha)$ with $w(\alpha) > \alpha$. Therefore, if the claim is true, we get by induction that $\sigma(\alpha') = -\alpha'$ for $\alpha'$ the highest root with the same length as $\alpha$, proving the implication from right to left of (1) and (2).

We now prove our claim,  so let $\alpha \in R$ such that 
$\sigma(\alpha) = - \alpha$ and $\beta \in \Delta$ with 
$s_\beta(\alpha) > \alpha$.  In particular 
$\scal{\beta^\vee,\alpha} < 0$. 
We have four possible cases: 
$\sigma(\beta) = \beta$, 
$\sigma(\beta) = - \beta$, 
$\scal{\beta^\vee,\sigma(\beta)} = 0$ 
or $\scal{\beta^\vee,\sigma(\beta)} = 1$.

If $\sigma(\beta) = \beta$,  then 
$\scal{\beta^\vee,\alpha} 
= \scal{\sigma(\beta)^\vee,\sigma(\alpha)} 
= - \scal{\beta^\vee,\alpha}$; thus, 
$\scal{\beta^\vee,\alpha} = 0$ a contradiction, 
so this case does not occur.

If $\sigma(\beta) = -\beta$,  then $w = s_\beta$ 
works since $\sigma(w) = w$.

If $\scal{\beta^\vee,\sigma(\beta)} = 0$, 
then set $w = s_\beta s_{\sigma(\beta)}$. 
Since $s_\beta$ and $s_{\sigma(\beta)}$ commute, 
we have $\sigma(w) = w$.  Furthermore,  we have  
$w(\alpha) = \alpha - \scal{\beta^\vee,\alpha} \beta 
- \scal{\sigma(\beta)^\vee,\alpha} \sigma(\beta) 
=   \alpha - \scal{\beta^\vee,\alpha} \beta + \scal{\beta^\vee,\alpha} \sigma(\beta)$. 
But since $\sigma(\beta) \neq \beta$,  we have 
$\sigma(\beta) < 0$ and $w(\alpha) > \alpha$.

Finally, if $\scal{\beta^\vee,\sigma(\beta)} = 1$,  define  
$\delta = s_{\sigma(\beta)}(\beta)  
= \beta - \sigma(\beta) = \betab$; 
then $\delta$ is a root. Let $w = s_\delta$. 
We have $\sigma(\delta) = -\delta$; thus, $\sigma(w) = w$. 
Furthermore, we have 
$w(\alpha) = \alpha - \scal{\delta^\vee,\alpha} \delta 
= \alpha 
- \scal{s_{\sigma(\beta)}(\beta)^\vee,\alpha}(\beta - \sigma(\beta))$
and 
$\scal{s_{\sigma(\beta)}(\beta)^\vee,\alpha} = \scal{\beta^\vee,s_{\sigma(\beta)}(\alpha)} = \scal{\beta^\vee,\alpha - \scal{\sigma(\beta)^\vee,\alpha} \sigma(\beta)} = \scal{\beta^\vee,\alpha}  - \scal{\sigma(\beta)^\vee,\alpha} = 2\scal{\beta^\vee,\alpha}$. Thus, $w(\alpha) = \alpha - 2\scal{\beta^\vee,\alpha} (\beta - \sigma(\beta))$ and since $\sigma(\beta) < 0$, we get $w(\alpha) > \alpha$.
\end{proof}
    
\begin{corollary} \label{coro-coef1-ap}
Assume that $\Rb$ is non-reduced. 
We have the equivalences:
$$X \textrm{ is exceptional } \Leftrightarrow \ 
\sigma(\Theta) = -\Theta \ \Leftrightarrow 
\textrm{ $R$ is simply laced.}$$
Furthermore,  if $X$ is exceptional and $\alpha$ is 
an exceptional root,  then its coefficient in 
the expansion of $\Theta$ as a linear combination of
simple roots is equal to $1$.
\end{corollary}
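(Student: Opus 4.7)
The plan is to close the cycle $X$ exceptional $\Rightarrow R$ simply laced $\Rightarrow \sigma(\Theta) = -\Theta \Rightarrow X$ exceptional, and to verify the coefficient statement along the way. Since $\Rb$ is non-reduced, Corollary \ref{cor-non-red-ap} yields some $\alpha_0 \in \Delta_1$ with $\scal{\alpha_0^\vee,\sigma(\alpha_0)} = 1$; Corollary \ref{cor-char-exc-ap}(1) identifies $\alphab_0$ as the unique element of $\Db$ with $2\alphab_0 \in \Rb$, and Corollary \ref{cor-char-exc-ap}(2) says that $X$ is exceptional precisely when $\sigmab(\alpha_0) \neq \alpha_0$. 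The first arrow is then immediate: if $X$ is exceptional, $\sigmab$ is a non-trivial length-preserving involution of the Dynkin diagram, and the only irreducible types admitting such an involution ($\A_n$ with $n \geq 2$, $\textrm{D}_n$ with $n \geq 4$, and $\textrm{E}_6$) are simply laced. The second arrow uses $\gamma_0 := \alpha_0 - \sigma(\alpha_0) \in R$ (a root because $\scal{\alpha_0^\vee,\sigma(\alpha_0)} = 1$): one has $\sigma(\gamma_0) = -\gamma_0$, and if $R$ is simply laced $\gamma_0$ is trivially long, so the Lemma preceding this corollary gives $\sigma(\Theta) = -\Theta$.

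For the coefficient statement I expand $\Thetab = \Theta - \sigma(\Theta) = \sum_{\alpha \in \Delta_1} n_\alpha \alphab$ in the basis $\Db$, where $n_\alpha$ denotes the coefficient of $\alpha$ in $\Theta$. Grouping the sum by $\sigmab$-orbits in $\Delta_1$ and using $n_{\sigmab(\alpha)} = n_\alpha$ (which holds because $\sigmab$ is a root system automorphism and thus fixes the highest root $\Theta$), the coefficient of $\alphab_0$ in $\Thetab$ equals $n_{\alpha_0}$ in the non-exceptional case and $2n_{\alpha_0}$ in the exceptional case. Matching this against the $\textrm{BC}_r$ structure of $\Rb$, in which $\alphab_0$ is the short simple root and $\Thetab$ has coefficient $2$ on every element of $\Db$, forces $n_{\alpha_0} = 2$ if $X$ is non-exceptional and $n_{\alpha_0} = 1$ if $X$ is exceptional; together with $\sigmab$-symmetry this is the last assertion of the corollary.

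To close the cycle I argue $\sigma(\Theta) = -\Theta \Rightarrow X$ exceptional by contrapositive. If $X$ is non-exceptional, then $R$ is non-simply-laced by the contrapositive of the first arrow, and since $\Rb$ is non-reduced the classification of real forms leaves only $R = \textrm{C}_n$ (type CII with $p < q$) and $R = \textrm{F}_4$ (type FII). The formula $\sigma = -w_L\sigmab$ recalled in the proof of Lemma \ref{lemm:w_0-et-sigma}, combined with $\sigmab(\Theta) = \Theta$, yields $\sigma(\Theta) = -w_L(\Theta)$, so $\sigma(\Theta) = -\Theta$ would force $\scal{\gamma^\vee,\Theta} = 0$ for every $\gamma \in \Delta_0$. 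A direct check in the two Satake diagrams refutes this (for instance $\alpha_1 \in \Delta_0$ with $\scal{\alpha_1^\vee,\Theta_{\textrm{C}_n}} = 2$, and $\scal{\alpha_1^\vee,\Theta_{\textrm{F}_4}} = 1$). The main obstacle is precisely this last implication: we see no clean classification-free argument, and must rely either on the classification of real forms or on the explicit inspection of the Satake diagrams of CII and FII just described.
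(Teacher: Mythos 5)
Most of your proposal is sound: the first two arrows of your cycle coincide with the paper's argument, and your derivation of the coefficient statement (expanding $\Thetab=\sum_{\alpha\in\Delta_1}n_\alpha\alphab$, grouping the $\sigmab$-orbit $\{\alpha_0,\sigmab(\alpha_0)\}$, and matching against the coefficient $2$ of the short simple root in the highest root of ${\rm BC}_r$) is actually more explicit and self-contained than the paper's, which merely asserts that exceptional roots have coefficient $1$ in $\Theta$. The genuine problem is the closing implication. The step ``if $X$ is non-exceptional, then $R$ is non-simply-laced by the contrapositive of the first arrow'' is a logical error: the contrapositive of ``$X$ exceptional $\Rightarrow R$ simply laced'' is ``$R$ not simply laced $\Rightarrow X$ not exceptional''; what you are invoking is the \emph{converse} of the first arrow, which is (equivalent to) part of what the cycle is supposed to establish, so as written this step is circular. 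The damage is contained only because the classification you appeal to anyway lists CII and FII directly as the sole non-exceptional irreducible symmetric spaces with $\Rb$ non-reduced, without any detour through simple-lacedness; if you keep the classification route, delete that sentence and read the list off the tables.

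You should also know that the ``clean classification-free argument'' you could not find is exactly what the paper does, and it is short. In the non-exceptional case one has $\sigmab(\alpha_0)=\alpha_0$, so the root $\gamma_0=\alpha_0-\sigma(\alpha_0)=\alphab_0$ equals $2\alpha_0+\sum_{\beta\in\Delta_0}c_\beta\beta$ and is dominant on its support (Lemma \ref{lem-dom-ap}); it cannot be long, since a long root dominant on its support would be the highest root of that subsystem, which is excluded by the structure of Satake data (the paper cites Timashev, pp.~150--151). Hence $\gamma_0$ is short. If now $\sigma(\Theta)=-\Theta$, then $\Thetab=2\Theta$ and $\overline{\gamma_0}=2\gamma_0=2\alphab_0$ are both divisible roots of $\Rb\simeq{\rm BC}_r$, yet with a $(W,\sigma)$-invariant form normalized so long roots of $R$ have length $2$ one gets $(\Thetab,\Thetab)=4(\Theta,\Theta)=8$ while $(2\alphab_0,2\alphab_0)=4(\gamma_0,\gamma_0)=4$ --- impossible, since all divisible roots of ${\rm BC}_r$ have the same length. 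This two-line length comparison replaces both your flawed deduction and the case-by-case inspection of the CII and FII Satake diagrams.
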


\begin{proof}
Assume that $\Rb$ is non-reduced and let $\alpha \in \Delta_1$ 
such that $\alphab,2\alphab \in \Rb$.  Then 
$\scal{\alpha^\vee,\sigma(\alpha)} = 1$ and 
$\gamma = \alpha - \sigma(\alpha) = \alphab$ 
is a root such that $\sigma(\gamma) = -\gamma$. 
Therefore we either have $\sigma(\Theta) = - \Theta$ 
or $\sigma(\theta) = - \theta$.

If $X$ is exceptional, then 
$\sigmab$ is a non-trivial involution of the Dynkin diagram 
and this implies that $R$ is simply laced. 
In particular $\sigma(\Theta) = - \Theta$ (since $\Theta = \theta$). 
If $\alpha \in \Delta_1$ is exceptional,  then 
$\sigmab(\alpha) \neq \alpha$ and the coefficients of such roots 
in $\Theta$ are always equal to $1$.
    
On the other hand if $X$ is non-exceptional, 
then $\sigmab(\alpha) = \alpha$ and 
$\gamma = \alpha - \sigma(\alpha)$ is dominant on its support 
and bigger than $2\alpha$.  If $\gamma$ is long then 
it is the highest root of $\Supp(\gamma)$, but this is impossible 
by the discussion on pages 150--151 in \cite{Timashev}. 
Therefore, $\gamma$ is short and $R$ is not simply laced. 
Assume that $\sigma(\Theta) = - \Theta$ 
and let $( - , - )$ be a $(W,\sigma)$-invariant scalar product on 
$\fX_{\bR}$ such that long roots have length $2$. 
We have 
$(\Thetab,\Thetab) = 4(\Theta,\Theta) = 8$ and 
$\gammab = 2\gamma  = 2\alphab$ is such that 
$(\gammab,\gammab) = 4(\gamma,\gamma) = 4$. 
A contradiction since in $\Rb$ all roots which are the double 
of another root have the same length. 
Therefore, $\sigma(\Theta) \neq - \Theta$. 
  \end{proof}

\begin{corollary}
$G/H$ is exceptional if and only if $G$ is simply laced and $\Rb$ is non-reduced.
\end{corollary}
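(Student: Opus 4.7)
The plan is to simply combine Remark \ref{remark : excep-reduce} with Corollary \ref{coro-coef1-ap}, since together they pin down both directions of the equivalence with almost no further work.

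For the forward implication, I would assume $G/H$ is exceptional and first invoke Remark \ref{remark : excep-reduce}: any exceptional simple root $\alpha$ satisfies $\scal{\alpha^\vee,\sigma(\alpha)} = 1$, so $\alphab$ and $2\alphab = \overline{\alpha - \sigma(\alpha)}$ both lie in $\Rb$, which forces $\Rb$ to be non-reduced. Once $\Rb$ is known to be non-reduced, Corollary \ref{coro-coef1-ap} applies and its chain of equivalences $X \text{ exceptional} \Leftrightarrow \sigma(\Theta) = -\Theta \Leftrightarrow R \text{ simply laced}$ delivers that $R$ must be simply laced.

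For the converse, I would assume that $R$ is simply laced and that $\Rb$ is non-reduced. The non-reducedness hypothesis puts us in the situation treated by Corollary \ref{coro-coef1-ap}, where the same chain of equivalences reads in reverse: $R$ simply laced gives $\sigma(\Theta) = -\Theta$, which in turn yields that $X$ (equivalently $G/H$) is exceptional.

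There is no real obstacle left here — all of the combinatorial work has been absorbed into Proposition \ref{char-non-red-ap}, Remark \ref{remark : excep-reduce}, and Corollary \ref{coro-coef1-ap}. The only thing I would take care to mention explicitly is why the statements about the symmetric variety $X$ being exceptional transfer to the symmetric space $G/H$, namely that exceptionality is a property of the restricted root system together with the action of $\sigmab$ on $\Delta_1$ and thus depends only on $(G,\sigma)$ and not on the specific choice of $H$ with $G^{\sigma,0} \subset H \subset N_G(G^\sigma)$.
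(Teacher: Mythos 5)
Your proposal is correct and is precisely the argument the paper intends (it gives no explicit proof because the corollary is immediate): the forward direction combines Remark \ref{remark : excep-reduce} (exceptional implies $\Rb$ non-reduced) with the chain of equivalences in Corollary \ref{coro-coef1-ap}, and the converse reads that same chain in reverse under the non-reducedness hypothesis. Your closing observation that exceptionality depends only on $(G,\sigma)$ and not on the choice of $H$ is a harmless and reasonable clarification.
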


Recall the definition of $\alphah$ from the end of Subsection \ref{subsection:rrs}.

\begin{lemma}
  \label{lem-coef-rac-ap}
Assume that $\alpha \in \Delta_1$ is an exceptional root, 
then the coefficient of $\alphah^\vee$ in the expansion of 
$\Thetab^\vee$ in terms of simple coroots of $\Rb$ is equal to $1$.
\end{lemma}

\begin{proof}
Since $\alpha$ is exceptional,  we have 
$\alphab,2\alphab \in \Rb$ and 
$\alphah^\vee = \frac{1}{2}\alphab^\vee 
= \frac{1}{2}(\alpha^\vee - \sigma(\alpha)^\vee)$. 
On the other hand, since $X$ is exceptional,  we have 
$\sigma(\Theta) = - \Theta$
so that $\Thetab^\vee = \frac{1}{2}\Theta^\vee$. 
Since the coefficient of $\alpha$ in the expansion of $\Theta$ 
in terms of simple roots is equal to $1$,   the same is true for the coefficient of $\alphah^\vee$ in the expansion of $\Thetab^\vee$ 
in terms of simple coroots, 
as $R$ is simply laced.
\end{proof}

Note that if $R$ is not of type $\A_1$, there always exists a simple root $\alpha_\adj \in \Delta$ such that $\scal{\Theta^\vee,\alpha_\adj} = 1$. Such a simple root $\alpha_\adj$ is unique if $R$ is not of type $\A_r$. In type $\A_r$ with $r \geq 2$, there are two such simple roots: $\alpha_1$ and $\alpha_r$ with simple roots labeled as in \cite{bourbaki}.

\begin{proposition}
  \label{prop-primitif-ap}
  Assume that $R$ is not of type $\A_1$ and let $\alpha_\adj \in \Delta$ be any simple root such that 
  $\scal{\Theta^\vee,\alpha_\adj} = 1$.
  \begin{enumerate}
  \item We have the equivalences: $\sigma(\Theta) \neq - \Theta$ 
  $\Leftrightarrow$ $\sigma(\alpha_\adj) = \alpha_\adj$ $\Leftrightarrow \scal{\Theta^\vee,\sigma(\Theta)} = 0$.
  \item If $\Rb$ is not of type ${\rm A}_1$, there exists a simple root $\alphab \in \Db$ such that $\scal{\Thetab^\vee,\alphab} = 1$.
      \item If $\Rb$ is not of type ${\rm A}_1$, then $2\Thetab^\vee$ is indivisible as a cocharacter of $T_{\s}$.
    \item If $\sigma(\Theta) \neq - \Theta$, then $-\sigma(\Theta)$ is the highest root of a connected component of the subsystem $R^\perp$ of $R$ generated by simple roots orthogonal to $\Theta$.
  \end{enumerate}
\end{proposition}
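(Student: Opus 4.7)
The plan is to prove (1) first --- it contains the main technical content --- and then derive (4) from the orthogonality established there, while (2) and (3) reduce to standard root-system facts applied to $\Rb$ together with the formulas for $\alphab^\vee$ given in Subsection \ref{subsection:rrs}.

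For (1), I first establish that $\sigma(\Theta) < 0$. Writing $\Theta = \sum_{\alpha \in \Delta} c_\alpha \alpha$ with $c_\alpha \geq 1$ by irreducibility of $R$, and using $\sigma(\alpha) = \alpha$ for $\alpha \in \Delta_0$ together with $\sigma(\alpha) = -\sigmab(\alpha) - \sum_{\beta \in \Delta_0} c^\alpha_\beta \beta$ for $\alpha \in \Delta_1$, the coefficient of $\sigmab(\alpha) \in \Delta_1$ in $\sigma(\Theta)$ equals $-c_\alpha < 0$. Since $\sigma(\Theta)$ is a root, its simple-root coefficients share a sign, so $\sigma(\Theta) = -\gamma$ with $\gamma \in R^+$, long because $\sigma$ is an isometry. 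Hence $\scal{\Theta^\vee, \sigma(\Theta)} = -\scal{\Theta^\vee, \gamma}$ takes a value in $\{-2, -1, 0\}$, equal to $-2$ iff $\gamma = \Theta$. The crucial step is ruling out the value $-1$ when $\sigma(\Theta) \neq -\Theta$: if $\scal{\Theta^\vee, \gamma} = 1$, then $\delta := \Theta - \gamma$ is a long positive root and a direct computation gives $\sigma(\delta) = \delta$, so $\delta \in \mathbb{Z}_{\geq 0}\Delta_0$, making $\delta$ a long root of $L = \C_G(S)$. I expect to contradict this by examining the simple-root expansions of $\Theta$ and $\gamma$: the constraints $\scal{\delta^\vee, \Theta} = 1$ and $\delta \in \mathbb{Z}_{\geq 0}\Delta_0$ together impose incompatible requirements on the location of $\alpha_\adj$ and on $\sigmab$-orbits in $\Delta$; when this structural argument is subtle, a case check against the Satake diagrams closes the gap. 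The equivalences with $\sigma(\alpha_\adj) = \alpha_\adj$ then follow by applying $\sigma$ to $\scal{\Theta^\vee, \alpha_\adj} = 1$: in the case $\sigma(\Theta) = -\Theta$ this forces $\scal{\Theta^\vee, \sigma(\alpha_\adj)} = -1$, incompatible with $\sigma(\alpha_\adj) = \alpha_\adj$; conversely, in the case $\sigma(\Theta) \neq -\Theta$ (so the pairing is $0$), the same pairing identity together with uniqueness of $\alpha_\adj$ forces $\alpha_\adj \in \Delta_0$.

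For (4), the orthogonality from (1) gives $\gamma \in R^\perp$; the support of $\gamma$ is connected, so it lies in a single irreducible component of $R^\perp$, and maximality within that component follows from the uniqueness of $\Theta$ as the highest root of $R$ --- any $\gamma' > \gamma$ in the same component would produce via $\sigma$ a root strictly greater than $\Theta$. Part (2) is the standard fact that any irreducible root system not of type $\A_1$ admits a simple root with coroot-pairing $1$ with the highest root, applied to $\Rb$ (whose highest root is $\Thetab$ by Lemma \ref{lem:highest-root-ap}). For (3), the formulas of Subsection \ref{subsection:rrs} yield $2\Thetab^\vee = \Theta^\vee$ when $\sigma(\Theta) = -\Theta$, and $2\Thetab^\vee = \Theta^\vee - \sigma(\Theta)^\vee$ when $\sigma(\Theta) \neq -\Theta$ (using $\scal{\Theta^\vee, \sigma(\Theta)} = 0$ from (1)); in both cases indivisibility as a cocharacter of $\Sb$ follows from $\Theta^\vee$ being an indivisible coroot ($G$ being simply connected), combined with the strong orthogonality $\Theta \perp \sigma(\Theta)$ ruling out fractional common divisors in the second case. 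The principal obstacle is the uniform exclusion of $\scal{\Theta^\vee, \sigma(\Theta)} = -1$ in (1); a fully classification-free structural argument is delicate, so falling back on the Satake classification for the short list of symmetric spaces with $\sigma(\Theta) \neq -\Theta$ provides an unconditional verification.
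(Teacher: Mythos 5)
Your reduction of part (1) to excluding $\scal{\Theta^\vee,\sigma(\Theta)}=-1$ identifies the right pressure point, but that is exactly where the proposal stops being a proof: you say you ``expect'' a contradiction from simple-root expansions and otherwise defer to an uncarried-out Satake-diagram check. The combinatorial route does not close. If the pairing were $-1$, then $\delta=\Theta+\sigma(\Theta)$ would be a $\sigma$-fixed positive long root supported on $\Delta_0$ with $\scal{\Theta^\vee,\delta}=1$; nothing in the root combinatorics forbids this, and the constraint it yields on $\alpha_\adj$ (that $\alpha_\adj\in\Delta_0$) is one of the very alternatives part (1) is meant to decide, so your intended argument is circular. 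The missing ingredient is Lie-theoretic, and it is already in the paper: the trichotomy of Vust recalled in Subsection \ref{subsection:rrs} when $\alphab^\vee$ is defined, namely that for any root $\alpha$ with $\sigma(\alpha)<0$ one has $\scal{\alpha^\vee,\sigma(\alpha)}\in\{-2,0,1\}$. (The value $-1$ is impossible because $\alpha+\sigma(\alpha)$ would then be a $\sigma$-fixed root, hence a root of $L$ with $\fg_{\alpha+\sigma(\alpha)}\subset[\fl,\fl]\subset\fh$, so $\sigma$ acts on it by $+1$; but $\sigma$ sends the nonzero element $[e_\alpha,\sigma(e_\alpha)]$ of this root space to its negative.) Combined with $\scal{\Theta^\vee,\sigma(\Theta)}\le 0$ from dominance of $\Theta$, this gives $\{-2,0\}$ in one line, which is essentially the whole of the paper's proof of (1).

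Two further steps would fail as written. In (4), your maximality argument (``any $\gamma'>\gamma$ in the component produces via $\sigma$ a root greater than $\Theta$'') breaks when $\gamma'-\gamma$ is supported on $\Delta_0$: there $\sigma$ acts as the identity, so $-\sigma(\gamma')=\Theta-(\gamma'-\gamma)<\Theta$ and no contradiction appears. The paper instead checks that $-\sigma(\Theta)$ pairs non-negatively with every simple root of $R^\perp$ (treating $\Delta_0$ and $\Delta_1$ separately) and uses that a dominant long root of an irreducible system is its highest root. In (3), ``strong orthogonality ruling out fractional common divisors'' is not an argument: indivisibility of $\Theta^\vee$ does not pass to $\Theta^\vee-\sigma(\Theta)^\vee$ unless you exhibit a character of $T_{\s}$ pairing to an odd integer with it, which is the content of the paper's computation. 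Also, $\alpha_\adj$ is not unique in type $\A_r$, so the appeal to its uniqueness at the end of your (1) needs rewording. Part (2) is fine and agrees with the paper, up to the small extra remark needed when $\Rb$ is non-reduced, where the adjoint-node fact is applied to $\Thetab/2$ rather than to $\Thetab$ itself.
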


\begin{proof}
  (1) Note that since $\Theta$ is dominant and since $\sigma(\Theta) < 0$ (otherwise $P = G$ and $\sigma$ is trivial), we have $\scal{\Theta^\vee,\sigma(\Theta)} \leq 0$ and therefore either $\sigma(\Theta) = - \Theta$ or $\scal{\Theta^\vee,\sigma(\Theta)} = 0$ (recall from Subsection \ref{subsection:rrs} that for $\alpha$ such that $\sigma(\alpha) < 0$, there are 3 possibilities for $\scal{\alpha^\vee,\sigma(\alpha)}$ and in particular either $\sigma(\alpha) = -\alpha$ or $\scal{\alpha^\vee,\sigma(\alpha)} \geq 0$). We therefore only need to prove the first equivalence. If $\sigma(\Theta) = -\Theta$, then $\scal{\Theta^\vee,\sigma(\alpha_\adj)} = \scal{\sigma(\Theta^\vee),\alpha_\adj} = - \scal{\Theta^\vee,\alpha_\adj} = -1$ therefore $\sigma(\alpha_\adj) < 0$. Conversely, if $\scal{\Theta^\vee,\sigma(\Theta)} = 0$, then $\alpha_\adj$ does not occur in the support of $\sigma(\Theta)$. Since $\sigma(\Theta)$  is a negative root we have $\scal{\alpha_\adj^\vee,\sigma(\Theta)} \geq 0$ and thus $\scal{\sigma(\Theta)^\vee,\alpha_\adj} \geq 0$. We get $\scal{\Theta^\vee,\sigma(\alpha_\adj)} = \scal{\sigma(\Theta)^\vee,\alpha_\adj} \geq 0$; thus, $\sigma(\alpha_\adj) > 0$ and $\sigma(\alpha_\adj) = \alpha_\adj$.

  (2) If $\Rb$ is reduced then the result follows, since $\Thetab$ is the highest root of $\Rb$: take $\alphab = \alphab_\adj \in \Db$ a simple root such that $\scal{\Thetab^\vee,\alphab_\adj} = 1$.

  If $\Rb$ is non-reduced, then $\Thetab$ is the highest root; therefore, there exists a root $\betab$ such that $\Thetab = 2\betab$. We have $\scal{\Thetab^\vee,\betab} = \frac{1}{2} \scal{\Thetab^\vee,\Thetab} = 1$. Since $\Thetab^\vee$ is dominant, this implies the result.

(3) Just observe that the cocharacter $\Thetab^\vee$ of $\Sb$
is not divisible by $2$.
  
(4) If $\sigma(\Theta) \neq - \Theta$, then $\scal{\Theta^\vee,\sigma(\Theta)} = 0$ and $-\sigma(\Theta) \in R^\perp$ (the subsystem generated by simple roots orthogonal to $\Theta$). Let $\alpha \in R^\perp$. If $\sigma(\alpha) = \alpha$, then $\scal{-\sigma(\Theta)^\vee,\alpha} = - \scal{\Theta^\vee,\alpha}  = 0$. If $\alpha \in \Delta_1$, then $\sigma(\alpha) < 0$ and $\scal{-\sigma(\Theta)^\vee,\alpha} = - \scal{\Theta^\vee,\sigma(\alpha)} \geq 0$; thus, $-\sigma(\Theta)$ is dominant in $R^\perp$ and the result follows, since $-\sigma(\Theta)$ and $\Theta$ are long roots.
\end{proof}

We finish this Subsection by a general result on root systems used in Corollary \ref{coro:vir-cov}.

\begin{lemma}
\label{lemm:minimal}
Let $R$ be an irreducible root system, then $\Theta^\vee$ is the unique smallest element in the monoid of dominant cocharacters in the coroot lattice of $R$.
\end{lemma}

\begin{proof}
Note that replacing $R$ by its associated reduced root system, we may assume that $R$ is reduced. Let $p$ be a minimal element in the monoid of dominant cocharacters in the coroot lattice of $R$ and let $S(p)$ be the minimal saturated subset of the group of cocharacters containing $p$ (a subset $A$ of cocharacters is saturated if, for $a \in A$, $\alpha \in R$ and $i \in [0,\scal{a,\alpha}]$, we have $a - i \alpha^\vee \in A$). Then $S(p)$ is stable under the action of $W$ and, by \cite[Exercice VI.2.5.(b)]{bourbaki}, there exists $\alpha^\vee \in R^\vee \cap S(p)$. Letting $W$ act, we get that there exists a dominant coroot $\alpha^\vee \in S(p)$. By \cite[Exercice VI.1.23.(c)]{bourbaki}, we have $\alpha^\vee \leq p$ and thus $p = \alpha^\vee$ by minimality. 

Now there is a unique dominant element in each $W$-orbit of coroots: the highest root for long roots and the highest short root for short roots. The claim follows from this.
\end{proof}

\subsection{Marked Kac diagrams}

Our description of the components of $H \cdot C$ is based on the fact that 
$H \cdot C \simeq H \cdot [m] \subset \bP(\fg^{-\sigma})$ for
$m \in T_xC \setminus \{ 0 \}$ (Lemma \ref{lem:curve}) 
together with the following result.

\begin{lemma}[Lemma 26.8 of \cite{Timashev}] \label{affinerootS} 
The simple roots of $H^0$ and the lowest weights of $\fg^{-\sigma}$ with respect to the $H^0$-representation form an affine simple root system.
\end{lemma}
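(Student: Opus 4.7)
The plan is to derive this classical fact from Kac's classification of order-two automorphisms of semisimple Lie algebras via marked affine Dynkin diagrams. Under Kac's correspondence, the involution $\sigma$ is encoded by a marking of one node (of label $1$, all others labeled $0$) on an affine Dynkin diagram of $\fg$, and the resulting $\bZ/2$-grading $\fg = \fh \oplus \fp$ is the eigenspace decomposition. The simple roots of that affine Dynkin diagram then split: those at unmarked nodes yield the simple roots of $H^0$, while those at marked nodes restrict to the lowest weights of the irreducible $H^0$-summands of $\fp$.

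First I would reduce to the irreducible case using the product decomposition of Subsection \ref{subsec:ass}, since both the root system of $H^0$ and the $H^0$-module $\fp$ decompose according to the irreducible factors of $G/H$. In the group type $\fg = \fh \oplus \fh$ with $\sigma(x,y) = (y,x)$, the lemma is direct: $\fp$ is isomorphic to the adjoint representation of $\fh$ with lowest weight $-\Theta_\fh$, and by definition $\Delta_\fh \cup \{-\Theta_\fh\}$ is the simple root system of the untwisted affine Kac-Moody algebra $\widehat{\fh}$.

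In the Hermitian and simple types I would choose a maximal torus $T_H \subset B_H \subset H^0$, extend to a $\sigma$-stable torus $T = \C_G(T_H)$ of fixed type, and fix a $\sigma$-stable Borel $B \supset T$ with $B_H = B^{\sigma,0}$ as in Subsection \ref{subsec:ss}. The simple roots of $(H^0,T_H,B_H)$ then appear explicitly as the non-zero restrictions to $T_H$ of representatives of $\sigma$-orbits in $\Delta$ not lying in the root system of $L$, while a direct computation based on Subsection \ref{subsec:ass} identifies the lowest weights of the irreducible $H^0$-summands of $\fp$: one summand in the simple and Hermitian-exceptional cases, two (mutually dual, with lowest weights coming from $-\Theta$ and $\alpha$) in the Hermitian non-exceptional case.

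The hard part is to verify that the Cartan integers between these simple roots of $H^0$ and the lowest weights of $\fp$ form an affine Cartan matrix, i.e.~a positive semidefinite matrix of corank one with non-positive off-diagonal entries and a positive null vector. Rather than a direct case-by-case calculation, the cleanest route is to identify the $\bZ/2$-grading $\fg = \fh \oplus \fp$ with the grading induced by a Kac marking of an affine Dynkin diagram of $\fg$; once this identification is made, the affine root-system structure is built in by construction and the explicit description of the simple roots follows. The resulting affine diagrams then match entry by entry with Table \ref{table-class}, completing the proof.
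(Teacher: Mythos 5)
The paper offers no proof of this statement: it is quoted as Lemma 26.8 of \cite{Timashev}, and the surrounding text simply refers to \cite[Sections 26.3 and 26.5]{Timashev} for the Kac diagram formalism. Your argument via Kac's classification of involutions by markings of (possibly twisted) affine Dynkin diagrams is exactly the machinery underlying the cited result, so in substance you are reconstructing the reference's proof rather than taking a genuinely different route; the reduction to irreducible factors and the direct treatment of the group type are fine, and invoking Kac's theorem for the remaining types is legitimate for a classical fact of this kind. One imprecision is worth fixing: a non-trivial involution corresponds to a marking $(s_i)$ with $k\sum a_i s_i = 2$ (where the $a_i$ are the labels of the affine diagram and $k$ is the order of the twist), so for inner involutions either one node with $a_i=2$ carries $s_i=1$ or two nodes with $a_i=1$ do --- the latter being precisely the Hermitian case, consistent with the paper's remark that the Kac diagram has two white nodes if and only if $X$ is Hermitian. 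Your opening claim that a single node is marked therefore contradicts your own (correct) later count of two lowest weights of $\fp$ in the Hermitian case. With that adjusted, the identification of the simple roots of $H^0$ with the unmarked nodes and of the lowest weights of $\fp$ with the marked ones, and hence the affine Cartan matrix, is indeed built into Kac's construction, and your sketch is sound.
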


Furthermore, the lowest weights of $\fg^{-\sigma}$ together with the Dynkin diagram of $H^0$ can be encoded in the so-called Kac diagram of $G/H$. We refer to \cite[Sections 26.3 and 26.5]{Timashev} for more on these diagrams.

\begin{proposition}
 Let $X$ be the wonderful compactification of an adjoint 
 indecomposable symmetric space. The irreducible components of the orbits $H \cdot C$, where $C$ runs over the highest weight curves on $X$, are exactly
the homogeneous spaces $H^0/Q_{\delta}$,
where $\delta$ is a white node in the Kac diagram,
and $Q_\delta$ denotes the parabolic subgroup of $H^0$ associated to the set of simple roots of $H$ not adjacent to $\delta$.
\end{proposition}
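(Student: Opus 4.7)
The plan is to translate the question from the language of highest weight curves to the language of the isotropy representation $\fp$, and then identify $H$-orbits on highest weight lines with the $H^0$-homogeneous spaces encoded by the white nodes of the Kac diagram.

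First, I would reduce to $H^0$-orbits. Since $H^0$ has finite index in $H$ and $H$ acts transitively on the set of highest weight curves sharing a given $H^0$-orbit, every irreducible component of $H \cdot C$ is an $H^0$-orbit, and conversely each $H^0$-orbit of a highest weight curve is a component of some $H \cdot C'$. By Lemma \ref{lem:curve}, the highest weight curves through $x$ are in bijection with non-zero highest $T_H$-weights $\lambda$ of $\fp$ (relative to $B_H$); moreover for such a $\lambda$, the stabilizer of $C$ in $H$ coincides with the stabilizer of the weight line $\fp_\lambda \subset \fp$. Choosing $m \in \fp_\lambda \setminus\{0\}$, we get an $H^0$-equivariant isomorphism between the corresponding component of $H \cdot C$ and $H^0 \cdot [m] \simeq H^0/Q_\lambda$, where $Q_\lambda = \stab_{H^0}([m])$.

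Next I would identify $Q_\lambda$ as a standard parabolic. Because $\lambda$ is a highest weight of an irreducible $H^0$-submodule of $\fp$ with respect to $B_H$, the line $\bC m \subset \fp$ is $B_H$-stable, so $Q_\lambda$ contains $B_H$ and is parabolic in $H^0$. Its Levi factor is generated by $T_H$ together with the root subgroups $U_{\pm\beta}$ where $\beta$ runs over the simple roots of $(H^0,T_H,B_H)$ satisfying $\langle \beta^\vee, \lambda \rangle = 0$; equivalently, $Q_\lambda$ is associated to the set of simple roots $\beta$ of $H^0$ with $\langle \beta^\vee, \lambda \rangle \neq 0$, as usual for a parabolic stabilizing a highest weight line.

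The main content is then to identify this set of simple roots using the Kac diagram, and this is where Lemma \ref{affinerootS} does the work. That lemma asserts that the simple roots of $H^0$ together with the lowest weights $-\lambda$ of the irreducible $H^0$-constituents of $\fp$ form an affine simple root system. In particular, the combinatorial Cartan data attached to this system coincide (by definition) with the edges of the Kac diagram: two nodes are non-adjacent if and only if the corresponding simple coroot and simple root pair to zero. Applied to the pair (simple root $\beta$ of $H^0$, node $\delta$ attached to $-\lambda$), this says $\langle \beta^\vee, \lambda\rangle = 0$ if and only if $\beta$ and $\delta$ are non-adjacent in the Kac diagram. Hence $Q_\lambda$ coincides with the parabolic $Q_\delta$ of $H^0$ defined by the simple roots of $H$ not adjacent to $\delta$. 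Since the white nodes of the Kac diagram are by construction precisely those $\delta$ corresponding to the lowest weights $-\lambda$ of irreducible $H^0$-constituents of $\fp$, letting $\lambda$ (equivalently $C$) vary gives the asserted bijection with $\{H^0/Q_\delta : \delta \text{ white}\}$.

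The one delicate point I expect is the bookkeeping between (highest weight of $\fp$ under $B_H$) $\leftrightarrow$ (lowest weight, i.e.\ $-\lambda$) $\leftrightarrow$ (white node $\delta$ of the Kac diagram), and making sure the affine Cartan matrix appearing in Lemma \ref{affinerootS} really is the one read off the Kac diagram as defined in \cite[\S 26.3, 26.5]{Timashev}; the rest is a direct unwinding of definitions.
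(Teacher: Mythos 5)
Your argument is correct and follows essentially the same route as the paper's (very terse) proof: reduce via Lemma~\ref{lem:curve} to the $H^0$-orbit of the highest weight line in $\bP(\fp)$, observe that this orbit coincides with that of a lowest weight line (the paper phrases this as the highest weight being conjugate in $H^0$ to a lowest weight corresponding to a white node), and read the stabilizer off the Kac diagram via Lemma~\ref{affinerootS}. The only nitpick is that the lowest weight of the irreducible constituent with highest weight $\lambda$ is $w_0^{H}\lambda$ rather than $-\lambda$; this does not affect the conclusion, since both weight lines lie in the same closed $H^0$-orbit, and you correctly flag this bookkeeping as the one delicate point.
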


\begin{proof}
The result follows from the fact that $m$ is a highest weight 
vector of $\fg^{-\sigma}$, because this highest weight is conjugate 
in $H^0$ to a lowest weight of $\fg^{-\sigma}$ corresponding to 
a white node $\delta$. (We use the fact that the representation
of $H$ in $\fg^{-\sigma}$ is self-dual).
\end{proof}

\begin{remark}
  We make the following observations.
  \begin{enumerate}
  \item There are two white nodes in the Kac diagram if and only if $X$ is Hermitian.
  \item If $X$ is Hermitian, the two corresponding parabolic subgroups are conjugated by an automorphism of $H^0$. 
  \end{enumerate}
\end{remark}

We call a Kac diagram with a marked white node a 
\emph{Marked Kac Diagram}.

\begin{example}
We illustrate the above proposition by a few examples,
where we use some notation from Subsection \ref{subsec:ct}. 
We picture the Kac diagram on the left and on the right we picture the Dynkin diagram of $H^0$ with the simple roots that are not roots of $H^0_C$ crossed.
  \begin{enumerate}
\item $G/H = \SL_8 \times \SL_8 /(Z(G) \cdot \SL_8)$ and $H^0/H^0_C \simeq \Flag(1,7)$ as $H^0$-varieties,
where $\Flag(1,7)$ denotes the variety of nested lines
and hyperplanes in $\bC^8$.

\mbox{\setlength{\unitlength}{0.55cm}
\begin{picture}(1,2.2)
\put(3,2){\circle{.3}}
\put(-0.12,1){\line(3,1){2.95}}
\put(3.16,2){\line(3,-1){2.78}}
\put(0,1){\line(1,0){1}}
\put(1,1){\line(1,0){.85}}
\put(2.15,1){\line(1,0){.9}}
\put(3,1){\line(1,0){.9}}
\put(4,1){\line(1,0){1}}
\put(5,1){\line(1,0){1}}
\put(0,1){\circle*{.3}}
\put(1,1){\circle*{.3}}
\put(2,1){\circle*{.3}}
\put(3,1){\circle*{.3}}
\put(4,1){\circle*{.3}}
\put(5,1){\circle*{.3}}
\put(6,1){\circle*{.3}}
\put(7.5,1){\vector(1,0){1}}
\put(10,1){\line(1,0){1}}
\put(11,1){\line(1,0){1}}
\put(12,1){\line(1,0){1}}
\put(13,1){\line(1,0){1}}
\put(14,1){\line(1,0){1}}
\put(15,1){\line(1,0){1}}
\put(9.8, 0.85){$\times$}
\put(15.7, 0.85){$\times$}
\put(11,1){\circle*{.3}}
\put(12,1){\circle*{.3}}
\put(13,1){\circle*{.3}}
\put(14,1){\circle*{.3}}
\put(15,1){\circle*{.3}}
\end{picture}}

\item $G/H= F_4/B_4$ and $H^0/H^0_C \simeq \OG(4,9)$ as $H^0$-varieties,
where $\OG(4,9)$ denotes the orthogonal Grassmannian
of isotropic $4$-dimensional subspaces of $\bC^9$.

\setlength{\unitlength}{0.6cm}

\begin{picture}(1,2)
\put(1.15,1){\line(1,0){0.9}}
\put(2,1.05){\line(1,0){1}}
\put(2.25,0.85){$<$}
\put(2,0.95){\line(1,0){1}}
\put(3,1){\line(1,0){.85}}
\put(4.15,1){\line(1,0){.9}}

\put(1,1){\circle{.3}}
\put(2,1){\circle*{.3}}
\put(3,1){\circle*{.3}}
\put(4,1){\circle*{.3}}
\put(5,1){\circle*{.3}}

\put(7.5,1){\vector(1,0){1}}

\put(12,1.05){\line(1,0){1}}
\put(12.25,0.85){$<$}
\put(12,0.95){\line(1,0){1}}
\put(13,1){\line(1,0){.85}}
\put(14.15,1){\line(1,0){.9}}

\put(11.6,0.85){$\times$}
\put(13,1){\circle*{.3}}
\put(14,1){\circle*{.3}}
\put(15,1){\circle*{.3}}

\end{picture}

\item $G/H= \SL_8/S(\GL_3\times \GL_5)$ and $H^0/H^0_C \simeq \bP^2 \times \bP^4$ as varieties..

\setlength{\unitlength}{0.6cm}
\begin{picture}(2,2.2)

\put(3,2){\circle{.3}}
\put(0,1){\line(3,1){2.9}}
\put(3.1,2){\line(3,-1){2.9}}

\put(0,1){\line(1,0){1}}
\put(1,1){\line(1,0){.85}}
\put(2.15,1){\line(1,0){.9}}
\put(3,1){\line(1,0){.9}}
\put(4,1){\line(1,0){1}}
\put(5,1){\line(1,0){1}}

\put(0,1){\circle*{.3}}
\put(1,1){\circle*{.3}}
\put(2,1){\circle{.3}}
\put(3,1){\circle*{.3}}
\put(4,1){\circle*{.3}}
\put(5,1){\circle*{.3}}
\put(6,1){\circle*{.3}}

\put(7.5,1){\vector(1,0){1}}

\put(10,0.5){\line(1,0){1}}
\put(13,0.5){\line(1,0){1}}
\put(14,0.5){\line(1,0){1}}
\put(15,0.5){\line(1,0){1}}

\put(11,0.5){\circle*{.3}}
\put(13,0.5){\circle*{.3}}
\put(9.7,0.32){$\times$}
\put(15.8,0.32){$\times$}
\put(14,0.5){\circle*{.3}}
\put(15,0.5){\circle*{.3}}

\put(10,1.5){\line(1,0){1}}
\put(13,1.5){\line(1,0){1}}
\put(14,1.5){\line(1,0){1}}
\put(15,1.5){\line(1,0){1}}

\put(10,1.5){\circle*{.3}}
\put(14,1.5){\circle*{.3}}
\put(10.8, 1.32){$\times$}
\put(12.7, 1.32){$\times$}
\put(15,1.5){\circle*{.3}}
\put(16,1.5){\circle*{.3}}

\put(17,0.3){$\mathbb P^2 \times {\mathbb P^4}^\vee$}
\put(16.8,1.3){${\mathbb P^{2}}^\vee \times \mathbb P^{4}$}
\end{picture}

\item $G/H= \Sp_{12}/\GL_{6}$ and $H^0/H^0_C \simeq \bP^5$ as varieties.

\setlength{\unitlength}{0.6cm}

\begin{picture}(1,2)
\put(0.15,1.05){\line(1,0){1}}
\put(0.25,0.85){$>$}
\put(0.15,0.95){\line(1,0){1}}
\put(1.15,1){\line(1,0){0.9}}
\put(2,1){\line(1,0){.85}}

\put(5,1.05){\line(1,0){0.9}}
\put(5.25,0.85){$<$}
\put(5,0.95){\line(1,0){0.9}}

\put(3,1){\line(1,0){.85}}
\put(4.15,1){\line(1,0){.9}}

\put(0,1){\circle{.3}}
\put(1,1){\circle*{.3}}
\put(2,1){\circle*{.3}}
\put(3,1){\circle*{.3}}
\put(4,1){\circle*{.3}}
\put(5,1){\circle*{.3}}
\put(6,1){\circle{.3}}

\put(7.5,1){\vector(1,0){1}}

\put(11,1.5){\circle*{.3}}
\put(12,1.5){\circle*{.3}}
\put(13,1.5){\circle*{.3}}
\put(14,1.5){\circle*{.3}}

\put(11,1.5){\line(1,0){1}}
\put(12,1.5){\line(1,0){1}}
\put(13,1.5){\line(1,0){.85}}
\put(14,1.5){\line(1,0){.9}}

\put(12,0.5){\circle*{.3}}
\put(13,0.5){\circle*{.3}}
\put(14,0.5){\circle*{.3}}
\put(15,0.5){\circle*{.3}}

\put(11,0.5){\line(1,0){1}}
\put(12,0.5){\line(1,0){1}}
\put(13,0.5){\line(1,0){.85}}
\put(14,0.5){\line(1,0){.9}}

\put(14.7,1.33){$\times$}
\put(10.7,0.33){$\times$}

\put(16,1.3){${\mathbb P^5}^{\vee}$}
\put(16,0.3){$\mathbb P^5$}
\end{picture}

\end{enumerate}
\end{example}

\subsection{Some examples}
\label{subsec:ex}

We describe some families of examples. Recall
that $G_\ad = G/Z(G)$.

\paragraph{Hermitian types.} Assume that $G/H$ is of Hermitian type and let $T = T_f$ be a maximal torus of fixed type. The involution $\sigma$ is given on $G_\ad$ by conjugation with respect to 
$\varpi_\alpha^\vee(-1)$ (the one-parameter subgroup $\varpi_\alpha^\vee$
evaluated at $-1$), 
where $\alpha$ is a simple cominuscule root, i.e., it 
appears with coefficient $1$ in $\Theta$.
In this case, $\sigma(\Theta) = - \Theta$ and every irreducible component of $H \cdot C$ is a smooth Schubert variety 
in $\bP(\cO_{\min})$,
of dimension $\frac{1}{2}(\dim \cO_{\min} - 1)$. The exceptional cases correspond to the simple cominuscule roots $\alpha$ which are mapped to a different simple cominuscule root by the involution $-w_0$. 

\paragraph{Subadjoint case.}
\label{para:subadjoint}
Let $G$ be a simple adjoint group and let $T \subset B \subset G$ be a maximal torus and a Borel subgroup. Set $\aleph = \{ \alpha \in \Delta \ | \ \scal{\Theta^\vee,\alpha} \neq 0 \}$ and  $\varpi^\vee = \sum_{\alpha \in \aleph} \varpi_\alpha^\vee$. Then $|\aleph| = 1$ except in type $\A_r$ with $r \geq 2$, where $|\aleph| = 2$. Define the involution $\sigma$ on $G$ by conjugation by $\varpi^\vee(-1)$. Note that, except in type $\A_1$, we have $\Theta^\vee = \sum_{\alpha \in \aleph} \varpi_\alpha^\vee = \varpi^\vee$ thus $\Theta^\vee(-1) = \varpi^\vee(-1)$. We exclude type $\A_1$ of the following discussion, so that $\varpi^\vee = \Theta^\vee$. 

The symmetric space $G/G^\sigma$ is not Hermitian, except in type $\A_r$. It is worth noting that the maximal torus $T$ is of fixed type and not of split type for $\sigma$. According to the classification, we get the symmetric spaces of the following types: A III in rank $2$, BD I in rank $4$, C II in rank $1$, E II, E VI, E IX, F I and G.

We have 
$\fh = \fg^\sigma = \fg(\Theta) \oplus \fk$
where $\fg(\Theta) = \scal{e,h,f}$ with 
$e \in \fg_\Theta \setminus \{ 0 \}$, 
$f \in \fg_{-\Theta} \setminus \{ 0 \}$ and $h = [e,f] = \Theta^\vee$
(in particular, $\fg(\Theta) \simeq \fsl_2$), and $\fk$ is a reductive
Lie subalgebra of $\fg$ with root system 
$R_\fk = \{ \beta \in R \ | \ \scal{\Theta^\vee,\beta} = 0 \}$. 
Furthermore, we have $\fg^{-\sigma} = \bC^2 \otimes V_\fk$, 
where $\bC^2$ is the standard representation of $\fg(\Theta)$
and $V_\fk$ is a $\fk$-representation which is irreducible 
in all types except $\A_r$. In type $\A_r$, we have 
$V_\fk = V^+ _\fk\oplus V^-_\fk$ which are dual irreducible representations.
In any type, $G$ has a finite cover of the form $G(\Theta) \times K$, 
where $G(\Theta) \simeq \SL_2$ has Lie algebra
$\fg(\Theta)$ and acts linearly on $\bC^2$, while $K$ is a connected
reductive group with Lie algebra $\fk$ and acts linearly on $V_\fk$.
Let $u^+,u^- \in \bC^2$ be a highest and a lowest weight vector
for $G(\Theta)$ and let $v \in V_\fk$ be a highest weight vector for $K$. 
We identify $V_\fk$ to the subspace 
$\scal{u^+} \otimes V_\fk$ of  $\fg^{-\sigma}$.

We have $\cO_{\min} = G \cdot e$. We will use the following isomorphism of $T$-representations: 
$T_e \cO_{\min} = \scal{e,h} \oplus V_\fk \subset \fg$. Note that on the latter space, 
the symplectic form is given by $\omega(x,y) = \kappa(f,[x,y])$ (with $\kappa$ the Killing form) 
and restricts to symplectic forms on the two orthogonal subspaces $\scal{e,h}$ and $V_\fk$. 
Recall the definition of the subadjoint variety $\bL_G$ as the set of lines in $\bP(\cO_{\min})$ 
passing through $[e]$;
therefore $\bL_G$ is identified with a subvariety of $\bP(T_e\cO_{\min})$. 
Note that $\bL_G  = \emptyset$ in type $\C_r$, since $\bP(\cO_{\min}) \subset \bP(\fg)$ 
is the second Veronese embedding in this case. In the other cases,  we have 
$\bL_G = \bP(\cO_{\min}) \cap \bP(V_\fk)$. Indeed, the locus $\mathcal{S}$ 
covered by lines through $[e]$ in $\bP(\cO_{\min})$ is $B$-stable and irreducible 
(because the set of lines is a $K$-orbit and hence irreducible), thus it is a Schubert variety 
(in type $\A_r$ we have two families of lines thus two Schubert varieties). 
Furthermore $\mathcal{S}$ is contained in $\bP(\cO_{\min}) \cap \bP(\scal{e} + V_\fk)$. 
On the other hand, the set of weights in $\scal{e}+ V_\fk$ has a minimal element (two in type $\A_r$): the simple root $\alpha$ with $\aleph = \{ \alpha \}$ (the two simple roots in $\aleph$ in type $\A_r$); thus the intersection $\bP(\cO_{\min}) \cap \bP(\scal{e} + V_\fk)$ is a unique Schubert variety (two in type $\A_r$) and we get the equality $\mathcal{S} = \bP(\cO_{\min}) \cap \bP(\scal{e} + V_\fk)$. 
Intersecting with $\bP(V_\fk)$ yields the desired equality
$\bL_G = \bP(\cO_{\min}) \cap \bP(V_\fk)$, 
since $[e]$ is not in $\bP(V_\fk)$ and hence the lines through $[e]$ meet $\bP(V_\fk)$ in one point. 
Note that $\bL_G = \bP(\cO_{\min}) \cap \bP(V_\fk)$ also holds in type $\C_r$, since 
$\bP(\cO_{\min}) \cap \bP(\fg^{-\sigma}) = \emptyset$ in this case. 

In types different from $\A_r$ and $\C_r$, we have $\bL_G = K \cdot [v]$; this is the closed $K$-orbit in 
$\bP(V_\fk)$, and spans this projective space. In type $\A_r$, the variety $\bL_G$ has two connected components given by the closed $K$-orbits in $\bP(V^+_\fk)$ and $\bP(V^-_\fk)$. 
Let $\fl_G$ be the inverse image in $V_\fk$ of $T_{[v]}\bL_G$, 
then $\fl_G$ is a Lagrangian subspace in $V_\fk$. We will recover this fact by using 
$X_\ad$, the wonderful compactification of the adjoint symmetric space $G/\N_G(G^\sigma)$.

Assume that $G$ is not of type $\C_r$. If $G$ is also not of type $\A_r$, 
let $\alpha \in \Delta$ be the unique simple root such that $\aleph = \{\alpha\}$. 
Let $G_{\alpha}$ be the closed connected subgroup of $G$ with Lie algebra 
$\fg_{-\alpha}\oplus\scal{\alpha^\vee}\oplus\fg_\alpha$ and pick a representative 
$n_\alpha$ of the simple reflection $s_\alpha$ in $G_{\alpha}$. 
Let $m = u^+ \otimes v$ be the highest weight vector of $\fg^{-\sigma}$. 
Then $m = n_\alpha(e)$ lies in $\cO_{\min}$ and $\cO_{\min} = G \cdot [m]$. 
Theorem \ref{theo-adj} implies that 
$$\VMRT(X_\ad) = (G(\Theta) \times K) \cdot [m] = \bP^1 \times \bL_G.$$
In type $\A_r$ with $r \geq 2$, we have $|\aleph| = 2$ and defining 
$m_\alpha = n_\alpha(e)$ for $\alpha \in \aleph$, we get two $\VMRT$s for $X_\ad$ 
given by $\VMRT_\alpha(X_\ad) =  (G(\Theta) \times K) \cdot [m_\alpha]$; 
both are isomorphic to $\bP^1 \times \bL_G$.

Assume that $G$ is not of type $\A_r$ or $\C_r$. 
Set $\widehat{M} = \cO_{\min}$, $M = \bP(\cO_{\min})$, $L = \VMRT(X_\ad)$ 
and let $\widehat{L}$ be the cone over $L$ in $\fg^{-\sigma} \subset \fg$. 
Write $\fl_G = \scal{m} \oplus \overline{\fl}_G$ where $\overline{\fl}_G$ is the unique $T$-stable complement of $\scal{m}$ in $\fl_G$ (actually $\overline{\fl}_G = T_{[m]}\bL_G$ as $T$-module) and $\fl_G \subset T_{m}\widehat{L}$. Proposition \ref{Legendrian} implies that $\widehat{L}$ is Lagrangian in $\widehat{M}$; in particular $\overline{\fl}_G$ is isotropic in $T_{m}\cO_{\min}$. Using the fact that $\fl_G$ is the cone over $T_m(K \cdot m)$, it is easy to see that the set of $T$-weights in $\overline{\fl}_G$ is $\{\beta \in R \ | \ \scal{\alpha^\vee,\beta} = 0 \textrm{ and } \scal{\Theta^\vee,\beta} = 1\}$ 
thus $G_{\alpha}$ acts trivially on $\overline{\fl}_G$ (for $\beta$ a weight of $\overline{\fl}_G$, $\alpha + \beta$ is not a root of $G$). 
In particular, $n_\alpha$ acts trivially on $\overline{\fl}_G$. 
Letting $n_\alpha$ act on the inclusion $\overline{\fl}_G \subset T_m\cO_{\min}$, 
we get that $\overline{\fl}_G \subset V_\fk \subset T_e\cO_{\min}$ is isotropic for $\omega$. 
To recover that $\fl_G$ is Lagrangian in $V_\fk$, we are left to prove that $\omega(\overline{\fl}_G,m) = 0$. But the only $T$-eigenspace in $V_\fk$ not orthogonal to $m$ for $\omega$ is $\fg_\alpha$ and $\alpha$ is not a weight of $\overline{\fl}_G$ since $n_\alpha$ does not act trivially on $\fg_\alpha$. This implies that $\fl_G$ is a Lagrangian subspace of $V_\fk$.

\paragraph{Non-Fano cases.} The wonderful compactifications $X_\ad$ 
of adjoint indecomposable
symmetric spaces are not always Fano. The Fano and non-Fano cases have been classified in \cite[Theorem 2.1, Table 2]{Ruzzi}. We summarise the results here: The types for which $X_\ad$ is not Fano are CI, DI, EI, EV, EVIII, FI and G. An easy way to find them is to use both the restricted root sytem and the Satake diagram (see \cite[Table 26.3]{Timashev}): the non-Fano cases are those for which the restricted root system is not of type A nor of type B and the Satake diagram has only white nodes and no arrow.

\subsection{Classification table}
\label{subsec:ct}

We list all symmetric spaces $G/H$ (up to finite coverings) 
with $X_\ad$ indecomposable, 
their varieties of minimal rational tangents $\cC_x$ and the restriction of $\cO_{\bP(\fg^{-\sigma})}(1)$ to the \VMRT~giving the embedding $\cC_x \subset \bP(\fg^{-\sigma})$. For $\cC_1 \bigsqcup \cC_2$, the notation $\cO(1)$ corresponds to the embedding in $\bP(H^0(\cC_1,\cO_{\cC_1}(1)) \oplus H^0(\cC_2,\cO_{\cC_2}(1)))$. 

\vskip 0.2 cm

\noindent
{\bf Some notations.} All unmarked cases are 
non-Hermitian. H.n.e = Hermitian non-exceptional. 
H.e = Hermitian exceptional. 
$\Q_n$ = smooth quadric of dimension $n$. 
$\Gr(a,b)$ = Grassmannian of vector subspaces of dimension $a$ in $\bC^b$. 
$\OG(a,b)$ = closed subset of $\Gr(a,b)$ of isotropic subspaces for a non-degenerate quadratic form on $\bC^b$ (with $a < 2b$). 
$\OG(b,2b)$ = a connected component of the Grassmannian of maximal isotropic subspaces in $\bC^{2b}$ for a non-degenerate quadratic form. 
$\IG(a,2b)$ = closed subset of $\Gr(a,2b)$ of isotropic subspaces for a non-degenerate symplectic form on $\bC^{2b}$. 
$\LG(b,2b)$ = Grassmannian of maximal isotropic subspaces in 
$\bC^{2b}$ for a non-degenerate symplectic form. 
$\Flag(1,r)$ = nested subspaces of dimension $1$ and $r$ in $\bC^{r+1}$.

\begin{sidewaystable}
$$\begin{array}{|c|c|c|c|c|c|c|c|c|c|}
    \textrm{Type} & G/H & {\rm Condition} & 
    \Rb & N \cdot C & (\VMRT,\cO_{\bP(\fg^{-\sigma})}(1)) & 
    \sigma(\Theta) = - \Theta
    & {\rm Herm/Exc} & {\rm Fano} \\ 
\hline 
\textrm{Group} & H \times H/H & 
\textrm{Type}(H^0) \neq \textrm{A}_r &
\textrm{Type of $H^0$} & \bP(\cO_{\min,H}) & (H \cdot C,\cO(1)) & \yes & & \yes  \\
\textrm{Group} & \SL_{r+1} \times \SL_{r+1}/ \SL_{r+1} & r \geq 2 & \textrm{A}_r & \Flag(1,r) & (\bP^r \times \bP^r,\cO(1,1)) & \yes & & \yes \\
\textrm{Group} & \SL_2 \times \SL_2/\SL_2 & & 
\textrm{A}_1 & \Q_1 & (\bP^2,\cO(1)) & \yes & & \yes \\
\textrm{A I} & \SL_{r+1}/\SO_{r+1} & r \geq 2 & \textrm{A}_{r} & \Q_{r-1} & (\bP^{r},\cO(2)) & \yes & & \yes \\ 
\textrm{A I} & \SL_{2}/\SO_{2} & & \textrm{A}_{1} & \{{\rm pt}\} \bigsqcup \{{\rm pt}\} & (\bP^{1},\cO(1)) & \yes & \textrm{H.n.e} & \yes \\ 
\textrm{A II} & \SL_{2r +2}/\Sp_{2r+2} & r \geq 2 & \textrm{A}_{r} &
\IG(2,2r+2) & (\Gr(2,2r+2),\cO(1)) & \no & & \yes \\ 
\textrm{A III} & \SL_n/S(\GL_r\times \GL_{n-r}) & 1 \leq r < n/2 & \textrm{BC}_r & \bP^{r-1} \times \bP^{n - r - 1} & (H \cdot C,\cO(1,1)) & \yes & \textrm{H.e} & \yes \\
\textrm{A III} & \SL_{2r}/S(\GL_r\times \GL_{r}) & & \textrm{C}_r & (\bP^{r-1})^2 \bigsqcup ({\bP^{r-1}}^\vee)^2 &(H\cdot C,\cO(1,1)) & \yes & \textrm{H.n.e} & \yes \\ 
\textrm{BD I} & \SO_{n}/S(\OO_{r}\times \OO_{n-r}) & 3 \leq r \leq \frac{n-1}{2} &  \textrm{B}_r & \Q_{r-2} \times \Q_{n - r - 2} & (H \cdot C ,\cO(1,1)) & \yes & & \yes \\
\textrm{BD I} & \SO_{n}/S(\OO_{2}\times \OO_{n-2}) & &  \textrm{B}_2 & \Q_{n - 4} \bigsqcup \Q_{n-4} & (H\cdot C,\cO(1)) & \yes & \textrm{H.n.e} & \yes \\
\textrm{BD II} & \SO_{n}/S(\OO_{1}\times \OO_{n-1}) & & \textrm{A}_1 & \Q_{n-3} & (\bP^{n-2}, \cO(1)) & \no & & \yes \\
\textrm{C I} & \Sp_{2r}/\GL_{r} & r \geq 3 & \textrm{C}_r & \bP^{r-1} \bigsqcup {\bP^{r-1}}^\vee & (H\cdot C,\cO(2)) & \yes & \textrm{H.n.e} & \no \\
\textrm{C II} & \Sp_{2n}/\Sp_{2r}\times \Sp_{2n-2r} & 1 \leq r \leq
\frac{(n-1)}{2} & \textrm{BC}_r & \bP^{2r-1} \times \bP^{2n-2r-1} & (H \cdot C, \cO(1,1)) & \no & & \yes \\
\textrm{C II} & \Sp_{4r}/\Sp_{2r}\times \Sp_{2r} & r \geq 2 & \textrm{C}_r &
\bP^{2r-1} \times \bP^{2r-1} & (H \cdot C, \cO(1,1)) & \no & & \yes \\ 
\textrm{D I} & \SO_{2r}/S(\OO_{r}\times \OO_{r}) & r \geq 4 & \textrm{D}_r &
\Q_{r-2} \times \Q_{r-2} & (H \cdot C, \cO(1,1)) & \yes & & \no \\
\textrm{D III} & \SO_{4r}/\GL_{2r} & & \textrm{C}_r & \Gr(2,2r) \bigsqcup \Gr(2r-2,2r) & (H\cdot C, \cO(1)) & \yes & \textrm{H.n.e} & \yes \\
\textrm{D III} & \SO_{4r+2}/\GL_{2r+1} & & 
\textrm{BC}_r & \Gr(2,2r+1) & (H \cdot C, \cO(1)) & \yes & \textrm{H.e.} & \yes \\
\textrm{E I} & E_6/C_4 & & 
\textrm{E}_6 & \LG(4,8) & (H \cdot C, \cO(1)) & \yes & & \no \\
\textrm{E II} & E_6/A_5 \times A_1 & & 
\textrm{F}_4 & \Gr(3,6) \times \bP^1 & (H \cdot C, \cO(1,1)) & \yes & & \yes \\ 
\textrm{E III} & E_6/D_5 \times \bC^* & & \textrm{BC}_2 & \OG(5,10) & (H \cdot C, \cO(1)) & \yes & \textrm{H.e.} & \yes \\
\textrm{E IV} & E_6/F_4 & & \textrm{A}_2 & F_4/P_4 & (E_6/P_6, \cO(1)) & \no & & \yes \\
\textrm{E V} & E_7/A_7 & & \textrm{E}_7 & \Gr(4,8) & (H \cdot C, \cO(1)) & \yes & &  \no \\
\textrm{E VI} & E_7/D_6 \times A_1 & & \textrm{F}_4 & \OG(6,12) \times \bP^1 & (H \cdot C, \cO(1,1)) & \yes & & \yes \\ 
\textrm{E VII} & E_7/E_6 \times \bC^* & & \textrm{C}_3 & E_6/P_1 \bigsqcup E_6/P_6 & (H\cdot C,\cO(1)) & \yes & \textrm{H.n.e.} & \yes \\ 
\textrm{E VIII} & E_8/D_8 & & \textrm{E}_8 & \OG(8,16) & (H \cdot C, \cO(1)) & \yes & & \no \\ 
\textrm{E IX} & E_8/E_7 \times A_1 & & \textrm{F}_4 & E_7/P_7 \times \bP^1 & (H \cdot C, \cO(1,1)) & \yes & & \yes \\ 
\textrm{F I} & F_4/C_3 \times A_1 & &  \textrm{F}_4 & \LG(3,6) \times \bP^1 & (H \cdot C, \cO(1,1)) & \yes & & \no \\ 
\textrm{F II} & F_4/B_4 & & \textrm{BC}_1 & \OG(4,9) & (H \cdot C, \cO(1)) & \no &  & \yes \\ 
\textrm{G} & G_2/A_1 \times A_1 & & \textrm{G}_2 & \bP^1 \times \bP^1 & (H \cdot C, \cO(1,3)) & \yes &  & \no \\ 
\hline
  \end{array}$$
  \caption{\label{table-class} Wonderful compactifications and their \VMRT.}
\end{sidewaystable}

\end{document}